\documentclass[10pt, oneside]{article}
\usepackage{mathrsfs}
\usepackage{amsfonts}
\usepackage{amssymb} 
\usepackage{amsmath,amsfonts,amssymb,amsthm}
\usepackage{caption}
\usepackage{subfigure}
\usepackage{cite}
\usepackage{color}
\usepackage{graphicx}
\usepackage{subfigure}
\usepackage{float}
\usepackage{paralist}
\usepackage{indentfirst}
\usepackage{bm}
\usepackage{authblk}
\usepackage{tikz}
\usetikzlibrary{shapes,arrows}
\usetikzlibrary{shapes.geometric, arrows}
\usepackage[colorlinks,
linkcolor=blue,
citecolor=blue
]{hyperref}
\usepackage[T1]{fontenc}

\textheight=220truemm
%
\textwidth=160truemm

\hoffset=0truemm
\voffset=0truemm

\topmargin=-5truemm
 \oddsidemargin=0truemm
\evensidemargin=0truemm


\definecolor{red}{rgb}{1.00,0.00,0.00}
\definecolor{blue}{rgb}{0.00,0.00,0.63}
\definecolor{black}{rgb}{0.00,0.00,0.00}
\definecolor{purple}{rgb}{0.00,1.00,0.00}
\definecolor{pink}{rgb}{0.95,0.01,0.08}


\newtheorem{theorem}{Theorem}[section]

\newtheorem{corollary}{Corollary}[section]

\newtheorem{lemma}{Lemma}[section]

\newtheorem{proposition}{Proposition}[section]
\newtheorem{remark}{Remark}[section]

\allowdisplaybreaks
\def\div{ \hbox{\rm div}\,  }
\def\ddj{\dot{\Delta}_{j}}

\def\var{\varepsilon}

\def \andf {\quad\!\hbox{and}\!\quad}

\def\dZ_1{\delta\!Z_1}

\def\ddj{\dot\Delta_j}

\numberwithin{equation}{section}

\providecommand{\keywords}[1]
{
  \textbf{\textit{Keywords---}} #1
}
 \providecommand{\Subjclass}[1]
{
  \textbf{\textit{2020 Mathematics Subject Classification---}} #1
}

\begin{document}


\title{Quantitative derivation of a two-phase porous media system from the one-velocity Baer-Nunziato and Kapila systems
}

\author{ Timoth\'{e}e Crin-Barat\footnote{ Corresponding author: timotheecrinbarat@gmail.com} , Ling-Yun Shou, Jin Tan}

\date{}
 	\maketitle

\begin{abstract}
We derive a novel two-phase flow system in porous media as a relaxation limit of compressible multi-fluid systems.
Considering a one-velocity Baer-Nunziato system with friction forces, we first justify its pressure-relaxation limit toward a Kapila model in a uniform manner with respect to the time-relaxation parameter associated with the friction forces. Then, we show that the diffusely rescaled solutions of the damped Kapila system converge to the solutions of the new two-phase porous media system as the time-relaxation parameter tends to zero. In addition, we also prove the convergence of the Baer-Nunziato system to the same two-phase porous media system as both relaxation parameters tend to zero. For each relaxation limit, we exhibit sharp rates of convergence in a critical regularity setting.

 Our proof is based on an elaborate low-frequency and high-frequency analysis via the  Littlewood-Paley decomposition and includes three main ingredients: a refined spectral analysis of the linearized problem to determine the frequency threshold explicitly in terms of the time-relaxation parameter, the introduction of an effective flux in the low-frequency region to overcome the loss of parameters due to the  {\em overdamping phenomenon}, and renormalized energy estimates in the high-frequency region to cancel higher-order nonlinear terms. To justify the convergence rates, we discover several {\em auxiliary unknowns} allowing us to recover crucial $\mathcal{O}(\varepsilon)$ bounds.

\end{abstract}

\keywords{ Multi-fluid system, pressure-relaxation limit, overdamping phenomenon, critical regularity, two-phase flow in porous media, Kapila system, Baer-Nunziato system.}

\Subjclass{35Q35; 35B40; 76N10; 76T17}

\section{Introduction}

\subsection{Models and motivations}
 Multi-phase flows have been used to simulate a wide range of physical mixing phenomenon, from engineering to biological systems (cf. \cite{baer1,bresch3,ishii2,wallis1} and the references therein). In the present paper, we investigate an inviscid compressible one-velocity Baer-Nunziato system  with two different pressure laws in presence of drag forces,  which was discussed in the recent work \cite{BH1} of Bresch and Hillairet:
\begin{equation}\tag{BN}
\left\{
\begin{aligned}
&\partial_{t}\alpha_{+} +u \cdot\nabla \alpha_{+} =\frac{\alpha_{+} \alpha_{-} }{\var}\big( P_{+}(\rho_{+} )-P_{-}(\rho_{-} ) \big),\\
&\partial_{t}(\alpha_{\pm} \rho_{\pm} )+\div (\alpha_{\pm}  \rho_{\pm}  u )=0,\\
&\partial_{t}(\rho  u )+\div (\rho  u \otimes u )+\nabla P +\frac{\rho  u }{\tau}=0,\quad \quad x\in\mathbb{R}^{d},\quad t>0,
\end{aligned}\right.\label{BN}
\end{equation}
where the unknowns $\alpha_{\pm} =\alpha_{\pm} (t,x)\in [0,1]$, $\rho_{\pm} =\rho_{\pm} (t,x)\geq0$ and $u =u(t,x)\in\mathbb{R}^{d}$ stand for the volume fractions, the densities and the common velocity of two fluids (denoted by $+$ and $-$), respectively, which satisfy
\begin{equation}\nonumber
\begin{aligned}
&\alpha_++\alpha_-=1,\quad\quad \rho=\alpha_{+}\rho_{+}+\alpha_{-}\rho_{-},\quad\quad P=\alpha_{+}P_{+}(\rho_{+})+\alpha_{-}P_{-}(\rho_{-}).
\end{aligned}
\end{equation}
The two positive constants $\var$ and  $\tau$ are  (small) relaxation parameters associated to the pressure-relaxation and time-relaxation limits.
Finally, the two  pressures  $P_{+}$ and $P_{-}$ take the gamma-law forms
\begin{align}\label{AssumPressure}
&P_{\pm}(s)=A_{\pm}s^{\gamma_{\pm}} \quad {\rm with ~constants} \quad A_{\pm}>0,\quad\quad 1\leq \gamma_{-}< \gamma_{+}.
\end{align}

The  \textit{Baer-Nunziato} terminology refers to the pressure-relaxation mechanism in the equations of volume fractions. 
   Numerically, such relaxation procedure can simplify its resolution as it reduces the number of constraints by introducing new unknowns: \textit{two pressures instead of one}. The readers can see  \cite{BH1} and references therein for more discussions on this pressure-relaxation process. Very recently, the one-dimensional version of System \eqref{BN}  was rigorously derived by Bresch, Burtea and Lagouti\'{e}re in \cite{bresch1,breschCos1}.

There is an extensive literature on the mathematical analysis of multi-fluid systems. For example, in the  one-velocity case, the global existence of weak solutions has been studied in \cite{novotny2, bresch4,vasseur1,li2,yao3,wen0}, and the global well-posedness and optimal time-decay rates of strong solutions has been established in the framework of Sobolev spaces \cite{guo1,yao1,zhang1,zhangying1} and critical Besov spaces \cite{hao1,burtea1,lhlshou2}, etc. We also refer to \cite{evje1,bresch2,bresch5,kracmar1,BreschHuangLi} on the study of multi-fluid systems in the two-velocity case. Complete reviews on multi-fluid systems are presented in \cite{breschhand,wen1}.
Concerning the study of relaxation problems associated to systems of conservation laws, it
can be traced back to the work \cite{chen1994} by Chen, Levermore and Liu. Recently, Giovangigli and Yong in \cite{Gio1,Gio2} studied a relaxation problem arising in the dynamics of perfect gases out of thermodynamic equilibrium.




At the formal level, the solution $(\alpha_{\pm}^{\var, \tau},\rho_{\pm}^{\var, \tau},u^{\var, \tau})$ of System \eqref{BN} tends, as $\varepsilon\to0$, to some limit $(\alpha_{\pm}^{  \tau},\rho_{\pm}^{ \tau},u^{ \tau})$ that satisfies the so-called one-velocity Kapila system (cf. \cite{kapila1}):
\begin{equation}\label{K}\tag{K}
\left\{
\begin{aligned}
&\partial_{t}(\alpha_{\pm}^\tau \rho_{\pm}^\tau )+\div (\alpha_{\pm}^\tau  \rho_{\pm}^\tau  u^\tau )=0,\\
&\partial_{t}(\rho^\tau  u^\tau )+\div (\rho ^\tau u^\tau \otimes u^\tau )+\nabla P^\tau +\frac{\rho^\tau  u^\tau }{\tau}=0,\\
&P^\tau=P_{+}^\tau(\rho_{+})=P_{-}^\tau(\rho_{-}),
\end{aligned}
\right.
\end{equation}
with $\alpha_{+}^\tau+\alpha_{-}^\tau=1$ and $\rho^\tau=\alpha^\tau_{+}\rho^\tau_{+}+\alpha^\tau_{-}\rho_{-}^\tau$. System \eqref{K} can be rewritten as classical two-phase fluid models of drift-flux type, see \cite{wen0,ishii2,evje11} and the references therein. For existence of finite energy weak solutions to System \eqref{K} with viscosities, refer to the recent works \cite{novotny2, bresch4,li2,wen0}. 
 

Then, we further investigate the time-relaxation limit of System \eqref{K} as $\tau\to 0.$ Inspired by the works \cite{cou1,junca1,xu00} concerning the relaxation problems for the compressible Euler system with damping, we introduce a large time-scale $\mathcal{O}(1/\tau)$ and define the following charge of variables
\begin{align}
    (\beta_{\pm}^{\tau}, \varrho_{\pm}^{\tau}, v^{\tau})(s,x):=\left(\alpha_{\pm}^{\tau}, \rho_{\pm}^{\tau}, \frac{u^{\tau}}{\tau}\right)\left(\frac{s}{\tau},x\right).\label{scalingK}
\end{align} 
Under the diffusive scaling \eqref{scalingK},  System \eqref{K} becomes
\begin{equation}\tag{${\rm K_{ {\tau}} }$}
\left\{
\begin{aligned}
&\partial_{s}(\beta_{\pm}^{\tau}\varrho_{\pm}^{\tau})+\div (\beta_{\pm}^{\tau} \varrho_{\pm}^{\tau} v^{\tau})=0,\\
&\tau^2\partial_{s}(\varrho^{\tau} v^{\tau})+\tau^2\div (\varrho^{\tau} v^{\tau}\otimes v^{\tau})+\nabla \Pi^{\tau}+\varrho^{\tau} v^{\tau}=0,
\\& \beta_+^\tau+\beta_-^\tau=1,
\end{aligned}
\right.\label{Keta}
\end{equation}
with $\varrho^{\tau}=\beta_{+}^{\tau}\rho_{+}^{\tau}+\beta_{-}^{\tau}\varrho_{-}^{\tau}$ and $\Pi^{\tau}=P_{+}(\varrho_{+}^{\tau})=P_{-}(\varrho_{-}^{\tau})$. As $\tau\rightarrow0$, one then expects that $(\beta_{\pm}^{\tau}, \varrho_{\pm}^{\tau},v^{\tau})$ converges to some limit $(\beta_{\pm}, \varrho_{\pm},v)$ which is the solution of a new two-phase system
\begin{equation}
\left\{
\begin{aligned}
&\partial_{s}(\beta_{\pm}\varrho_{\pm})+\div (\beta_{\pm} \varrho_{\pm} v)=0,\\
&\nabla \Pi+\varrho v=0,
\\& \beta_++\beta_-=1,
\end{aligned}
\right.\label{PM1}
\end{equation}
 with $\varrho=\beta_{+}\varrho_{+}+\beta_{-}\varrho_{-}$ and $\Pi=P_{+}(\varrho_{+})=P_{-}(\varrho_{-})$. Inserting Darcy's law $\eqref{PM1}_{2}$ into $\eqref{PM1}_{1}$, we derive the following two-phase system in porous media:
\begin{equation}\tag{PM}
\left\{
\begin{aligned}
&\partial_{s} \beta_++v\cdot\nabla\beta_+=\dfrac{(\gamma_+-\gamma_-)\beta_+\beta_-}{\gamma_+\beta_-+\gamma_-\beta_+}\,\div \left(\frac{\nabla \Pi}{\beta_{+}\varrho_{+}+\beta_{-}\varrho_{-}}\right),\\
&\partial_{s}\Pi+v\cdot\nabla \Pi=\frac{\gamma_{+}\gamma_{-}\Pi}{\gamma_{+}\beta_{-}+\gamma_{-}\beta_{+}}\div \left(\frac{\nabla \Pi}{\beta_{+}\varrho_{+}+\beta_{-}\varrho_{-}}\right),
\\& \beta_++\beta_-=1,\\
&\Pi=P_{+}(\varrho_{+})=P_{-}(\varrho_{-}).
\end{aligned}
\right.\label{PM}
\end{equation}

   \bigbreak
   


The present paper is a follow-up to the paper \cite{burtea1} by Burtea, Crin-Barat and Tan where the authors justified the pressure-relaxation limit  for the viscous version of System \eqref{BN} to System \eqref{K}. 
In \cite{burtea1}, the smallness  condition on initial data employed to justify their global well-posedness result depends on $\min\{\tau, \frac{1}{\tau}\}$ (due to the overdamping phenomenon that will be explained below) and therefore does not allow to further investigate the limit when $\tau\to0$. 
\medbreak
\textit{The main results of this article are the quantitative justification of the pressure-relaxation limit from System \eqref{BN} to System \eqref{K} as $\var\rightarrow0$ uniformly in $\tau$ and the time-relaxation limit from System \eqref{Keta} to System \eqref{PM}} as $\tau\rightarrow0$. Consequently,  a new two-phase flow system in porous media \eqref{PM} is rigorously derived from Systems \eqref{Keta} and \eqref{BNvar}, which implies that Kapila and Baer-Nunziato systems considered in our paper can be viewed, for $\varepsilon$ and $\tau $ small enough,  as hyperbolic approximations of \eqref{PM}.
\smallbreak


For both relaxation limits, we will focus on global-in-time strong solutions being small perturbations of constant equilibrium states. In other words,
we consider   solutions $(\alpha_{\pm}^{\var, \tau},\rho_{\pm}^{\var, \tau},u^{\var, \tau})$  to System \eqref{BN} (resp.  $(\alpha_{\pm}^{  \tau},\rho_{\pm}^{  \tau},u^{\tau})$ to System \eqref{K}) with positive densities and volume fractions which, as $|x|\rightarrow\infty$, tend to  some  thermodynamically stable equilibrium state $(\bar{\alpha}_{\pm},\bar{\rho}_{\pm}, 0)$ fulfilling
\begin{equation}\label{far}
    0<\bar \alpha_{\pm}<1,\quad\quad \bar\alpha_{+}+\bar\alpha_{-}=1,\quad\quad \bar\rho_{\pm}>0,\quad\quad P_{+}(\bar{\rho}_{+})=P_{-}(\bar{\rho}_{-}).
\end{equation}

For convenience, we also define the corresponding equilibrium state for the total density and the total pressure as
\begin{equation}\label{far1}
\bar{\rho} :=\bar{\alpha}_{+} \bar{\rho}_{+} +\bar{\alpha}_{-} \bar{\rho}_{-} ,\quad\quad \bar{P}:=P_{+}(\bar{\rho}_{+} )=P_{-}(\bar{\rho}_{-} ).
\end{equation}




To achieve our goals, we prove uniform in $\var$ and $\tau$ (such that $\var\leq \tau$) a priori estimate for System \eqref{BN} which improves the analysis performed in \cite{burtea1} that did not provide uniform-in-$\tau$ estimate. Such estimate allows us to justify a global well-posedness for a class of non-symmetric partially dissipative hyperbolic systems with rough coefficients in the context of overdamping phenomenon, which is not covered by the recent lecture of Danchin \cite{danchin5}. 
Indeed, our proof generalizes the techniques developed in \cite{c0, c1,jinxin,BCBP} which cannot be directly applied to System \eqref{BN} due to the complex forms of the total pressure and the lack of symmetry. 

\medbreak
 On the other hand, it is natural to ask what happens for System \eqref{BN} as $\tau$ tends to 0 first. To investigate this process, we introduce a diffusive scaling similar to \eqref{scalingK} as follows
\begin{align}
(\beta_{\pm}^{\var, \tau}, \varrho_{\pm}^{\var, \tau}, v^{\var, \tau})(s,x):=\left(\alpha_{\pm}^{\var, \tau},\rho_{\pm}^{\var, \tau}, \frac{u^{\var, \tau}}{\tau}\right)\left(\frac{s}{\tau},x\right).\label{scalingBN}
\end{align}
Under such scaling \eqref{scalingBN}, System \eqref{BN} becomes
\begin{equation}\tag{${\rm BN_{\tau} }$}
\left\{
\begin{aligned}
&\partial_{s}\beta^{\var, \tau}_{+}+v^{\var, \tau}\cdot\nabla \beta^{\var, \tau}_{+}=-\frac{\beta^{\var, \tau}_{+}\beta^{\var, \tau}_{-}}{\var\tau}\big(P_{+}(\varrho^{\var, \tau}_{+})-P_{-}(\varrho^{\var, \tau}_{-}) \big),\\
&\partial_{s}(\beta^{\var, \tau}_{\pm}\varrho^{\var, \tau}_{\pm})+\div (\beta^{\var, \tau}_{\pm} \varrho^{\var, \tau}_{\pm} v^{\var, \tau})=0,\\
&\tau^2\partial_{s}(\varrho^{\var, \tau} v^{\var, \tau})+\tau^2\div (\varrho^{\var, \tau} v^{\var, \tau}\otimes v^{\var, \tau})+\nabla \Pi^{\var, \tau}+\varrho^{\var, \tau} v^{\var, \tau}=0,\label{BNvar}
\end{aligned}\right.
\end{equation}
with $\beta^{\var,\tau}_{+}+\beta_{-}^{\var,\tau}=1$, $\varrho^{\var, \tau}=\beta^{\var, \tau}_{+}\varrho^{\var, \tau}_{+}+\beta^{\var, \tau}_{-}\varrho^{\var, \tau}_{-}$ and $\Pi^{\var, \tau}=\beta^{\var, \tau}_{+}P_{+}(\varrho^{\var, \tau}_{+})+\beta^{\var, \tau}_{-}P_{-}(\varrho^{\var, \tau}_{-})$. The crucial observation is that the parameter $\tau$ now also appears under the pressure-relaxation term in the equation of the volume fractions. This reveals that as $\tau\to0$, the two pressures in System \eqref{BNvar} should converge to a common pressure, and the solutions of System \eqref{BN} should converge to the solutions of System \eqref{K} regardless of $\var$. Additionally, in the sequel of the paper we are only able to justify the limit in the case $\var\leq\tau$ which corresponds to the situation that the time-scale of the pressure-relaxation is small than the time-scale of the diffusive relaxation. The condition $\var\leq\tau$ appears in the spectral analysis of the system (see Section \ref{sec:spectral}) and  is essential for us  to close the uniform a-priori estimate in both low and high frequencies (See Sections \ref{subsectionlow}-\ref{subsectionhigh} for the details). But in a formal way, the condition $\var\leq\tau$ seems not necessary in the limit process $\tau\to 0$, so the case $\var>\tau$ remains an interesting  open problem.
The Figure \ref{fig:tri-limits} summarizes the limit processes that we tackle in this article.

\tikzstyle{system} = [rectangle,rounded corners, minimum width=2.5cm,minimum height=1cm,text centered, draw=black]
\tikzstyle{arrow} = [thick,->,>=stealth]
 
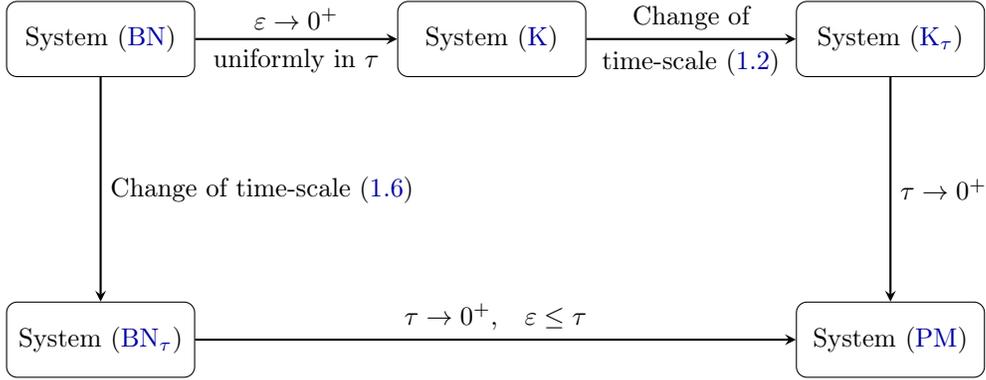
\begin{figure}[!ht]
	\centering
	\begin{tikzpicture}[node distance=2cm]
\node (BN) [system] at (0, 0) {System \eqref{BN}};
\node (K) [system] at (5.2, 0) {System \eqref{K}};
\node (K-tau) [system] at (10.5, 0) {System \eqref{Keta}};
\node (PME) [system] at (10.5, -4) {System \eqref{PM}};
\node (BN-tau) [system] at (0, -4) {System \eqref{BNvar}};

\draw [arrow] (BN) --node[anchor=south]{$\varepsilon\to 0^+$}node[anchor=north]{uniformly in $\tau$} (K);
\draw [arrow] (K) --node[anchor=south]{Change of}node[anchor=north]{time-scale \eqref{scalingK} } (K-tau);
\draw [arrow] (K-tau) --node[anchor=west]{$\tau\to 0^+$} (PME);
\draw [arrow] (BN) -- node[anchor=west]{Change of time-scale \eqref{scalingBN} } (BN-tau);
\draw [arrow] (BN-tau) --node[anchor=south]{$\tau\to 0^+,$ ~ $\var\leq\tau$} (PME);
\end{tikzpicture}
	\caption{Relaxation limits diagram.}
	\label{fig:tri-limits}
\end{figure}

\subsection{Outline of the paper}

The rest of the paper is organized as follows. Our main results are stated in Section \ref{subsectionresult}. In Section \ref{sec:spectral}, we first recall a reformulation of System \eqref{BN} from \cite{burtea1} and present an explicit spectral analysis for the associated linear system, then the difficulties and strategies of proof are discussed. Section \ref{sectionbesov} is devoted to some notations and properties of Besov spaces and Littlewood-Paley decomposition, and the regularity estimates for some linear problems are stated. In Section \ref{section3}, we establish uniform a priori estimate for the linearized problem.  Next, in Section \ref{section4}, we prove the global existence and uniqueness results of solutions for Systems \eqref{BN}, \eqref{K} and \eqref{PM}, respectively. Section \ref{sectionrelaxation} is devoted to the justification of the relaxation limits with explicit convergence rates.

\vspace{2mm}

\textbf{Notations.} We end this section by presenting  a few notations. 
As usual,  we denote by $C$ (and sometimes with subscripts) harmless  positive constants that may change from line to line, and  $A\lesssim B$ ($A\gtrsim B$) means that both $A\leq C B$ ($A\geq C B$),  while $A\sim B$ means that $A\lesssim  B$ 
and $A\gtrsim B.$ 
For $X$ a Banach space, $p\in[1, \infty]$ and 
$T>0$, the notation $L^p(0, T; X)$ or $L^p_T(X)$ designates the set of measurable functions $f: [0, T]\to X$ with $t\mapsto\|f(t)\|_X$ in $L^p(0, T)$, endowed with the norm $\|\cdot\|_{L^p_{T}(X)} :=\|\|\cdot\|_X\|_{L^p(0, T)}.$ We agree that $\mathcal C_b([0, T];X)$ denotes the set of continuous and  bounded (uniformly in $T$) functions from $[0, T]$ to $X$.  Sometimes, we  use the notation $L^p(X)$ to designate the space
$L^p(\mathbb{R}_+;X)$ and $\|\cdot\|_{L^p(X)}$ for the associated norm. 
 We will keep the same notations for multi-component functions, namely  for
 $f:[0,T]\to X^m$ with $m\in\mathbb{N}.$ $\mathcal{F}$ and $\mathcal{F}^{-1}$ stand for the Fourier transform and its inverse, and define the operator $\Lambda^{\sigma}:=\mathcal{F}^{-1}(|\xi|^{\sigma}\mathcal{F}(\cdot))$. Finally, let ${o}(1)$ denote a generic constant which can be sufficiently small. 

\subsection{Main results}\label{subsectionresult}

Our first theorem concerns the uniform, in both relaxation parameters $\var$ and $\tau$, global well-posedness of System \eqref{BN} in a critical regularity framework.

\begin{theorem} \label{theorem11}
Let $d\geq2$ and $0<\var\leq\tau\leq 1.$  Given the constants $\bar\alpha_{\pm}, \bar\rho_{\pm}$ verifying \eqref{far}-\eqref{far1}, assume that the initial data $(\alpha_{\pm,0},\rho_{\pm,0},u_{0})$ satisfies $(\alpha_{\pm,0}-\bar\alpha_{\pm},\rho_{\pm,0}-\bar\rho_{\pm},u_{0})\in \dot{B}^{\frac{d}{2}-1}\cap\dot{B}^{\frac{d}{2}+1}$. There exists a positive constant $c_0$ independent of $\tau$ and $\var$ such that if 
\begin{align}
&\|(\alpha_{\pm,0}-\bar{\alpha}_{\pm}, \rho_{\pm,0}-\bar{\rho}_{\pm},u_{0})\|_{\dot{B}^{\frac{d}{2}-1}\cap\dot{B}^{\frac{d}{2}+1}}\leq c_{0},\label{a1}
\end{align}
then the Cauchy problem of System \eqref{BN} with the initial data  $(\alpha_{\pm,0},\rho_{\pm,0},u_{0})$ has a unique global  solution $(\alpha^{\var, \tau}_{\pm},\rho^{\var, \tau}_{\pm}, u^{\var, \tau})$ satisfying
\begin{equation}\label{r1}
\left\{
\begin{aligned}
&(\alpha^{\var, \tau}_{\pm}-\bar{\alpha}_{\pm}, \rho^{\var, \tau}_{\pm}-\bar{\rho}_{\pm},u^{\var, \tau})\in \mathcal{C}_{b}(\mathbb{R}_{+};\dot{B}^{\frac{d}{2}-1}\cap\dot{B}^{\frac{d}{2}+1}), \\
& P_{+}(\rho_{+}^{\var, \tau})-P_{-}(\rho_{-}^{\var, \tau})\in L^1(\mathbb{R}_{+};\dot{B}^{\frac{d}{2}-1}\cap\dot{B}^{\frac{d}{2}+1}),\\
&P^{\var, \tau}-\bar{P}\in L^1(\mathbb{R}_{+};\dot{B}^{\frac{d}{2}+1})\cap {L}^2(\mathbb{R}_{+};\dot{B}^{\frac{d}{2}}\cap\dot{B}^{\frac{d}{2}+1}),\\
& u^{\var, \tau}\in L^1(\mathbb{R}_{+};\dot{B}^{\frac{d}{2}}\cap\dot{B}^{\frac{d}{2}+1})\cap  {L}^2(\mathbb{R}_{+};\dot{B}^{\frac{d}{2}-1}\cap\dot{B}^{\frac{d}{2}+1}).
\end{aligned}
\right.
\end{equation}
 
Moreover, the following uniform estimate holds{\rm:}
\begin{equation}\label{uniform1}
\begin{aligned}
&\|(\alpha^{\var, \tau}_{\pm}-\bar{\alpha}_{\pm}, \rho^{\var, \tau}_{\pm}-\bar{\rho}_{\pm},u^{\var, \tau})\|_{ {L}^{\infty} (\dot{B}^{\frac{d}{2}-1}\cap\dot{B}^{\frac{d}{2}+1})}+\|(\partial_{t}\alpha^{\var, \tau}_{\pm}, \partial_{t}\rho^{\var, \tau}_{\pm},\partial_{t}u^{\var, \tau})\|_{ {L}^{1}(\dot{B}^{\frac{d}{2}})}\\
&\quad\quad+\frac{1}{\var}\|P_{+}(\rho_{+}^{\var, \tau})-P_{-}(\rho_{-}^{\var, \tau})\|_{L^1(\dot{B}^{\frac{d}{2}-1}\cap\dot{B}^{\frac{d}{2}})}+\frac{1}{\sqrt{\var}}\|P_{+}(\rho_{+}^{\var, \tau})-P_{-}(\rho_{-}^{\var, \tau})\|_{ {L}^2 (\dot{B}^{\frac{d}{2}-1}\cap\dot{B}^{\frac{d}{2}+1})}\\
&\quad\quad+\tau \|P^{\var, \tau}-\bar{P}\|_{L^1(\dot{B}^{\frac{d}{2}+1})}+\sqrt{\tau} \|P^{\var, \tau}-\bar{P}\|_{ {L}^2(\dot{B}^{\frac{d}{2}}\cap\dot{B}^{\frac{d}{2}+1})}\\
&\quad\quad+\|u^{\var, \tau}\|_{L^1 (\dot{B}^{\frac{d}{2}}\cap\dot{B}^{\frac{d}{2}+1})}+\frac{1}{\sqrt{\tau}}\|u^{\var, \tau}\|_{ {L}^2(\dot{B}^{\frac{d}{2}-1}\cap\dot{B}^{\frac{d}{2}+1})}\\
&\quad\quad+\left\|\frac{ \rho^{\var,\tau}u^{\var, \tau}}{\tau}+\nabla P^{\var, \tau}\right\|_{L^1(\dot{B}^{\frac{d}{2}-1}\cap\dot{B}^{\frac{d}{2}})}\leq C\|(\alpha_{\pm,0}-\bar{\alpha}_{\pm}, \rho_{\pm,0}-\bar{\rho}_{\pm},u_{0})\|_{\dot{B}^{\frac{d}{2}-1}\cap\dot{B}^{\frac{d}{2}+1}},
\end{aligned}
\end{equation}
where $C>0$ is a generic constant.  
\end{theorem}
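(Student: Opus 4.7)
The plan is to prove Theorem \ref{theorem11} by deriving uniform-in-$(\varepsilon,\tau)$ a priori estimates in critical Besov spaces via a low-high frequency decomposition dictated by the spectral threshold $J_\tau$ discussed in Section \ref{sec:spectral}, and then to implement these bounds on a Friedrichs-type approximation to obtain global existence; uniqueness follows by a difference argument in a slightly weaker norm. I would first reformulate System \eqref{BN} around the equilibrium $(\bar\alpha_\pm,\bar\rho_\pm,0)$, using as unknowns the perturbations of the two densities and two volume fractions, the velocity $u^{\var,\tau}$, together with the pressure difference $P_+(\rho_+^{\var,\tau})-P_-(\rho_-^{\var,\tau})$ and the total-pressure perturbation $P^{\var,\tau}-\bar P$. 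Linearization produces a non-symmetric, partially dissipative system with one damping source of strength $1/\tau$ acting on the velocity and one pressure-relaxation source of strength $1/\var$ acting on the pressure difference. The condition $\var\le\tau$ will be used at this stage to ensure, through the spectral analysis, that the relaxation source dominates, so that the semigroup dissipates at the precise rates recorded by the weights in \eqref{uniform1}.

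The heart of the proof is frequency-localized estimates for the linearized (and then the full nonlinear) problem. In the low-frequency region $j\le J_\tau$, a direct energy estimate loses powers of $\tau$ because of the \emph{overdamping}: the term $\rho^{\var,\tau}u^{\var,\tau}/\tau$ dominates the hyperbolic evolution of momentum, so $u^{\var,\tau}$ is, to leading order, slaved to $\nabla P^{\var,\tau}$ via a Darcy-type relation. I would therefore introduce an effective flux of the form
\[
w:=\frac{\rho^{\var,\tau}u^{\var,\tau}}{\tau}+\nabla P^{\var,\tau},
\]
possibly corrected by lower-order contributions from the pressure difference, whose evolution has a structure free of singular $1/\tau$ factors. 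Combining its equation with those for $u^{\var,\tau}$ and for $P_+(\rho_+^{\var,\tau})-P_-(\rho_-^{\var,\tau})$ should produce a Lyapunov functional whose dissipation is precisely the low-frequency part of \eqref{uniform1}, with the weights $\tau$, $1/\var$, $1/\sqrt{\tau}$ appearing as coefficients of the dissipative terms rather than inside the basic energy. In the high-frequency region $j>J_\tau$, the system behaves as a standard damped hyperbolic one, and I would use a renormalized energy of the form $\|\ddj(\delta\rho_\pm,\delta\alpha_\pm,u^{\var,\tau})\|_{L^2}^2+\eta\int\ddj u^{\var,\tau}\cdot\ddj\nabla P^{\var,\tau}\,dx$ with small $\eta$ chosen so that the cross term provides full $\dot B^{d/2+1}$-dissipation for the pressure while cancelling the higher-order nonlinear terms that would otherwise be uncontrollable at this regularity level.

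The nonlinear closure then rests on paradifferential calculus in critical Besov spaces. The subtlety is that several quadratic terms carry singular factors like $u^{\var,\tau}/\tau$ or $(P_+-P_-)/\var$; these are controlled by placing the singular factor in the $L^1_t$ or $L^2_t$ norm weighted as in \eqref{uniform1}, and the regular factor in $L^\infty_t(\dot B^{d/2})$, which is available by interpolation from \eqref{r1}. Under the smallness assumption \eqref{a1}, every nonlinear contribution is absorbed and the bootstrap closes. The step I expect to be the main obstacle is the low-frequency analysis: identifying the correct effective unknown compatible with the two distinct pressure laws \eqref{AssumPressure} and the non-symmetric algebraic structure, so that the resulting Lyapunov functional yields \emph{exactly} the weighted dissipation of \eqref{uniform1} and not a version degenerate in $\tau$ or $\var$. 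Once the a priori bound is closed uniformly in the truncation parameter of a Friedrichs scheme, compactness and a standard continuation argument produce the global solution with regularity \eqref{r1}, and the uniqueness follows by applying the same linear machinery to the difference of two solutions.
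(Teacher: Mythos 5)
Your overall strategy --- frequency splitting at the threshold $J_\tau$, an effective Darcy-type flux in low frequencies, a Beauchard--Zuazua Lyapunov functional with a cross term in high frequencies, paradifferential nonlinear closure, and a Friedrichs scheme --- coincides with the paper's. The genuine gap is the reformulation step, which you leave as a vague "reformulate around equilibrium" with a redundant set of unknowns $(\alpha_\pm,\rho_\pm,u,P_+-P_-,P^{\var,\tau}-\bar P)$. The paper reduces to the four scalar unknowns $(y,w,r,u)$ of \eqref{newunknows}, where $y=\alpha_+\rho_+/\rho-\bar\alpha_+\bar\rho_+/\bar\rho$ is \emph{purely transported} (hence has no time decay whatsoever), $w$ is a specific nonlinear weight times $P_+-P_-$ (strongly damped at rate $1/\var$), and $r=P^{\var,\tau}-\bar P-(\gamma_+-\gamma_-)w$ is chosen so that the velocity equation carries $\nabla r$ plus a genuinely lower-order $\nabla w$. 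Isolating $y$ is what removes the coupled zero eigenvalue: without it the linearization does not satisfy the Shizuta--Kawashima condition and no Lyapunov functional on the damped block can close. It also exposes the key difficulty you gloss over: the coefficient $H_4=G_0^{\var,\tau}$ multiplying $\nabla r$ in the velocity equation is a nonlinear function of $y$ and therefore has no $L^p_t$ integrability for any finite $p$, so your plan to put "the regular factor in $L^\infty_t(\dot B^{d/2})$ by interpolation" cannot by itself absorb the product $H_4\nabla r$.

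The concrete consequence is a second missing step. In the low-frequency analysis the product estimate for $H_4\nabla r$ forces you to control $\tau\|r\|^\ell_{L^1_t(\dot B^{d/2+1})}$ as well as $\|H_4\|_{\widetilde L^\infty_t(\dot B^{d/2+1})}$; the first requires a separate $\dot B^{d/2-1}$-level estimate, and the second cannot be obtained from the $\dot B^{d/2}$-level bound since the low-frequency embedding $\|\cdot\|^\ell_{\dot B^{d/2+1}}\lesssim\tau^{-1}\|\cdot\|^\ell_{\dot B^{d/2}}$ degenerates as $\tau\to0$ (Remarks \ref{rmkH4r}--\ref{rmkH4r2}). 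The paper resolves this with a global-in-frequency $L^2$-in-time energy estimate (Lemma \ref{lemmaL2}) yielding a uniform $\widetilde L^\infty_t(\dot B^{d/2+1})$ bound on $(w,r,u)$ and weighted $\widetilde L^2_t(\dot B^{d/2+1})$ bounds on $w,u$ without ever going through the degenerate embedding. That lemma, together with the decoupled low-frequency system for $(w,r,z)$ where $z=u+\tau(h_4+H_4)\nabla r$ (Lemma \ref{lemmalow}), is precisely where the $\tau$-, $1/\var$-, $1/\sqrt\tau$-weights in \eqref{uniform1} come from. You correctly anticipated that the effective unknown is the main obstacle; the full answer is the triple $(y,w,r)$, the effective flux $z$, and the supplementary all-frequency $L^2$-in-time estimate at regularity $\dot B^{d/2+1}$.
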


\begin{remark}
It should be emphasized that the regularity and decay-in-$\tau$ properties of the effective flux $\dfrac{ \rho^{\var,\tau}u^{\var, \tau}}{\tau}+\nabla P^{\var, \tau}$ is better than the one verified by  the solution $(\alpha^{\var, \tau}_{\pm},\rho^{\var, \tau}_{\pm}, u^{\var, \tau})$. This is consistent with Darcy's law and plays key role in the justification of the time-relaxation limit.
\end{remark}



By classical compactness arguments and the uniform estimate \eqref{uniform1}, we obtain the following global well-posedness theorems for Systems \eqref{K} and \eqref{PM} in the  critical regularity framework. 

\begin{theorem}\label{theoremK}
Let $d\geq2$ and $0<\tau\leq 1.$ Given the constants $\bar\alpha_{\pm}, \bar\rho_{\pm}$ verifying \eqref{far}-\eqref{far1}, assume that the initial data $(\alpha_{\pm,0},\rho_{\pm,0},u_{0})$ satisfies $(\alpha_{\pm,0}-\bar\alpha_{\pm},\rho_{\pm,0}-\bar\rho_{\pm},u_{0})\in \dot{B}^{\frac{d}{2}-1}\cap\dot{B}^{\frac{d}{2}+1}$. There exists a positive constant $c_1$ independent of $\tau$ such that if 
\begin{align}
&\|(\alpha_{\pm,0}-\bar{\alpha}_{\pm}, \rho_{\pm,0}-\bar{\rho}_{\pm},u_{0})\|_{\dot{B}^{\frac{d}{2}-1}\cap\dot{B}^{\frac{d}{2}+1}}\leq c_{1},\label{a1K}
\end{align}
then the Cauchy problem of System \eqref{K} with the initial data  $(\alpha_{\pm,0},\rho_{\pm,0},u_{0})$ admits a unique global  solution $(\alpha^{\tau}_{\pm},\rho^{\tau}_{\pm}, u^{\tau})$ satisfying
\begin{equation}\label{r1K}
\left\{
\begin{aligned}
&(\alpha^{\tau}_{\pm}-\bar{\alpha}_{\pm}, \rho^{\tau}_{\pm}-\bar{\rho}_{\pm},u^{\tau})\in \mathcal{C}_{b}(\mathbb{R}_{+};\dot{B}^{\frac{d}{2}-1}\cap\dot{B}^{\frac{d}{2}+1}), \\
&P^{\tau}-\bar{P}\in L^1(\mathbb{R}_{+};\dot{B}^{\frac{d}{2}+1})\cap {L}^2(\mathbb{R}_{+};\dot{B}^{\frac{d}{2}}\cap\dot{B}^{\frac{d}{2}+1}),\\
& u^{\tau}\in L^1(\mathbb{R}_{+};\dot{B}^{\frac{d}{2}}\cap\dot{B}^{\frac{d}{2}+1})\cap  {L}^2(\mathbb{R}_{+};\dot{B}^{\frac{d}{2}-1}\cap\dot{B}^{\frac{d}{2}+1}).
\end{aligned}
\right.
\end{equation}
 
Moreover, the following uniform estimate holds{\rm:}
\begin{equation}\label{uniform1K}
\begin{aligned}
&\|(\alpha^{\tau}_{\pm}-\bar{\alpha}_{\pm}, \rho^{\tau}_{\pm}-\bar{\rho}_{\pm},u^{\tau})\|_{ {L}^{\infty} (\dot{B}^{\frac{d}{2}-1}\cap\dot{B}^{\frac{d}{2}+1})}+\|(\partial_{t}\alpha^{\tau}_{\pm}, \partial_{t}\rho^{\tau}_{\pm},\partial_{t}u^{\tau})\|_{ {L}^{1}(\dot{B}^{\frac{d}{2}})}+\tau \|P^{\tau}-\bar{P}\|_{L^1(\dot{B}^{\frac{d}{2}+1})}\\
&\quad+\sqrt{\tau} \|P^{\tau}-\bar{P}\|_{ {L}^2(\dot{B}^{\frac{d}{2}}\cap\dot{B}^{\frac{d}{2}+1})}+\|u^{\tau}\|_{L^1 (\dot{B}^{\frac{d}{2}}\cap\dot{B}^{\frac{d}{2}+1})}+\frac{1}{\sqrt{\tau}}\|u^{\tau}\|_{ {L}^2(\dot{B}^{\frac{d}{2}-1}\cap\dot{B}^{\frac{d}{2}+1})}\\
&\quad\quad+\left\|\frac{ \rho^{\var,\tau}u^{\tau}}{\tau}+\nabla P^{\tau}\right\|_{L^1(\dot{B}^{\frac{d}{2}-1}\cap\dot{B}^{\frac{d}{2}})}\leq C\|(\alpha_{\pm,0}-\bar{\alpha}_{\pm}, \rho_{\pm,0}-\bar{\rho}_{\pm},u_{0})\|_{\dot{B}^{\frac{d}{2}-1}\cap\dot{B}^{\frac{d}{2}+1}},
\end{aligned}
\end{equation}
where $C>0$ is a generic constant.  
\end{theorem}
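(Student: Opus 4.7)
The plan is to obtain the solution of System \eqref{K} as the vanishing pressure-relaxation limit of the family $(\alpha_\pm^{\varepsilon,\tau},\rho_\pm^{\varepsilon,\tau},u^{\varepsilon,\tau})_{\varepsilon}$ of solutions to System \eqref{BN} already constructed in Theorem \ref{theorem11}, at $\tau\in(0,1]$ fixed. Choosing the threshold $c_1\leq c_0$ so that \eqref{a1K} implies \eqref{a1}, Theorem \ref{theorem11} furnishes, for every $\varepsilon\in(0,\tau]$, a global solution satisfying \eqref{uniform1} with a constant independent of $\varepsilon$ and $\tau$. In particular, every norm appearing in \eqref{uniform1K} is bounded uniformly in $\varepsilon$, and the extra dissipation term $\frac{1}{\varepsilon}\|P_+(\rho_+^{\varepsilon,\tau})-P_-(\rho_-^{\varepsilon,\tau})\|_{L^1(\dot{B}^{\frac{d}{2}-1}\cap\dot{B}^{\frac{d}{2}})}\leq C$ forces the pressure gap to vanish at rate $\varepsilon$.

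First, I would extract a limit by compactness. The uniform $L^\infty_t(\dot{B}^{\frac{d}{2}-1}\cap\dot{B}^{\frac{d}{2}+1})$ control on the unknowns, together with the uniform $L^1_t(\dot{B}^{\frac{d}{2}})$ bound on $\partial_t(\alpha_\pm^{\varepsilon,\tau},\rho_\pm^{\varepsilon,\tau},u^{\varepsilon,\tau})$, feeds into an Aubin--Lions argument adapted to the Littlewood--Paley framework to yield, along a subsequence, strong convergence in $L^2_{\mathrm{loc}}(\mathbb{R}_+;\dot{B}^{s}_{\mathrm{loc}})$ for every $s<\frac{d}{2}+1$ and weak-$*$ convergence in $L^\infty_t(\dot{B}^{\frac{d}{2}-1}\cap\dot{B}^{\frac{d}{2}+1})$ towards some limit $(\alpha_\pm^\tau,\rho_\pm^\tau,u^\tau)$. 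The decay $\|P_+(\rho_+^{\varepsilon,\tau})-P_-(\rho_-^{\varepsilon,\tau})\|_{L^1_tL^\infty}=O(\varepsilon)$ and the strong convergence of $\rho_\pm^{\varepsilon,\tau}$ force $P_+(\rho_+^\tau)=P_-(\rho_-^\tau)$ in the limit, thereby producing the algebraic closure of \eqref{K}.

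Next, I would pass to the limit inside the equations of \eqref{BN}. The transport and conservation terms pass by weak-strong convergence of products, the quadratic nonlinearities $\rho^{\varepsilon,\tau}u^{\varepsilon,\tau}\otimes u^{\varepsilon,\tau}$ and $\alpha_+^{\varepsilon,\tau}\rho_+^{\varepsilon,\tau}u^{\varepsilon,\tau}$ are handled by pairing weak with strong convergence in appropriate Besov regularities, and the right-hand side $\alpha_+^{\varepsilon,\tau}\alpha_-^{\varepsilon,\tau}(P_+-P_-)/\varepsilon$ of the volume-fraction equation—uniformly bounded in $L^1_t(\dot{B}^{\frac{d}{2}-1}\cap\dot{B}^{\frac{d}{2}})$—converges weakly to some limit which is reabsorbed in the mass equations once the common-pressure constraint is imposed, yielding exactly System \eqref{K}. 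The uniform estimate \eqref{uniform1K} is then inherited from \eqref{uniform1} by lower semi-continuity of the Bochner norms under weak-$*$ convergence, after discarding the $\varepsilon$-dependent contributions. Finally, uniqueness is obtained through a stability estimate directly on \eqref{K} at one derivative below the critical level, using the dissipation and overdamping structure captured by the spectral analysis of Subsection \ref{sec:spectral} applied to the difference of two solutions; smallness of $c_1$ closes the argument by a Gronwall-type absorption.

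The main obstacle will be identifying the nonlinear limits without losing the endpoint regularity $\dot{B}^{\frac{d}{2}+1}$, which is only held in the weak-$*$ sense. This is overcome by interpolating the strong compactness in $\dot{B}^{\frac{d}{2}-1}_{\mathrm{loc}}$ with the high-frequency weak bound, and by exploiting the uniform $L^1_t(\dot{B}^{\frac{d}{2}}\cap \dot{B}^{\frac{d}{2}+1})$ control of $u^{\varepsilon,\tau}$—thanks to which the velocity converges strongly in time in a Besov space of positive regularity, so that every nonlinear product can be identified as the product of the limits.
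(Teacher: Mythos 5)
Your proposal is correct and follows essentially the same route as the paper: pass to the limit $\varepsilon\to 0$ in System \eqref{BN} using the uniform (in $\varepsilon$ and $\tau$) bounds of Theorem \ref{theorem11}, extract a limit via Aubin--Lions compactness and a diagonal argument, identify the common-pressure constraint from the $O(\varepsilon)$ decay of the pressure gap, recover \eqref{uniform1K} by Fatou's property, and prove uniqueness by a lower-order stability estimate. The paper additionally regularizes the initial data via $\dot{S}_{[1/\varepsilon]}$ for the approximation scheme, but this is a cosmetic difference that does not change the structure of the argument.
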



\begin{theorem}\label{theoremPM}
Let $d\geq2$. Given the constants $\bar\alpha_{\pm}, \bar\rho_{\pm}$ verifying \eqref{far}-\eqref{far1}, assume that the initial data $(\beta_{\pm,0}, \varrho_{\pm,0})$ satisfies $(\beta_{\pm,0}-\bar\alpha_{\pm},\varrho_{\pm,0}-\bar\rho_{\pm})\in \dot{B}^{\frac{d}{2}-1}\cap\dot{B}^{\frac{d}{2}+1}$ and
\begin{align}
&\|(\beta_{\pm,0}-\bar{\alpha}_{\pm},\varrho_{\pm,0}-\bar{\rho}_{\pm})\|_{\dot{B}^{\frac{d}{2}-1}\cap\dot{B}^{\frac{d}{2}+1}}\leq c_{2},\label{a1PM}
\end{align}
for a positive constant $c_2$, then the Cauchy problem of System \eqref{PM} with the initial data  $(\beta_{\pm,0},\varrho_{\pm,0})$ admits a unique global  solution $(\beta_{\pm},\varrho_{\pm})$, which satisfies
\begin{equation}\label{r1PM}
\left\{
\begin{aligned}
&\beta_{\pm}-\bar{\alpha}_{\pm}\in \mathcal{C}_{b}(\mathbb{R}_{+};\dot{B}^{\frac{d}{2}-1}\cap\dot{B}^{\frac{d}{2}+1}),\\
&\varrho_{\pm}-\bar{\rho}_{\pm}\in \mathcal{C}_{b}(\mathbb{R}_{+};\dot{B}^{\frac{d}{2}-1}\cap\dot{B}^{\frac{d}{2}+1})\cap L^1(\mathbb{R}_{+};\dot{B}^{\frac{d}{2}+1}).
\end{aligned}
\right.
\end{equation}
 
Moreover, the following uniform estimate holds{\rm:}
\begin{align}\label{uniform1PM}
\|(\beta_{\pm}-\bar{\alpha}_{\pm}, \varrho_{\pm}-\bar{\rho}_{\pm})\|_{ {L}^{\infty} (\dot{B}^{\frac{d}{2}-1}\cap\dot{B}^{\frac{d}{2}+1})}+\|(\partial_{t}\beta_{\pm},& \partial_{t}\varrho_{\pm})\|_{ {L}^{1}(\dot{B}^{\frac{d}{2}})}+\|\varrho_{\pm}-\bar{\rho}_{\pm}\|_{ {L}^{1} (\dot{B}^{\frac{d}{2}+1}\cap\dot{B}^{\frac{d}{2}+3})}\\
&\leq C\|(\beta_{\pm,0}-\bar{\alpha}_{\pm}, \varrho_{\pm,0}-\bar{\rho}_{\pm})\|_{\dot{B}^{\frac{d}{2}-1}\cap\dot{B}^{\frac{d}{2}+1}},
\end{align}
where $C>0$ is a generic constant.  
\end{theorem}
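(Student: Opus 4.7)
The plan is to exploit the parabolic--hyperbolic structure of System \eqref{PM}. Substituting Darcy's law $v=-\nabla\Pi/\varrho$ into the remaining equations reduces \eqref{PM} to a quasi-linear porous-media equation for $\Pi$ coupled with a pure transport equation for $\beta_+$ whose velocity is precisely $v=-\nabla\Pi/\varrho$; the individual densities $\varrho_{\pm}$ are recovered from $\Pi$ via the smooth relations $\varrho_{\pm}=(\Pi/A_\pm)^{1/\gamma_{\pm}}$, and $\beta_-=1-\beta_+$. Writing $\wt\Pi:=\Pi-\bar P$, $\wt\beta_+:=\beta_+-\bar\alpha_+$ and linearizing around the constant equilibrium $(\bar\alpha_\pm,\bar\rho_\pm)$, the pressure equation reads
\begin{equation*}
\partial_s \wt\Pi-\mu\,\Delta \wt\Pi=\mathcal{N}_1,\qquad \mu:=\frac{\gamma_+\gamma_-\bar P}{(\gamma_+\bar\alpha_-+\gamma_-\bar\alpha_+)\bar\rho}>0,
\end{equation*}
so that $\mu>0$ yields parabolic smoothing of $\wt\Pi$ (equivalently, of $\varrho_{\pm}$), while the volume-fraction equation retains a transport character with source driven by $\div v$.

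Accordingly, I would set up a Picard-type iteration $\{(\wt\beta_+^n,\wt\Pi^n)\}_{n\geq 0}$: given $(\wt\beta_+^n,\wt\Pi^n)$ in a fixed ball, define $v^n:=-\nabla\Pi^n/\varrho^n$ with $\varrho^n$ reconstructed from $(\beta_+^n,\Pi^n)$, and solve the decoupled linear system
\begin{equation*}
\partial_s \wt\Pi^{n+1}-\mu\,\Delta \wt\Pi^{n+1}=F^n,\qquad \partial_s\wt\beta_+^{n+1}+v^n\cdot\nabla \wt\beta_+^{n+1}=G^n,
\end{equation*}
where $F^n,G^n$ collect the quadratic and higher-order nonlinear terms. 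The uniform bounds announced in \eqref{uniform1PM} would then follow from three ingredients: (i) the $L^1$-maximal regularity of the heat semigroup in homogeneous Besov spaces, which promotes initial data in $\dot{B}^{\frac{d}{2}-1}\cap\dot{B}^{\frac{d}{2}+1}$ into $L^1(\mathbb{R}_+;\dot{B}^{\frac{d}{2}+1}\cap\dot{B}^{\frac{d}{2}+3})$-control of $\wt\Pi^{n+1}$; (ii) the Danchin-type transport estimate for $\wt\beta_+^{n+1}$, whose Lipschitz-in-$L^1_t$ requirement on $\nabla v^n$ is supplied by the $L^1_t(\dot{B}^{\frac{d}{2}+2})$ control of $\Pi^n$ furnished by (i); (iii) standard product and composition estimates in homogeneous Besov spaces to absorb $F^n$ and $G^n$ by a quadratic factor of the energy norm. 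Choosing $c_2$ small enough traps the sequence in a ball, and a contraction argument in a lower-regularity metric provides the fixed point.

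Uniqueness would then follow from a stability estimate for the difference of two solutions performed in a weaker norm, while the time-continuity and the $L^1_t(\dot{B}^{\frac{d}{2}})$ bound on $(\partial_s\beta_\pm,\partial_s\varrho_\pm)$ are read directly from the equations once the spatial regularity is in hand. The main obstacle I anticipate is the tight coupling of the parabolic and transport estimates at the \emph{critical} regularity level: the transport equation for $\beta_+$ demands precisely $\varrho\in L^1_t(\dot{B}^{\frac{d}{2}+2})$, which is exactly what parabolic smoothing delivers starting from $\dot{B}^{\frac{d}{2}+1}$---no margin is left for perturbative corrections. Consequently every paraproduct and composition estimate must be carried out with a sharp accounting of the two-derivative gain for the high-frequency component of $\Pi$, and the variable-coefficient nature of the true (nonlinearized) porous-media operator forces one to incorporate the deviation $\mu(\beta_+,\Pi)-\mu$ into the source $F^n$ rather than into the principal symbol.
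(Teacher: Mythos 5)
Your plan is correct, but it takes a genuinely different route from the paper. The paper obtains Theorem~\ref{theoremPM} as a corollary of the heavier machinery already in place: it regularizes the data via $\dot{S}_{[1/\tau]}$, solves the Kapila system \eqref{K} by Theorem~\ref{theoremK} (itself a limit of solutions of \eqref{BN} from Theorem~\ref{theorem11}), applies the diffusive rescaling \eqref{scalingK} to obtain a $\tau$-family of solutions of \eqref{Keta}, invokes the uniform-in-$\tau$ estimates of Lemma~\ref{lemmaKuniform} together with Aubin--Lions compactness to pass to the limit $\tau\to 0$, and then recovers the higher regularity $\varrho_{\pm}-\bar\rho_{\pm}\in L^1(\dot B^{\frac d2+1}\cap\dot B^{\frac d2+3})$ a posteriori from the parabolic structure of the $\Pi$-equation (Lemma~\ref{maximalheat}) together with Fatou. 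Existence thus comes for free from the relaxation limit; no independent fixed-point argument is run for \eqref{PM}.

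You instead propose a self-contained construction: substitute Darcy's law, split \eqref{PM} into a quasilinear heat equation for $\Pi$ coupled with a transport equation for $\beta_+$, and run a Picard iteration where heat maximal regularity (Lemma~\ref{maximalheat}) furnishes the $L^1_t(\dot B^{\frac d2+1}\cap\dot B^{\frac d2+3})$ gain that in turn supplies the $L^1_t(\dot B^{\frac d2+1})$ control of $v$ needed for the transport estimate (Lemma~\ref{maximaldamped}), closing with product and composition laws. This works, and your diagnosis of where the argument is tight is accurate: the transport source $\div v$ at $\dot B^{\frac d2+1}$-regularity eats exactly the two-derivative parabolic gain on $\Pi$, and the variable-coefficient correction $(\mu(\beta_+,\Pi)-\mu)\Delta\Pi$ must sit in the source $F^n$ so that the principal symbol stays constant-coefficient. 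Roughly, your route is the direct a~priori-estimate proof the paper alludes to as possible but does not carry out; it buys independence from Theorems~\ref{theorem11}--\ref{theoremK} at the cost of reproducing, at the limit system, nonlinear estimates that the paper sidesteps by inheriting them uniformly from \eqref{Keta}. The paper's route is shorter given the context, and it automatically provides the limiting initial data; yours would be the argument of choice if one wanted a stand-alone well-posedness theorem for the two-phase porous media system.

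One small caveat to keep in mind when you carry this out at the sharp level: for the highest-order product $(\mu^n-\mu)\Delta\Pi^n$ in $\dot B^{\frac d2+1}$, the Bony estimate \eqref{uv1} produces a term $\|\Delta\Pi^n\|_{L^\infty}\|\mu^n-\mu\|_{\dot B^{\frac d2+1}}$, which in $L^1_t$ requires $\Pi^n-\bar P\in L^1_t(\dot B^{\frac d2+2})$; this is available by interpolation inside the range you already control, but it is precisely the kind of place where, as you warn, there is no slack.
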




Next, we present the rigorous justifications of the pressure-relaxation limit for System \eqref{BN} to System \eqref{K} as $\var\rightarrow0$ uniform with respect to $\tau$, and further the time-relaxation limit for System \eqref{Keta} to System \eqref{PM} as $\tau\rightarrow0,$  with explicit convergence rates.

\begin{theorem}\label{theorem12}
Let $d\geq2$ and $0<\var\leq\tau\leq 1.$  Given the constants $\bar\alpha_{\pm}, \bar\rho_{\pm}$ verifying \eqref{far}-\eqref{far1}, let
$(\alpha^{\var, \tau}_{\pm},\rho^{\var, \tau}_{\pm}, u^{\var, \tau})$, $(\alpha^{\tau}_{\pm},\rho^{\tau}_{\pm}, u^{\tau})$ and  $(\beta_{\pm},\varrho_{\pm})$ be the global solutions to the Cauchy problems of Systems \eqref{BN}, \eqref{K} and \eqref{PM} obtained from Theorems \ref{theorem11}-\ref{theoremPM} associated to their corresponding initial data $(\alpha^{\var, \tau}_{\pm,0},\rho^{\var, \tau}_{\pm,0}, u_{0}^{\var, \tau})$, $(\alpha^{\tau}_{\pm,0},\rho^{\tau}_{\pm,0}, u^{\tau}_{0})$ and $(\beta_{\pm,0},\varrho_{\pm,0})$, respectively.



\begin{itemize}
    \item Let the initial quantities $P^{\var,\tau}_{0}-P^{\tau}_{0}$ and $Y_{0}^{\var,\tau}-Y_{0}^{\tau}$ be denoted by \eqref{deltaP0} and \eqref{deltaY0}, respectively.  If $d\geq3$ and
  \begin{equation}\label{errora1}
      \begin{aligned}
      &\|\big(P_{+}(\rho_{+,0}^{\var,\tau})-P_{-}(\rho_{-,0}^{\var,\tau}), Y_{0}^{\var,\tau}-Y_{0}^{\tau}, P^{\var,\tau}_{0}-P^{\tau}_{0},u_{0}^{\var, \tau}-u_{0}^{ \tau} \big)\|_{\dot{B}^{\frac{d}{2}-2}\cap\dot{B}^{\frac{d}{2}-1}} \leq \sqrt{ \var\tau},
      \end{aligned}
  \end{equation}  
  then there exists a universal constant $C_{1}$ such that the following estimate holds{\rm:}
\begin{equation}\label{error1}
\begin{aligned}
 &\|(\alpha_{\pm}^{\var, \tau}-\alpha_{\pm}^{\tau},\rho_{\pm}^{\var, \tau}-\rho_{\pm}^{\tau},u^{\var,\tau}-u^{\tau})\|_{ {L}^{\infty} (\dot{B}^{\frac{d}{2}-2}\cap\dot{B}^{\frac{d}{2}-1})}\\
 &\quad\quad+\sqrt{\tau}\|\rho_{\pm}^{\var, \tau}-\rho_{\pm}^{\tau}\|_{ {L}^2(\dot{B}^{\frac{d}{2}-1})}+\frac{1}{\sqrt{\tau}}\|u^{\var, \tau}-u^{\tau}\|_{ {L}^2(\dot{B}^{\frac{d}{2}-2}\cap\dot{B}^{\frac{d}{2}-1})}\\
&\quad\quad+\|u^{\var,\tau}-u^{\tau}\|_{L^1(\dot{B}^{\frac{d}{2}-1})}\leq C_{1}\sqrt{\var\tau}.
\end{aligned}
\end{equation}
 \item 
 Furthermore, define $(\beta_{\pm}^{\tau}, \varrho_{\pm}^{\tau},v^{\tau})$ by the diffusive scaling \eqref{scalingK} and $v$ by Darcy's law $\eqref{PM1}_{2}$. Let the initial quantity $Z_{0}^{\tau}-Z_{0}$ be denoted by \eqref{deltaZ0}. If 
     \begin{equation}
     \begin{aligned}
    & \hspace{-2mm}
    \|Z_{0}^{\tau}-Z_{0}\|_{\dot{B}^{\frac{d}{2}-1}\cap\dot{B}^{\frac{d}{2}}}
    +\|\varrho_{\pm,0}^{\tau}-\varrho_{\pm,0}\|_{\dot{B}^{\frac{d}{2}-1}}\leq \tau,\label{errora2}
     \end{aligned}
\end{equation}
then there exists a universal constant $C_{2}$ such that the following estimate holds{\rm:}
 \begin{equation} \label{error2}
 \begin{aligned}
 &\|(\beta_{\pm}^{\tau}-\beta_{\pm},\varrho_{\pm}^{\tau}-\varrho_{\pm})\|_{ {L}^{\infty} (\dot{B}^{\frac{d}{2}-1})}\quad\quad+\|\varrho_{\pm}^{\tau}-\varrho_{\pm}\|_{L^1 (\dot{B}^{\frac{d}{2}+1})}+\|v^{\tau}-v\|_{L^1 (\dot{B}^{\frac{d}{2}})}\leq C_{2}\tau.
 \end{aligned}
\end{equation}
\end{itemize}
\end{theorem}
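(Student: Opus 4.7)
Proof proposal. I would treat the two limits separately, in both cases working in a regularity space one derivative below that of the well-posedness results (namely $\dot{B}^{\frac{d}{2}-2}\cap\dot{B}^{\frac{d}{2}-1}$ for the pressure-relaxation limit and $\dot{B}^{\frac{d}{2}-1}$ for the time-relaxation limit). This is the usual trade-off permitting the derivation of explicit convergence rates without any loss in the nonlinear estimates, while keeping the uniform controls of Theorems \ref{theorem11}--\ref{theoremPM} available to absorb every error term coming from the nonlinear coupling.

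For the pressure-relaxation limit I first subtract \eqref{K} from \eqref{BN} to obtain a difference system for $\delta\alpha_\pm := \alpha_\pm^{\var,\tau}-\alpha_\pm^\tau$, $\delta\rho_\pm := \rho_\pm^{\var,\tau}-\rho_\pm^\tau$, and $\delta u := u^{\var,\tau}-u^\tau$. The extra volume-fraction equation in \eqref{BN} produces a forcing of the form $\frac{\alpha_+^{\var,\tau}\alpha_-^{\var,\tau}}{\var}(P_+(\rho_+^{\var,\tau})-P_-(\rho_-^{\var,\tau}))$, and a careful expansion shows that $P^{\var,\tau}-P^\tau$ differs from the natural pressure difference by a term proportional to $P_+-P_-$. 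The two decisive inputs are the uniform bounds $\|P_+(\rho_+^{\var,\tau})-P_-(\rho_-^{\var,\tau})\|_{L^2(\dot{B}^{\frac{d}{2}-1}\cap\dot{B}^{\frac{d}{2}+1})}\leq C\sqrt{\var}$ and $\|u^{\var,\tau}\|_{L^2(\dot{B}^{\frac{d}{2}-1}\cap\dot{B}^{\frac{d}{2}+1})}\leq C\sqrt{\tau}$ from \eqref{uniform1}, which by Cauchy--Schwarz produce the expected $\sqrt{\var\tau}$ rate. To execute the estimate I would use the auxiliary mass-fraction-type unknown $Y$ that diagonalises the linearised \eqref{BN} in Section \ref{sec:spectral}, then perform the low/high-frequency analysis at the $\tau$-dependent threshold $J_\tau$ identified there: in low frequencies a parabolic energy estimate on the reformulated system absorbs the $\sqrt{\var}$-size source, while in high frequencies the friction-induced damping together with renormalised energy estimates in the spirit of \cite{c0,c1} controls the transport commutators. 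Gronwall, with the smallness assumption \eqref{errora1}, closes the bound.

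For the time-relaxation limit I rewrite the momentum equation of \eqref{Keta} in the Darcy form $\varrho^\tau v^\tau + \nabla\Pi^\tau = -\tau^2[\partial_s(\varrho^\tau v^\tau)+\mathrm{div}(\varrho^\tau v^\tau\otimes v^\tau)]$, so that the right-hand side is the explicit defect from Darcy's law. Translated through the scaling \eqref{scalingK}, the effective-flux estimate in \eqref{uniform1K} gives precisely $\|\varrho^\tau v^\tau + \nabla\Pi^\tau\|_{L^1(\dot{B}^{\frac{d}{2}-1}\cap\dot{B}^{\frac{d}{2}})}\leq C\tau$, which is the source of the $\tau$-rate. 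Setting $\delta\beta_\pm := \beta_\pm^\tau-\beta_\pm$, $\delta\varrho_\pm := \varrho_\pm^\tau-\varrho_\pm$ and $\delta v := v^\tau-v$, the difference system is a perturbation of the porous-media system \eqref{PM} by a forcing of size $\tau$ encoded in $Z^\tau-Z$ and in the momentum defect. I would eliminate $\delta v$ via the elliptic Darcy relation, feed the result into the transport equations for $\delta\beta_\pm$ and the parabolic equation for $\delta\Pi$, and close the estimate in $\dot{B}^{\frac{d}{2}-1}$ using the parabolic bounds of Theorem \ref{theoremPM} together with the smallness \eqref{errora2}.

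The hardest step is the first limit. The genuine asymmetry between the two phases and the nonlinearity of $P^{\var,\tau}$ prevent a direct appeal to the scheme of \cite{c0,c1}, which was designed for symmetric partially dissipative systems. The real difficulty is to identify the combinations of $\delta\alpha_\pm$, $\delta\rho_\pm$ and $P_+-P_-$ that carry the natural dissipation in each frequency regime, and then to verify that this dissipation absorbs the quadratic coupling between the slow and fast modes \emph{uniformly} in both $\var$ and $\tau$ under the constraint $\var\leq\tau$; otherwise either the low-frequency parabolic estimate or the high-frequency damping, which is exactly the overdamping phenomenon alluded to in the abstract, would degenerate as the parameters shrink.
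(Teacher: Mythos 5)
Your overall roadmap is correct and aligns with the paper's strategy: work one derivative below the well-posedness level, isolate a purely transported mass-fraction difference ($\delta Y$ and $\delta Z$), use the $\tau$-dependent threshold $J_\tau$ with the effective flux $\delta z = \delta u + \tau\rho^{-1}\nabla(\text{pressure difference})$ for the low/high-frequency split, and close via Gr\"onwall under the smallness assumptions. For the time-relaxation limit your account is essentially complete: the $L^1$-in-time bound $\|v^\tau + (\varrho^\tau)^{-1}\nabla\Pi^\tau\|_{L^1(\dot B^{d/2-1}\cap\dot B^{d/2})}\lesssim\tau$ from Lemma \ref{lemmaKuniform} is indeed the sole source of the rate, and eliminating $\delta v$ via Darcy and feeding into the parabolic equation for $\delta\Pi$ is precisely how the paper proceeds (Section \ref{sectionrelaxationKPM}).

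The genuine gap is in the pressure-relaxation step, and it is the one the paper flags as the core discovery. If you subtract the equation for $P^\tau$ from the equation for $P^{\var,\tau}$ (equations \eqref{Pvar}--\eqref{Ptau}), you are left with the forcing
\begin{equation*}
-\,\alpha^{\var,\tau}_{+}\alpha^{\var,\tau}_{-}\Big((\gamma_{+}-1)P_{+}(\rho_{+}^{\var,\tau})-(\gamma_{-}-1)P_{-}(\rho_{-}^{\var,\tau})\Big)\,\frac{P_{+}(\rho_{+}^{\var,\tau})-P_{-}(\rho_{-}^{\var,\tau})}{\var}.
\end{equation*}
Its prefactor does \emph{not} vanish with $\var$, and the $L^1$-in-time bound from \eqref{uniform1} only gives $\frac{1}{\var}\|P_+-P_-\|_{L^1(\dot B^{d/2-1})}=\mathcal O(1)$, so a direct estimate of $\delta P$ produces an $\mathcal O(1)$ error rather than $\mathcal O(\sqrt{\var\tau})$. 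Your Cauchy--Schwarz pairing of $\sqrt{\var}$ against $\sqrt{\tau}$ does not help here because the two factors live in the \emph{same} term, not two separate ones. The paper circumvents this by introducing the corrected pressure
\begin{equation*}
\delta Q := P^{\var,\tau}-P^{\tau}-\Gamma_{1}^{\var,\tau}\big(P_{+}(\rho_{+}^{\var,\tau})-P_{-}(\rho_{-}^{\var,\tau})\big),\qquad
\Gamma_{1}^{\var,\tau}=\frac{\alpha^{\var,\tau}_{+}\alpha^{\var,\tau}_{-}\big((\gamma_{+}-1)P_{+}-(\gamma_{-}-1)P_{-}\big)}{\gamma_{+}\alpha^{\var,\tau}_{-}P_{+}+\gamma_{-}\alpha^{\var,\tau}_{+}P_{-}},
\end{equation*}
which is chosen precisely so that the singular $\frac{1}{\var}$ term cancels exactly against the leading part of the linear damping in \eqref{Pvar}; what remains on the right-hand side of \eqref{eqQ} involves only $(\partial_t\Gamma_1^{\var,\tau}+u^{\var,\tau}\cdot\nabla\Gamma_1^{\var,\tau})(P_+-P_-)$ and $\Gamma_3^{\var,\tau}(P_+-P_-)\div u^{\var,\tau}$, which are $o(1)\cdot\sqrt{\var\tau}$ thanks to the auxiliary bound $\|P_+-P_-\|_{L^\infty(\dot B^{d/2-2}\cap\dot B^{d/2-1})}\lesssim\sqrt{\var\tau}$ proved separately. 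Your proposal gestures at ``identifying the right combinations'' but never produces this unknown, and without it the difference system retains a non-vanishing $\mathcal O(\var^{-1})$ source that Gr\"onwall cannot absorb.

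A secondary but real omission: once $\delta Q$ is defined, the recovery of $(\delta\alpha_\pm,\delta\rho_\pm,\delta\rho)$ from $(\delta Y,\delta Q,P_+-P_-)$ is an algebraic inversion (Lemma \ref{lemmarecover}) that requires the explicit identities \eqref{deltaalpha0}--\eqref{deltaPpmrhopm}; your proposal treats this step as automatic, but the same pressure-gap term $P_+-P_-$ is the obstruction that makes it nontrivial, which is why the separate Lemma for $\|P_+-P_-\|_{L^\infty(\dot B^{d/2-2}\cap\dot B^{d/2-1})}\lesssim\sqrt{\var\tau}$ must precede, not follow, the $(\delta Q,\delta u)$ analysis.
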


Finally, Theorem \ref{theorem12} implies the relaxation limit for System \eqref{BNvar} to System \eqref{PM} as both $\var,\tau\rightarrow0$.

\begin{corollary}
Let $d\geq3$, $0<\var\leq\tau\leq 1,$ and
$(\beta^{\var, \tau}_{\pm},\varrho^{\var, \tau}_{\pm}, v^{\var, \tau})$ be defined by \eqref{scalingBN}. Then under the assumptions 
of Theorem \ref{theorem12}, there is a generic constant $C_3$ such that
\begin{equation}\nonumber
\begin{aligned}
   & \|(\beta^{\var, \tau}_{\pm}-\beta_{\pm},\varrho_{\pm}^{\var,\tau}-\varrho_{\pm} )\|_{L^{\infty}(\dot{B}^{\frac{d}{2}-1})}\leq C_3( \sqrt{\var\tau}+\tau).
\end{aligned}
\end{equation}
\end{corollary}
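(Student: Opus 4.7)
The plan is to prove the corollary as a direct consequence of the two convergence estimates \eqref{error1} and \eqref{error2} obtained in Theorem \ref{theorem12}, combined via the triangle inequality and the invariance of the $L^\infty_t$ norm under the diffusive time-rescaling \eqref{scalingBN}.

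First, I would insert the intermediate quantities $(\beta_{\pm}^{\tau},\varrho_{\pm}^{\tau},v^{\tau})$ defined by the scaling \eqref{scalingK} applied to the Kapila solution $(\alpha_{\pm}^{\tau},\rho_{\pm}^{\tau},u^{\tau})$ of Theorem \ref{theoremK}, and split
\begin{equation*}
\beta_{\pm}^{\var,\tau}-\beta_{\pm}=(\beta_{\pm}^{\var,\tau}-\beta_{\pm}^{\tau})+(\beta_{\pm}^{\tau}-\beta_{\pm}),\qquad \varrho_{\pm}^{\var,\tau}-\varrho_{\pm}=(\varrho_{\pm}^{\var,\tau}-\varrho_{\pm}^{\tau})+(\varrho_{\pm}^{\tau}-\varrho_{\pm}).
\end{equation*}
The second piece on each line is controlled directly by \eqref{error2}, which yields
\begin{equation*}
\|(\beta_{\pm}^{\tau}-\beta_{\pm},\varrho_{\pm}^{\tau}-\varrho_{\pm})\|_{L^{\infty}(\dot B^{\frac{d}{2}-1})}\leq C_{2}\tau.
\end{equation*}

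For the first piece, I would use the fact that the scalings \eqref{scalingBN} and \eqref{scalingK} act identically (replacing $(t,x)$ by $(s/\tau,x)$), so that $\beta_{\pm}^{\var,\tau}(s,x)-\beta_{\pm}^{\tau}(s,x)=(\alpha_{\pm}^{\var,\tau}-\alpha_{\pm}^{\tau})(s/\tau,x)$ and similarly for the density. Since the spatial Besov norm is time-independent and the change of variable $t=s/\tau$ is an $L^\infty_t$-isometry (it only rescales time, so the supremum is preserved), I obtain
\begin{equation*}
\|(\beta_{\pm}^{\var,\tau}-\beta_{\pm}^{\tau},\varrho_{\pm}^{\var,\tau}-\varrho_{\pm}^{\tau})\|_{L^{\infty}(\dot B^{\frac{d}{2}-1})}=\|(\alpha_{\pm}^{\var,\tau}-\alpha_{\pm}^{\tau},\rho_{\pm}^{\var,\tau}-\rho_{\pm}^{\tau})\|_{L^{\infty}(\dot B^{\frac{d}{2}-1})}\leq C_{1}\sqrt{\var\tau},
\end{equation*}
where the last inequality is precisely the relevant part of \eqref{error1}. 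Combining the two bounds via the triangle inequality yields the desired estimate with $C_3:=C_1+C_2$.

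The argument is essentially a one-line triangle inequality; there is no genuine obstacle beyond correctly identifying that the diffusive rescaling \eqref{scalingBN} is compatible with \eqref{scalingK} on the Kapila solution, so that the difference $\beta_{\pm}^{\var,\tau}-\beta_{\pm}^{\tau}$ really is the rescaled version of $\alpha_{\pm}^{\var,\tau}-\alpha_{\pm}^{\tau}$. The assumptions of Theorem \ref{theorem12} (in particular \eqref{errora1} and \eqref{errora2}) are simply inherited, and the restriction $d\geq 3$ comes from \eqref{error1}.
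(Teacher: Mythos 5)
Your proof is correct and matches the paper's (implicit) approach: the paper does not spell out the argument, but the corollary is placed immediately after Theorem \ref{theorem12} precisely so that it follows by the triangle inequality from \eqref{error1} and \eqref{error2}, together with the observation that the diffusive time-rescaling $(s,x)\mapsto(s/\tau,x)$ leaves the $L^\infty_t(\dot B^{d/2-1}_x)$ norm unchanged. The one small point worth making explicit (which you handle correctly) is that \eqref{error1} controls the $L^\infty(\dot B^{\frac d2-2}\cap\dot B^{\frac d2-1})$ norm of $(\alpha_\pm^{\var,\tau}-\alpha_\pm^\tau,\rho_\pm^{\var,\tau}-\rho_\pm^\tau)$, which dominates the $L^\infty(\dot B^{\frac d2-1})$ norm, so the bound $C_1\sqrt{\var\tau}$ transfers as claimed.
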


\subsection{Difficulties  and strategies}\label{sec:spectral}

The first difficulty concerning the study of System \eqref{BN} are its lack of dissipativity and symmetrizability.
Indeed, the linearization of \eqref{BN} admits the eigenvalue $0$ and therefore does not satisfy the well-known ``Shizuta-Kawashima'' stability condition for partially dissipative hyperbolic systems (cf. \cite{shi1}). Additionally, System \eqref{BN} cannot be written in a conservative form and the entropy  naturally associated to \eqref{BN} is not positive definite, therefore the notion of entropic variables does not make sense in this case. Therefore, the first crucial step in our analysis is to partially symmetrize System \eqref{BN}, by hands. We refer to \cite{BreschHuangLi,ForestierGavrilyuk} for the treatment of non-conservative systems in similar contexts. In our setting, as explained in \cite{burtea1}, we define the new unknowns
\begin{equation}\label{newunknows}
\left\{
\begin{aligned}
&y^{\var, \tau}:=\frac{\alpha_{+}^{\var, \tau}\rho_{+}^{\var, \tau}}{\alpha_{+}^{\var, \tau}\rho_{+}^{\var, \tau}+\alpha_{-}^{\var, \tau}\rho_{-}^{\var, \tau}}-\frac{\bar{\alpha}_{+}\bar{\rho}_{+}}{\bar{\alpha}_{+}\bar{\rho}_{+}+\bar{\alpha}_{-}\bar{\rho}_{-}},\\
& w^{\var, \tau}:=\frac{\alpha_{+}^{\var, \tau}\alpha_{-}^{\var, \tau}}{\gamma_{+}\alpha_{-}^{\var, \tau}+\gamma_{-}\alpha_{+}^{\var, \tau}} \big( P_{+}(\rho_{+}^{\var, \tau})-P_{-}(\rho_{-}^{\var, \tau}) \big),\\
&r^{\var, \tau}:=P^{\var, \tau}-\bar{P}-(\gamma_{+}-\gamma_{-})w^{\var, \tau},
\end{aligned}
\right.
\end{equation}
and the corresponding initial data
\begin{equation}\label{y0w0r0}
\left\{
\begin{aligned}
&y_{0}:=\frac{\alpha_{+,0}\rho_{+,0}}{\alpha_{+,0}\rho_{+,0}+\alpha_{-,0}\rho_{-,0}}-\frac{\bar{\alpha}_{+}\bar{\rho}_{+}}{\bar{\alpha}_{+}\bar{\rho}_{+}+\bar{\alpha}_{-}\bar{\rho}_{-}},\\
& w_{0}:=\frac{\alpha_{+,0}\alpha_{-,0}}{\gamma_{+}\alpha_{-,0}+\gamma_{-}\alpha_{+,0}} \big(P_{+}(\rho_{+,0})-P_{-}(\rho_{-,0}) \big),\\
& r_{0}:=\alpha_{+,0}P_{+}(\rho_{+,0})+\alpha_{-,0}P_{-}(\rho_{-,0})-\bar{P}-(\gamma_{+}-\gamma_{-})w_{0},
\end{aligned}
\right.
\end{equation}
so that the Cauchy problem of System \eqref{BN} subject to the initial data $(\alpha_{\pm,0}, \rho_{\pm,0}, u_{0})$ is reformulated as
\begin{equation}\label{re}
\left\{
\begin{aligned}
&\partial_{t}y^{\var, \tau}+u^{\var, \tau}\cdot\nabla y^{\var, \tau}=0,\\
&\partial_{t}w^{\var, \tau}+u^{\var, \tau}\cdot\nabla w^{\var, \tau}+ (\bar{F}_{1}+G^{\var, \tau}_1  )\div u^{\var, \tau}+ (\bar{F}_2+G^{\var, \tau}_2) \dfrac{w^{\var, \tau}}{\var}=0,\\
&\partial_{t}r^{\var, \tau}+u^{\var, \tau}\cdot\nabla r^{\var, \tau}+ (\bar{F}_3+G^{\var, \tau}_3  )\div u^{\var, \tau}=F_{4}^{\var, \tau}\frac{(w^{\var, \tau})^2}{\var},\\
&\partial_{t}u^{\var, \tau}+u^{\var, \tau}\cdot\nabla u^{\var, \tau}+\frac{u^{\var, \tau}}{\tau}+ ( \bar{F}_{0}+G^{\var, \tau}_0  )\nabla  r^{\var, \tau}+(\gamma_{+}-\gamma_{-}) (\bar{F}_{0}+G^{\var, \tau}_0 ) \nabla w^{\var, \tau}   =0,\\
&(y^{\var, \tau},w^{\var, \tau},r^{\var, \tau},u^{\var, \tau})(0,x)=(y_{0},w_{0},r_{0},u_{0})(x),
\end{aligned}
\right.
\end{equation}
where $F^{\var, \tau}_{i}=F^{\var, \tau}_{i}(y,w,r)$ $(i=0,1,2,3,4)$ are the nonlinear terms
\begin{equation}\label{Fi}
\left\{
\begin{aligned}
\hspace{-20mm}&F^{\var, \tau}_{0}:=\frac{1}{\alpha_{+}^{\var, \tau}\rho_{+}^{\var, \tau}+\alpha_{-}^{\var, \tau}\rho_{-}^{\var, \tau}},\\
&F^{\var, \tau}_{1}:=\frac{(\gamma_{+}-\gamma_{-})\alpha_{+}^{\var, \tau}\alpha^{\var, \tau}_{-}}{\gamma_{+}\alpha^{\var, \tau}_{-}+\gamma_{-}\alpha^{\var, \tau}_{+}}(\bar{P}+r^{\var, \tau} )+\frac{\gamma_{+}^2\alpha^{\var, \tau}_{-}+\gamma_{-}^2\alpha^{\var, \tau}_{+}}{\gamma_{+}\alpha^{\var, \tau}_{-}+\gamma_{-}\alpha^{\var, \tau}_{+}} w^{\var, \tau},\\
&F^{\var, \tau}_{2}:=(\gamma_{+}\alpha^{\var, \tau}_{-}+\gamma_{-}\alpha^{\var, \tau}_{+})(\bar{P}+r^{\var, \tau})-\frac{(\gamma_{+}-\gamma_{+}^2)(\alpha^{\var, \tau}_{-})^2-(\gamma_{-}-\gamma_{-}^2)(\alpha^{\var, \tau}_{+})^2}{\alpha^{\var, \tau}_{+}\alpha^{\var, \tau}_{-}}w^{\var, \tau},\\
&F^{\var, \tau}_{3}:=\frac{\gamma_{+}\gamma_{-}}{\gamma_{+}\alpha^{\var, \tau}_{-}+\gamma_{-}\alpha^{\var, \tau}_{+}}P^{\var, \tau},\\
&F^{\var, \tau}_{4}:=\frac{\gamma_{+}\gamma_{-}}{\alpha^{\var, \tau}_{+}\alpha^{\var, \tau}_{-}}(1-\gamma_{+}\alpha^{\var, \tau}_{-}-\gamma_{-}\alpha^{\var, \tau}_{+}),
\end{aligned}
\right.
\end{equation}
$\bar{F}_{i}$ $(i=0,1,2,3)$ are the constants
\begin{equation}\label{barFi}
\left\{
\begin{aligned}
&\bar{F}_{0}:=\frac{1}{\bar{\alpha}_{+}\bar{\rho}_{+}+\bar{\alpha}_{-}\bar{\rho}_{-} }>0,\\
&\bar{F}_{1}:=\frac{(\gamma_{+}-\gamma_{-})\bar{\alpha}_{+}\bar{\alpha}_{-}}{\gamma_{+}\bar{\alpha}_{-}+\gamma_{-}\bar{\alpha}_{+}}\bar{P}>0,\\
&\bar{F}_{2}:=(\gamma_{+}\bar{\alpha}_{-}+\gamma_{-}\bar{\alpha}_{+})\bar{P}>0,\\
&\bar{F}_{3}:=\frac{\gamma_{+}\gamma_{-}}{\gamma_{+}\bar{\alpha}_{-}+\gamma_{-}\bar{\alpha}_{+}}\bar{P}>0,\\
\end{aligned}
\right.
\end{equation}
and $G^{\var, \tau}_{i}=G^{\var, \tau}_{i}(y,w,r)$ $(i=0,1,2,3)$ are the coefficients 
\begin{equation}
\begin{aligned}
G^{\var, \tau}_{i}:=F^{\var, \tau}_{i}-\bar{F}_{i}.\label{Gi}
\end{aligned}
\end{equation}
In this formulation, the equation $\eqref{re}_{1}$ is purely transport and the linear part of subsystem $\eqref{re}_{2}$-$\eqref{re}_{4}$ is partially dissipative and satisfies the ``Shizuta-Kawashima'' stability condition. Thus, we will estimate the undamped unknown $y^{\var, \tau}$ and the dissipative components $(w^{\var,\tau}, r^{\var, \tau}, u^{\var, \tau})$ separately. We emphasize here that due to the double parameters $\var, \tau$ and the lack of time-integrability of $G^{\var, \tau}_{i}$, the dissipative structures of subsystem $\eqref{re}_{2}$-$\eqref{re}_{4}$ does not fit into the general theorems that can be found in \cite{shi1,xu1,xu00,c0,c1,danchin5}, and a new analysis is needed to be developed to obtain the uniform estimates with respect to the two relaxation parameters $\var, \tau$.

In order to understand the behaviors of the solution to \eqref{re} with respect to $\var, \tau$, we perform a spectral analysis of the linear system for \eqref{re}. For simplicity we set $\bar{F}_{i}=1$ $(i=0,2,3)$ and $\bar{F}_{1}=\gamma_{+}-\gamma_{-}$. In terms of Hodge decomposition, we denote the compressible part $m=\Lambda^{-1}\div u$ and the incompressible part $\Omega=\Lambda^{-1}\nabla\times u$ and rewrite the linear system of \eqref{re} as
 \begin{equation}\nonumber
\begin{aligned}
& \partial_{t}
\left(\begin{matrix}
   w  \\
   r  \\
   m\\
  \end{matrix}\right)
  =\mathbb{A}\left(\begin{matrix}
  w \\
  r\\
  m
     \end{matrix}\right),\quad  \mathbb{A}:=\left(\begin{matrix}
-\dfrac{1}{\var}                                 &    0                                    &      -(\gamma_{+}-\gamma_{-}) \Lambda\\
0                                                                 &    0                                    &      -\Lambda \\            
(\gamma_{+}-\gamma_{-})\Lambda &   \Lambda           &   -\dfrac{1}{\tau}
  \end{matrix}\right),
 \quad\quad \partial_{t} \Omega +\frac{1}{\tau}\Omega=0.
   \end{aligned}
\end{equation}
The eigenvalues of the matrix $\widehat{\mathbb{A}}(\xi)$ satisfy
\begin{equation}\nonumber
\begin{aligned}
&|\widehat{\mathbb{A}}(\xi)-\lambda \mathbb{I}_{3\times 3} |=\lambda^3+\left(\frac{1}{\tau}+\frac{1}{\var}  \right)\lambda^2+\left[\frac{1}{\var\tau}+ \big( |\gamma_{+}-\gamma_{-}|^2+1 \big) |\xi|^2\right] \lambda+\frac{1}{\var} |\xi|^2=0.
\end{aligned}
\end{equation}
Under the condition $0<\var<<\tau$, the behaviors of $\lambda_{i}$ $(i=1,2,3)$ can be analyzed as follows:
\begin{itemize}
    \item In the low-frequency region $|\xi|<<\frac{1}{\tau}$, by Taylor's expansion near $|\tau\xi|<<1$ as in \cite{mats1}, all the eigenvalues are real, and we have $\lambda_{1}= -\frac{1}{\var}+\frac{1}{\tau}\mathcal{O}(|\tau \xi|^2)$,  $\lambda_{2}=-\tau|\xi|^2+\frac{1}{\tau}\mathcal{O}(|\tau \xi|^3)$ and $\lambda_{3}= -\frac{1}{\tau}+\frac{1}{\tau}\mathcal{O}(|\tau \xi|^2)$.
    
   \item In the medium-frequency region $\frac{1}{\tau}<<|\xi|<<\frac{1}{\var}$, according to Cardano's formula, $\lambda_{1}$ is real and $\lambda_{i}$ $(i=2,3)$ are conjugated complex, and Re $\lambda_{i}\lesssim -\frac{1}{\tau}$ holds for all $i=1,2,3$.
   
   \item In the high-frequency region $|\xi|>>\frac{1}{\var}$, by Taylor's expansion near $|\var\xi|^{-1}<<1$, the real eigenvalue $\lambda_{1}$ and the conjugated complex eigenvalues $\lambda_{i}$ $(i=2,3)$ satisfy $\lambda_{1}=-\frac{1}{|\gamma_{+}-\gamma_{-}|^2+1}\frac{1}{\var}+\frac{1}{\var}\mathcal{O}\left(\frac{1}{| \var\xi|^2}\right)$ and $\lambda_{2,3}=-\frac{1}{2\tau}-\frac{|\gamma_{+}-\gamma_{-}|^2}{|\gamma_{+}-\gamma_{-}|^2+1}\frac{1}{2\var}\pm \sqrt{|\gamma_{+}-\gamma_{-}|^2+1} |\xi| i+(\frac{1}{\tau}+\frac{|\gamma_{+}-\gamma_{-}|^2}{\var})\mathcal{O}\left(\frac{1}{|\var\xi|}\right)$.
\end{itemize}
The above spectral analysis suggests us to separate the whole frequencies into two parts $|\xi|\lesssim\frac{1}{\tau}$ and $|\xi|\gtrsim \frac{1}{\tau}$ so as to capture the qualitative properties of solutions for System \eqref{re}. Indeed, the time-decay rates (determined by $\lambda_{2}$) achieve the fastest rate in the low-frequency region $|\xi|\lesssim\frac{1}{\tau}$. Moreover this region recover the whole frequency-space when $\tau\rightarrow0$, as expected from the well-known overdamping phenomenon which will be mentioned below. 
To this end, the threshold $J_{\tau}$ between these two regions is used in the definition of the hybrid Besov spaces in next section.

It should be noted that $\lambda_2$ and $\lambda_3$  exhibit similar behaviors to the eigenvalues of the compressible Euler equations with damping. Indeed, to study System \eqref{re}, one considers the following simplified system of damped Euler type with rough coefficients:   
\begin{equation}\label{L-E}
\left\{\begin{aligned}
&\partial_{t}r^{\tau}+ (1+ G^\tau_3)\div u^{ \tau}=0,\\
&\partial_{t}u^{\tau}+(1+ G^\tau_0)\nabla  r^{ \tau} +\frac{u^{ \tau}}{\tau} =0.
\end{aligned} 
\right. 
\end{equation}
The well-known spectral analysis for the linear Euler part of System \eqref{L-E} implies that the frequency space shall be separated into the low-frequency region $|\xi|\lesssim \frac{1}{\tau}$ and the high-frequency region $|\xi|\gtrsim \frac{1}{\tau}$ to recover the uniform estimates and optimal regularity of solutions. Formally, this implies that as $\tau\to0$, the low-frequency region covers the whole frequency space and is therefore be dominant at the limit. 
We observe here the classical \textit{overdamping phenomenon}:{ as the friction coefficient $\frac{1}{\tau}$ gets larger, the decay rates of $r^\tau$ do not necessarily increase and on the contrary follow }$\min\{\tau, \frac{1}{\tau}\}$, cf. Figure \ref{fig:overdamping}. For more discussion on the overdamping phenomenon, see Zuazua's sildes \cite{SlidesZuazua}.
\begin{figure}[!ht]
	\centering	
 \resizebox{0.69\columnwidth}{!}{\begin{tikzpicture}
\node at (0, 3) [left] {$\omega^*=|\xi|$ };
\draw[->,thick] (0,0) -- (8,0) node[right] {$\frac{1}{\tau}=$ damping};
\draw[->,thick] (0,0) -- (0,4) node[above] {$\omega$: decay rate};

\draw[domain=0:2] plot (\x,1.5*\x) node[below right] {};
\draw[domain=2:8] plot (\x,{6/\x}) node[below right] {};
\draw[dotted] (0,3)--(2, 3); 
\draw[dotted] (2,3)--(2, 0) node[below] {$\frac{1}{\tau*}=2|\xi|$ };
\node[above left] at (1.3,1.8){$\omega=\frac{1}{2\tau}$}; 
  \node[above right] at (4, 1.5) {$\omega=\frac{2\tau|\xi|^2}{1+\sqrt{1-4\tau^2|\xi|^2}}$};
\end{tikzpicture}}
	\caption{A graph of overdamping phenomenon for System \eqref{L-E}.}
	\label{fig:overdamping} 
\end{figure}
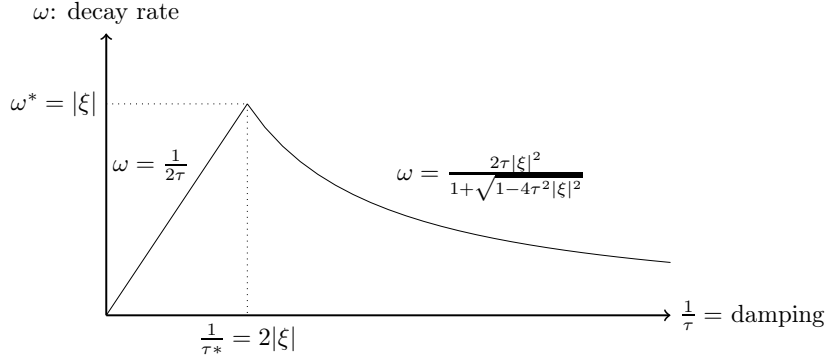

Recently, in \cite{c0,c1}, the issue concerning the 
 relaxation limit from compressible Euler system with damping toward the porous media equation has been rigorously justified  in  critical space  $\dot{B}^{\frac{d}{2}}\cap \dot{B}^{\frac{d}{2}+1}$.  The readers also can refer to the work \cite{c2}  about the relaxation limit for  a hyperbolic-parabolic chemotaxis system to a parabolic-elliptic Keller-Segel model.   The regularity index $\frac{d}{2}+1$ is called critical for initial data of general hyperbolic systems since $\dot{B}^{\frac{d}{2}+1}$ is embedded in the set of globally Lipschitz functions. Indeed, It has been observed by many authors that controlling the Lipschitz regularity of solutions for general hyperbolic systems can prevent blow-up in finite time, see e.g., \cite{xu1,Majda}. We also refer to  \cite{li010,linares1} about the ill-posedness results for hyperbolic systems in $H^{s}$ with $s<\frac{d}{2}+1$.




\medbreak
Nevertheless, the   methods developed in \cite{c0,c1,c2} are not applicable in the current situation to derive estimates which are uniform with respect to the relaxation parameter $\tau$. This is mainly due to the \emph{complex form of the total pressure $P^{\var,\tau}$} in the velocity equation $\eqref{BN}_{3}$ and the fact that one can not expect any time integrability  property on $\mathbb{R}_+$ for the purely transported unknown $y^{\var, \tau},$ which generally leads to a lack of time integrability on $\mathbb{R}_+$ for $(\alpha_{\pm}^{\var,\tau}-\bar{\alpha}_{\pm},\rho_{\pm}^{\var,\tau}-\bar{\rho}_{\pm})$ (see Remarks \ref{rmkH4r}-\ref{rmkH4r2}), and thus for $G^\tau_3, G^\tau_0$ in System \eqref{L-E}. In addition, we can not find a  a rescaling  to reduce the proof to the case $\tau=1$ and then recover the corresponding uniform estimates with respect to $\tau$ thanks to the  homogeneity of the Besov norms as in \cite{c0,c1}. To overcome these new difficulties,  we will  keep track of the dependence of $\var, \tau$ and perform elaborate energy estimates with mixed $L^1$-time and $L^2$-time type dissipation. More precisely, in the low-frequency region, we introduce a purely damped mode (effective flux)
$$
u^{\var,\tau}+\frac{\tau}{\rho^{\var,\tau}}\nabla P^{\var,\tau}
$$
corresponding to Darcy's law \eqref{PM1}$_{2}$ in the low-frequency setting to partially diagonalize the system and capture maximal dissipative structures. In addition, we derive some uniform estimates at a lower regularity level compared to \cite{c0,c1,c2} (see Lemma \ref{lemmalow}). In the high-frequency setting, due to the lack of symmetry, we need to cancel  higher-order terms  so as not to lose derivatives. For that, the construction of a Lyapunov functional in the spirit of Beauchard and Zuazua as in \cite{bea1} with additional nonlinear weights allows us to capture the $L^1$-time dissipation properties in high frequencies (cf. Lemma \ref{lemmahigh}).
Moreover, we also establish the uniform $L^2$-in-time estimates at $\dot{B}^{\frac{d}{2}+1}$-regularity level to recover the necessary bounds of parameters (refer to Lemma \ref{lemmaL2}). Applying these ideas, we obtain uniform estimates in terms of the parameters $\var, \tau$ satisfying $0<\var\leq \tau<1$ for the linearized problem (see Proposition \ref{proplinear}), which is crucial for our later nonlinear analysis.

Let us finally sketch the proof of the justifications of the strong relaxation limits. In fact,  to obtain  convergence rates, we will not estimate the differences of solutions between systems directly. The reason shares similarities with  the proof of global uniform  well-posedness.  {Roughly speaking, since both pressure-relaxation limit and time-relaxation limit are singular limits, there are singular terms which are only uniformly bounded but not necessarily vanishing in the equations satisfied by the difference of solutions.}
To overcome these difficulties, we discover some {\em auxiliary unknowns} associated with the difference systems, which reveal better structures (cancellations), and then perform error estimates on them for each relaxation limit. More details are presented in Sections \ref{sectionrelaxationBNK} and \ref{sectionrelaxationKPM}.



\vspace{2mm}




\section{Functional framework and tools}\label{sectionbesov}

In this section, we recall the notations of the Littlewood-Paley decomposition and Besov spaces. The reader can refer to \cite{bahouri1}[Chapter 2] for a complete overview. Choose a smooth radial non-increasing function $\chi(\xi)$ with compact supported in $B(0,\frac{4}{3})$ and $\chi(\xi)=1$ in $B(0,\frac{3}{4})$ such that
$$
\varphi(\xi):=\chi\left( {\xi}/{2}\right)-\chi(\xi),\quad \sum_{j\in \mathbb{Z}}\varphi(2^{-j}\cdot)=1,\quad \text{{\rm{Supp}}}~ \varphi\subset \Big\{\xi\in \mathbb{R}^{d}~\Big{|}~\frac{3}{4}\leq |\xi|\leq \frac{8}{3}\Big\}.
$$
For any $j\in \mathbb{Z}$, the homogeneous dyadic blocks $\dot{\Delta}_{j}$ and the low-frequency cut-off operator $\dot{S}_{j}$ are defined by
$$
\dot{\Delta}_{j}u:=\mathcal{F}^{-1}(\varphi(2^{-j}\cdot )\mathcal{F}u),\quad\quad \dot{S}_{j}u:= \mathcal{F}^{-1}( \chi (2^{-j}\cdot) \mathcal{F} u).
$$
 From now on, we use the shorthand notation
$$
\dot{\Delta}_{j}u=u_{j}.
$$
Let $\mathcal{S}_{h}'$ be the set of tempered distributions on $\mathbb{R}^{d}$ such that every $u\in \mathcal{S}_{h}'$ satisfies $u\in \mathcal{S}'$ and $\lim_{j\rightarrow-\infty}\|\dot{S}_{j}u\|_{L^{\infty}}=0$. Then it follows that
\begin{equation}\nonumber
\begin{aligned}
&u=\sum_{j\in \mathbb{Z}}u_{j}\quad\text{in}~\mathcal{S}',\quad\quad \dot{S}_{j}u= \sum_{j'\leq j-1}u_{j'},\quad \forall u\in \mathcal{S}_{h}',\\
\end{aligned}
\end{equation}
With the help of these dyadic blocks, the homogeneous Besov space $\dot{B}^{s}$ for $s\in \mathbb{R}$ is defined by
$$
\dot{B}^{s}:=\Big\{u\in \mathcal{S}_{h}'~|~\|u\|_{\dot{B}^{s}}:=\sum_{j\in\mathbb{Z}}2^{js}\|u_{j}\|_{L^{2}}<\infty\Big\}.
$$
We denote the Chemin-Lerner type space $\widetilde{L}^{\varrho}(0,T;\dot{B}^{s})$ for  $s\in\mathbb{R}$ and $T>0$:
$$
\widetilde{L}^{\varrho}(0,T;\dot{B}^{s}):=\Big\{u\in L^{\varrho}(0,T;\mathcal{S}'_{h})~|~ \|u\|_{\widetilde{L}^{\varrho}_{T}(\dot{B}^{s})}:=\sum_{j\in\mathbb{Z}}2^{js}\|u_{j}\|_{L^{\varrho}_{T}(L^{2})}<\infty\Big\}.
$$
By the Minkowski inequality, it holds that
\begin{equation}\nonumber
\begin{aligned}
&\|u\|_{L^{\varrho}_{T}(\dot{B}^{s})}\leq \|u\|_{\widetilde{L}^{\varrho}_{T}(\dot{B}^{s})}\quad \varrho>1,\quad\quad \|u\|_{L^{1}_{T}(\dot{B}^{s})}= \|u\|_{\widetilde{L}^{1}_{T}(\dot{B}^{s})},
\end{aligned}
\end{equation}
where $\|\cdot\|_{L^{\varrho}_{T}(\dot{B}^{s} )}$ is the usual Lebesgue-Besov norm. 

In order to perform our analysis on the low and high frequencies regions, we set the threshold 
\begin{align}
J_{\tau}:=-[\log_{2}{\tau}]+k,\label{J}
\end{align}
for suitable negative integer $k$ (to be determined). 
Denote the following notations for $p\in[1,\infty]$ and $s\in\mathbb{R}$:
\begin{equation}\nonumber
\begin{aligned}
&\|u\|_{\dot{B}^{s}}^{\ell}:=\sum_{j\leq J_{\tau}}2^{js}\|u_{j}\|_{L^{2}},\quad\quad\quad~~ \|u\|_{\dot{B}^{s}}^{h}:=\sum_{j\geq J_{\tau}-1}2^{js}\|u_{j}\|_{L^{\varrho}_{T}(L^{2})},\\
&\|u\|_{\widetilde{L}^{\varrho}_{T}(\dot{B}^{s})}^{\ell}:=\sum_{j\leq J_{\tau}}2^{js}\|u_{j}\|_{L^{2}},\quad\quad \|u\|_{\widetilde{L}^{\varrho}_{T}(\dot{B}^{s})}^{h}:=\sum_{j\geq  J_{\tau}-1}2^{js}\|u_{j}\|_{L^{\varrho}_{T}(L^{2})}.
\end{aligned}
\end{equation}
For any $u\in\mathcal{S}'_{h}$, we also define the low-frequency part $u^{\ell}$ and the high-frequency part $u^{h}$ by
$$
u^{\ell}:=\sum_{j\leq J_{\tau}-1}u_{j},\quad\quad u^{h}:=u-u^{\ell}=\sum_{j\geq J_{\tau}}u_{j}.
$$
It is easy to check for any $s'>0$ that
\begin{equation}\label{lhl}
\left\{
\begin{aligned}
&\|u^{\ell}\|_{\dot{B}^{s}}\leq \|u\|_{\dot{B}^{s}}^{\ell}\leq 2^{J_{\tau}s'}\|u\|_{\dot{B}^{s-s'}}^{\ell}\leq  2^{s'} ( 2^{k }\tau^{-1})^{s'}\|u\|_{\dot{B}^{s-s'}}^{\ell},\\
&\|u^{h}\|_{\dot{B}^{s}}\leq \|u\|_{\dot{B}^{s}}^{h}\leq 2^{-(J_{\tau}-1)s'}\|u\|_{\dot{B}^{s+s'}}^{h}\leq 2^{s'}( 2^{-k }\tau)^{s'}\|u\|_{\dot{B}^{s+s'}}^{h}.
\end{aligned}
\right.
\end{equation}

\medbreak

Next, we state some properties of Besov spaces and related estimates which will be repeatedly used in the rest of paper. The reader can refer to \cite[Chapters 2-3]{bahouri1} for more details. Below, all the properties of Besov norms can be easily extended to the Chemin-Lerner norms.

The first lemma pertains to the so-called Bernstein's inequalities.
\begin{lemma}\label{lemma61}
Let $0<r<R$, $1\leq p\leq q\leq \infty$ and $k\in \mathbb{N}$. For any function $u\in L^p$ and $\lambda>0$, it holds that
\begin{equation}\nonumber
\left\{
\begin{aligned}
&{\rm{Supp}}~ \mathcal{F}(u) \subset \{\xi\in\mathbb{R}^{d}~| ~|\xi|\leq \lambda R\}\Rightarrow \|D^{k}u\|_{L^q}\lesssim\lambda^{k+d(\frac{1}{p}-\frac{1}{q})}\|u\|_{L^p},\\
&{\rm{Supp}}~ \mathcal{F}(u) \subset \{\xi\in\mathbb{R}^{d}~|~ \lambda r\leq |\xi|\leq \lambda R\}\Rightarrow \|D^{k}u\|_{L^{p}}\sim\lambda^{k}\|u\|_{L^{p}}.
\end{aligned}
\right.
\end{equation}
\end{lemma}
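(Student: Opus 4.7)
The statement is the classical pair of Bernstein inequalities, so my plan is to rely on a rescaling argument combined with Young's convolution inequality, carried out in the following order.

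First, for the direct inequality, I would fix an auxiliary cutoff $\widetilde{\chi}\in\mathcal{S}(\mathbb{R}^d)$ with $\widetilde{\chi}\equiv 1$ on $\{|\xi|\leq R\}$. Since by hypothesis $\mathrm{Supp}\,\mathcal{F}(u)\subset\{|\xi|\leq \lambda R\}$, the reproducing identity
\begin{equation*}
u=\mathcal{F}^{-1}\bigl(\widetilde{\chi}(\cdot/\lambda)\bigr)*u
\end{equation*}
holds. Setting $g:=\mathcal{F}^{-1}\widetilde{\chi}\in\mathcal{S}(\mathbb{R}^d)$, and using the scaling property $\mathcal{F}^{-1}(\widetilde{\chi}(\cdot/\lambda))(x)=\lambda^{d}g(\lambda x)$, I would differentiate to obtain $D^{k}u=\lambda^{d+k}\bigl(D^{k}g\bigr)(\lambda\cdot)*u$. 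Choosing the unique $r\in[1,\infty]$ with $1+1/q=1/r+1/p$, Young's convolution inequality yields
\begin{equation*}
\|D^{k}u\|_{L^q}\leq \lambda^{d+k}\|(D^{k}g)(\lambda\cdot)\|_{L^r}\|u\|_{L^p}=\lambda^{k+d(1/p-1/q)}\|D^{k}g\|_{L^r}\|u\|_{L^p},
\end{equation*}
after a change of variables. Since $g\in\mathcal{S}$, the constant $\|D^{k}g\|_{L^r}$ is finite, completing the first claim.

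For the annular case, the upper bound $\|D^{k}u\|_{L^p}\lesssim\lambda^{k}\|u\|_{L^p}$ is obtained along the same lines, replacing $\widetilde{\chi}$ by a smooth annular cutoff $\widetilde{\varphi}$ equal to $1$ on $\{r\leq|\xi|\leq R\}$ and supported in a slightly larger annulus bounded away from the origin, and taking $p=q$ (so $r=1$). The delicate direction is the reverse bound $\lambda^{k}\|u\|_{L^p}\lesssim\|D^{k}u\|_{L^p}$, which I expect to be the main technical obstacle since it amounts to inverting the differential operator $D^{k}$ on the spectral annulus. The plan is to introduce, for each multi-index $\alpha$ with $|\alpha|=k$, a Schwartz function $H_{\alpha}$ whose Fourier transform equals $(i\xi)^{-\alpha}\widetilde{\varphi}(\xi)$; this is well-defined and smooth precisely because $\widetilde{\varphi}$ vanishes near $0$. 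Using the reproducing identity combined with $\partial^{\alpha}u=\mathcal{F}^{-1}((i\xi)^{\alpha})*u$, I would then write $u$ as a rescaled convolution of $H_{\alpha}$ with $\partial^{\alpha}u$, and Young's inequality with the correct bookkeeping of the $\lambda$-powers produces the matching lower bound, yielding the equivalence $\|D^{k}u\|_{L^p}\sim\lambda^{k}\|u\|_{L^p}$.
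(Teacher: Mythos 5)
The paper does not supply a proof of this lemma; it simply cites \cite{bahouri1}, so there is nothing to compare against on the paper's side. Your argument for the direct inequality (cutoff $\widetilde{\chi}$, scaling, Young's inequality with $1+1/q=1/r+1/p$) is the standard one and is correct, as is the upper bound in the annular case.

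The reverse inequality, however, contains a genuine flaw. You claim that for each multi-index $\alpha$ with $|\alpha|=k$, the function with Fourier transform $(i\xi)^{-\alpha}\widetilde{\varphi}(\xi)$ is Schwartz ``because $\widetilde{\varphi}$ vanishes near $0$.'' This is false: $\widetilde{\varphi}$ supported in an annulus means $|\xi|$ is bounded away from $0$, but individual coordinates $\xi_j$ can still vanish there (e.g.\ $\xi=(0,1,0,\dots,0)$ lies in the annulus). Hence $(i\xi)^{-\alpha}=\prod_j(i\xi_j)^{-\alpha_j}$ is singular on the coordinate hyperplanes intersected with the support of $\widetilde{\varphi}$, and $H_\alpha$ is not even well-defined as a tempered distribution, let alone Schwartz. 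The underlying reason your plan fails is that a single derivative $\partial^\alpha$ with $|\alpha|=k$ has a nontrivial kernel on the spectral annulus, so it cannot be inverted in isolation. The standard repair (Bahouri--Chemin--Danchin, Lemma 2.1) is to sum over all multi-indices of length $k$ using the identity $|\xi|^{2k}=\sum_{|\alpha|=k}\frac{k!}{\alpha!}\xi^{2\alpha}$, writing
\begin{equation*}
\widetilde{\varphi}(\xi)=\sum_{|\alpha|=k}\frac{k!}{\alpha!}\,\frac{(-i\xi)^{\alpha}}{|\xi|^{2k}}\,\widetilde{\varphi}(\xi)\,(i\xi)^{\alpha}.
\end{equation*}
Each symbol $g_\alpha(\xi):=\frac{k!}{\alpha!}\frac{(-i\xi)^\alpha}{|\xi|^{2k}}\widetilde{\varphi}(\xi)$ \emph{is} smooth and compactly supported (the denominator is $|\xi|^{2k}$, not a product of single coordinates), hence $\mathcal{F}^{-1}g_\alpha\in\mathcal{S}$. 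Rescaling by $\lambda$ and applying Young then gives $\|u\|_{L^p}\lesssim\lambda^{-k}\sum_{|\alpha|=k}\|\partial^\alpha u\|_{L^p}$, which is the lower bound you wanted. Your overall strategy is right; it is only the choice of inverting symbol that needs replacing.
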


Due to the Bernstein inequalities, the Besov spaces have many useful properties:
\begin{lemma}\label{lemma22}
The following properties hold{\rm:}
\begin{itemize}
\item{} For any $s\in\mathbb{R}$ and $q\geq2$, we have the following continuous embeddings{\rm:}
\begin{equation}\nonumber
\begin{aligned}
 \dot{B}^{s}\hookrightarrow  \dot{H}^{s},\quad \quad \dot{B}^{\frac{d}{2}-\frac{d}{q}}\hookrightarrow  L^{q}.
\end{aligned}
\end{equation}
 \item{} $\dot{B}^{\frac{d}{2}}$ is continuously embedded in the set of continuous functions decaying to $0$ at infinity.
\item{}
For any $\sigma\in \mathbb{R}$, the operator $\Lambda^{\sigma}$ is an isomorphism from $\dot{B}^{s}$ to $\dot{B}^{s-\sigma}$.
\item{} Let $s_{1}\in\mathbb{R}$ and $ s_{2}\leq \frac{d}{2}$. Then
    the space $\dot{B}^{s_{1}}\cap \dot{B}^{s_{2}}$ is a Banach space and satisfies weak compact and Fatou properties: If $u_{k}$ is a uniformly bounded sequence of $\dot{B}^{s_{1}}\cap \dot{B}^{s_{2}}$, then an element $u$ of $\dot{B}^{s_{1}}\cap \dot{B}^{s_{2}}$ and a subsequence $u_{n_{k}}$ exist such that
    \begin{equation}\nonumber
    \begin{aligned}
\lim_{k\rightarrow\infty}u_{n_{k}}=u\quad\text{in}\quad\mathcal{S}'\quad\text{and}\quad\|u\|_{\dot{B}^{s_{1}}\cap \dot{B}^{s_{2}}}\lesssim \liminf_{n_{k}\rightarrow \infty} \|u_{n_{k}}\|_{\dot{B}^{s_{1}}\cap \dot{B}^{s_{2}}}.
    \end{aligned}
    \end{equation}
\end{itemize}
\end{lemma}

The following Morse-type product estimates in Besov spaces play a fundamental role in our analysis of nonlinear terms.
\begin{lemma}\label{lemma63}
The following statements hold:
\begin{itemize}
\item{} Let $s>0$. Then $\dot{B}^{s}\cap L^{\infty}$ is a algebra and
    \begin{equation}\label{uv1}
\begin{aligned}
\|uv\|_{\dot{B}^{s}}\lesssim \|u\|_{L^{\infty}}\|v\|_{\dot{B}^{s} }+ \|v\|_{L^{\infty}}\|u\|_{\dot{B}^{s} }.
\end{aligned}
\end{equation}
\item{}  Let $s_{1}, s_{2}$ satisfy $s_{1}, s_{2}\leq \frac{d}{2}$ and $s_{1}+s_{2}>0$. Then there holds
\begin{equation}\label{uv2}
\begin{aligned}
&\|uv\|_{\dot{B}^{s_{1}+s_{2}-\frac{d}{2}}}\lesssim \|u\|_{\dot{B}^{s_{1}}
}\|v\|_{\dot{B}^{s_{2}}}.
\end{aligned}
\end{equation}
\end{itemize}
\end{lemma}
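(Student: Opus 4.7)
The plan is to rely on Bony's homogeneous paraproduct decomposition, which for any tempered distributions $u,v\in\mathcal{S}'_h$ writes
\begin{equation}\nonumber
uv = \dot{T}_u v + \dot{T}_v u + \dot{R}(u,v),
\end{equation}
with $\dot{T}_u v := \sum_{j} \dot{S}_{j-1} u \,\dot{\Delta}_j v$ and $\dot{R}(u,v) := \sum_j \sum_{|j'-j|\le 1} \dot{\Delta}_j u\,\dot{\Delta}_{j'} v$. The spectral localizations of the three pieces are standard: $\dot{\Delta}_q(\dot{T}_u v)$ only involves dyadic blocks $\dot\Delta_{j'} v$ with $|j'-q|\le 4$, while $\dot{\Delta}_q \dot{R}(u,v)$ only involves blocks $\dot\Delta_j u$ and $\dot\Delta_{j'} v$ with $j,j'\ge q-N_0$ for some absolute $N_0$. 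I will apply these localizations together with Bernstein's inequalities from Lemma \ref{lemma61} to estimate each piece in a $\dot B^{s}$-norm (with $r=1$ summability).

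First I would prove the algebra estimate \eqref{uv1}. For $\dot T_u v$, I use $\|\dot S_{j-1} u\|_{L^\infty}\lesssim \|u\|_{L^\infty}$ and Young's inequality on the convolution of $(2^{js}\|\dot\Delta_j v\|_{L^2})_{j}$ with the finite mask coming from the support constraint, obtaining $\|\dot T_u v\|_{\dot B^s}\lesssim \|u\|_{L^\infty}\|v\|_{\dot B^s}$ without any sign condition on $s$. The symmetric estimate bounds $\dot T_v u$ by $\|v\|_{L^\infty}\|u\|_{\dot B^s}$. For the remainder $\dot R(u,v)$, the spectral support constraint forces
\begin{equation}\nonumber
2^{qs}\|\dot\Delta_q \dot R(u,v)\|_{L^2}\lesssim \sum_{j\ge q-N_0} 2^{(q-j)s}\, 2^{js}\|\dot\Delta_j u\|_{L^2}\,\|v\|_{L^\infty},
\end{equation}
and here the assumption $s>0$ is used in order to sum the geometric factor $2^{(q-j)s}$ over $j\ge q-N_0$ and then sum over $q$, yielding $\|\dot R(u,v)\|_{\dot B^s}\lesssim \|u\|_{\dot B^s}\|v\|_{L^\infty}$. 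Assembling the three bounds gives \eqref{uv1}.

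For the Moser-type estimate \eqref{uv2}, I would argue analogously but use the critical embedding $\dot B^{d/2}_{2,1}\hookrightarrow L^\infty$ and Bernstein to control the low-frequency cut-offs by full Besov norms. Concretely, for $s_1\le d/2$,
\begin{equation}\nonumber
\|\dot S_{j-1} u\|_{L^\infty}\lesssim \sum_{j'\le j-2} 2^{j'd/2}\|\dot\Delta_{j'} u\|_{L^2} \lesssim 2^{j(d/2-s_1)}\|u\|_{\dot B^{s_1}},
\end{equation}
and inserting this into $\dot T_u v$ gives a bound in $\dot B^{s_1+s_2-d/2}$ of the desired form $\|u\|_{\dot B^{s_1}}\|v\|_{\dot B^{s_2}}$ (and symmetrically for $\dot T_v u$ using $s_2\le d/2$). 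The remainder $\dot R(u,v)$ is estimated via the same spectral-support argument as above, this time producing a factor $2^{(q-j)(s_1+s_2)}$, whose summation in $j\ge q-N_0$ and then in $q$ requires precisely the hypothesis $s_1+s_2>0$.

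The only genuinely delicate point is the treatment of the remainder in both cases, where the positivity conditions ($s>0$, respectively $s_1+s_2>0$) are sharp and reflect the loss of regularity that would otherwise occur when superposing arbitrarily high dyadic frequencies of $u$ and $v$ to build arbitrarily low frequencies of $uv$. Apart from this subtlety, every step is a direct application of Lemma \ref{lemma61} together with the definition of the Besov norm used in the paper, so the full proof can either be written out in a few lines along the lines above or simply referenced to \cite[Chapter 2]{bahouri1}.
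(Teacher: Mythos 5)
Your proposal is correct. The paper itself gives no proof of Lemma~\ref{lemma63} and instead cites \cite[Chapters 2--3]{bahouri1}; the argument you outline via Bony's paraproduct decomposition, with Bernstein's inequalities and Young's inequality for the paraproducts and a geometric-series summation for the remainder, is exactly the argument found in that reference, including the identification of $s>0$ and $s_1+s_2>0$ as the conditions needed to sum the remainder term.
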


The following commutator estimates will used to control some nonlinearities in high frequencies.

\begin{lemma}\label{lemmacom}
Let $p\in[1,\infty]$ and $-\frac{d}{2}-1< s\leq \frac{d}{2}+1$. Then it holds that
\begin{align}
&\sum_{j\in\mathbb{Z}}2^{js}\|[v,\dot{\Delta}_{j}]\partial_{i}u\|_{L^{2}}\lesssim\|\nabla v\|_{\dot{B}^{\frac{d}{2}}}\|u\|_{\dot{B}^{s}},\quad i=1,...,d,\label{commutator}
\end{align}
for the commutator $[A,B]:=AB-BA$.
\end{lemma}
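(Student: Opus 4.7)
The plan is to invoke Bony's homogeneous paraproduct decomposition
\[
ab = T_a b + T_b a + R(a,b),\qquad T_a b := \sum_k \dot{S}_{k-1}a\,\dot{\Delta}_k b,\qquad R(a,b):=\sum_{|k-k'|\le 1}\dot{\Delta}_k a\,\dot{\Delta}_{k'}b.
\]
Applying it to $v\,\partial_i u$ and to $v\cdot\dot{\Delta}_j\partial_i u$ and subtracting yields
\begin{align*}
[v,\dot{\Delta}_j]\partial_i u = \underbrace{[T_v,\dot{\Delta}_j]\partial_i u}_{I_j^1} + \underbrace{T_{\dot{\Delta}_j\partial_i u}v - \dot{\Delta}_j T_{\partial_i u} v}_{I_j^2} + \underbrace{R(v,\dot{\Delta}_j\partial_i u) - \dot{\Delta}_j R(v,\partial_i u)}_{I_j^3}.
\end{align*}
I would bound each $I_j^\ell$ in $L^2$, multiply by $2^{js}$, and sum over $j\in\mathbb{Z}$, ultimately using the embedding $\dot{B}^{d/2}\hookrightarrow L^\infty$ (valid because the Besov norm in this paper is defined with $\ell^1$-summation over $j$) to rewrite the right-hand side in terms of $\|\nabla v\|_{\dot{B}^{d/2}}$.

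The first piece $I_j^1$ is the genuine first-order commutator and is the source of the gain of one derivative. Writing $\dot{\Delta}_j$ as a convolution with $2^{jd}h(2^j\cdot)$ where $h=\mathcal{F}^{-1}\varphi$, spectral localization confines the relevant dyadic blocks to $|k-j|\le N_0$ for a fixed $N_0$. A first-order Taylor expansion of the low-frequency factor $\dot{S}_{k-1}v$ inside the convolution kernel produces the classical bound
\[
\bigl\|[\dot{S}_{k-1}v,\dot{\Delta}_j]\partial_i u_k\bigr\|_{L^2} \lesssim 2^{-j}\|\nabla \dot{S}_{k-1} v\|_{L^\infty}\,\|\partial_i u_k\|_{L^2}\lesssim \|\nabla v\|_{L^\infty}\|u_k\|_{L^2}.
\]
Summing over $|k-j|\le N_0$ and weighting by $2^{js}$ yields a contribution controlled by $\|\nabla v\|_{L^\infty}\|u\|_{\dot{B}^s}$, valid for every $s\in\mathbb{R}$. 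The piece $I_j^2$ is treated analogously: the two paraproducts agree up to indices with $|k-j|\le N_0$, and Bernstein's inequality (Lemma~\ref{lemma61}) allows one to transfer the derivative $\partial_i$ onto the low-frequency factor, producing the same type of estimate without any restriction on $s$.

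The last piece $I_j^3$, involving the remainder operator $R$, is the most delicate, and this is where the restriction $-\frac{d}{2}-1\le s\le \frac{d}{2}+1$ enters. Only indices $k\ge j - N_0$ contribute to the $j$-th dyadic block, so summability in $j$ must be obtained through a discrete convolution inequality. Applying the product estimates of Lemma~\ref{lemma63} to each factor of $R$ and counting the derivative $\partial_i$ on either side gives terms of the form $2^{k(s+1-d/2)}\|v_k\|_{L^2}\|u_{k'}\|_{L^2}$; the two-sided restriction on $s$ is exactly what ensures that both the "high–low" and "low–high" regimes of the resulting convolution in $j$ remain summable against the $\ell^1$-weight $(2^{js})_j$. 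The main obstacle will be a careful tracking of the endpoints $s=\pm(\frac{d}{2}+1)$: one must verify that no logarithmic loss appears in the remainder's contribution, which is the sharpness of the admissible range. Once $I_j^1, I_j^2, I_j^3$ are bounded in this way, taking the $2^{js}$-weighted sum in $j$ delivers the announced estimate \eqref{commutator}.
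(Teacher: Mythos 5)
The paper does not actually prove this lemma: it is a standard commutator estimate, implicitly cited from the Bahouri--Chemin--Danchin reference \cite{bahouri1}. Your overall scheme --- Bony's decomposition of $[v,\dot{\Delta}_j]\partial_i u$ into $I_j^1+I_j^2+I_j^3$ --- is the correct, standard route, and your treatment of the paraproduct commutator $I_j^1$ via the first-order Taylor expansion of the convolution kernel is accurate and indeed valid for every $s\in\mathbb{R}$.

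However, your allocation of the constraints on $s$ is wrong in a way that matters. You claim that $I_j^2$ carries ``no restriction on $s$'' and that both endpoints of the range come from $I_j^3$. In fact the upper constraint $s\le\frac{d}{2}+1$ comes precisely from the $\dot{\Delta}_j T_{\partial_i u}v$ part of $I_j^2$. To bound it one must control the low-pass factor $\|\dot{S}_{k-1}\partial_i u\|_{L^\infty}$ with $|k-j|\lesssim 1$; expanding in dyadic blocks and using Bernstein,
$$\|\dot{S}_{k-1}\partial_i u\|_{L^\infty}\lesssim\sum_{k'\le k-2}2^{k'(1+\frac{d}{2})}\|\dot{\Delta}_{k'}u\|_{L^2}\lesssim\Big(\sum_{k'\le k-2}c_{k'}\,2^{k'(1+\frac{d}{2}-s)}\Big)\|u\|_{\dot{B}^{s}},$$
and the sum over $k'$ diverges as soon as $s>\frac{d}{2}+1$; at the endpoint $s=\frac{d}{2}+1$ one survives only thanks to the $\ell^1$-summability of the coefficients $c_{k'}$ built into the $\dot{B}^s$-norm used in this paper. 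There is no way to ``transfer the derivative'' onto the low-frequency factor and remove this constraint, so the claim that $I_j^2$ works for all $s$ is false. (Note also that the two paraproducts in $I_j^2$ do not ``agree up to $|k-j|\le N_0$'': the first is supported on $k\ge j+2$, the second on $|k-j|\le N_0$, and they are estimated separately, not by cancellation.)

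Your treatment of $I_j^3$ is also too thin. A direct use of Lemma~\ref{lemma63} on $\dot{\Delta}_j R(v,\partial_i u)$ exactly as you describe yields, after weighting by $2^{js}$ and resumming, a discrete geometric tail of the form $\sum_{j\le k+N_0}2^{(j-k)(s+\frac{d}{2})}$, which converges only for $s>-\frac{d}{2}$, strictly inside the claimed range. Reaching further toward $-\frac{d}{2}-1$ requires redistributing the derivative \emph{before} estimating, e.g.\ $R(v,\partial_i u)=\partial_i R(v,u)-R(\partial_i v,u)$, so that the outer $\partial_i\dot{\Delta}_j$ contributes an extra factor $2^{j}$ and shifts the convergence condition to $s+1+\frac{d}{2}>0$. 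Your proposal never invokes this step, so as written it would only yield the estimate on the strictly smaller interval $-\frac{d}{2}<s\le\frac{d}{2}+1$.
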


We prove the following lemma about the continuity for composition of multi-component functions. It should be noted that \eqref{F11} will be used to deal with the two-dimensional case in $\dot{B}^{0}$.
\begin{lemma}\label{lemma64}
Let $m\in \mathbb{N}$, $s>0$, and $G\in C^{\infty}(\mathbb{R}^{m})$ satisfy $G(0,...,0)=0$. Then for any $f_{i}\in\dot{B}^{s}\cap L^{\infty}$ $(i=1,...,m)$, there exists a constant $C_{f}>0$ depending on $\sum_{i=1}^{m}\|f_{i}\|_{L^{\infty}}$, $F$, $s$, $m$ and $d$ such that
\begin{equation}
\begin{aligned}
\|G(f_{1},...,f_{m})\|_{\dot{B}^{s} }\leq C_{f}\sum_{i=1}^{m}\|f_{i}\|_{\dot{B}^{s} }.\label{F1}
\end{aligned}
\end{equation}
In the case $s>-\frac{d}{2}$ and $f_{i}\in\dot{B}^{s}\cap \dot{B}^{\frac{d}{2}}$, it holds that
\begin{equation}
\begin{aligned}
\|G(f_{1},...,f_{m})\|_{\dot{B}^{s} }\leq C_{f}\big(1+\sum_{i=1}^{m}\|f_{i}\|_{\dot{B}^{\frac{d}{2}}} \big)\sum_{i=1}^{m}\|f_{i}\|_{\dot{B}^{s} }.\label{F11}
\end{aligned}
\end{equation}
Furthermore, for any $f^{1}_{i}, f^{2}_{i}\in\dot{B}^{s}\cap \dot{B}^{\frac{d}{2}}$, we have
\begin{equation}\label{F3}
\begin{aligned}
&\|G(f^{1}_{1},...,f^{1}_{m})-G(f^{1}_{1},...,f^{1}_{m})\|_{\dot{B}^{s} }\leq C^{*}_{f}\big(1+\sum_{i=1}^{m}\|f_{i}\|_{\dot{B}^{s}\cap\dot{B}^{\frac{d}{2}}} \big)\sum_{i=1}^{m}\|f_{i}^{1}-f_{i}^{2}\|_{\dot{B}^{s}\cap\dot{B}^{\frac{d}{2}}}.
\end{aligned}
\end{equation}
Here $C^{*}_{f}>0$ depends on $\sum_{i=1}^{m}\|(f^{1}_{i},f^{2}_{i})\|_{L^{\infty}}$, $F$, $s$, $m$ and $d$.

\end{lemma}
\begin{proof} 
The estimate \eqref{F1} can be found in \cite{runst1}[Pages 387-388]. Then for $-\frac{d}{2}<s\leq \frac{d}{2}$, Taylor's formula implies that there exists a sequence $\widetilde{H}_{i}(f_{1},...,f_{m})$ satisfying $\widetilde{H}_{i}(0,...,0)=0$ and 
\begin{align}
&G(f_{1},...,f_{m})=\sum_{i=1}^{m}\big(\partial_{f_{i}}G(0,...,0)+\widetilde{H}_{i}(f_{1},...,f_{m})  \big) f_{i}.\nonumber
\end{align}
This together with the product law \eqref{uv2} and the estimate \eqref{F1} yields \eqref{F11}.

Moreover, we note that
\begin{equation}\nonumber
\begin{aligned}
G(f^{1}_{1},...,f^{1}_{m})-G(&f^{1}_{1},...,f^{1}_{m})=\int_{0}^{1}\frac{d}{ds}G\big(f_1^1+s(f_1^1-f_2^1),...,f_m^1+s(f_m^1-f_m^1)\big)ds\\
&\quad=\sum_{i=1}^{m} (f_i^1-f_i^2)\partial_{f_{i}}G(0,...,0)\\
&\quad\quad+\sum_{i=1}^{m} (f_i^1-f_i^2)\int_{0}^{1}\Big( \partial_{f_{i}}G\big(f_1^1+s(f_1^1-f_2^1),...,f_m^1+s(f_m^1-f_m^1)\big)-\partial_{f_{i}}G(0,...,0)\Big)ds.
\end{aligned}
\end{equation}
Therefore, applying \eqref{uv1}, \eqref{F1} and the embedding $\dot{B}^{\frac{d}{2}}\hookrightarrow L^{\infty}$, we get \eqref{F3}.
\end{proof}


Finally, we give optimal regularity estimates of some linear equations. We mention that such estimates on usual Besov norms can be easily extended to the norms restricted in low or high frequencies. We recall the estimates of  the heat equation as follows (cf. \cite{bahouri1}[Page 157] for example).

\begin{lemma}\label{maximalheat} Let $\mu_{*}>0$, $s\in\mathbb{R}$ and $1\leq p\leq\infty$. For given time $T>0$, assume $u_{0}\in\dot{B}^{s}$ and $f\in \widetilde{L}^{p}(0,T;\dot{B}^{s-2+\frac{2}{p}})$. If $u$ solves the problem  
\begin{equation}\nonumber
\left\{
\begin{aligned}
&\partial_t u- \mu_{*}\Delta u =f,\quad\quad x\in\mathbb{R}^{d},\quad t>0,\\
&u(0, x)=u_0(x),\quad\quad~  x\in\mathbb{R}^{d},
\end{aligned}
\right.
\end{equation}
then the following estimate is fulfilled{\rm:}
\begin{equation}\nonumber
\|u\|_{\widetilde{L}^{\infty}_{t}(\dot{B}^{s})}+\mu_{*}^{\frac{1}{p_{1}}}\|u\|_{\widetilde{L}^{p}_{t}(\dot{B}^{s+\frac{2}{p}})}\leq C\big(\|u_{0}\|_{\dot{B}^{s}}+\mu_{*}^{\frac{1}{p}-1}\|f\|_{\widetilde{L}^{p}_{t}(\dot{B}^{s-2+\frac{2}{p}}
)}\big),\quad t\in(0,T),
\end{equation}
where $C>0$ is a constant independent of $T$ and $\mu_{*}$.
\end{lemma}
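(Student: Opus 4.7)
The plan is to reduce the estimate to a frequency-localized analysis via the Littlewood-Paley decomposition and then apply Duhamel's formula together with the exponential decay of the heat semigroup on dyadic annuli. Applying $\dot{\Delta}_j$ to the equation commutes with the Laplacian, so $u_j := \dot\Delta_j u$ solves $\partial_t u_j - c_1\Delta u_j = f_j$ with $u_j(0)=\dot\Delta_j u_0$, and Duhamel gives
\begin{equation*}
u_j(t)=e^{c_1 t\Delta}\dot\Delta_j u_0+\int_0^t e^{c_1(t-s)\Delta}f_j(s)\,ds.
\end{equation*}
Since $\mathrm{Supp}\,\widehat{u_j}\subset\{|\xi|\sim 2^j\}$, a standard frequency-localized heat kernel estimate (an application of Bernstein's inequality, Lemma \ref{lemma61}) yields a constant $\kappa>0$ such that $\|e^{c_1 t\Delta}\dot\Delta_j g\|_{L^2}\lesssim e^{-\kappa c_1 2^{2j}t}\|\dot\Delta_j g\|_{L^2}$ for every $t\geq 0$ and $g\in\mathcal S_h'$.

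The next step is to take $L^p_t$ and $L^\infty_t$ norms on both sides. Using the above decay, followed by Young's convolution inequality for the Duhamel integral, I would obtain
\begin{equation*}
\|u_j\|_{L^p_t(L^2)}\lesssim (\kappa c_1 2^{2j})^{-1/p}\|\dot\Delta_j u_0\|_{L^2}+(\kappa c_1 2^{2j})^{-1}\|f_j\|_{L^p_t(L^2)},
\end{equation*}
\begin{equation*}
\|u_j\|_{L^\infty_t(L^2)}\lesssim \|\dot\Delta_j u_0\|_{L^2}+(\kappa c_1 2^{2j})^{-1/p'}\|f_j\|_{L^p_t(L^2)},
\end{equation*}
where $p'$ is the conjugate exponent. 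Multiplying the first inequality by $2^{j(s+2/p)}$ and the second by $2^{js}$, the $j$-dependent factors recombine exactly as $2^{j(s-2+2/p)}$ in front of $\|f_j\|_{L^p_t(L^2)}$, while the $c_1$-dependence becomes $c_1^{-1/p}$ on the left of the $\widetilde L^p_t(\dot B^{s+2/p})$ piece and $c_1^{1/p-1}$ in front of the source term (using $1/p'=1-1/p$).

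Finally, summing over $j\in\mathbb Z$ and recalling the definitions of the Chemin-Lerner and Besov norms yields
\begin{equation*}
\|u\|_{\widetilde L^\infty_t(\dot B^s)}+c_1^{1/p}\|u\|_{\widetilde L^p_t(\dot B^{s+2/p})}\lesssim \|u_0\|_{\dot B^s}+c_1^{1/p-1}\|f\|_{\widetilde L^p_t(\dot B^{s-2+2/p})},
\end{equation*}
with an absolute constant. The only subtle point is tracking the powers of $c_1$ so that the final bound is uniform in $c_1>0$ (and uniform in $T$), which is the precise content of the statement; this is handled by keeping the factor $c_1 2^{2j}$ intact throughout the Young's inequality step rather than absorbing $c_1$ prematurely. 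The argument is otherwise elementary and mirrors the proof in \cite{bahouri1}.
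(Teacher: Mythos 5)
Your argument is correct and is essentially the standard proof: the paper does not prove this lemma but simply cites \cite{bahouri1} for it, and your Duhamel-plus-dyadic-heat-kernel-decay-plus-Young argument is exactly the proof given there, with the powers of $c_1$ tracked carefully so the constant is uniform in $c_1$ and $T$ (note also that the $c_1^{1/p_1}$ in the statement is evidently a typo for $c_1^{1/p}$, which is what your computation produces). The only small imprecision is attributing the decay estimate $\|e^{c_1 t\Delta}\dot\Delta_j g\|_{L^2}\lesssim e^{-\kappa c_1 2^{2j}t}\|\dot\Delta_j g\|_{L^2}$ to Bernstein's inequality (Lemma \ref{lemma61}); it is really a separate smoothing lemma proved by an $L^1$ kernel bound on a rescaled annulus, though the distinction does not affect the argument.
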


We have the regularity estimates of the damped transport equation. Since it can be directly shown by the commutator estimates \eqref{commutator} and Gr\"onwall's inequality as in \cite{burtea1,danchin4},  we omit the proof for brevity.

\begin{lemma}\label{maximaldamped} Let $\lambda_{*}\geq 0$, $p=1$ or $\lambda_{*}>0$, $1\leq p\leq \infty$. For $-\frac{d}{2}<s\leq \frac{d}{2}+1$ and given time $T>0$, assume that $u_{0}\in\dot{B}^{s}$, $v\in L^1(0,T;\dot{B}^{\frac{d}{2}+1})$ and $f\in \widetilde{L}^{p}(0,T;\dot{B}^{s})$. If $u$ solves the problem  
\begin{equation}\nonumber
\left\{
\begin{aligned}
&\partial_t u+v\cdot\nabla u+ \lambda_{*}  u =f,\quad \quad x\in\mathbb{R}^{d},\quad t>0,\\
&u(0, x)=u_0(x),\quad\quad\quad\quad\quad ~ x\in\mathbb{R}^{d},
\end{aligned}
\right.
\end{equation}
then it holds that
\begin{equation}\nonumber
\|u\|_{\widetilde{L}^{\infty}_{t}(\dot{B}^{s})}+ \lambda_{*}^{\frac{1}{p}}\|u\|_{\widetilde{L}^{p}_{t}(\dot{B}^{s})}\leq C{\rm{exp}}\big(C\|v\|_{L^1_{t}(\dot{B}^{\frac{d}{2}+1})}\big)\big(\|u_{0}\|_{\dot{B}^{s}}+\lambda_{*}^{\frac{1}{p}-1}\|f\|_{\widetilde{L}^{p}_{t}(\dot{B}^{s})}\big),\quad t\in(0,T),
\end{equation}
where $C>0$ is a constant independent of $T$ and $\lambda_{*}$.
\end{lemma}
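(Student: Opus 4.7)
The plan is to follow the classical localize-then-Gr\"onwall scheme of Chemin--Lerner, with careful bookkeeping of the damping parameter $c_{2}$. First I would apply $\dot{\Delta}_{j}$ to the equation to obtain, for each dyadic block $u_{j}:=\dot{\Delta}_{j}u$,
\begin{equation*}
\partial_{t}u_{j}+v\cdot\nabla u_{j}+c_{2}u_{j}=f_{j}+R_{j}, \qquad R_{j}:=[v,\dot{\Delta}_{j}]\cdot\nabla u.
\end{equation*}
A standard $L^{2}$ energy estimate (multiply by $u_{j}$, integrate in $x$, integration by parts on $v\cdot\nabla u_{j}\,u_{j}$ producing a $\tfrac{1}{2}\mathrm{div}\,v$ error, Cauchy--Schwarz, then divide by $\|u_{j}\|_{L^{2}}$) yields the pointwise-in-time ODI
\begin{equation*}
\tfrac{d}{dt}\|u_{j}\|_{L^{2}}+c_{2}\|u_{j}\|_{L^{2}}\leq \|f_{j}\|_{L^{2}}+\|R_{j}\|_{L^{2}}+C\|\nabla v\|_{L^{\infty}}\|u_{j}\|_{L^{2}}.
\end{equation*}

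Next, integrating this inequality against the integrating factor $\exp(c_{2}t-CV(t))$ with $V(t):=\int_{0}^{t}\|\nabla v(\sigma)\|_{L^{\infty}}\,d\sigma$ and using $V(t)-V(s)\leq V(\infty)\lesssim \|v\|_{L^{1}_{t}(\dot{B}^{d/2+1})}$ (via the embedding $\dot{B}^{d/2+1}\hookrightarrow W^{1,\infty}$) delivers the block-level Duhamel bound
\begin{equation*}
\|u_{j}(t)\|_{L^{2}}\leq Ce^{C\|v\|_{L^{1}_{t}(\dot{B}^{d/2+1})}}\Big(e^{-c_{2}t}\|u_{j}(0)\|_{L^{2}}+e^{-c_{2}(t-\cdot)}\ast\big(\|f_{j}\|_{L^{2}}+\|R_{j}\|_{L^{2}}\big)(t)\Big).
\end{equation*}
Taking the $L^{p}$-norm in time on both sides and invoking Young's convolution inequality together with $\|e^{-c_{2}\cdot}\|_{L^{q}}\lesssim c_{2}^{-1/q}$ produces block-level estimates with the sharp $c_{2}^{1/p-1}$ prefactor on the source, and an analogous argument gives the $L^{\infty}_{t}$ counterpart without the $c_{2}^{1/p}$ multiplier. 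The degenerate case $c_{2}=0$, $p=1$ is handled directly by integrating the ODI in time.

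Finally, I would multiply by $2^{js}$ and sum in $j\in\mathbb{Z}$. At this stage the commutator estimate \eqref{commutator} from Lemma \ref{lemmacom} is invoked in its summed form $\sum_{j}2^{js}\|R_{j}(t)\|_{L^{2}}\lesssim \|v(t)\|_{\dot{B}^{d/2+1}}\|u(t)\|_{\dot{B}^{s}}$, so as to transfer the contribution of $R_{j}$ onto the unknown $u$ itself. The resulting inequality is self-referential in $\|u(\cdot)\|_{\dot{B}^{s}}$ with weight $\|v(\cdot)\|_{\dot{B}^{d/2+1}}$ and is closed via Gr\"onwall's inequality, which produces the announced $\exp(C\|v\|_{L^{1}_{t}(\dot{B}^{d/2+1})})$ prefactor. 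For $p>1$ one first establishes the bound on $\|u\|_{\widetilde{L}^{\infty}_{t}(\dot{B}^{s})}$ by the $p=1$ argument above, and then feeds this bound into the block-level Duhamel representation to control the commutator piece without requiring higher time-integrability on $v$.

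The main obstacle I foresee is the careful tracking of the $c_{2}$-powers across the Young step so that the estimate remains uniform as $c_{2}\to 0^{+}$ when $p=1$ and exhibits the correct $c_{2}^{1/p-1}$ scaling on the forcing for general $p\in[1,\infty]$, combined with the transition from pointwise-in-time summed commutator bounds to genuine Chemin--Lerner $\widetilde{L}^{p}_{t}(\dot{B}^{s})$ norms; this is achieved via a block-refined Bony paraproduct decomposition, which is also the source of the restriction $s>-d/2$ in the hypothesis.
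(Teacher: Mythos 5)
Your proposal is correct and follows exactly the route the paper has in mind: the authors themselves omit the proof, stating only that it follows "by the commutator estimates \eqref{commutator} and Gr\"onwall's inequality," which is precisely your localize--Duhamel--Young--Gr\"onwall scheme, with the $\widetilde L^\infty_t$ bound established first (using the $L^1_t$ integrability of the commutator and transport terms) and then fed back to recover the $c_2^{1/p}\|u\|_{\widetilde L^p_t}$ control for general $p$ via the two Young pairings $L^1*L^p$ (for $f_j$, yielding $c_2^{1/p-1}$) and $L^p*L^1$ (for $R_j$ and the $\|\nabla v\|_{L^\infty}\|u_j\|$ term, yielding $c_2^{-1/p}$ which cancels against the $c_2^{1/p}$ prefactor). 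One small inaccuracy in your closing remark: the restriction $s>-\tfrac d2$ is not actually forced by the commutator estimate — Lemma~\ref{lemmacom} in the paper is valid on the wider range $-\tfrac d2-1\le s\le\tfrac d2+1$ — so attributing the hypothesis $s>-\tfrac d2$ to the Bony decomposition in the commutator is off; it is simply a (harmless) restriction in the statement, not a constraint your argument needs to produce.
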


\section{Analysis of the linearized system}\label{section3}

We now consider the linearized problem associated to \eqref{re}, which reads
\begin{equation}
\left\{
\begin{aligned}
&\partial_{t}y+v\cdot\nabla y=0,\\
&\partial_{t}w+v\cdot\nabla w+ (h_{1}+H_1  )\div u+ (h_2+H_2) \dfrac{w}{\var}=S_{1},\\
&\partial_{t}r+v\cdot\nabla r+ (h_3+H_3  )\div u=S_{2},\\
&\partial_{t}u+v\cdot\nabla u+\frac{u}{\tau}+ (h_4+H_4  )\nabla  r+ (h_5+H_5 ) \nabla w  =S_{3},\\
&(y,w,r,u)(0,x)=(y_{0},w_{0},r_{0},u_{0})(x),
\end{aligned}
\right.\label{L}
\end{equation}
where $h_{i}$ ($i=1,...,5$) are given positive constants and $H_{i}=H_{i}(t,x)$ ($i=1,...,5$), $S_{i}=S_{i}(t,x)$ ($i=1,2,3$) are given smooth functions.

We first establish the following a-priori estimate for solutions of the linear problem \eqref{L} uniformly with respect to the parameters $\var, \tau$, which improves the result in \cite{burtea1} without the uniformity with respect to $\tau$. As explained before, the threshold $J_{\tau}$ between low and high frequencies given by \eqref{J} is the key to our analysis.


\begin{proposition}\label{proplinear} Let $d\geq2$, $0<\varepsilon\leq \tau <1$, $T>0$, and the threshold $J_{\tau}$ be given by \eqref{J}. Assume that $(w_{0},r_{0},u_{0})\in\dot{B}^{\frac{d}{2}-1}\cap\dot{B}^{\frac{d}{2}+1}$, $S_{1}, S_{2}, S_{3}\in L^1(0,T; \dot{B}^{\frac{d}{2}-1}\cap \dot{B}^{\frac{d}{2}+1})$, $H_{i}\in C([0,T];\dot{B}^{\frac{d}{2}-1}\cap\dot{B}^{\frac{d}{2}+1})$ and $\partial_{t}H_{i}\in L^1(0,T;\dot{B}^{\frac{d}{2}})$ for $i=1,2,...5$. There exists a constant $c>0$ independent of $T$, $\var$ and $\tau$ such that if
\begin{equation}\label{alinear}
\begin{aligned}
\mathcal{Z}(t):&=\sum_{i=1}^{5} \|H_{i}\|_{\widetilde{L}^{\infty}_{t}(\dot{B}^{\frac{d}{2}-1}\cap\dot{B}^{\frac{d}{2}+1})}\leq c,\quad\quad t\in (0,T),
\end{aligned}
\end{equation}
then for $t\in(0,T)$, the solution $(y,w,r,u)$ of the Cauchy problem \eqref{L} satisfies
\begin{equation}\label{elinear}
\begin{aligned}
\!\!\mathcal{X}(t)&:=\|(y,w,r,u)\|_{\widetilde{L}^{\infty}_{t}(\dot{B}^{\frac{d}{2}-1}\cap \dot{B}^{\frac{d}{2}+1})}+\|(\partial_{t}y,\partial_{t}w,\partial_{t}r,\partial_{t}u)\|_{L^1_{t}(\dot{B}^{\frac{d}{2}})} \\
&\!\!\quad\quad+\frac{1}{\var}\|w\|_{L^1_{t}(\dot{B}^{\frac{d}{2}-1}\cap\dot{B}^{\frac{d}{2}})}+\frac{1}{\sqrt{\var}}\|w\|_{\widetilde{L}^2_{t}(\dot{B}^{\frac{d}{2}-1}\cap\dot{B}^{\frac{d}{2}+1})}\\
&\!\!\quad\quad+\tau\|r\|_{L^1_{t}(\dot{B}^{\frac{d}{2}+1}\cap\dot{B}^{\frac{d}{2}+2})}^{\ell}+\|r\|_{L^1_{t}(\dot{B}^{\frac{d}{2}+1})}^{h}+\tau\|r\|_{L^1_{t}(\dot{B}^{\frac{d}{2}+1})}+\sqrt{\tau}\|r\|_{\widetilde{L}^2_{t}(\dot{B}^{\frac{d}{2}}\cap\dot{B}^{\frac{d}{2}+1})}\\
&\!\!\quad\quad+\|u\|_{L^1_{t}(\dot{B}^{\frac{d}{2}}\cap\dot{B}^{\frac{d}{2}+1})}+\frac{1}{\sqrt{\tau}}\|u\|_{\widetilde{L}^2_{t}(\dot{B}^{\frac{d}{2}-1}\cap\dot{B}^{\frac{d}{2}+1})}\\
&\!\!\quad\quad+ \frac{1}{\tau}\| u+\tau(h_4+H_4  )\nabla r \|_{L^1_{t}(\dot{B}^{\frac{d}{2}-1}\cap \dot{B}^{\frac{d}{2}})}\\
&\!\!\leq C_{0}{\rm{exp}}\Big(C_{0}\int_{0}^{t}\mathcal{V}(s)ds\Big) \Big(\|(y_{0},w_{0},r_{0},u_{0})\|_{\dot{B}^{\frac{d}{2}-1}\cap \dot{B}^{\frac{d}{2}+1}}+\|(S_{1}, S_{2}, S_{3})\|_{L^1_{t}( \dot{B}^{\frac{d}{2}-1} \cap \dot{B}^{\frac{d}{2}+1})} \Big),
\end{aligned}
\end{equation}
where $C_{0}>1$ is a universal constant, and $\mathcal{V}(t)$ is denoted by
\begin{equation}\label{V}
\begin{aligned}
&\mathcal{V}(t):=\|v(t)\|_{\dot{B}^{\frac{d}{2}}\cap\dot{B}^{\frac{d}{2}+1}}+\sum_{i=1}^{5}\|\partial_{t}H_{i}(t)\|_{\dot{B}^{\frac{d}{2}}}.
\end{aligned}
\end{equation}

\end{proposition}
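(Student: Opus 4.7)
The plan is to split $(y,w,r,u)$ into the purely transported component $y$ and the coupled component $(w,r,u)$, and to decompose the latter at the threshold $J_\tau$ defined in \eqref{J} so that $2^{J_\tau}\tau\sim 1$. The estimates on $y$ follow immediately from Lemma \ref{maximaldamped} applied at $s=\frac{d}{2}-1$ and $s=\frac{d}{2}+1$ with $c_2=0$, with $\|v\|_{L^1_t(\dot B^{\frac{d}{2}+1})}$ absorbed into the exponential prefactor via $\mathcal{V}(t)$. For the coupled subsystem, the spectral analysis in Section \ref{sec:spectral} dictates that the dissipative mechanism changes character across $J_\tau$: parabolic smoothing at rate $\tau|\xi|^2$ below it, and oscillation plus unit-rate damping $|u_j|/\tau$ above it.

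In the low-frequency regime, the key step is to extract the parabolic structure by introducing the \emph{effective flux} $\mathcal{F} := u + \tau(h_4+H_4)\nabla r$ modeled on Darcy's law. Using \eqref{L}, $\mathcal{F}$ satisfies a transport–damped equation
\[
\partial_t \mathcal{F} + v\cdot\nabla \mathcal{F} + \tfrac{\mathcal{F}}{\tau} = -\tau h_3(h_4+H_4)\nabla(\nabla\cdot u) - (h_5+H_5)\nabla w + \mathcal{R},
\]
where $\mathcal{R}$ gathers remainders involving $\tau\,\partial_t H_4\,\nabla r$ (explaining the $\sum\|\partial_tH_i\|_{L^1_t(\dot B^{d/2})}$ contribution to $\mathcal{V}$). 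Substituting $u = \mathcal{F}-\tau(h_4+H_4)\nabla r$ into the $r$-equation yields a heat-type equation
\[
\partial_t r + v\cdot\nabla r - \tau h_3(h_4+H_4)\Delta r = -h_3\nabla\cdot \mathcal{F} + \text{l.o.t.},
\]
so that Lemma \ref{maximalheat} with $c_1 = \tau h_3 h_4$ at the $\dot B^{d/2-1}$ input level produces the prefactor $\tau$ in $\tau\|r\|^\ell_{L^1_t(\dot B^{d/2+1}\cap\dot B^{d/2+2})}$ and, by interpolation, $\sqrt\tau$ in the $\widetilde L^2_t$ norm of $r$. Lemma \ref{maximaldamped} applied to $\mathcal{F}$ then delivers the $\frac{1}{\tau}\|u+\tau(h_4+H_4)\nabla r\|_{L^1_t(\dot B^{d/2-1}\cap\dot B^{d/2})}$ control. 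The strong damping $w/\varepsilon$ directly gives the $\frac{1}{\varepsilon}$ and $\frac{1}{\sqrt\varepsilon}$ prefactors on $w$ via Lemma \ref{maximaldamped}; the condition $\varepsilon\leq\tau$ is used at this stage to absorb the coupling $(h_5+H_5)\nabla w$ into the dissipation available for $\mathcal{F}$.

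In the high-frequency regime $j\geq J_\tau-1$, neither $r$ nor $w$ enjoys parabolic smoothing and the unit-rate damping $u/\tau$ must be coupled to the antisymmetric transport terms to retrieve dissipation on $r$. Freezing the coefficients through the operator $\dot S_{j-1}(h_i+H_i)$ and absorbing remainders via Lemma \ref{lemmacom}, I would adapt the Beauchard–Zuazua framework of \cite{bea1} used in \cite{c0, c1} and construct a Lyapunov functional of the form
\[
\mathcal{L}_j^2 := \int\Big(\tfrac{w_j^2}{h_5+H_5} + \tfrac{r_j^2}{h_4+H_4} + \tfrac{|u_j|^2}{h_3+H_3}\Big)\,dx + 2\kappa\, 2^{-j}\int u_j\cdot\nabla r_j\,dx,
\]
with $\kappa>0$ small. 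The nonlinear weights symmetrize the linearized part, while the cross term produces, after integration by parts against $h_3\nabla\cdot u$ in the $r$-equation and $h_4\nabla r$ in the $u$-equation, a coercive $2^j\|r_j\|_{L^2}^2$ dissipation on top of $|u_j|^2/\tau$ and $w_j^2/\varepsilon$. Since $2^j\tau\gtrsim 1$ throughout this range, the cross-term prefactor $2^{-j}\leq\tau$ keeps $\mathcal{L}_j^2$ coercive, and summing the resulting $\ell^1(j)$ differential inequalities yields $\|r\|^h_{L^1_t(\dot B^{d/2+1})}$, $\|u\|^h_{L^1_t(\dot B^{d/2+1})}$ and the mixed $L^2_t$ bounds without loss of derivatives. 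Time derivatives of the weights are handled by time-integration by parts, producing the $\|\partial_t H_i\|_{L^1_t(\dot B^{d/2})}$ terms already present in $\mathcal{V}$.

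The main obstacle is the absence of genuine time integrability on $H_i$: only $\widetilde L^\infty_t(\dot B^{d/2-1}\cap\dot B^{d/2+1})$ smallness and $\partial_tH_i\in L^1_t(\dot B^{d/2})$ are available, which precludes a direct Gr\"onwall bound against $\|H_i\|_{L^1_t(\dot B^{d/2+1})}$. To circumvent this, I would route the variable-coefficient remainders $H_i\nabla r$, $H_i\nabla w$ and $H_i\nabla\cdot u$ through the $L^2_t$-type dissipation norms $\sqrt\tau\|r\|_{\widetilde L^2_t(\dot B^{d/2+1})}$, $\frac{1}{\sqrt\varepsilon}\|w\|_{\widetilde L^2_t(\dot B^{d/2+1})}$ and $\frac{1}{\sqrt\tau}\|u\|_{\widetilde L^2_t(\dot B^{d/2+1})}$ present in $\mathcal X$, using the product law \eqref{uv2} and Cauchy–Schwarz. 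The product of two such $L^2_t$-in-$\dot B^{d/2+1}$ norms then gains a compensating factor of $\sqrt{\varepsilon}$ or $\sqrt{\tau}$, which combines with the smallness $\mathcal{Z}(t)\leq c$ from \eqref{alinear} to absorb these terms into the left-hand side of \eqref{elinear}. Finally, the $L^1_t(\dot B^{d/2})$ bounds on $\partial_t y,\partial_tw,\partial_tr,\partial_tu$ are read off directly from \eqref{L} once the other norms are controlled, and Gr\"onwall's inequality applied against $\mathcal{V}(t)$ closes \eqref{elinear}.
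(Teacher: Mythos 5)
Your plan follows the paper's proof quite closely in structure: you use the transport lemma for $y$, introduce the effective flux $u+\tau(h_4+H_4)\nabla r$ to diagonalize the low-frequency subsystem (this is exactly the unknown $z$ of Lemma~\ref{lemmalow}), build a Beauchard--Zuazua type Lyapunov functional with nonlinear weights in high frequencies (Lemma~\ref{lemmahigh}), and route the variable-coefficient terms through the $L^2_t$-type dissipation norms to close the $\dot B^{\frac d2+1}$-level estimates without time integrability on the $H_i$ (this is Lemma~\ref{lemmaL2}). The condition $\varepsilon\leq\tau$ is used exactly where you indicate.

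However, your specific choice of weights in the Lyapunov functional is wrong, and this is not a cosmetic issue. You propose the energy density $\tfrac{w_j^2}{h_5+H_5}+\tfrac{r_j^2}{h_4+H_4}+\tfrac{|u_j|^2}{h_3+H_3}$. For the antisymmetric coupling to cancel upon differentiating in time, the coefficients $a,b,c$ of $|w_j|^2,|r_j|^2,|u_j|^2$ must satisfy $a\,(h_1+H_1)=c\,(h_5+H_5)$ (so that the $w$--$u$ coupling cancels) and $b\,(h_3+H_3)=c\,(h_4+H_4)$ (so that the $r$--$u$ coupling cancels). Normalizing $c=1$, this forces $a=\tfrac{h_5+H_5}{h_1+H_1}$ and $b=\tfrac{h_4+H_4}{h_3+H_3}$, which is what the paper uses. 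With your weights, after integration by parts the $\div u_j$ contributions from the $r$ and $u$ equations carry the mismatched factors $\tfrac{h_3+H_3}{h_4+H_4}$ and $\tfrac{h_4+H_4}{h_3+H_3}$, and their difference leaves a linear term $\sim\int\bigl(\tfrac{h_4+H_4}{h_3+H_3}-\tfrac{h_3+H_3}{h_4+H_4}\bigr)r_j\,\div u_j\,dx$ that is $O(1)$ in size, not small, and cannot be absorbed by dissipation or smallness of $\mathcal Z$. The same defect occurs in the $w$--$u$ pairing. With the corrected weights your argument would go through as written. The cross-term prefactor is a smaller deviation: you use $\kappa\,2^{-j}$ whereas the paper takes $\tfrac{\eta_*}{\tau}2^{-2j}$; both choices keep $\mathcal L_j$ coercive on $j\geq J_\tau-1$ and yield at least a $\tfrac{1}{\tau}\|r_j\|_{L^2}^2$ dissipation there, so this one is a legitimate variant.
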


\begin{proof}
First, we deal with the purely transport unknown $y$. By the regularity estimate in Lemma \ref{maximaldamped} for the transport equation $\eqref{L}_{1}$, it follows that
\begin{equation}\label{ey}
\begin{aligned}
\|y\|_{\widetilde{L}^{\infty}_{t}(\dot{B}^{\frac{d}{2}-1}\cap\dot{B}^{\frac{d}{2}+1})}&\lesssim {\rm{exp}}\Big(\int_{0}^{t}\|v(s)\|_{\dot{B}^{\frac{d}{2}+1}}ds\Big)\|y_{0}\|_{\dot{B}^{\frac{d}{2}-1}\cap \dot{B}^{\frac{d}{2}+1}}.
\end{aligned}
\end{equation}
And direct produce law \eqref{uv2} for the equation $\eqref{L}_{1}$ gives that
\begin{equation}\label{eyt}
\begin{aligned}
\|\partial_{t}y\|_{L^1_{t}(\dot{B}^{\frac{d}{2}})}\lesssim  \int_{0}^{t}\|v(s)\|_{\dot{B}^{\frac{d}{2}}}\|y(s)\|_{\dot{B}^{\frac{d}{2}+1}}ds.
\end{aligned}
\end{equation}
Similarly, we also get from \eqref{uv2} and $\eqref{L}_{3}$ that
\begin{equation}\label{rt}
\begin{aligned}
\|\partial_{t}r\|_{L^1_{t}(\dot{B}^{\frac{d}{2}})}&\lesssim \int_{0}^{t}\|v(s)\|_{\dot{B}^{\frac{d}{2}}} \|r(s)\|_{\dot{B}^{\frac{d}{2}+1}}ds+(1+\|H_{3}\|_{\widetilde{L}^{\infty}_{t}(\dot{B}^{\frac{d}{2}})})\|u\|_{L^1_{t}(\dot{B}^{\frac{d}{2}+1})}+\|S_{2}\|_{L^1_{t}(\dot{B}^{\frac{d}{2}})},
\end{aligned}
\end{equation}
and 
\begin{equation}\label{et}
\begin{aligned}
\|(\partial_{t}w,\partial_{t}u)\|_{L^1_{t}(\dot{B}^{\frac{d}{2}})}&\lesssim \int_{0}^{t}\|v(s)\|_{\dot{B}^{\frac{d}{2}}} \|(w,u)(s)\|_{\dot{B}^{\frac{d}{2}+1}}ds+\frac{1}{\tau}\|u+\tau (h_5+H_5  )\nabla r\|_{L^1_{t}(\dot{B}^{\frac{d}{2}})}\\
&\quad\quad+\Big(1+\sum_{i=1}^{5}\|H_{i}\|_{\widetilde{L}^{\infty}_{t}(\dot{B}^{\frac{d}{2}})}\Big)\left(\frac{1}{\var}\|w\|_{L^1_{t}(\dot{B}^{\frac{d}{2}})}+\|u\|_{L^1_{t}(\dot{B}^{\frac{d}{2}+1})}\right)+\|(S_{1},S_{3})\|_{L^1_{t}(\dot{B}^{\frac{d}{2}})}.
\end{aligned}
\end{equation}
The conclusion of the proof will follow from Lemmas \ref{lemmalow}-\ref{lemmaL2} given and proven in the next three subsections.
Indeed, combining \eqref{ey}-\eqref{et} and the uniform estimates of $(w,r,u)$ from Lemmas \ref{lemmalow}-\ref{lemmaL2} together and taking the constant $\eta>0$ suitable small in Lemma \ref{lemmaL2}, we obtain
\begin{equation}
\begin{aligned}
\mathcal{X}(t)&\lesssim \|(S_{1}, S_{2}, S_{3})\|_{L^1_{t}( \dot{B}^{\frac{d}{2}-1} \cap \dot{B}^{\frac{d}{2}+1})}+\big(\sqrt{\mathcal{Z}(t)}+\mathcal{Z}(t)\big)\mathcal{X}(t)+\int_{0}^{t}\mathcal{V}(s)\mathcal{X}(s)ds\nonumber\\
&\quad+ \|(y_{0},w_{0},r_{0},u_{0})\|_{\dot{B}^{\frac{d}{2}-1}\cap \dot{B}^{\frac{d}{2}+1}}.
\end{aligned}
\end{equation}
Then making use of the Gr\"onwall inequality and the smallness assumption \eqref{alinear} of $\mathcal{Z}(t),$ we obtain the uniform a-priori estimate \eqref{elinear}.
\end{proof}


\subsection{Low-frequency analysis}\label{subsectionlow}



Motivated by Darcy's law $\eqref{PM1}_{2}$, we introduce the following effective flux
\begin{align}
z:=u+\tau (h_4+H_4  )\nabla  r,\label{z}
\end{align}
which undergoes a purely damped effect in the low-frequency region $|\xi|\leq \frac{2^{k}}{\tau}$ and allows us to diagonalize the subsystem \eqref{L}$_{2}$-\eqref{L}$_{4}$ up to some higher-order terms that can be absorbed. Indeed, substituting \eqref{z} into \eqref{L}, we obtain
\begin{equation}
\left\{
\begin{aligned}
&\partial_{t}w+ \frac{h_2}{\var}w=L_1+R_1+S_{1},\\
&\partial_{t}r-h_{3}h_{4}\tau\Delta r=L_2+R_{2}+S_{2},\\
&\partial_{t}z+\frac{z}{\tau} =L_3+R_{3}+S_{3},\\
&(w,r,z)(0, x)=(w_{0},r_{0},z_{0})(x),
\end{aligned}
\right.\label{L-low1}
\end{equation}
where the higher-order linear terms $L_{i}$ ($i=1,2,3$) are denoted as
\begin{equation}\nonumber
\left\{
\begin{aligned}
&L_1:=h_{1} \big( h_{4}\tau\Delta r -\div z\big),\\
& L_2:=-h_{3}\div z,\\
&L_3:=h_{3}h_{4}\tau \nabla \big( h_{4}\tau  \Delta r-\div z\big)-h_{5}\nabla w,
\end{aligned}
\right.
\end{equation}
and the nonlinear terms $R_{i}$ ($i=1,2,3$) are defined by
\begin{equation}\label{R123}
\left\{
\begin{aligned}
&R_{1}:=-v\cdot\nabla w-H_{1}\div u+h_{1}\tau \div (H_4 \nabla r)- \dfrac{1}{\var}H_2 w,\\
&R_{2}:=-v\cdot\nabla r-H_{3}\div u+h_{3}\tau  \div (H_4 \nabla r),\\
&R_{3}:=-v\cdot\nabla u-H_{5}\nabla w-\tau\partial_t (H_4 \nabla r)+h_{4}\tau\nabla R_{2}.
\end{aligned}
\right.
\end{equation}

Now, to establish the $\dot{B}^{\frac{d}{2}-1}\cap\dot{B}^{\frac{d}{2}}$-estimates in low frequencies to the solutions of System \eqref{L} uniformly with respect to both $\var$ and $\tau$, we understand the equations in \eqref{L-low1} are decoupled. More precisely, we will treat the equations of $w$ and $z$ as damped equations and $r$ as a heat equation, respectively. This viewpoint plays a key role in the  proof of the following lemma.



\begin{lemma}\label{lemmalow} Let $T>0$, and the threshold $J_{\tau}$ be given by \eqref{J}. Then for $t\in(0,T)$, the solution  $(w,r,u)$ to the    linear problem \eqref{L}$_2$-\eqref{L}$_4$  satisfies
\begin{equation}\label{low}
\begin{aligned}
&\|(w,r,u)\|_{\widetilde{L}^{\infty}_{t}(\dot{B}^{\frac{d}{2}-1}\cap\dot{B}^{\frac{d}{2}})}^{\ell}+\frac{1}{\var}\|w\|_{L^1_{t}(\dot{B}^{\frac{d}{2}-1}\cap\dot{B}^{\frac{d}{2}})}^{\ell}+\frac{1}{\sqrt{\var}}\|w\|_{\widetilde{L}^2_{t}(\dot{B}^{\frac{d}{2}-1}\cap\dot{B}^{\frac{d}{2}})}^{\ell}\\
&\quad\quad+\tau\|r\|_{L^1_{t}(\dot{B}^{\frac{d}{2}+1}\cap\dot{B}^{\frac{d}{2}+2})}^{\ell}+\sqrt{\tau}\|r\|_{\widetilde{L}^2_{t}(\dot{B}^{\frac{d}{2}}\cap\dot{B}^{\frac{d}{2}+1})}^{\ell}\\
&\quad\quad+\|u\|_{L^1_{t}(\dot{B}^{\frac{d}{2}}\cap \dot{B}^{\frac{d}{2}+1})}^{\ell}+\frac{1}{\sqrt{\tau}}\|u\|_{\widetilde{L}^2_{t}(\dot{B}^{\frac{d}{2}-1}\cap \dot{B}^{\frac{d}{2}})}^{\ell}+\frac{1}{\tau}\|z\|_{L^1_{t}(\dot{B}^{\frac{d}{2}-1}\cap \dot{B}^{\frac{d}{2}})}^{\ell}\\
&\quad\lesssim \|(w_{0},r_{0},u_{0})\|_{\dot{B}^{\frac{d}{2}-1}\cap \dot{B}^{\frac{d}{2}}}+\|(S_{1},S_{2},S_{3})\|_{L^1_{t}(\dot{B}^{\frac{d}{2}-1}\cap\dot{B}^{\frac{d}{2}})}^{\ell}+\mathcal{Z}(t) \mathcal{X}(t)+\int_{0}^{t} \mathcal{V}(s)\mathcal{X}(s) ds ,
\end{aligned}
\end{equation}
where $\mathcal{Z}(t)$, $\mathcal{X}(t)$,  $\mathcal{V}(t)$ and $z$ are defined by \eqref{alinear}, \eqref{elinear}, \eqref{V} and \eqref{z}, respectively.
\end{lemma}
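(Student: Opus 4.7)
The main idea is to work with the decoupled system \eqref{L-low1} rather than directly with the original variables. Its three scalar equations can be understood, at the linear level, as: a purely damped equation for $w$ with rate $h_2/\varepsilon$; a heat equation for $r$ with diffusion $h_3 h_4\tau$; and a purely damped equation for $z$ with rate $1/\tau$. The cross terms $L_1, L_2, L_3$ carry factors of $\tau$ (or $1$, for $\div z$) times spatial derivatives, hence by Bernstein on dyadic blocks $j\leq J_\tau$ they are of size at most $2^k$ times the dissipated norms that will already appear on the left-hand side. Choosing the integer $k$ in \eqref{J} sufficiently negative will then make the whole linear coupling absorbable.

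To implement this, I localize \eqref{L-low1} by $\dot\Delta_j$ and apply the maximal regularity lemmas component-by-component. For \eqref{L-low1}$_1$, Lemma \ref{maximaldamped} with $v=0$ and $c_2=h_2/\varepsilon$, applied at $p=1,2,\infty$ and summed over $j\leq J_\tau$ against $2^{j(d/2-1)}$ and $2^{jd/2}$, yields
\begin{align*}
\|w\|^{\ell}_{\widetilde L^\infty_t(\dot B^{d/2-1}\cap\dot B^{d/2})}&+\varepsilon^{-1}\|w\|^{\ell}_{L^1_t(\dot B^{d/2-1}\cap\dot B^{d/2})}+\varepsilon^{-1/2}\|w\|^{\ell}_{\widetilde L^2_t(\dot B^{d/2-1}\cap\dot B^{d/2})}\\
&\lesssim \|w_0\|^{\ell}_{\dot B^{d/2-1}\cap\dot B^{d/2}}+\|L_1+R_1+S_1\|^{\ell}_{L^1_t(\dot B^{d/2-1}\cap\dot B^{d/2})}.
\end{align*}
Likewise, Lemma \ref{maximalheat} with $c_1=h_3 h_4\tau$ applied to \eqref{L-low1}$_2$ produces the $\tau$-weighted $L^1_t(\dot B^{d/2+1}\cap\dot B^{d/2+2})$ and $\sqrt\tau$-weighted $\widetilde L^2_t(\dot B^{d/2}\cap\dot B^{d/2+1})$ bounds on $r$; and Lemma \ref{maximaldamped} with $c_2=1/\tau$ applied to \eqref{L-low1}$_3$, together with a standard $L^2$ energy argument against $z$ itself to gain the $\tau^{-1/2}\widetilde L^2_t$ norm, yields all the $z$-bounds.

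The next step is to absorb the coupling remainders. Using Bernstein on low frequencies, together with $2^{J_\tau}\sim 2^k/\tau$, one has
\begin{align*}
\|\tau\Delta r\|^{\ell}_{L^1_t(\dot B^s)}&\lesssim 2^k\,\tau\|r\|^{\ell}_{L^1_t(\dot B^{s+1})},\qquad \|\div z\|^{\ell}_{L^1_t(\dot B^s)}\lesssim 2^k\,\tau^{-1}\|z\|^{\ell}_{L^1_t(\dot B^s)},\\
\|\tau^2\nabla\Delta r\|^{\ell}_{L^1_t(\dot B^s)}&\lesssim 2^k\,\tau\|r\|^{\ell}_{L^1_t(\dot B^{s+2})},\qquad \|\nabla w\|^{\ell}_{L^1_t(\dot B^s)}\lesssim \frac{2^k}{\tau}\|w\|^{\ell}_{L^1_t(\dot B^s)}\leq \frac{2^k}{\varepsilon}\|w\|^{\ell}_{L^1_t(\dot B^s)},
\end{align*}
where the last inequality crucially invokes the standing assumption $\varepsilon\leq\tau$. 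Each right-hand side matches a norm already present in $\mathcal{X}(t)$, and the small prefactor $2^k$ can be made arbitrarily small by choosing $k$ negative enough in \eqref{J}; this is the only place where $k$ is constrained, and it fixes the low/high frequency threshold $J_\tau$ once and for all. Summing the three component estimates after this absorption closes the linear part of the bound in the norms listed on the left of \eqref{low}, except for the $u$-norms, which are then recovered from $u=z-\tau(h_4+H_4)\nabla r$ via the product law \eqref{uv1}--\eqref{uv2}.

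It remains to estimate the nonlinear source terms $R_1,R_2,R_3$ from \eqref{R123}. The transport contributions $v\cdot\nabla(\cdot)$ are handled by Lemma \ref{lemmacom} and yield $\int_0^t\mathcal{V}(s)\mathcal{X}(s)\,ds$ terms; the $H_i$-weighted linear terms are controlled by Lemma \ref{lemma63} and produce $\mathcal{Z}(t)\mathcal{X}(t)$ terms, absorbed by the smallness \eqref{alinear}. The truly delicate piece is $\tau\partial_t(H_4\nabla r)$ in $R_3$: expanding it as $\tau(\partial_t H_4)\nabla r+\tau H_4\nabla(\partial_t r)$, the first summand is placed in $L^1_t(\dot B^s)$ via $\|\partial_t H_4\|_{\dot B^{d/2}}$, contributing to $\int \mathcal V\mathcal X$; for the second, I substitute $\partial_t r$ from \eqref{L-low1}$_2$, whereupon each resulting piece acquires either an additional factor $\tau 2^j\lesssim 2^k$ (for the $\tau\Delta r$ and $L_2$ parts) or reduces to previously controlled nonlinear structures. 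This bookkeeping of $\tau$-factors against Bernstein gains is the main technical obstacle and must be executed uniformly in $\varepsilon\leq\tau\leq 1$; once settled, combining all the pieces yields \eqref{low}.
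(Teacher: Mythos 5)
Your proposal is correct and follows essentially the same route as the paper: introduce the effective flux $z$, decouple \eqref{L-low1} into a damped equation for $w$, a heat equation for $r$ (via Lemma \ref{maximalheat}) and a damped equation for $z$ (via Lemma \ref{maximaldamped}), absorb the linear coupling terms $L_i$ using Bernstein and the smallness $\tau 2^{J_\tau}\sim 2^k\ll 1$ together with $\varepsilon\le\tau$, run the argument at both regularities $\tfrac d2-1$ and $\tfrac d2$, and recover $u$ from $u=z-\tau(h_4+H_4)\nabla r$.

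One place where you take an unnecessarily heavier route: for $\tau\partial_t(H_4\nabla r)$ you propose to substitute $\partial_t r$ back from \eqref{L-low1}$_2$ and then re-estimate each resulting piece. This works, but it is not needed: the functional $\mathcal X(t)$ defined in \eqref{elinear} already contains $\|\partial_t r\|_{L^1_t(\dot B^{d/2})}$, so after using the low-frequency gain to drop one derivative, the paper simply applies the product law to get
$\tau\|\partial_t(H_4\nabla r)\|^{\ell}_{L^1_t(\dot B^{d/2})}\lesssim\|\partial_t(H_4\nabla r)\|^{\ell}_{L^1_t(\dot B^{d/2-1})}\lesssim\int_0^t\|\partial_t H_4\|_{\dot B^{d/2}}\|r\|_{\dot B^{d/2}}\,ds+\|H_4\|_{\widetilde L^\infty_t(\dot B^{d/2})}\|\partial_t r\|_{L^1_t(\dot B^{d/2})}$,
bounded directly by $\int\mathcal V\mathcal X\,ds+\mathcal Z\mathcal X$ without going back to the equation. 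Also, a small arithmetic slip: your displayed inequality $\|\tau\Delta r\|^{\ell}_{L^1_t(\dot B^s)}\lesssim 2^k\tau\|r\|^{\ell}_{L^1_t(\dot B^{s+1})}$ has one extra factor of $\tau$; Bernstein gives either $\lesssim\tau\|r\|^{\ell}_{L^1_t(\dot B^{s+2})}$ (no $2^k$) or $\lesssim 2^k\|r\|^{\ell}_{L^1_t(\dot B^{s+1})}$ (no $\tau$). This does not affect the closability of the scheme, since $\tau\|r\|^{\ell}_{L^1_t(\dot B^{d/2+2})}$ is already controlled in $\mathcal X(t)$, but the bookkeeping should be stated precisely.
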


\begin{remark}
In {\rm\cite{burtea1}}, the authors obtained the low-frequency estimates by constructing a related Lyapunov functional.  However, that method does not lead to the desired estimates which uniform with respect to $\tau$. Moreover, it should be noted that the effective unknown $z$ given by \eqref{z} enables us to capture the heat-like behavior of the unknown $r$ in low frequencies directly, which is consistent with the  parabolic nature of the limiting porous media equations.
\end{remark}

\subsubsection{The $\dot{B}^{\frac{d}{2}}$-estimates}

We first perform $\dot{B}^{\frac{d}{2}}$-estimates in low frequencies for the heat equation \eqref{L-low1}$_{2}$.
It follows from the regularity estimate  in Lemma \ref{maximalheat} that
\begin{equation}\label{rd21}
\begin{aligned}
&\|r\|_{\widetilde{L}^{\infty}_{t}(\dot{B}^{\frac{d}{2}})}^{\ell}+\tau\|r\|_{L^1_{t}(\dot{B}^{\frac{d}{2}+2})}^{\ell}\lesssim \|r_{0}\|_{\dot{B}^{\frac{d}{2}}}^{\ell}+\|L_{2}\|_{L^1_{t}(\dot{B}^{\frac{d}{2}})}^{\ell}+\|R_{2}\|_{L^1_{t}(\dot{B}^{\frac{d}{2}})}^{\ell}+\|S_{2}\|_{L^1_{t}(\dot{B}^{\frac{d}{2}})}^{\ell}\\
&\quad\lesssim \|r_{0}\|_{\dot{B}^{\frac{d}{2}}}^{\ell}+2^{J_{\tau}}\|z\|_{L^1_{t}(\dot{B}^{\frac{d}{2}})}^{\ell}+\|R_{2}\|_{L^1_{t}(\dot{B}^{\frac{d}{2}})}^{\ell}+\|S_{2}\|_{L^1_{t}(\dot{B}^{\frac{d}{2}})}^{\ell}.
\end{aligned}
\end{equation}
Applying Lemma \ref{maximaldamped} to the damped equation \eqref{L-low1}$_{1}$, we get
\begin{equation}\label{wd21}
\begin{aligned}
&\|w\|_{\widetilde{L}^{\infty}_{t}(\dot{B}^{\frac{d}{2}})}^{\ell}+\frac{1}{\var}\|w\|_{L^1_{t}(\dot{B}^{\frac{d}{2}})}^{\ell}\\
&\quad\lesssim \|w_{0}\|_{\dot{B}^{\frac{d}{2}}}^{\ell}+\|L_{1}\|_{L^1_{t}(\dot{B}^{\frac{d}{2}})}^{\ell}+\|R_{1}\|_{L^1_{t}(\dot{B}^{\frac{d}{2}})}^{\ell}+\|S_{1}\|_{L^1_{t}(\dot{B}^{\frac{d}{2}})}^{\ell}\\
&\quad\lesssim \|w_{0}\|_{\dot{B}^{\frac{d}{2}}}^{\ell}+ \tau\|r\|_{L^1_{t}(\dot{B}^{\frac{d}{2}+2})}^{\ell}+2^{J_{\tau}}\|z\|_{L^1_{t}(\dot{B}^{\frac{d}{2}})}^{\ell}+\|R_{1}\|_{L^1_{t}(\dot{B}^{\frac{d}{2}})}^{\ell}+\|S_{1}\|_{L^1_{t}(\dot{B}^{\frac{d}{2}})}^{\ell}\\
&\quad\lesssim \|(w_{0},r_{0})\|_{\dot{B}^{\frac{d}{2}}}^{\ell}+2^{J_{\tau}}\|z\|_{L^1_{t}(\dot{B}^{\frac{d}{2}})}^{\ell}+\|(R_{1},R_{2})\|_{L^1_{t}(\dot{B}^{\frac{d}{2}})}^{\ell}+\|(S_{1},S_{2})\|_{L^1_{t}(\dot{B}^{\frac{d}{2}})}^{\ell},
\end{aligned}
\end{equation}
where we  used inequality \eqref{rd21} to control  terms involving $r$ in equation \eqref{L-low1}$_1$.

Similarly, by virtue of inequality \eqref{rd21} and Lemmas \ref{maximalheat}-\ref{maximaldamped}, we have for equation \eqref{L-low1}$_{3}$ that
\begin{equation}\label{zd2}
\begin{aligned}
&\|z\|_{\widetilde{L}^{\infty}_{t}(\dot{B}^{\frac{d}{2}})}^{\ell}+\frac{1}{\tau}\|z\|_{L^1_{t}(\dot{B}^{\frac{d}{2}})}^{\ell}\\
&\quad\lesssim \|z_{0}\|_{\dot{B}^{\frac{d}{2}}}^{\ell}+\|L_{3}\|_{L^1_{t}(\dot{B}^{\frac{d}{2}})}^{\ell}+\|(R_{3},S_{3})\|_{L^1_{t}(\dot{B}^{\frac{d}{2}})}^{\ell}\\
&\quad\lesssim \|z_{0}\|_{\dot{B}^{\frac{d}{2}}}^{\ell}+2^{J_{\tau}}\|w\|_{L^1_{t}(\dot{B}^{\frac{d}{2}})}^{\ell}+\tau^2 2^{J_{\tau}}\|r\|_{L^1_{t}(\dot{B}^{\frac{d}{2}+2})}^{\ell}+\tau2^{2J_{\tau}}\|z\|_{L^1_{t}(\dot{B}^{\frac{d}{2}+2})}^{\ell}+\|(R_{3},S_{3})\|_{L^1_{t}(\dot{B}^{\frac{d}{2}})}^{\ell}.
\end{aligned}
\end{equation}
 Since the threshold $J_{\tau}$ satisfies condition \eqref{J}, thus $\tau 2^{J_{\tau}}\sim 2^{k}<<1$ for   suitable negative integer $k$.  Due to the condiiton $\var\leq \tau$ so that $2^{J_{\tau}}\|w\|_{L^1_{t}(\dot{B}^{\frac{d}{2}})}^{\ell}\leq \frac{ 2^{k+1}}{\var}\|w\|_{L^1_{t}(\dot{B}^{\frac{d}{2}})}^{\ell}$, we have by the inequalities \eqref{rd21}-\eqref{zd2} that
\begin{equation}\label{wrzd21}
\begin{aligned}
&\|(w,r,z)\|_{\widetilde{L}^{\infty}_{t}(\dot{B}^{\frac{d}{2}})}^{\ell}+\tau\|r\|_{L^1_{t}(\dot{B}^{\frac{d}{2}+2})}^{\ell}+\frac{1}{\var}\|w\|_{L^1_{t}(\dot{B}^{\frac{d}{2}})}^{\ell}+\frac{1}{\tau}\|z\|_{L^1_{t}(\dot{B}^{\frac{d}{2}})}^{\ell}\\
&\quad\lesssim \|(w_{0},r_{0},z_{0})\|_{\dot{B}^{\frac{d}{2}}}^{\ell}+\|(R_{1},R_{2},R_{3})\|_{L^1_{t}(\dot{B}^{\frac{d}{2}})}^{\ell}+\|(S_{1},S_{2},S_{3})\|_{L^1_{t}(\dot{B}^{\frac{d}{2}})}^{\ell}.
\end{aligned}
\end{equation}
The terms on the right-hand side of \eqref{wrzd21} can be estimated as follows. First, one derives from inequality \eqref{lhl} and   product law \eqref{uv2} and the composition estimate \eqref{F1} that
\begin{equation}\label{wrzd211}
\begin{aligned}
\|z_{0}\|_{\dot{B}^{\frac{d}{2}}}^{\ell}&\lesssim \|(r_{0},u_{0})\|_{\dot{B}^{\frac{d}{2}}}^{\ell}+  \|H_{4}(0)\|_{\dot{B}^{\frac{d}{2}}} \|r_{0}\|_{\dot{B}^{\frac{d}{2}}}\lesssim \|(r_{0},u_{0})\|_{\dot{B}^{\frac{d}{2}}}.
\end{aligned}
\end{equation}
By the product law \eqref{uv2} again, we also get
\begin{equation}\label{wrzd212}
\left\{
\begin{aligned}
&\|v\cdot\nabla w\|_{L^1_{t}(\dot{B}^{\frac{d}{2}})}^{\ell}\lesssim  \int_{0}^{t} \|v(s)\|_{\dot{B}^{\frac{d}{2}}}  \|w(s)\|_{\dot{B}^{\frac{d}{2}+1}}ds,\\
&\|H_{1}\div u\|_{L^1_{t}(\dot{B}^{\frac{d}{2}})}^{\ell}\lesssim \|H_{1}\|_{\widetilde{L}^{\infty}_t(\dot{B}^{\frac{d}{2}})}\|u\|_{L^1_{t}(\dot{B}^{\frac{d}{2}+1})},\\
&\frac{1}{\var}\|H_{2}w\|_{L^1_{t}(\dot{B}^{\frac{d}{2}})}^{\ell}\lesssim \|H_{2}\|_{\widetilde{L}^{\infty}_{t}(\dot{B}^{\frac{d}{2}})}\frac{1}{\var}\|w\|_{L^1_{t}(\dot{B}^{\frac{d}{2}})}.
\end{aligned}
\right.
\end{equation}
According to \eqref{lhl} and \eqref{uv1}-\eqref{uv2}, the tricky nonlinear term $H_4 \nabla r$ in \eqref{R123} can be estimated as
\begin{equation}\label{wrzd213}
\begin{aligned}
&\tau\| \div(H_4 \nabla r)\|_{L^1_{t}(\dot{B}^{\frac{d}{2}})}^{\ell}\\
&\quad\lesssim \tau\|H_4 \nabla r^{\ell}\|_{L^1_{t}(\dot{B}^{\frac{d}{2}+1})}^{\ell}+\tau\|H_4 \nabla r^{h}\|_{L^1_{t}(\dot{B}^{\frac{d}{2}+1})}^{\ell}\\
&\quad\lesssim \tau\|H_4 \nabla r^{\ell}\|_{L^1_{t}(\dot{B}^{\frac{d}{2}+1})}^{\ell}+\|H_4 \nabla r^{h}\|_{L^1_{t}(\dot{B}^{\frac{d}{2}})}^{\ell}\\
&\quad\lesssim \|H_{4}\|_{\widetilde{L}^{\infty}_{t}(\dot{B}^{\frac{d}{2}})}\tau\|r\|_{L^1_{t}(\dot{B}^{\frac{d}{2}+2})}^{\ell}+\|H_{4}\|_{\widetilde{L}^{\infty}_{t}(\dot{B}^{\frac{d}{2}+1})} \tau\|r\|_{L^1_{t}(\dot{B}^{\frac{d}{2}+1})}^{\ell}+\|H_{4}\|_{\widetilde{L}^{\infty}_{t}(\dot{B}^{\frac{d}{2}})}\|r\|_{L^1_{t}(\dot{B}^{\frac{d}{2}+1})}^{h}.
\end{aligned}
\end{equation}

\begin{remark}\label{rmkH4r}
The above estimate \eqref{wrzd213} for $H_4 \nabla r$ arising from two pressures implies that one needs uniform $\dot{B}^{\frac d2-1}$-estimates for low frequencies. Indeed, as $H_4$ does not have the either $L^1$-in-time or $L^2$-in-time integrability property, the product law  \eqref{uv1} in  $\dot{B}^{\frac d2+1}$ indicates us to discover the control of $\tau\|r\|_{L^1_{t}(\dot{B}^{\frac{d}{2}+1})}^{\ell},$ which can not be obtained from the $\dot{B}^{\frac d2}$-estimates in this section.
\end{remark}
\begin{remark}\label{rmkH4r2}
It is also one of the reasons why we need to perform the $\dot{B}^{\frac d2+1}$-estimates in the both low and high frequencies in the later Section \ref{sectiond21}. Indeed, in the low-frequency setting, the uniform $\widetilde{L}^\infty_t(\dot{B}^{\frac d2})$-norm is not enough to produce the uniform $\widetilde{L}^\infty_t(\dot{B}^{\frac d2+1})$-estimates required in \eqref{wrzd213} due to the inclusion \eqref{lhl}.
\end{remark}

Now, one derives from inequalities \eqref{wrzd211}-\eqref{wrzd213} that
\begin{equation}\label{R1d2}
\begin{aligned}
\|R_{1}\|_{L^1_{t}(\dot{B}^{\frac{d}{2}})}^{\ell}&\lesssim \|v\cdot\nabla w\|_{L^1_{t}(\dot{B}^{\frac{d}{2}})}^{\ell}+\|H_{1}\div u\|_{L^1_{t}(\dot{B}^{\frac{d}{2}})}^{\ell}\\
&\quad+\tau\| H_4\nabla r\|_{L^1_{t}(\dot{B}^{\frac{d}{2}+1})}^{\ell}+\frac{1}{\var}\|H_{2}w\|_{L^1_{t}(\dot{B}^{\frac{d}{2}})}^{\ell}\\
&\lesssim \mathcal{Z}(t)\mathcal{X}(t)+\int_{0}^{t}\mathcal{V}(s)\mathcal{X}(s)ds.
\end{aligned}
\end{equation}
Similarly, we have
\begin{equation}\label{R2d2}
\begin{aligned}
\|R_{2}\|_{L^1_{t}(\dot{B}^{\frac{d}{2}})}^{\ell}&\lesssim \|v\cdot\nabla r\|_{L^1_{t}(\dot{B}^{\frac{d}{2}})}^{\ell}+\|H_{3}\div u\|_{L^1_{t}(\dot{B}^{\frac{d}{2}})}^{\ell}+\tau\| H_4 \nabla r\|_{L^1_{t}(\dot{B}^{\frac{d}{2}+1})}^{\ell}\\
&\lesssim \mathcal{Z}(t)\mathcal{X}(t)+\int_{0}^{t}\mathcal{V}(s)\mathcal{X}(s)ds.
\end{aligned}
\end{equation}
To estimate $R_{3}$, we notice that \eqref{lhl} together with \eqref{uv2} implies
\begin{equation}\label{r3d2d1}
\begin{aligned}
\tau\|\partial_t( H_4\nabla r)\|_{L^1_{t}(\dot{B}^{\frac{d}{2}})}^{\ell}&\lesssim \|\partial_t( H_4\nabla r)\|_{L^1_{t}(\dot{B}^{\frac{d}{2}-1})}^{\ell}\\
&\lesssim \int_{0}^{t}\|\partial_{t}H_{4}(s)\|_{\dot{B}^{\frac{d}{2}}} \|r(s)\|_{\dot{B}^{\frac{d}{2}}}ds+\|H_{4}\|_{\widetilde{L}^{\infty}_{t}(\dot{B}^{\frac{d}{2}})} \|\partial_{t}r\|_{L^1_{t}(\dot{B}^{\frac{d}{2}})},
\end{aligned}
\end{equation}
and
\begin{equation}\nonumber
\begin{aligned}
\|H_{5}\nabla w\|_{L^1_{t}(\dot{B}^{\frac{d}{2}})}^{\ell}\lesssim \frac{1}{\tau} \|H_{5}\nabla w\|_{L^1_{t}(\dot{B}^{\frac{d}{2}-1})}^{\ell}\lesssim \|H_{5}\|_{\widetilde{L}^{\infty}_{t}(\dot{B}^{\frac{d}{2}} )}\frac{1}{\var}\|w\|_{L^1_{t}(\dot{B}^{\frac{d}{2}})},
\end{aligned}
\end{equation}
where we   used the assumption $\var\leq \tau$. Thus, it holds that
\begin{equation}\label{R3d2}
\begin{aligned}
\|R_{3}\|_{L^1_{t}(\dot{B}^{\frac{d}{2}})}^{\ell}&\lesssim\int_{0}^{t}\|v(s)\|_{\dot{B}^{\frac{d}{2}}}\|u(s)\|_{\dot{B}^{\frac{d}{2}+1}}ds+\tau\| H_4 \nabla r\|_{L^1_{t}(\dot{B}^{\frac{d}{2}+1})}^{\ell}+\|H_{5}\nabla w\|_{L^1_{t}(\dot{B}^{\frac{d}{2}})}^{\ell}\\
&\quad+\tau\|\partial_t( H_4\nabla r) \|_{L^1_{t}(\dot{B}^{\frac{d}{2}-1})}^{\ell}+\|R_{2}\|_{L^1_{t}(\dot{B}^{\frac{d}{2}-1})}^{\ell}\\
&\lesssim  \mathcal{Z}(t) \mathcal{X}(t)+\int_{0}^{t}\mathcal{V}(s)\mathcal{X}(s)ds.
\end{aligned}
\end{equation}
We substitute inequalities \eqref{wrzd211}, \eqref{R1d2}-\eqref{R2d2} and \eqref{R3d2} into inequality \eqref{wrzd21} and use standard interpolation to get
\begin{equation}\label{low11}
\begin{aligned}
&\|(w,r,z)\|_{\widetilde{L}^{\infty}_{t}(\dot{B}^{\frac{d}{2}})}^{\ell}+\tau\|r\|_{L^1_{t}(\dot{B}^{\frac{d}{2}+2})}^{\ell}+\sqrt{\tau}\|r\|_{\widetilde{L}^2_{t}(\dot{B}^{\frac{d}{2}+1})}^{\ell}\\
&\quad\quad+\frac{1}{\var}\|w\|_{L^1_{t}(\dot{B}^{\frac{d}{2}})}^{\ell}+\frac{1}{\sqrt{\var}}\|w\|_{\widetilde{L}^2_{t}(\dot{B}^{\frac{d}{2}})}^{\ell}+\frac{1}{\tau}\|z\|_{L^1_{t}(\dot{B}^{\frac{d}{2}})}^{\ell}\\
&\quad\lesssim \|(w_{0},r_{0},u_{0})\|_{\dot{B}^{\frac{d}{2}}}+\|(S_{1},S_{2},S_{3})\|_{L^1_{t}(\dot{B}^{\frac{d}{2}})}^{\ell}+\mathcal{Z}(t) \mathcal{X}(t)+\int_{0}^{t}\mathcal{V}(s)\mathcal{X}(s)ds.
\end{aligned}
\end{equation}
Thence, we rewrite the form \eqref{z} and use inequalities \eqref{lhl} and \eqref{wrzd213} to obtain the $L^{1}_{t}(\dot{B}^{\frac{d}{2}})$-estimate of $u$ as follows:
\begin{equation}\nonumber
\begin{aligned}
\|u\|_{L^1_{t}(\dot{B}^{\frac{d}{2}+1})}^{\ell}&\lesssim \|z\|_{L^1_{t}(\dot{B}^{\frac{d}{2}+1})}^{\ell}+\tau\|\nabla r\|_{L^1_{t}(\dot{B}^{\frac{d}{2}+1})}^{\ell}+\tau\|H_{4} \nabla r\|_{L^1_{t}(\dot{B}^{\frac{d}{2}+1})}^{\ell} \\
&\lesssim\frac{1}{\tau}\|z\|_{L^1_{t}(\dot{B}^{\frac{d}{2}})}^{\ell}+\tau\|r\|_{L^1_{t}(\dot{B}^{\frac{d}{2}+2})}^{\ell}+\|H_{4}\|_{\widetilde{L}^{\infty}_{t}(\dot{B}^{\frac{d}{2}})}\tau\|r\|_{L^1_{t}(\dot{B}^{\frac{d}{2}+2})}^{\ell} \\
&\quad+\|H_{4}\|_{\widetilde{L}^{\infty}_{t}(\dot{B}^{\frac{d}{2}+1})} \tau\|r\|_{L^1_{t}(\dot{B}^{\frac{d}{2}+1})}^{\ell}+\|H_{4}\|_{\widetilde{L}^{\infty}_{t}(\dot{B}^{\frac{d}{2}})}\|r\|_{L^1_{t}(\dot{B}^{\frac{d}{2}+1})}^{h}.
\end{aligned}
\end{equation}
Similarly, we have
\begin{equation}\nonumber
\begin{aligned}
\|u\|_{\widetilde{L}^{\infty}_{t}(\dot{B}^{\frac{d}{2}})}^{\ell}&\lesssim \|(z,r)\|_{\widetilde{L}^{\infty}_{t}(\dot{B}^{\frac{d}{2}})}^{\ell}+\|H_{4}\|_{\widetilde{L}^{\infty}_{t}(\dot{B}^{\frac{d}{2}})}\|r\|_{\widetilde{L}^{\infty}_{t}(\dot{B}^{\frac{d}{2}})},
\end{aligned}
\end{equation}
and
\begin{equation}\nonumber
\begin{aligned}
\frac{1}{\sqrt{\tau}}\|u\|_{\widetilde{L}^2_{t}(\dot{B}^{\frac{d}{2}})}^{\ell}&\lesssim \frac{1}{\sqrt{\tau}}\|z\|_{\widetilde{L}^2_{t}(\dot{B}^{\frac{d}{2}})}^{\ell}+\sqrt{\tau}\|r\|_{\widetilde{L}^2_{t}(\dot{B}^{\frac{d}{2}+1})}^{\ell}+\|H_{4}\|_{\widetilde{L}^{\infty}_{t}(\dot{B}^{\frac{d}{2}})}\sqrt{\tau}\|r\|_{\widetilde{L}^2_{t}(\dot{B}^{\frac{d}{2}+1})}.
\end{aligned}
\end{equation}
We thus obtain from inequality \eqref{low11} that
\begin{equation}\label{low14}
    \begin{aligned}
   &\|u\|_{\widetilde{L}^{\infty}_{t}(\dot{B}^{\frac{d}{2}})}^{\ell}+ \frac{1}{\sqrt{\tau}}\|u\|_{\widetilde{L}^2_{t}(\dot{B}^{\frac{d}{2}})}^{\ell}+\|u\|_{L^1_{t}(\dot{B}^{\frac{d}{2}+1})}^{\ell}\\
   &\quad \lesssim \|(w_{0},r_{0},u_{0})\|_{\dot{B}^{\frac{d}{2}}}+\|(S_{1},S_{2},S_{3})\|_{L^1_{t}(\dot{B}^{\frac{d}{2}})}^{\ell}+\mathcal{Z}(t) \mathcal{X}(t)+\int_{0}^{t}\mathcal{V}(s)\mathcal{X}(s)ds.
    \end{aligned}
\end{equation}

\subsubsection{The $\dot{B}^{\frac{d}{2}-1}$-estimates}


We perform the $\dot{B}^{\frac{d}{2}-1}$-estimates so as to control $\tau\|r\|_{L^1_{t}(\dot{B}^{\frac{d}{2}+1})}^{\ell}$, as explained in Remark \ref{rmkH4r}. Arguing similarly as for inequalities  \eqref{rd21}-\eqref{wrzd21}, we have
\begin{equation}\label{wrzd2-11}
\begin{aligned}
&\|(w,r,z)\|_{\widetilde{L}^{\infty}_{t}(\dot{B}^{\frac{d}{2}-1})}^{\ell}+\tau\|r\|_{L^1_{t}(\dot{B}^{\frac{d}{2}+1})}^{\ell}+\frac{1}{\var}\|w\|_{L^1_{t}(\dot{B}^{\frac{d}{2}-1})}^{\ell}+\frac{1}{\tau}\|z\|_{L^1_{t}(\dot{B}^{\frac{d}{2}-1})}^{\ell}\\
&\quad\lesssim\|(w_{0},r_{0},u_{0})\|_{\dot{B}^{\frac{d}{2}-1}}+\|(R_{1},R_{2},R_{3})\|_{L^1_{t}(\dot{B}^{\frac{d}{2}-1})}^{\ell}+\|(S_{1},S_{2},S_{3})\|_{L^1_{t}(\dot{B}^{\frac{d}{2}-1})}^{\ell}.
\end{aligned}
\end{equation}
Direct calculations give
\begin{equation}\label{R1R2d2-1}
\begin{aligned}
&\|(R_{1},R_{2})\|_{L^1_{t}(\dot{B}^{\frac{d}{2}-1})}^{\ell}\\
&\quad\lesssim \int_{0}^{t}\|v(s)\|_{\dot{B}^{\frac{d}{2}}}  \|(w,r)(s)\|_{\dot{B}^{\frac{d}{2}}}ds+\|(H_{1},H_{3})\|_{\widetilde{L}^{\infty}_t(\dot{B}^{\frac{d}{2}})}\|u\|_{L^1_{t}(\dot{B}^{\frac{d}{2}})}\\
&\quad\quad+ \|H_4\|_{\widetilde{L}^{\infty}_t(\dot{B}^{\frac{d}{2}})} \tau\|r\|_{L^1_{t}(\dot{B}^{\frac{d}{2}+1})}+\|H_{2}\|_{\widetilde{L}^{\infty}_{t}(\dot{B}^{\frac{d}{2}})}\frac{1}{\var}\|w\|_{L^1_{t}(\dot{B}^{\frac{d}{2}-1})}\\
&\quad\lesssim  \mathcal{Z}(t) \mathcal{X}(t)+\int_{0}^{t}\mathcal{V}(s)\mathcal{X}(s)ds.
\end{aligned}
\end{equation}
By inequalities \eqref{r3d2d1}, \eqref{R1R2d2-1} and product law \eqref{uv2} for $d\geq2$, the term $R_{3}$ can be bounded by
\begin{equation}\label{R3d2-1}
\begin{aligned}
\|R_{3}\|_{L^1_{t}(\dot{B}^{\frac{d}{2}-1})}^{\ell}&\lesssim \int_{0}^{t}\|v(s)\|_{\dot{B}^{\frac{d}{2}}}  \|u(s)\|_{\dot{B}^{\frac{d}{2}}}ds+\|H_{5}\|_{\widetilde{L}^{\infty}_{t}(\dot{B}^{\frac{d}{2}} )}\frac{1}{\var}\| w\|_{L^1_{t}(\dot{B}^{\frac{d}{2}})}\\
&\quad\quad+\tau\|\partial_t( H_4\nabla r)\|_{L^1_{t}(\dot{B}^{\frac{d}{2}-1})}^{\ell}+\|R_{2}\|_{L^1_{t}(\dot{B}^{\frac{d}{2}-1})}^{\ell}\\
&\lesssim \mathcal{Z}(t) \mathcal{X}(t)+\int_{0}^{t}\mathcal{V}(s)\mathcal{X}(s)ds.
\end{aligned}
\end{equation}
Inserting \eqref{R1R2d2-1} and \eqref{R3d2-1} into \eqref{wrzd2-11} and taking advantage of interpolation, we obtain
\begin{equation}\label{low21}
\begin{aligned}
&\|(w,r,z)\|_{\widetilde{L}^{\infty}_{t}(\dot{B}^{\frac{d}{2}-1})}^{\ell}+\tau\|r\|_{L^1_{t}(\dot{B}^{\frac{d}{2}+1})}^{\ell}+\sqrt{\tau}\|r\|_{\widetilde{L}^2_{t}(\dot{B}^{\frac{d}{2}})}^{\ell}\\
&\quad\quad+\frac{1}{\var}\|w\|_{L^1_{t}(\dot{B}^{\frac{d}{2}-1})}^{\ell}+\frac{1}{\sqrt{\var}}\|w\|_{\widetilde{L}^2_{t}(\dot{B}^{\frac{d}{2}-1})}^{\ell}+\frac{1}{\tau}\|z\|_{L^1_{t}(\dot{B}^{\frac{d}{2}-1})}^{\ell}\\
&\quad\lesssim \|(w_{0},r_{0},u_{0})\|_{\dot{B}^{\frac{d}{2}-1}}+\|(S_{1}, S_{2}, S_{3})\|_{L^1_{t}( \dot{B}^{\frac{d}{2}-1})}^{\ell}\\
&\quad\quad+\mathcal{Z}(t) \mathcal{X}(t)+\int_{0}^{t}\mathcal{V}(s)\mathcal{X}(s)ds.
\end{aligned}
\end{equation}
This together with inequality \eqref{lhl} and the fact that $u=z-\tau(h_4+H_4)\nabla r$ leads to
\begin{equation}\nonumber
\begin{aligned}
\|u\|_{\widetilde{L}^{\infty}_{t}(\dot{B}^{\frac{d}{2}-1})}^{\ell}&\lesssim \|(z,r)\|_{\widetilde{L}^{\infty}_{t}(\dot{B}^{\frac{d}{2}-1})}^{\ell}+\|H_{4}\|_{\widetilde{L}^{\infty}_{t}(\dot{B}^{\frac{d}{2}})}\|r\|_{\widetilde{L}^{\infty}_{t}(\dot{B}^{\frac{d}{2}-1})}.
\end{aligned}
\end{equation}
Similarly, one gets
\begin{equation}\nonumber
\begin{aligned}
\frac{1}{\sqrt{\tau}}\|u\|_{\widetilde{L}^2_{t}(\dot{B}^{\frac{d}{2}-1})}^{\ell}&\lesssim \frac{1}{\sqrt{\tau}}\|z\|_{\widetilde{L}^2_{t}(\dot{B}^{\frac{d}{2}-1})}^{\ell}+\sqrt{\tau}\|r\|_{\widetilde{L}^2_{t}(\dot{B}^{\frac{d}{2}})}^{\ell}+\|H_{4}\|_{\widetilde{L}^{\infty}_{t}(\dot{B}^{\frac{d}{2}})}\sqrt{\tau}\|r\|_{\widetilde{L}^2_{t}(\dot{B}^{\frac{d}{2}})},
\end{aligned}
\end{equation}
and
\begin{equation}\nonumber
\begin{aligned}
\|u\|_{L^1_{t}(\dot{B}^{\frac{d}{2}})}^{\ell}&\lesssim \frac{1}{\tau}\|z\|_{L^1_{t}(\dot{B}^{\frac{d}{2}-1})}^{\ell}+\tau\|r\|_{L^1_{t}(\dot{B}^{\frac{d}{2}+1})}^{\ell}+\|H_{4}\|_{\widetilde{L}^{\infty}_{t}(\dot{B}^{\frac{d}{2}})}\tau\|r\|_{L^1_{t}(\dot{B}^{\frac{d}{2}+1})}.
\end{aligned}
\end{equation}
Combining the above three estimates, we are led to
\begin{equation}\label{low24}
\begin{aligned}
\|u\|_{\widetilde{L}^{\infty}_{t}(\dot{B}^{\frac{d}{2}-1})}^{\ell}+\frac{1}{\sqrt{\tau}}\|u\|_{\widetilde{L}^2_{t}(\dot{B}^{\frac{d}{2}-1})}^{\ell}+\|u\|_{L^1_{t}(\dot{B}^{\frac{d}{2}})}^{\ell}&\lesssim \|(w_{0},r_{0},u_{0})\|_{\dot{B}^{\frac{d}{2}-1}}+\|(S_{1},S_{2},S_{3})\|_{L^1_{t}(\dot{B}^{\frac{d}{2}-1})}^{\ell}\\
&\quad\quad+\mathcal{Z}(t) \mathcal{X}(t)+\int_{0}^{t}\mathcal{V}(s)\mathcal{X}(s)ds.
\end{aligned}
\end{equation}
Putting the estimates \eqref{low},  \eqref{low11}, \eqref{low14} and \eqref{low21}, \eqref{low24} together, we complete the proof of Lemma \ref{lemmalow}.

\subsection{High-frequency analysis}\label{subsectionhigh}

In this section, we establish some uniform high-frequency estimates of solutions to the linear problem \eqref{L} in terms of the Lyapunov functional. More precisely, we establish the $\widetilde{L}^{\infty}_{t}(\dot{B}^{\frac{d}{2}-1}\cap \dot{B}^{\frac{d}{2}})$-estimates, and furthermore obtain the control of higher-order $L^1_{t}(\dot{B}^{\frac{d}{2}}\cap \dot{B}^{\frac{d}{2}+1})$-norms.



\begin{lemma}\label{lemmahigh} 
Let $T>0$, and the threshold $J_{\tau}$ be given by \eqref{J}. Then for any $t\in(0,T)$, the solution $(w,r,u)$ to the linear problem \eqref{L}$_2$-\eqref{L}$_4$ satisfies
\begin{equation}\label{high}
\begin{aligned}
&\|(w,r,u)\|_{\widetilde{L}^{\infty}_{t}(\dot{B}^{\frac{d}{2}-1}\cap\dot{B}^{\frac{d}{2}})}^{h}+\|(w,r,u)\|_{L^1_{t}(\dot{B}^{\frac{d}{2}+1})}^{h}+\frac{1}{\var}\|w\|_{L^1_{t}(\dot{B}^{\frac{d}{2}-1}\cap\dot{B}^{\frac{d}{2}})}^{h}+\frac{1}{\sqrt{\var}}\|w\|_{\widetilde{L}^2_{t}(\dot{B}^{\frac{d}{2}-1}\cap\dot{B}^{\frac{d}{2}})}^{h}\\
&\quad\quad+\sqrt{\tau}\|r\|_{\widetilde{L}^{2}_{t}(\dot{B}^{\frac{d}{2}+1})}^{h}+\frac{1}{\tau}\|z\|_{L^1_{t}(\dot{B}^{\frac{d}{2}-1}\cap\dot{B}^{\frac{d}{2}})}^{h}\\
&\quad\lesssim \|(w_{0},r_{0},u_{0})\|_{\dot{B}^{\frac{d}{2}+1}}^{h}+\|(S_{1},S_{2},S_{3})\|_{L^1_{t}(\dot{B}^{\frac{d}{2}+1})}^{h}+\mathcal{Z}(t)\mathcal{X}(t)+\int_{0}^{t}\mathcal{V}(s)\mathcal{X}(s)ds.
\end{aligned}
\end{equation}
\end{lemma}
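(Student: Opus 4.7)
My plan is to apply the dyadic operator $\dot\Delta_j$ (for $j\geq J_\tau-1$) to the subsystem \eqref{L}$_2$-\eqref{L}$_4$ and construct a frequency-localized Lyapunov functional in the spirit of Beauchard-Zuazua \cite{bea1}, augmented by nonlinear weights so as to overcome the lack of symmetry of the coefficients $h_1,h_3,h_4,h_5$. Since these constants are \emph{a priori} unrelated, the naive $L^2$ inner product fails to cancel the derivative cross-terms between $\div u$ and $\nabla w,\nabla r$. I would therefore introduce positive scalar weights $a=a(H),b=b(H),c=c(H)$ built from the $F_i$'s satisfying the symmetrization identities $a(h_1+H_1)=c(h_5+H_5)$ and $b(h_3+H_3)=c(h_4+H_4)$, and test the localized equations against $(aw_j,br_j,cu_j)$. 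Integration by parts on the cross-terms then produces cancellations up to lower-order terms involving $\nabla H_i$, $\partial_t H_i$, and the commutators $[\dot\Delta_j,H_i]$, all of which are controllable by Lemmas \ref{lemma63} and \ref{lemmacom}. This yields a basic identity
\[
\tfrac12\tfrac{d}{dt}\mathcal{E}_j^0+\tfrac{c_1}{\varepsilon}\|w_j\|_{L^2}^2+\tfrac{c_2}{\tau}\|u_j\|_{L^2}^2\lesssim \|(S_{1,j},S_{3,j})\|_{L^2}\|(w_j,u_j)\|_{L^2}+\text{l.o.t.},
\]
with $\mathcal{E}_j^0\sim\|(w_j,r_j,u_j)\|_{L^2}^2$, the crucial point being that \emph{no} dissipation is yet recovered on $r_j$.

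To supply the missing dissipation on $r_j$ through the classical hypocoercive mechanism, I would incorporate the cross-term
\[
\mathcal{M}_j:=\eta\,2^{-2j}\!\int_{\mathbb{R}^d}(h_4+H_4)\,\nabla r_j\cdot u_j\,dx
\]
with $\eta>0$ small. Differentiating $\mathcal{M}_j$ in time and using \eqref{L}$_3$-\eqref{L}$_4$, the leading contribution is $-\eta\,2^{-2j}\!\int(h_4+H_4)^2|\nabla r_j|^2\,dx\leq-c\eta\|r_j\|_{L^2}^2$, thanks to $\|\nabla r_j\|_{L^2}^2\sim 2^{2j}\|r_j\|_{L^2}^2$ on frequencies $j\geq J_\tau-1$. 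The remaining pieces, in particular the term arising from $\partial_t r_j$ (controlled by $\|\div u_j\|\,\|r_j\|$) and the one arising from $\partial_t u_j$ (proportional to $\|u_j\|^2/\tau$), are absorbable against the $\varepsilon^{-1}\|w_j\|^2$ and $\tau^{-1}\|u_j\|^2$ dissipation from the first step, provided $\eta$ is chosen small enough. Forming the Lyapunov functional $\mathcal{L}_j^2:=\mathcal{E}_j^0+\mathcal{M}_j$, which remains equivalent to $\|(w_j,r_j,u_j)\|_{L^2}^2$ uniformly in $j,\varepsilon,\tau$ (since $|\mathcal{M}_j|\leq C\eta\|r_j\|\,\|u_j\|$), I would arrive at a block-wise differential inequality
\[
\tfrac{d}{dt}\mathcal{L}_j+\kappa_j\mathcal{L}_j\lesssim \|(S_{1,j},S_{2,j},S_{3,j})\|_{L^2}+\mathcal{R}_j,
\]
with damping rate $\kappa_j\sim\min\{\varepsilon^{-1},\tau^{-1},2^{2j}\tau\}\gtrsim 2^j$ on high frequencies (using $\varepsilon\leq\tau$ and \eqref{J}), and $\mathcal{R}_j$ collecting the commutator and variable-coefficient contributions.

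Integrating in time, multiplying by $2^{j(\frac{d}{2}+1)}$ and summing over $j\geq J_\tau-1$ simultaneously produces $\widetilde L^\infty_t(\dot B^{\frac{d}{2}+1})$ control and, via $\kappa_j\gtrsim 2^j$, an $L^1_t(\dot B^{\frac{d}{2}+1})$ bound for $(w,r,u)$; the weaker $\widetilde L^\infty_t(\dot B^{\frac{d}{2}-1}\cap\dot B^{\frac{d}{2}})$ norms then follow by the Bernstein-type inclusion \eqref{lhl}. The $\varepsilon^{-1}$ dissipation on $w$, the $\sqrt\tau$-weighted $\widetilde L^2_t$ dissipation on $r$, and the $\tau^{-1}$ control on $z=u+\tau(h_4+H_4)\nabla r$ are either direct consequences of the block inequality (for $w$ and, by interpolation, for $r$) or follow from the defining identity of $z$ combined with \eqref{uv1}-\eqref{uv2} and the already-obtained bounds on $u$ and $\nabla r$. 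The nonlinear remainder $\mathcal{R}_j$ is split into a piece bounded by $\mathcal Z(t)\,\mathcal X(t)$ (via Lemma \ref{lemma63} and the composition estimate \eqref{F1}) and a piece bounded by $\int_0^t\mathcal V(s)\mathcal X(s)\,ds$ (via \eqref{commutator} for the transport commutators involving $v$ and by product bounds for the time-derivatives $\partial_t H_i$).

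The principal obstacle will be the interaction of the asymmetry with the impossibility of rescaling time to $\tau=1$, as is done in \cite{c0,c1}: the cross-term $\mathcal{M}_j$ must carry the $j$-dependent weight $2^{-2j}$ in order to remain subordinate to $\mathcal{E}_j^0$ on every frequency block, and one has to verify at each step that the weight-derivative terms produced by $\nabla a,\nabla b,\nabla c$ and $\partial_t a,\partial_t b,\partial_t c$ do not destroy the coefficient balance required for the absorption. This is what forces us to impose $\varepsilon\leq\tau$, which is precisely the regime in which $2^{2j}\tau\gtrsim\varepsilon^{-1}$ can be used to control the coupling terms coming from $(h_5+H_5)\nabla w$ in the $u$-equation uniformly.
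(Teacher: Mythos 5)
Your overall strategy coincides with the paper's: localize in frequency, symmetrize the subsystem \eqref{L}$_2$--\eqref{L}$_4$ with \emph{nonlinear} scalar weights built from the $H_i$'s, and recover the missing dissipation on $r_j$ by a Beauchard--Zuazua cross-term weighted by $2^{-2j}$. Your weights $a=(h_5+H_5)/(h_1+H_1)$, $b=(h_4+H_4)/(h_3+H_3)$, $c=1$ are precisely those the paper uses, and the observation that the raw estimate leaves $r_j$ undamped is correct.

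However, your bookkeeping of the damping rate is internally inconsistent and does not match what your cross-term actually produces. You define
\[
\mathcal{M}_j=\eta\,2^{-2j}\!\int_{\mathbb{R}^d}(h_4+H_4)\,\nabla r_j\cdot u_j\,dx,
\]
and you correctly compute that the leading contribution of $\frac{d}{dt}\mathcal{M}_j$ is $-\eta\,2^{-2j}\int(h_4+H_4)^2|\nabla r_j|^2\lesssim -c\eta\|r_j\|_{L^2}^2$. That is a \emph{constant}-in-$j$ dissipation rate $\sim\eta$ on $r_j$, so the combined block inequality carries $\kappa_j\sim\min\{\varepsilon^{-1},\tau^{-1},\eta\}\sim\eta$ (a fixed small constant, since $\varepsilon\le\tau\le 1$). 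Your subsequent claim $\kappa_j\sim\min\{\varepsilon^{-1},\tau^{-1},2^{2j}\tau\}\gtrsim 2^j$ contradicts this: the factor $2^{2j}\tau$ cannot arise from any cross-term of the form $\mathrm{weight}\cdot 2^{-2j}\!\int\nabla r_j\cdot u_j$, and in fact $2^{2j}\tau\gtrsim\varepsilon^{-1}$ is \emph{false} near $j\approx J_\tau$ once the integer $k$ in \eqref{J} is taken strictly negative (one has $2^{2J_\tau}\tau\sim 2^{2k}/\tau<1/\tau\le 1/\varepsilon$). The paper gets a genuinely $\tau$-dependent rate $\kappa_j\sim 1/\tau$ by putting a $1/\tau$ in front of the cross-term, i.e.\ $\mathcal{L}_j=\mathcal{E}_j^0+\tfrac{\eta_*}{\tau}2^{-2j}\!\int u_j\cdot\nabla r_j\,dx$, which is still subordinate to $\mathcal{E}_j^0$ because $\tfrac{1}{\tau}2^{-j}\lesssim 1$ on $j\ge J_\tau-1$. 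This uniform $1/\tau$ damping is what drives the $\tau$-weighted estimate $\tau\|(w,r,u)\|_{\widetilde L^\infty_t(\dot B^{d/2+1})}^h+\|(w,r,u)\|_{L^1_t(\dot B^{d/2+1})}^h\lesssim\tau\|(w_0,r_0,u_0)\|^h_{\dot B^{d/2+1}}+\cdots$ and hence the displayed $\sqrt\tau\|r\|^h_{\widetilde L^2_t}$ bound via interpolation.

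Your explanation for the restriction $\varepsilon\le\tau$ (``precisely the regime in which $2^{2j}\tau\gtrsim\varepsilon^{-1}$'') is therefore also off-target. In the high-frequency proof, $\varepsilon\le\tau$ is invoked to absorb the cross-term's coupling $(\eta_*/\tau)\,2^{-2j}\!\int(h_5+H_5)\nabla w_j\cdot\nabla r_j\lesssim(\eta_*/\tau)\|w_j\|\,\|r_j\|$: after Young's inequality, the $\|w_j\|^2$ half must be dominated by $(1/\varepsilon)\|w_j\|^2$ with a \emph{universal} $\eta_*$, which requires $\tau/\varepsilon\ge 1$. With your un-rescaled cross-term this particular absorption does not in fact need $\varepsilon\le\tau$, which suggests your reasoning about where the restriction bites is imported from a different step. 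Finally, you never spell out how the $\frac1\varepsilon\|w\|^h_{L^1_t(\dot B^{d/2-1}\cap\dot B^{d/2})}$ and $\frac1\tau\|z\|^h_{L^1_t}$ bounds are recovered; the paper treats \eqref{L}$_2$ separately as a damped transport equation (using the already-established $L^1_t(\dot B^{d/2+1})$ bound on $u$) and reads off $z$ from its definition. These are genuine steps that should be recorded. The overall scheme is salvageable with constant damping $\eta$, but the quantitative claims about $\kappa_j$ and about the role of $\varepsilon\le\tau$ need to be corrected before the argument is sound.
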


\begin{proof}
To prove of Lemma \ref{lemmahigh}, we localize in frequencies for the equations $\eqref{L}_{2}$-$\eqref{L}_{4}$ as
\begin{equation}
\left\{
\begin{aligned}
&\partial_{t}w_{j}+v\cdot\nabla w_{j}+ (h_{1}+H_1  )\div u_{j}+ (h_2+H_2) \dfrac{w_{j}}{\var}=\dot{\Delta}_{j}S_{1}+T_{j}^1,\\
&\partial_{t}r_{j}+v\cdot\nabla r_{j}+ (h_3+H_3  )\div u_{j}=\dot{\Delta}_{j}S_{2}+T_{j}^2,\\
&\partial_{t}u_{j}+v\cdot\nabla u_{j}+\frac{u_{j}}{\tau}+ (h_4+H_4  )\nabla  r_{j}+ (h_5+H_5 ) \nabla w_{j}  =\dot{\Delta}_{j}S_{3}+T_{j}^3,
\end{aligned}
\right.\label{Lj}
\end{equation}
with the commutator terms
\begin{equation}\label{T1T2T3}
\left\{
\begin{aligned}
&T_{j}^1:=[v,\dot{\Delta}_{j}]\nabla w+[H_{1},\dot{\Delta}_{j}]\div u+\frac{1}{\var} [H_{2}, \dot{\Delta}_{j}]w,\\
&T_{j}^2:=[v,\dot{\Delta}_{j}]\nabla r+[H_{3},\dot{\Delta}_{j}]\div u,\\
&T_{j}^3:=[v,\dot{\Delta}_{j}]\nabla u+[H_{4},\dot{\Delta}_{j}]\nabla r+[H_{5},\dot{\Delta}_{j}]\nabla w.
\end{aligned}
\right.
\end{equation}
Multiplying $\eqref{Lj}_{3}$ by $u_{j}$ and integrating the resulting equation by parts, we get 
\begin{equation}\label{ujh}
\begin{aligned}
&\frac{d}{dt}\int_{\mathbb{R}^{d}} \frac{1}{2}|u_{j}|^2dx+\int_{\mathbb{R}^{d}} \frac{1}{\tau}|u_{j}|^2 dx\\
&\quad -\int_{\mathbb{R}^{d}}\big((h_{4}+H_{4}) r_{j}\div u_{j}+(h_{5}+H_{5}) w_{j}\div u_{j}             \big)dx\\
&\lesssim\|\div v\|_{L^{\infty}}\|u_{j}\|_{L^2}^2+\big(\|\dot{\Delta}_{j}S_{3}\|_{L^2}+\|T_{j}^3\|_{L^2} \big)\|u_{j}\|_{L^2}+\|\nabla H_{4}\|_{L^{\infty}}\|(w_{j},r_{j})\|_{L^2}\|u_{j}\|_{L^2}.
\end{aligned}
\end{equation}
Thence, we multiply $\eqref{Lj}_{1}$ by $\frac{h_{5}+H_{5}}{h_{1}+H_{1}} w_{j}$ and integrate the resulting equation by parts to show
\begin{equation}\label{wjh}
\begin{aligned}
&\frac{d}{dt}\int_{\mathbb{R}^{d}}\frac{1}{2}\frac{h_{5}+H_{5}}{h_{1}+H_{1}}|w_{j}|^2 dx\\
&\quad+\int_{\mathbb{R}^{d}}\left( (h_{5}+H_{5})w_{j}\div u_{j}+\frac{(h_{2}+H_{2})(h_{5}+H_{5})}{\var(h_{1}+H_{1})}|w_{j}|^2\right)dx\\
&\lesssim\bigg( \left\| \partial_{t}\Big( \frac{h_{5}+H_{5}}{h_{1}+H_{1}}\Big)\right\|_{L^{\infty}} +\left\|\frac{h_{5}+H_{5}}{h_{1}+H_{1}}\right\|_{L^{\infty}}\|\div v\|_{L^{\infty}}+\left\|\nabla \Big( \frac{h_{5}+H_{5}}{h_{1}+H_{1}}\Big)\right\|_{L^{\infty}}\|v\|_{L^{\infty}}\bigg)\|w_{j}\|_{L^2}^2\\
&\quad+\left\|\frac{h_{5}+H_{5}}{h_{1}+H_{1}}\right\|_{L^{\infty}}\big( \|\dot{\Delta}_{j}S_{1}\|_{L^2}+\|T_{j}^1\|_{L^2}\big) \|w_{j}\|_{L^2}.
\end{aligned}
\end{equation}
Similarly, direct computations on $\eqref{Lj}_{2}$ yield
\begin{equation}\label{rjh}
\begin{aligned}
&\frac{d}{dt}\int_{\mathbb{R}^{d}}\frac{1}{2}\frac{h_{4}+H_{4}}{h_{3}+H_{3}}|r_{j}|^2 dx+\int_{\mathbb{R}^{d}}(h_{4}+H_{4}) r_{j}\div u_{j}dx\\
&\leq\bigg(\left\| \partial_{t}\Big( \frac{h_{4}+H_{4}}{h_{3}+H_{3}}\Big)\right\|_{L^{\infty}}+\left\|\frac{h_{4}+H_{4}}{h_{3}+H_{3}}\right\|_{L^{\infty}}\|\div v\|_{L^{\infty}}+\left\|\nabla \Big( \frac{h_{4}+H_{4}}{h_{3}+H_{3}}\Big)\right\|_{L^{\infty}}\|v\|_{L^{\infty}}\bigg)\|r_{j}\|^2\\
&\quad+\left\|\frac{h_{4}+H_{4}}{h_{3}+H_{3}}\right\|_{L^{\infty}}\big( \|\dot{\Delta}_{j}S_{2}\|_{L^2}+\|T_{j}^2\|_{L^2}\big)\|r_{j}\|_{L^2}.
\end{aligned}
\end{equation}
To derive the cross estimate and capture the dissipative property of $r_{j}$, we gain by taking the $L^2$-inner product of $\eqref{Lj}_{3}$ with $\nabla r_{j}$ that 
\begin{equation}\label{urjh1}
\begin{aligned}
&\int_{\mathbb{R}^{d}}\partial_{t}u_{j}\cdot\nabla r_{j}dx+\int_{\mathbb{R}^{d}}  (h_{4}+H_{4})|\nabla r_{j}|^2 dx\\
&\quad\quad+\int_{\mathbb{R}^{d}} \left( (h_{5}+H_{5})\nabla w_{j}\cdot\nabla r_{j}+\frac{1}{\tau}u_{j}\cdot \nabla r_{j} \right)dx\\
&\quad\lesssim\big( \|v\|_{L^{\infty}}\|\nabla u_{j}\|_{L^2}+\|\dot{\Delta}_{j}S_{3}\|_{L^2}+\|T_{j}^3\|_{L^2}\big)\|\nabla r_{j}\|_{L^2},
\end{aligned}
\end{equation}
and taking the $L^2$-inner product of $\eqref{Lj}_{2}$ with $\div u_{j}$ that 
\begin{equation}\label{urjh2}
\begin{aligned}
&\int_{\mathbb{R}^{d}}u_{j}\cdot\nabla \partial_{t}r_{j} dx-\int_{\mathbb{R}^{d}}  (h_{3}+H_{3})|\div u_{j}|^2 dx\\
&\quad\lesssim  \big( \|v\|_{L^{\infty}}\|\nabla r_{j}\|_{L^2}+ \|\dot{\Delta}_{j}S_{2}\|_{L^2}+\|T_{j}^2\|_{L^2}\big)\|\div u_{j}\|_{L^2}.
\end{aligned}
\end{equation}
In the spirit of the work \cite{bea1} by Beauchard and Zuazua, for a small constant $\eta_{*}>0$ to be determined, we define the following Lyapunov functional with  nonlinear weights as
\begin{equation}\nonumber
\begin{aligned}
\mathcal{L}_{j}(t):=\int_{\mathbb{R}^{d}} \frac{1}{2}\left( \frac{h_{5}+H_{5}}{h_{1}+H_{1}}|w_{j}|^2+\frac{h_{4}+H_{4}}{h_{3}+H_{3}}|r_{j}|^2+|u_{j}|^2\right)dx+\frac{\eta_{*}}{\tau} 2^{-2j}\int_{\mathbb{R}^{d}}   u_{j}\cdot \nabla r_{j} dx,
\end{aligned}
\end{equation}
and its dissipation rate
\begin{equation}\nonumber
\begin{aligned}
\mathcal{H}_{j}(t):&=\int_{\mathbb{R}^{d}}\left( \frac{1}{\tau}|u_{j}|^2 +\frac{(h_{2}+H_{2})(h_{5}+H_{5})}{\var(h_{1}+H_{1})}|w_{j}|^2\right)dx\\
&\quad+\frac{\eta_{*}}{\tau} 2^{-2j} \int_{\mathbb{R}^{d}}\left(  (h_{4}+H_{4})|\nabla r_{j}|^2 + (h_{5}+H_{5})\nabla w_{j}\cdot\nabla r_{j}+\frac{1}{\tau}u_{j}\cdot \nabla r_{j} \right)dx.
\end{aligned}
\end{equation} 
One derives from assumption \eqref{alinear} and the embedding $\dot{B}^{\frac{d}{2}}\hookrightarrow L^{\infty}$ that
\begin{equation}\label{Hismall}
\begin{aligned}
\|H_{i}\|_{L^{\infty}_{t}(L^{\infty})}+\|\nabla H_{i}\|_{L^{\infty}_{t}(L^{\infty})}\lesssim \|H_{i}\|_{\widetilde{L}^{\infty}_{t}(\dot{B}^{\frac{d}{2}}\cap\dot{B}^{\frac{d}{2}+1})}\lesssim c<<1,
\end{aligned}
\end{equation}
which together with estimates \eqref{ujh}-\eqref{urjh2} and the fact that $2^{-j}\lesssim \tau\leq 1$ for any $j\geq J_{\tau}-1$ yields the following Lyapunov inequality:
\begin{equation}\label{lyapunov}
\begin{aligned}
\frac{d}{dt}\mathcal{L}_{j}(t)+\mathcal{H}_{j}(t)&\lesssim (\|\div v\|_{L^{\infty}}+\|v\|_{L^{\infty}}+\sum_{i=1}^{5}\|\partial_{t}H_{i}\|_{L^{\infty}} ) \|(r_{j},w_{j},u_{j})\|_{L^2}^2\\
&\quad+\left(\sum_{i=1}^{5}\|\partial_{t}H_{i}\|_{L^{\infty}}+\|\dot{\Delta}_{j}(S_{1},S_{2},S_{3})\|_{L^2}+\|(T_{j}^1,T_{j}^2,T_{j}^3)\|_{L^2} \right) \|(r_{j},w_{j},u_{j})\|_{L^2}.
\end{aligned}
\end{equation}
It follows from the smallness condition \eqref{Hismall}, Bernstein's inequality in Lemma \ref{lemma61} and the fact $2^{-j}\lesssim \tau$ that 
\begin{equation}\nonumber
\begin{aligned}
&(1-\eta_{*})\|(w_{j},r_{j},u_{j})\|_{L^2}^2\lesssim \mathcal{L}_{j}(t)\lesssim (1+\eta_{*})\|(w_{j},r_{j},u_{j})\|_{L^2}^2,
\end{aligned}
\end{equation}
and
\begin{equation}\nonumber
\begin{aligned}
\mathcal{H}_{j}(t)&\gtrsim \frac{1}{\tau}\|u_{j}\|_{L^2}^2+\frac{1}{\var}\|w_{j}\|_{L^2}^2+\frac{\eta_{*}}{\tau}2^{-2j}\left(\|\nabla r_{j}\|_{L^2}-\|\nabla w_{j}\|_{L^2}^2-\frac{1}{\tau^2}\|u_{j}\|_{L^2}\right)\\
&\gtrsim \frac{1}{\tau}(1-\eta_{*})\|u_{j}\|_{L^2}^2+\frac{1}{\var}(1-\eta_{*} )\|w_{j}\|_{L^2}^2+\frac{\eta_{*}}{\tau}\|r_{j}\|_{L^2},
\end{aligned}
\end{equation}
where one has used the condition $\var\leq \tau$. Thus, we can choose a sufficiently small constant $\eta_{*}>0$ independent of $\var$ and $\tau$ so that 
\begin{equation}\label{sim}
\begin{aligned}
&\mathcal{L}_{j}(t)\sim \|(w_{j},r_{j},u_{j})\|_{L^2}^2,\quad\quad \mathcal{H}_{j}(t)\gtrsim \frac{1}{\tau}\|(w_{j},r_{j},u_{j})\|_{L^2}^2\gtrsim \frac{1}{\tau}\mathcal{L}_{j}(t).
\end{aligned}
\end{equation}
Dividing the two sides of \eqref{lyapunov} by $\sqrt{\mathcal{L}_{j}(t)+\eta}$ for any $\eta>0$, we have
\begin{equation}\nonumber
\begin{aligned}
&\frac{d}{dt}\sqrt{\mathcal{L}_{j}(t)+\eta}+\frac{1}{\tau}\sqrt{\mathcal{L}_{j}(t)+\eta}-\frac{\eta}{\tau\sqrt{\mathcal{L}_{j}(t)+\eta}}\\
&~~\lesssim \left(\|\div v\|_{L^{\infty}}+\|v\|_{L^{\infty}}+\sum_{i=1}^{5}\|\partial_{t}H_{i}\|_{L^{\infty}}\right)\|(r_{j},w_{j},u_{j})\|_{L^2}\\
&~~\quad+\|\dot{\Delta}_{j}(S_{1},S_{2},S_{3})\|_{L^2}+\|(T_{j}^1,T_{j}^2,T_{j}^3)\|_{L^2}.
\end{aligned}
\end{equation}
From \eqref{sim} and the embedding $\dot{B}^{\frac{d}{2}}\hookrightarrow L^{\infty}$ we get after integrating  the above inequality over $[0,t]$ and taking the limit as $\eta\rightarrow0$ that
\begin{equation}\label{high101}
\begin{aligned}
\tau\|(w,r,u)\|_{\widetilde{L}^{\infty}_{t}(\dot{B}^{\frac{d}{2}+1})}^{h}&+\|(w,r,u)\|_{L^1_{t}(\dot{B}^{\frac{d}{2}+1})}^{h}\lesssim \tau\|(w_{0},r_{0},u_{0})\|_{\dot{B}^{\frac{d}{2}+1}}^{h}\\
&\quad\quad+\int_{0}^{t}\left(\|v(s)\|_{\dot{B}^{\frac{d}{2}}\cap\dot{B}^{\frac{d}{2}+1}}+\sum_{i=1}^{5}\|\partial_{t}H_{i}(s)\|_{\dot{B}^{\frac{d}{2}}}\right) \tau\|(w,r,u)(s)\|_{\dot{B}^{\frac{d}{2}+1}}^{h} ds\\
&\quad\quad+\tau\sum_{j\geq J_{\tau}-1}2^{j(\frac{d}{2}+1)}\|(T_{j}^1,T_{j}^2,T_{j}^3)\|_{L^1_{t}(L^2)}+\tau\|(S_{1},S_{2},S_{3})\|_{L^1_{t}(\dot{B}^{\frac{d}{2}+1})}^{h}.
\end{aligned}
\end{equation}
According to the commutator estimate \eqref{commutator}, it follows that
\begin{equation}\nonumber
\begin{aligned}
\tau\sum_{j\geq J_{\tau}-1}2^{j(\frac{d}{2}+1)}\|(T_{j}^1,T_{j}^2,T_{j}^3)\|_{L^1_{t}(L^2)}&\lesssim \int_{0}^{t}\|v(s)\|_{\dot{B}^{\frac{d}{2}+1}}\|(w,r,u)(s)\|_{\dot{B}^{\frac{d}{2}+1}}ds\\
&\quad\quad+\sum_{i=1}^{4}\|H_{i}\|_{\widetilde{L}^{\infty}_{t}(\dot{B}^{\frac{d}{2}+1})}\left(\frac{1}{\var} \|w\|_{L^1_{t}(\dot{B}^{\frac{d}{2}})}+ \tau\|r\|_{L^1_{t}(\dot{B}^{\frac{d}{2}+1})}+\|u\|_{L^1_{t}(\dot{B}^{\frac{d}{2}+1})}\right)\\
&\quad\lesssim \mathcal{Z}(t)\mathcal{X}(t).
\end{aligned}
\end{equation}
This with inequality \eqref{high101} leads to
\begin{equation}\label{high11}
\begin{aligned}
&\tau\|(w,r,u)\|_{\widetilde{L}^{\infty}_{t}(\dot{B}^{\frac{d}{2}+1})}^{h}+\|(w,r,u)\|_{L^1_{t}(\dot{B}^{\frac{d}{2}+1})}^{h}\\
&\quad\lesssim \tau\|(w_{0},r_{0},u_{0})\|_{\dot{B}^{\frac{d}{2}+1}}^{h}+\tau\|(S_{1},S_{2},S_{3})\|_{L^1_{t}(\dot{B}^{\frac{d}{2}+1})}^{h}\\
&\quad\quad+\mathcal{Z}(t)\mathcal{X}(t)+\int_{0}^{t}\mathcal{V}(s)\mathcal{X}(s)ds.
\end{aligned}
\end{equation}
On the other hand, for any $\eta>0$, we deduce from inequality \eqref{wjh} that
\begin{equation}\nonumber
\begin{aligned}
&\frac{d}{dt}\sqrt{\|w_{j}\|_{L^2}^2+\eta}+\frac{1}{\var}\sqrt{\|w_{j}\|_{L^2}^2+\eta}\\
&\lesssim 2^{j} \|u_{j}\|_{L^2}+\|(\partial_{t}H_{1},\partial_{t}H_{5})\|_{L^{\infty}}\|w_{j}\|_{L^2}\\
&\quad+\|\div v\|_{L^{\infty}} \|w_{j}\|_{L^2}+\|v\|_{L^{\infty}}\|w_{j}\|_{L^2}+\|\dot{\Delta}_{j}S_{1}\|_{L^2}+\|T_{j}^1\|_{L^2},
\end{aligned}
\end{equation}
which together with \eqref{high11} implies
\begin{equation}\nonumber
\begin{aligned}
\frac{1}{\var}\|w\|_{L^1_{t}(\dot{B}^{\frac{d}{2}})}^{h}&\lesssim \|w_{0}\|_{\dot{B}^{\frac{d}{2}}}^{h}+\|u\|_{L^1_{t}(\dot{B}^{\frac{d}{2}+1})}^{h}\\
&\quad+\|S_{1}\|_{L^1_{t}(\dot{B}^{\frac{d}{2}})}^{h}+\tau\sum_{j\geq J_{\tau}-1}2^{j(\frac{d}{2}+1)}\|T_{j}^1\|_{L^1_{t}(L^2)}\\
&\quad+\int_{0}^{t}\big( \|(\partial_{t} H_{1},\partial_{t} H_{5})(s)\|_{\dot{B}^{\frac{d}{2}}}+\|v(s)\|_{\dot{B}^{\frac{d}{2}}\cap\dot{B}^{\frac{d}{2}+1}}\big) \|w(s)\|_{\dot{B}^{\frac{d}{2}}}ds\\
&\lesssim \tau\|(w_{0},r_{0},u_{0})\|_{\dot{B}^{\frac{d}{2}+1}}^{h}+\tau\|(S_{1},S_{2},S_{3})\|_{L^1_{t}(\dot{B}^{\frac{d}{2}+1})}^{h}\\
&\quad+\mathcal{Z}(t)\mathcal{X}(t)+\int_{0}^{t}\mathcal{V}(s)\mathcal{X}(s)ds.
\end{aligned}
\end{equation}
Thanks to inequality \eqref{lhl}, one has 
\begin{align}
\|(w,r,u)\|_{\widetilde{L}^{\infty}_{t}(\dot{B}^{\frac{d}{2}-1})}^{h}\lesssim \tau\|(w,r,u)\|_{\widetilde{L}^{\infty}_{t}(\dot{B}^{\frac{d}{2}})}^{h}\lesssim \tau\|(w,r,u)\|_{\widetilde{L}^{\infty}_{t}(\dot{B}^{\frac{d}{2}+1})}^{h}.\label{345}
\end{align}
Finally,  the remain estimates in \eqref{high} can be achieved similarly to \eqref{345}. We omit the details here and complete the proof of Lemma \ref{lemmahigh}.
\end{proof}

\subsection{Recovering the $\dot{B}^{\frac{d}{2}+1}$-estimates}\label{sectiond21}

 As explained in  Remark \ref{rmkH4r2}, we need to establish the uniform $L^{\infty}_{t}(\dot{B}^{\frac{d}{2}+1})$-norm estimate of $(w,r,u)$ which  in fact leads to the uniform control of $\widetilde{L}^2_{t}(\dot{B}^{\frac{d}{2}+1})$-norms for $(\frac{1}{\sqrt{\var}}w,\frac{1}{\sqrt{\tau}}u)$ at both low and high frequencies as a byproduct.
 
 \begin{lemma}\label{lemmaL2} 
Let $T>0$, and the threshold $J_{\tau}$ be given by \eqref{J}. Then for any $t\in(0,T)$, the solution $(w,r,u)$ to the linear problem \eqref{L}$_2$-\eqref{L}$_4$ satisfies
\begin{equation}\label{all}
\begin{aligned}
&\|(w,r,u)\|_{\widetilde{L}^{\infty}_{t}(\dot{B}^{\frac{d}{2}+1})}+\frac{1}{\sqrt{\var}}\|w\|_{\widetilde{L}^2_{t}(\dot{B}^{\frac{d}{2}+1})}+\frac{1}{\sqrt{\tau}}\|u\|_{\widetilde{L}^{2}_{t}(\dot{B}^{\frac{d}{2}+1})}\\
&\quad\lesssim \|(w_{0},r_{0},u_{0})\|_{\dot{B}^{\frac{d}{2}+1}}+\|(S_{1},S_{2},S_{3})\|_{L^1_{t}(\dot{B}^{\frac{d}{2}+1})}\\
&\quad\quad+(\eta+\sqrt{\mathcal{Z}(t)})\mathcal{X}(t)+\frac{1}{\eta}\int_{0}^{t}\mathcal{V}(s)\mathcal{X}(s)ds,
\end{aligned}
\end{equation}
where $\eta\in(0,1)$ is a constant to be chosen.

\end{lemma}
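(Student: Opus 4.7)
The plan is to establish the missing $\dot B^{\frac d2+1}$ estimates in low frequencies and merge them with the high-frequency $\dot B^{\frac d2+1}$ bounds already contained in Lemma \ref{lemmahigh}. The crucial tool is $L^2$-in-time maximal regularity for the diagonalized system \eqref{L-low1}: in contrast with the $L^1$-in-time approach of Lemma \ref{lemmalow}, the $L^2$-in-time estimates place $\sqrt\var$ or $\sqrt\tau$ in front of the source terms, and these prefactors can be traded against one derivative via \eqref{lhl} together with the smallness $\tau 2^{J_\tau}=2^k\ll 1$. This is what allows me to close a uniform-in-$(\var,\tau)$ estimate at the $\dot B^{\frac d2+1}$ level despite the higher-order linear couplings $L_1, L_2, L_3$.

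Concretely, I would apply a damped $L^2$-in-time maximal regularity estimate to \eqref{L-low1}$_1$ and \eqref{L-low1}$_3$, and a parabolic $L^2$-in-time maximal regularity estimate (analogous to Lemma \ref{maximalheat}) to \eqref{L-low1}$_2$, obtaining bounds for
\begin{equation*}
\|w\|_{\widetilde L^\infty_t(\dot B^{\frac d2+1})}^\ell+\tfrac{1}{\sqrt\var}\|w\|_{\widetilde L^2_t(\dot B^{\frac d2+1})}^\ell,\quad \|r\|_{\widetilde L^\infty_t(\dot B^{\frac d2+1})}^\ell+\sqrt\tau\|r\|_{\widetilde L^2_t(\dot B^{\frac d2+2})}^\ell,\quad \|z\|_{\widetilde L^\infty_t(\dot B^{\frac d2+1})}^\ell+\tfrac{1}{\sqrt\tau}\|z\|_{\widetilde L^2_t(\dot B^{\frac d2+1})}^\ell
\end{equation*}
in terms of $\sqrt\var\|L_1\|^\ell_{\widetilde L^2_t(\dot B^{\frac d2+1})}$, $\tfrac{1}{\sqrt\tau}\|L_2\|^\ell_{\widetilde L^2_t(\dot B^{\frac d2})}$ and $\sqrt\tau\|L_3\|^\ell_{\widetilde L^2_t(\dot B^{\frac d2+1})}$, plus nonlinear and source contributions. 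Since $L_i$ only involves $\div z$, $\tau\Delta r$ and $\nabla w$, the inclusion \eqref{lhl}, the condition $\var\leq\tau$, and $\tau 2^{J_\tau}=2^k\ll 1$ bound each of these by $2^k$ times a linear combination of the quantities on the left-hand side, so the $L_i$ terms are absorbed once $k$ is chosen sufficiently negative.

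The only nonlinear contribution requiring special care is $\tau\partial_t(H_4\nabla r)$ hidden in $R_3$, since a direct bound would involve $\partial_t r$ and create a circular estimate. I would substitute $\partial_t r=-v\cdot\nabla r-(h_3+H_3)\div u+S_2$ from \eqref{L-low1}$_2$, rewriting $R_3+S_3$ as an equivalent expression $\widetilde R_3+\widetilde S_3$ where only $\partial_t H_4$ (bounded by $\mathcal V$) and spatial derivatives of $r,u$ (bounded by $\mathcal X$) appear. The remaining nonlinearities in $R_1, R_2, \widetilde R_3$ are then handled by \eqref{uv1}--\eqref{uv2} together with the smallness \eqref{alinear} of the $H_i$'s. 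A typical bilinear term such as $\tfrac{1}{\sqrt\tau}\|v\|_{\widetilde L^2_t(\dot B^{\frac d2})}\|w\|_{\widetilde L^\infty_t(\dot B^{\frac d2+1})}$ will be split by the Young inequality $ab\leq\eta a^2+\eta^{-1}b^2$, which is precisely the source of the pair $(\eta\mathcal X(t),\,\eta^{-1}\int_0^t \mathcal V\mathcal X\,ds)$ appearing in \eqref{all}.

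Finally, recovering $u$ from $u=z-\tau(h_4+H_4)\nabla r$ via the triangle inequality and \eqref{lhl} converts the $z$-bounds into bounds on $u$; combining the resulting low-frequency $\dot B^{\frac d2+1}$ estimates with the high-frequency estimates of Lemma \ref{lemmahigh} yields \eqref{all}. The principal obstacle is that the coefficients $H_i$ carry no time-integrability (inherited from the purely transported variable $y^{\var,\tau}$), so the problematic terms $H_4\nabla r$ and $\partial_t(H_4\nabla r)$ cannot be closed by direct means; it is precisely the combination of $L^2$-in-time maximal regularity, the threshold trick $\tau 2^{J_\tau}\ll 1$, and the algebraic rewriting of $R_3+S_3$ that allows a uniform-in-$(\var,\tau)$ bound at the critical $\dot B^{\frac d2+1}$ level.
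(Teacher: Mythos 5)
Your proposed route is different from the one the paper actually uses, and it has two genuine gaps. The paper itself appears to have tried your approach (the source contains a commented-out subsection titled ``$\dot B^{\frac d2+1}$-estimates'' that implements essentially the same plan, with a non-absorbable term highlighted in red) before replacing it with the argument that actually works.

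\textbf{First gap: Lemma \ref{lemmahigh} does not contain the bounds you intend to cite.} The conclusion \eqref{high} gives $\|(w,r,u)\|_{\widetilde L^\infty_t(\dot B^{\frac d2-1}\cap\dot B^{\frac d2})}^h$, $\|(w,r,u)\|_{L^1_t(\dot B^{\frac d2+1})}^h$, $\sqrt\tau\|r\|_{\widetilde L^2_t(\dot B^{\frac d2+1})}^h$, and $\tfrac{1}{\sqrt\var}\|w\|_{\widetilde L^2_t(\dot B^{\frac d2-1}\cap\dot B^{\frac d2})}^h$. None of these is $\|(w,r,u)\|_{\widetilde L^\infty_t(\dot B^{\frac d2+1})}^h$, $\tfrac{1}{\sqrt\var}\|w\|_{\widetilde L^2_t(\dot B^{\frac d2+1})}^h$, or $\tfrac{1}{\sqrt\tau}\|u\|_{\widetilde L^2_t(\dot B^{\frac d2+1})}^h$. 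The intermediate estimate \eqref{high101} does reach level $\frac d2+1$ in $\widetilde L^\infty_t$, but with a $\tau$ prefactor, and dividing through by $\tau$ inflates the nonlinear and source contributions by $\tau^{-1}$. So there is nothing to merge with at the high-frequency end; the high-frequency $\dot B^{\frac d2+1}$ information is exactly what this lemma must produce.

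\textbf{Second gap: the low-frequency $u$-recovery step cannot be closed.} After substituting $u=z-\tau(h_4+H_4)\nabla r$, the estimate of $\tfrac{1}{\sqrt\tau}\|u\|_{\widetilde L^2_t(\dot B^{\frac d2+1})}^\ell$ involves $\sqrt\tau\|(h_4+H_4)\nabla r\|^{\ell}_{\widetilde L^2_t(\dot B^{\frac d2+1})}$. The product $H_4\nabla r$ mixes frequency bands, so one must treat the contribution of $\nabla r^h$ separately. Using \eqref{lhl} to trade regularity for a factor $\tau$ in the low-frequency norm, this piece reduces to $\|H_4\|_{\widetilde L^\infty_t(\dot B^{\frac d2})}\tfrac{1}{\sqrt\tau}\|r\|^h_{\widetilde L^2_t(\dot B^{\frac d2+1})}$. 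Lemma \ref{lemmahigh} controls only $\sqrt\tau\|r\|^h_{\widetilde L^2_t(\dot B^{\frac d2+1})}$, which is off by a factor $\tau^{-1}$ — and since $H_4$ has no time integrability this factor cannot be regained. This is precisely the term highlighted in red in the paper's abandoned attempt.

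\textbf{What the paper does instead.} The proof of Lemma \ref{lemmaL2} does not split frequencies and does not introduce the effective flux $z$. It works with the localized system \eqref{Lj} for \emph{all} $j\in\mathbb{Z}$ and sums \eqref{ujh}, \eqref{wjh}, \eqref{rjh} to obtain the weighted energy identity \eqref{wujh}: the coupling terms $(h_4+H_4)r_j\div u_j$ and $(h_5+H_5)w_j\div u_j$ cancel exactly thanks to the nonlinear weights $\tfrac{h_4+H_4}{h_3+H_3}$ and $\tfrac{h_5+H_5}{h_1+H_1}$, leaving dissipation only on $u_j$ (rate $\tau^{-1}$) and $w_j$ (rate $\var^{-1}$). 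Integrating in time, multiplying by $2^{j(\frac d2+1)}$, summing over all $j$, and controlling the commutator terms via \eqref{commutator} yields \eqref{highl2}; a Young inequality $ab\le\eta a^2+\eta^{-1}b^2$ then produces \eqref{all}. This bypasses the $u$-recovery step entirely and never needs the problematic $\tfrac{1}{\sqrt\tau}\|r\|^h_{\widetilde L^2_t(\dot B^{\frac d2+1})}$ norm, because $r$ appears in the energy but not in the dissipation — the lemma asks only for $L^\infty$-in-time control of $r$ at level $\frac d2+1$, which the energy supplies directly.
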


\begin{proof}
We perform a $L^2$-in-time type estimates and make use of the  decay-in-$\tau$ of $u$ for $L^2$-time type norms. In fact, for any $j\in\mathbb{Z}$, by combining inequalities \eqref{ujh}-\eqref{wjh} together, we get
\begin{equation}\label{wujh}
\begin{aligned}
&\frac{d}{dt}\int_{\mathbb{R}^{d}} \frac{1}{2}\Big( \frac{h_{5}+H_{5}}{h_{1}+H_{1}}|w_{j}|^2+\frac{h_{4}+H_{4}}{h_{3}+H_{3}}|r_{j}|^2+|u_{j}|^2\Big)dx\\
&\quad+\int_{\mathbb{R}^{d}}\Big( \frac{1}{\tau}|u_{j}|^2 +\frac{(h_{2}+H_{2})(h_{5}+H_{5})}{\var(h_{1}+H_{1})}|w_{j}|^2\Big)dx\\
&\lesssim \|\div v\|_{L^{\infty}}\|(w_{j},r_{j},u_{j})\|_{L^2}^2+\Big(\sum_{i=1}^{5}\|\partial_{t}H_{i}\|_{L^{\infty}}+\|v\|_{L^{\infty}} \Big)\|(w_{j},r_{j})\|_{L^2}^2\\
&\quad+\|\nabla H_{4}\|_{L^{\infty}}\|(w_{j},r_{j})\|_{L^2}\|u_{j}\|_{L^2}+\|T_{j}^1\|_{L^2} \|w_{j}\|_{L^2}\\
&\quad+\|T_{j}^2\|_{L^2}\|r_{j}\|_{L^2}+\|T_{j}^3\|_{L^2}\|u_{j}\|_{L^2}+\|\dot{\Delta}_{j}(S_{1},S_{2},S_{3})\|_{L^2}\|(w_{j},r_{j},u_{j})\|_{L^2}.
\end{aligned}
\end{equation}
Furthermore, from \eqref{wujh} we have
\begin{equation}\label{highl2}
\begin{aligned}
&\|(w,r,u)\|_{\widetilde{L}^{\infty}_{t}(\dot{B}^{\frac{d}{2}+1})}+\frac{1}{\sqrt{\var}}\|w\|_{\widetilde{L}^{2}_{t}(\dot{B}^{\frac{d}{2}+1})}+\frac{1}{\sqrt{\tau}}\|u\|_{\widetilde{L}^{2}_{t}(\dot{B}^{\frac{d}{2}+1})}\\
&\quad\lesssim\|(w_{0},r_{0},u_{0})\|_{\dot{B}^{\frac{d}{2}+1}}+\|v\|_{L^1_{t}(\dot{B}^{\frac{d}{2}+1})}^{\frac{1}{2}}\|(w,r,u)\|_{\widetilde{L}^{\infty}_{t}(\dot{B}^{\frac{d}{2}+1})}\\
&\quad\quad+\Big(\int_{0}^{t}\Big( \sum_{i=1}^{5}\|\partial_{t}H_{i}(s)\|_{\dot{B}^{\frac{d}{2}}}+\|v(s)\|_{\dot{B}^{\frac{d}{2}}}\Big)\|(w,r)(s)\|_{\dot{B}^{\frac{d}{2}+1}}ds   \Big)^{\frac{1}{2}}\|(w,r)\|_{\widetilde{L}^{\infty}_{t}(\dot{B}^{\frac{d}{2}+1})}^{\frac{1}{2}}\\
&\quad\quad+\Big(\|H_{4}\|_{\widetilde{L}^{\infty}_{t}(\dot{B}^{\frac{d}{2}+1})}\frac{1}{\sqrt{\tau}}\|u\|_{\widetilde{L}^{2}_{t}(\dot{B}^{\frac{d}{2}+1})}\sqrt{\tau}\|(w,r)\|_{\widetilde{L}^{2}_{t}(\dot{B}^{\frac{d}{2}+1})} \Big)^{\frac{1}{2}}\\
&\quad\quad+\sum_{j\in\mathbb{Z}}2^{j(\frac{d}{2}+1)}\Big( \int_{0}^{t}\big(\|T_{j}^1\|_{L^2}\|w_{j}\|_{L^2}+\|T_{j}^2\|_{L^2}\|r_{j}\|_{L^2}+\|T_{j}^3\|_{L^2}\|u_{j}\|_{L^2} \big)ds\Big)^{\frac{1}{2}}\\
&\quad\quad+\Big(\|(S_{1},S_{2},S_{3})\|_{L^1_{t}(\dot{B}^{\frac{d}{2}+1})} \|(w,r,u)\|_{\widetilde{L}^{\infty}_{t}(\dot{B}^{\frac{d}{2}+1})}\Big)^{\frac{1}{2}}.
\end{aligned}
\end{equation}
The right-hand side of inequality \eqref{highl2} can be estimated as follows. By the commutator estimate \eqref{commutator}, we have
\begin{equation}\nonumber
\begin{aligned}
&\sum_{j\in\mathbb{Z}}2^{j(\frac{d}{2}+1)} \Big(\int_{0}^{t}\|T_{j}^1\|_{L^2}\|w_{j}\|_{L^2} ds\Big)^{\frac{1}{2}}\\
&\quad\lesssim \bigg(\|w\|_{\widetilde{L}^{\infty}_{t}(\dot{B}^{\frac{d}{2}+1})}\int_{0}^{t}\|v(s)\|_{\dot{B}^{\frac{d}{2}+1}}\|w(s)\|_{\dot{B}^{\frac{d}{2}+1}}ds \\
&\quad\quad+\|H_{1}\|_{\widetilde{L}^{\infty}_{t}(\dot{B}^{\frac{d}{2}+1})}\|w\|_{\widetilde{L}^{\infty}_{t}(\dot{B}^{\frac{d}{2}+1})}\|u\|_{L^1_{t}(\dot{B}^{\frac{d}{2}+1})}+\|H_{2}\|_{\widetilde{L}^{\infty}_{t}(\dot{B}^{\frac{d}{2}+1})}\frac{1}{\var}\|w\|_{\widetilde{L}^{2}_{t}(\dot{B}^{\frac{d}{2}+1})}^2\bigg)^{\frac{1}{2}}\\
&\quad\lesssim\sqrt{\mathcal{Z}(t)}\mathcal{X}(t)+\Big(\int_{0}^{t}\mathcal{V}(s)\mathcal{X}(s)ds\Big)^{\frac{1}{2}}\sqrt{\mathcal{X}(t)} .
\end{aligned}
\end{equation}
Similarly, it holds
\begin{equation}\nonumber
\begin{aligned}
&\sum_{j\in\mathbb{Z}}2^{j(\frac{d}{2}+1)} \Big(\int_{0}^{t}\|T_{j}^2\|_{L^2}\|r_{j}\|_{L^2} ds\Big)^{\frac{1}{2}}\\
&\quad \lesssim \bigg(\|r\|_{\widetilde{L}^{\infty}_{t}(\dot{B}^{\frac{d}{2}+1})} \int_{0}^{t}\|v(s)\|_{\dot{B}^{\frac{d}{2}+1}}\|r(s)\|_{\dot{B}^{\frac{d}{2}+1}}ds +\|H_{3}\|_{\widetilde{L}^{\infty}_{t}(\dot{B}^{\frac{d}{2}+1})}\|r\|_{\widetilde{L}^{\infty}_{t}(\dot{B}^{\frac{d}{2}+1})}\|u\|_{L^1_{t}(\dot{B}^{\frac{d}{2}+1})} \bigg)^{\frac{1}{2}}\\
&\quad\lesssim\sqrt{\mathcal{Z}(t)}\mathcal{X}(t)+\Big(\int_{0}^{t}\mathcal{V}(s)\mathcal{X}(s)ds\Big)^{\frac{1}{2}}\sqrt{\mathcal{X}(t)} ,
\end{aligned}
\end{equation}
and
\begin{equation}\nonumber
\begin{aligned}
&\sum_{j\in\mathbb{Z}}2^{j(\frac{d}{2}+1)} \Big(\int_{0}^{t}\|T_{j}^3\|_{L^2}\|u_{j}\|_{L^2} ds\Big)^{\frac{1}{2}}\\
&\quad\lesssim \bigg(\|u\|_{\widetilde{L}^{\infty}_{t}(\dot{B}^{\frac{d}{2}+1})}\int_{0}^{t}\|v(s)\|_{\dot{B}^{\frac{d}{2}+1}} \|u(s)\|_{\dot{B}^{\frac{d}{2}+1}}ds +\|(H_{4},H_{5})\|_{\widetilde{L}^{\infty}_{t}(\dot{B}^{\frac{d}{2}+1})} \|r\|_{\widetilde{L}^{\infty}_{t}(\dot{B}^{\frac{d}{2}+1})}\|u\|_{L^1_{t}(\dot{B}^{\frac{d}{2}+1})}\bigg)^{\frac{1}{2}}\\
&\quad\lesssim \sqrt{\mathcal{Z}(t)}\mathcal{X}(t)+\Big(\int_{0}^{t}\mathcal{V}(s)\mathcal{X}(s)ds\Big)^{\frac{1}{2}}\sqrt{\mathcal{X}(t)} .
\end{aligned}
\end{equation}
We conclude from the above estimates that
\begin{equation}\nonumber
\begin{aligned}
&\|(w,r,u)\|_{\widetilde{L}^{\infty}_{t}(\dot{B}^{\frac{d}{2}+1})}+\frac{1}{\sqrt{\var}}\|w\|_{\widetilde{L}^{2}_{t}(\dot{B}^{\frac{d}{2}+1})}+\frac{1}{\sqrt{\tau}}\|u\|_{\widetilde{L}^{2}_{t}(\dot{B}^{\frac{d}{2}+1})}\\
&\quad\lesssim \|(w_{0},r_{0},u_{0})\|_{\dot{B}^{\frac{d}{2}+1}}+\|(S_{1},S_{2},S_{3})\|_{L^1_{t}(\dot{B}^{\frac{d}{2}+1})}\\
&\quad\quad+\sqrt{\mathcal{Z}(t)}\mathcal{X}(t)+\Big(\int_{0}^{t}\mathcal{V}(s)\mathcal{X}(s)ds\Big)^{\frac{1}{2}}\sqrt{\mathcal{X}(t)}.
\end{aligned}
\end{equation}
Applying H\"older's inequality to the above estimate leads to inequality \eqref{all}.
\end{proof}

\section{Global  well-posedness   for the nonlinear problems}\label{section4}

\subsection{The Cauchy problem of System \eqref{BN}}\label{subsectionexistenceBN}

In this section, we prove the uniform in $\var$ and $\tau$ global existence and uniqueness of solutions to the Cauchy problem for \eqref{BN} subject to the initial data $(\alpha_{\pm,0}, \rho_{\pm,0}, u_{0})$. i.e. Theorem \ref{theorem11}. For simplicity, we omit the superscript concerning the parameters $\var$ and $\tau$ in this section.

\vspace{2mm}

\underline{\it\textbf{Proof of Theorem \ref{theorem11}:}}~Let $(\alpha_{\pm,0}, \rho_{\pm,0},u_{0})$ satisfy the smallness condition \eqref{a1} and denote
$$
\mathcal{X}_{0}:=\|(\alpha_{\pm,0}-\bar{\alpha}_{\pm}, \rho_{\pm,0}-\bar{\rho}_{\pm},u_{0})\|_{\dot{B}^{\frac{d}{2}-1}\cap\dot{B}^{\frac{d}{2}+1}}.
$$
Let $(y_{0},w_{0},r_{0})$ be the perturbation of $(\alpha_{\pm,0}, \rho_{\pm,0})$ given by \eqref{y0w0r0}. 

\begin{itemize}
\item {\emph{Step 1: Construction of approximation sequence}}
\end{itemize}

For any $n\geq1$, we define the regularized perturbation
\begin{align}
(y^{n}_{0},w^{n}_{0}, r^{n}_{0},u^{n}_{0})(x)=(\dot{S}_{n}y_{0},\dot{S}_{n}w_{0}, \dot{S}_{n}r_{0},\dot{S}_{n}u_{0})(x),\nonumber
\end{align}
where $\dot{S}_{n}$ is the low-frequency cut-off operator (see Section \ref{sectionbesov}). One can verify that $(y^{n}_{0},w^{n}_{0}, r^{n}_{0},u^{n}_{0})$ is smooth and converges to $(y_{0},w_{0},r_{0},u_{0})$ strongly in $\dot{B}^{\frac{d}{2}-1}\cap\dot{B}^{\frac{d}{2}+1}$ as $n\rightarrow\infty$. In addition, due to Lemmas \ref{lemma63} and \ref{lemma64}, there exists a constant $C_{0}^{*}$ independent of $n$, $\var$ and $\tau$ such that
\begin{equation}\label{initialbound}
\begin{aligned}
\|(y^{n}_{0},w^{n}_{0}, r^{n}_{0},u^{n}_{0})\|_{\dot{B}^{\frac{d}{2}-1}\cap\dot{B}^{\frac{d}{2}+1}}\leq C_{0}^{*}\mathcal{X}_{0}.
\end{aligned}
\end{equation}
\normalcolor

Set $(y^{0},w^{0},r^{0},u^{0}):=(0,0,0,0)$. For any $n\geq0$, we consider the approximate scheme for \eqref{re} as follows:
\begin{equation}\label{ren}
\left\{
\begin{aligned}
&\partial_{t}y^{n+1}+u^{n}\cdot\nabla y^{n+1}=0,\\
&\partial_{t}w^{n+1}+u^{n}\cdot\nabla w^{n+1}+ (\bar{F}_{1}+G_1^{n}  )\div u^{n+1}+ (\bar{F}_2+G_2^{n}) \dfrac{w^{n+1}}{\var}=0,\\
&\partial_{t}r^{n+1}+u^{n}\cdot\nabla r^{n+1}+ (\bar{F}_3+G_3^{n}  )\div u^{n+1}=F_{4}^{n}\frac{(w^{n})^2}{\var},\\
&\partial_{t}u^{n+1}+u^{n}\cdot\nabla u^{n+1}+\frac{u^{n+1}}{\tau}+ ( \bar{F}_{0}+G_0^{n}  )(\nabla  r^{n+1}+(\gamma_{+}-\gamma_{-}) \nabla w^{n+1})  =0,\\
&(y^{n+1},w^{n+1},r^{n+1},u^{n+1})(0,x)=(y^{n}_{0},w^{n}_{0}, r^{n}_{0},u^{n}_{0})(x),
\end{aligned}
\right.
\end{equation}
with $F_{i}^{n}=F_{i}^{\var,\tau}(y^{n},w^{n},r^{n})$, $\bar{Fi}$ and $G_{i}^{n}=G^{\var,\tau}_{i}(y^{n},w^{n},r^{n})$ defined in \eqref{Fi}, \eqref{barFi} and \eqref{Gi}, respectively. 
We define the functional space $\mathbb{E}_t$ associated to the following norm:
\begin{equation}\nonumber
\begin{aligned}
\|(y,w,r,u)\|_{\mathbb{E}_{t}}:&= \|(y ,w ,r ,u )\|_{\widetilde{L}^{\infty}_{t}(\dot{B}^{\frac{d}{2}-1}\cap \dot{B}^{\frac{d}{2}+1})}+\|(\partial_{t}y,\partial_{t}w,\partial_{t}r,\partial_{t}u)\|_{L^1_{t}(\dot{B}^{\frac{d}{2}})}\\
&\quad+\frac{1}{\var}\|w \|_{L^1_{t}(\dot{B}^{\frac{d}{2}-1}\cap\dot{B}^{\frac{d}{2}})}+\frac{1}{\sqrt{\var}}\|w \|_{\widetilde{L}^2_{t}(\dot{B}^{\frac{d}{2}-1}\cap\dot{B}^{\frac{d}{2}+1})}\\
&\quad+\tau \|r \|_{L^1_{t}(\dot{B}^{\frac{d}{2}+1}\cap\dot{B}^{\frac{d}{2}+2})}^{\ell}+\|r\|_{L^1_{t}(\dot{B}^{\frac{d}{2}+1})}^{h}+\tau\|r\|_{L^1_{t}(\dot{B}^{\frac{d}{2}+1})}+\sqrt{\tau}\|r\|_{\widetilde{L}^2_{t}(\dot{B}^{\frac{d}{2}}\cap\dot{B}^{\frac{d}{2}+1})}\\
&\quad+\|u \|_{L^1_{t}(\dot{B}^{\frac{d}{2}}\cap\dot{B}^{\frac{d}{2}+1})}+\frac{1}{\sqrt{\tau}}\|u \|_{\widetilde{L}^2_{t}(\dot{B}^{\frac{d}{2}-1}\cap\dot{B}^{\frac{d}{2}+1})}.
\end{aligned}
\end{equation}
For any fixed $n\geq1$, we assume that  $(y^{n} ,w^{n} ,r^{n} ,u^{n} )$ satisfies
\begin{align}
\|(y^{n},w^{n},r^{n},u^{n})\|_{\mathbb{E}_{t}}+\frac{1}{\tau}\|u^{n} +\tau (\bar{F}_{0}+G^{n-1}_{0}  )\nabla r^{n} \|_{L^1_{t}(\dot{B}^{\frac{d}{2}-1}\cap \dot{B}^{\frac{d}{2}})}\leq 2C_{0}C_{0}^{*}\mathcal{X}_{0},\quad t>0,\label{apriori}
\end{align}
where the constants $C_{0}$ and $C_{0}^{*}$ are given by \eqref{elinear} and \eqref{initialbound}, respectively. Since the initial data is smooth, by virtue of the classical theorems for the transport equation $\eqref{ren}_{1}$ and the symmetric hyperbolic system $\eqref{ren}_{2}$-$\eqref{ren}_{4}$ (cf. \cite{bahouri1,serre1}), there exists a unique global solution $(y^{n+1} ,w^{n+1} ,r^{n+1} ,u^{n+1} )\in \mathcal{C}(\mathbb{R}_{+}; H^{s})$ with all $s>\frac{d}{2}+1$.

\begin{itemize}
\item {\emph{ Step 2: Uniform estimate}}
\end{itemize}

Our goal is to show that $(y^{n+1} ,w^{n+1} ,r^{n+1} ,u^{n+1} )$ also satisfies the estimate \eqref{apriori} uniformly in $n$, $\var$, $\tau$ and time. To this end, we first let $\mathcal{X}_{0}\leq 1$. It follows from \eqref{apriori} and the composition estimates in Lemma \ref{lemma64} that
\begin{equation}\label{Gin}
\begin{aligned}
&\sum_{i=0}^{4}\|G_{i}^{n}\|_{\widetilde{L}^{\infty}_{t}(\dot{B}^{\frac{d}{2}-1}\cap\dot{B}^{\frac{d}{2}+1})}\leq C_{2}^{*}\|(y^{n},r^{n},w^{n})\|_{\widetilde{L}^{\infty}_{t}(\dot{B}^{\frac{d}{2}-1}\cap\dot{B}^{\frac{d}{2}+1})},
\end{aligned}
\end{equation}
and similarly, 
\begin{equation}\label{dtGin}
\begin{aligned}
\sum_{i=0}^{4}\|\partial_{t}G_{i}^{n}\|_{L^1_{t}(\dot{B}^{\frac{d}{2}})}&\leq C_{3}^{*} \|(\partial_{t}y^{n},\partial_{t}w^{n},\partial_{t}r^{n})\|_{L^1_{t}(\dot{B}^{\frac{d}{2}})},
\end{aligned}
\end{equation}
for some universal constants $C_{2}^{*}$ and $C_{3}^{*}$. According to \eqref{initialbound} and \eqref{Gin}, we can justify the condition \eqref{alinear} provided that
\begin{align}
    \mathcal{X}_{0}\leq c_{1}^{*}:=\frac{c}{2C_{0}C_{0}^{*}C_{2}^{*}}.\nonumber
\end{align} 
Hence, we are able to employ the uniform a-priori estimate established in Proposition \ref{proplinear} to obtain
\begin{equation}\label{oooooooooo}
\begin{aligned}
&\|(y^{n+1},w^{n+1},r^{n+1},u^{n+1})\|_{\mathbb{E}_t}+\frac{1}{\tau}\|u^{n+1} +\tau (\bar{F}_{0}+G^{n}_{0}  )\nabla r^{n+1} \|_{L^1_{t}(\dot{B}^{\frac{d}{2}-1}\cap \dot{B}^{\frac{d}{2}})}\\
&\quad\leq C_{0} {\rm{exp}}\Big(C_{0}\int_{0}^{t}\big(\|u^{n}(s)\|_{\dot{B}^{\frac{d}{2}}\cap\dot{B}^{\frac{d}{2}+1}}+\sum_{i=0}^{4}\|\partial_{t}G_{i}^{n}(s)\|_{\dot{B}^{\frac{d}{2}}}\big)ds \Big)\\
&\quad\quad\times \Big( \|(y^{n}_{0},w^{n}_{0}, r^{n}_{0},u^{n}_{0})\|_{\dot{B}^{\frac{d}{2}-1}\cap \dot{B}^{\frac{d}{2}+1}}+\|F_{4}^{n}\frac{(w^{n})^2}{\var}\|_{L^1_{t}(\dot{B}^{\frac{d}{2}-1}\cap \dot{B}^{\frac{d}{2}+1})}\Big).
\end{aligned}
\end{equation}
Applying  \eqref{apriori}, Lemma \ref{lemma63} and \ref{lemma64} gives directly
\begin{equation}\label{oooooooooo1}
\begin{aligned}
\|F_{4}^{n}\frac{(w^{n})^2}{\var}\|_{L^1_{t}(\dot{B}^{\frac{d}{2}-1}\cap \dot{B}^{\frac{d}{2}+1})}\leq\frac{ C_{4}^{*}}{\var}\|w^{n}\|_{\widetilde{L}^2_{t}(\dot{B}^{\frac{d}{2}-1}\cap\dot{B}^{\frac{d}{2}+1})}^2 ,
\end{aligned}
\end{equation}
where $C_{4}^{*}>0$ is a universal constant. Combining  \eqref{initialbound}, \eqref{apriori}, \eqref{dtGin}, \eqref{oooooooooo} and \eqref{oooooooooo1} together, we have
\begin{equation}\nonumber
\begin{aligned}
&\|(y^{n+1},w^{n+1},r^{n+1},u^{n+1})\|_{\mathbb{E}_t}+\frac{1}{\tau}\|u^{n+1} +\tau (\bar{F}_{0}+G^{n}_{0}  )\nabla r^{n+1} \|_{L^1_{t}(\dot{B}^{\frac{d}{2}-1}\cap \dot{B}^{\frac{d}{2}})}\\
&\quad\leq C_{0}e^{2(1+C_{3}^{*})C_{0}^2C_{0}^{*}\mathcal{X}_{0}}\Big( C_{0}^{*}\mathcal{X}_{0}+C_{4}^{*}(2C_{0} C_{0}^{*} \mathcal{X}_{0})^2\Big)\leq  2C_{0}C_{0}^{*}\mathcal{X}_{0},
\end{aligned}
\end{equation}
as long as 
\begin{align}
    \mathcal{X}_{0}\leq c_{2}^{*}:=\min\Big\{ \frac{1}{2(1+C_{3}^{*})C_{0}^2C_{0}^{*}\log \frac{3}{2}}, \frac{2}{9(2C_{0} C_{0}^{*})^2C^{*}_{4}}\Big\}\nonumber
\end{align} 
such that $e^{2(1+C_{3}^{*})C_{0}^2C_{0}^{*}\mathcal{X}_{0}}\leq \frac{3}{2}$ and $C_{4}^{*}(2C_{0} C_{0}^{*} \mathcal{X}_{0})^2\leq \frac{1}{3}\mathcal{X}_{0}$. Thus, the uniform estimate \eqref{apriori} holds true for any $n\geq0$.

\begin{itemize}
\item {\emph{Step 3: Strong convergence}}
\end{itemize}

The uniform estimate  \eqref{apriori} enables us to obtain the weak compactness of the approximate sequence. In order to pass the limit in every nonlinear term of \eqref{ren} as $n\rightarrow\infty$, one needs to have robust strong compactness in a suitable sense. Classical compact embedding theorem merely gives the strong convergence locally in space-time and up to a subsequence, which is not enough for System \eqref{ren}.  In what follows,  we show that the strong convergence holds in $\mathbb{R}_{+}\times\mathbb{R}^{d}$ for the whole sequence.

\begin{lemma}\label{lemmastrong}
There exists a small constant $c_{3}^{*}\in(0, \min\{1,c_{1}^{*},c_{2}^{*}\}]$ and a limit $(y,w,r,u)$ such that if $\mathcal{X}_{0}\leq c_{3}^{*}$, then as $n\rightarrow\infty$,
\begin{equation}\label{strong}
\begin{aligned}
&(y^{n},w^{n},r^{n},u^{n})\rightarrow (y,w,r,u)\quad\text{strongly in}\quad  L^{\infty}(\mathbb{R}_{+};\dot{B}^{\frac{d}{2}-1}\cap\dot{B}^{\frac{d}{2}}).
\end{aligned}
\end{equation}
In particular, we have
\begin{equation}\label{strong1}
\begin{aligned}
&(y^{n},w^{n},r^{n},u^{n})\rightarrow (y,w,r,u)\quad\text{strongly in}\quad  L^{\infty}(\mathbb{R}_{+};L^{d}\cap L^{\infty}).
\end{aligned}
\end{equation}
\end{lemma}

\begin{proof}
In order to show \eqref{strong}, one needs to perform uniform energy estimates on the error unknown
$$
(\widetilde{y}^{n},\widetilde{w}^{n},\widetilde{r}^{n},\widetilde{u}^{n}):=(y^{n+1}-y^{n},w^{n+1}-w^{n},r^{n+1}-r^{n},u^{n+1}-u^{n}).
$$
Following the framework in Section \ref{section3}, we aim to estimate the functional
\begin{align}
\widetilde{\mathcal{X}}^{n}(t)&=\|(\widetilde{y}^{n},\widetilde{w}^{n},\widetilde{r}^{n},\widetilde{u}^{n})\|_{\widetilde{L}^{\infty}_{t}(\dot{B}^{\frac{d}{2}-1}\cap \dot{B}^{\frac{d}{2}})}\nonumber+\frac{1}{\var}\|\widetilde{w}^{n}\|_{L^1_{t}(\dot{B}^{\frac{d}{2}-1})}+\frac{1}{\sqrt{\var}}\|\widetilde{w}^{n}\|_{\widetilde{L}^2_{t}(\dot{B}^{\frac{d}{2}-1}\cap\dot{B}^{\frac{d}{2}})}\nonumber\\
&\quad+\tau\|\widetilde{r}^{n}\|_{L^1_{t}(\dot{B}^{\frac{d}{2}+1})}^{\ell}+\|\widetilde{r}^{n}\|_{L^1_{t}(\dot{B}^{\frac{d}{2}})}^{h}+\sqrt{\tau}\|\widetilde{r}^{n}\|_{\widetilde{L}^2_{t}(\dot{B}^{\frac{d}{2}})}\nonumber\\
&\quad+\|\widetilde{u}^{n}\|_{L^1_{t}(\dot{B}^{\frac{d}{2}})}+\frac{1}{\sqrt{\tau}}\|\widetilde{u}^{n}\|_{\widetilde{L}^2_{t}(\dot{B}^{\frac{d}{2}-1}\cap\dot{B}^{\frac{d}{2}})}\nonumber+ \frac{1}{\tau}\| \widetilde{u}^{n}+\tau( \bar{F}_{0}+G_0^{n-1}  )\nabla \widetilde{r}^{n} \|_{L^1_{t}(\dot{B}^{\frac{d}{2}-1})}.\nonumber
\end{align}
To this matter, one can verify that for any $n\geq1$, $(\widetilde{y}^{n},\widetilde{w}^{n+1},\widetilde{r}^{n+1},\widetilde{u}^{n+1})$ solves
\begin{equation}\label{rendelta}
\left\{
\begin{aligned}
&\partial_{t}\widetilde{y}^{n+1}+u^{n}\cdot\nabla \widetilde{y}^{n+1}=\widetilde{S}^{n}_{1},\\
&\partial_{t}\widetilde{w}^{n+1}+u^{n}\cdot\nabla \widetilde{w}^{n+1}+ (\bar{F}_{1}+G_1^{n}  )\div \widetilde{u}^{n+1}+ (\bar{F}_2+G_2^{n}) \dfrac{\widetilde{w}^{n+1}}{\var}=\widetilde{S}^{n}_{2},\\
&\partial_{t}\widetilde{r}^{n+1}+u^{n}\cdot\nabla \widetilde{r}^{n+1}+ (\bar{F}_3+G_3^{n}  )\div \widetilde{u}^{n+1}=\widetilde{S}^{n}_{3},\\
&\partial_{t}\widetilde{u}^{n+1}+u^{n}\cdot\nabla \widetilde{u}^{n+1}+\frac{\widetilde{u}^{n+1}}{\tau}+ ( \bar{F}_{0}+G_0^{n}  )(\nabla  \widetilde{r}^{n+1}+(\gamma_{+}-\gamma_{-}) \nabla \widetilde{w}^{n+1}) =\widetilde{S}^{n}_{4},\\
&(\widetilde{y}^{n+1},\widetilde{w}^{n+1},\widetilde{r}^{n+1},\widetilde{u}^{n+1})(0,x)=\dot{\Delta}_{n}(y_{0},w_{0}, r_{0},u_{0})(x),
\end{aligned}
\right.
\end{equation}
with 
\begin{equation}\nonumber
\left\{
\begin{aligned}
&\widetilde{S}^{n}_{1}:=-\widetilde{u}^{n}\cdot\nabla y^{n},\\
&\widetilde{S}^{n}_{2}:=-\widetilde{u}^{n}\cdot\nabla w^{n}-(G_{1}^{n}-G_{1}^{n-1})\div u^{n}-(G_{2}^{n}-G_{2}^{n-1})\frac{w^{n}}{\var},\\
&\widetilde{S}^{n}_{3}:=-\widetilde{u}^{n}\cdot\nabla r^{n}-(G_{3}^{n}-G_{3}^{n-1})\div u^{n}+(F_{4}^{n}-F_{4}^{n-1})\frac{(w^{n})^2}{\var}+F_{4}^{n-1}\frac{(w^{n}+w^{n-1})\widetilde{w}^{n}}{\var},\\
&\widetilde{S}^{n}_{4}:=-\widetilde{u}^{n}\cdot\nabla u^{n}-(G_{0}^{n}-G_{0}^{n-1})(\nabla r^{n}+(\gamma_{+}-\gamma_{-})\nabla w^{n} ).
\end{aligned}
\right.
\end{equation}
First, employing Lemma \ref{maximaldamped} to $\eqref{rendelta}_{1}$ yields
\begin{equation}\label{deltay}
\begin{aligned}
&\|\widetilde{y}^{n+1}\|_{\widetilde{L}^{\infty}_{t}(\dot{B}^{\frac{d}{2}-1}\cap\dot{B}^{\frac{d}{2}})}\lesssim {\rm{exp}}\Big(\|u^{n}\|_{L^1_{t}(\dot{B}^{\frac{d}{2}+1})}\Big)(\|\dot{\Delta}_{n} y_{0}\|_{\dot{B}^{\frac{d}{2}-1}\cap\dot{B}^{\frac{d}{2}}}+\|\widetilde{S}^{n}_{1}\|_{L^1_{t}(\dot{B}^{\frac{d}{2}-1}\cap\dot{B}^{\frac{d}{2}})}).
\end{aligned}
\end{equation}
By similar computations on $\eqref{rendelta}_{2}$-$\eqref{rendelta}_{4}$ as in Lemma \ref{lemmalow}, we have the low-frequency estimate at the $\dot{B}^{\frac{d}{2}-1}$ regularity level:
\begin{equation}\label{lowdelta}
\begin{aligned}
&\|(\widetilde{w}^{n+1},\widetilde{r}^{n+1},\widetilde{u}^{n+1})\|_{\widetilde{L}^{\infty}_{t}(\dot{B}^{\frac{d}{2}-1})}^{\ell}+\tau\|\widetilde{r}^{n+1}\|_{L^1_{t}(\dot{B}^{\frac{d}{2}+1})}^{\ell}+\sqrt{\tau}\|\widetilde{r}^{n+1}\|_{\widetilde{L}^2_{t}(\dot{B}^{\frac{d}{2}})}^{\ell}\\
&\quad\quad+\frac{1}{\var}\|\widetilde{w}^{n+1}\|_{L^1_{t}(\dot{B}^{\frac{d}{2}-1})}^{\ell}+\frac{1}{\sqrt{\var}}\|\widetilde{w}^{n+1}\|_{\widetilde{L}^2_{t}(\dot{B}^{\frac{d}{2}-1})}^{\ell}+\frac{1}{\tau}\| \widetilde{u}^{n+1}+\tau( \bar{F}_{0}+G_0^{n}  )\nabla \widetilde{r}^{n+1} \|_{L^1_{t}(\dot{B}^{\frac{d}{2}-1})}^{\ell}\\
&\quad\lesssim \|\dot{\Delta}_{n}(w_{0},r_{0},u_{0})\|_{\dot{B}^{\frac{d}{2}-1}}+\|(\widetilde{S}^{n}_{2},\widetilde{S}^{n}_{3},\widetilde{S}^{n}_{4})\|_{L^1_{t}(\dot{B}^{\frac{d}{2}-1})}^{\ell}+\mathcal{Z}^{n}(t)\widetilde{\mathcal{X}}^{n+1}(t)+\int_{0}^{t}\mathcal{V}^{n}(s)\widetilde{\mathcal{X}}^{n+1}(s)ds,
\end{aligned}
\end{equation}
with 
\begin{equation}\nonumber
\left\{
\begin{aligned}
\mathcal{Z}^{n}(t)&=\sum_{i=0}^{3}\|G_{i}^{n}\|_{\dot{B}^{\frac{d}{2}-1}\cap\dot{B}^{\frac{d}{2}+1}},\\
\mathcal{V}^{n}(t)&=\|u^{n}\|_{\dot{B}^{\frac{d}{2}}\cap\dot{B}^{\frac{d}{2}+1}}+\sum_{i=0}^{3}\|\partial_{t}G_{i}^{n}\|_{\dot{B}^{\frac{d}{2}-1}\cap\dot{B}^{\frac{d}{2}+1}}.
\end{aligned}
\right.
\end{equation}
Moreover, as in Lemma \ref{lemmahigh}, one can obtain the following estimate in the high-frequency region:
\begin{equation}\label{highdelta}
\begin{aligned}
&\|(\widetilde{w}^{n+1},\widetilde{r}^{n+1},\widetilde{u}^{n+1})\|_{\widetilde{L}^{\infty}_{t}(\dot{B}^{\frac{d}{2}-1})\cap L^1_{t}(\dot{B}^{\frac{d}{2}})}^{h}+\frac{1}{\tau}\|\widetilde{u}^{n+1}+\tau( \bar{F}_{0}+G_0^{n}  )\nabla \widetilde{r}^{n+1}\|_{L^1_{t}(\dot{B}^{\frac{d}{2}-1})}^{h}\\
&\quad\quad+\frac{1}{\var}\|\widetilde{w}^{n+1}\|_{L^1_{t}(\dot{B}^{\frac{d}{2}-1})}^{h}+\frac{1}{\sqrt{\var}}\|\widetilde{w}^{n+1}\|_{\widetilde{L}^2_{t}(\dot{B}^{\frac{d}{2}-1})}^{h}+\sqrt{\tau}\|\widetilde{r}^{n+1}\|_{\widetilde{L}^{2}_{t}(\dot{B}^{\frac{d}{2}})}^{h}\\
&\quad\lesssim \|\dot{\Delta}_{n}(w_{0},r_{0},u_{0})\|_{\dot{B}^{\frac{d}{2}}}^{h}+\|(\widetilde{S}^{n}_{2},\widetilde{S}^{n}_{3},\widetilde{S}^{n}_{4})\|_{L^1_{t}(\dot{B}^{\frac{d}{2}})}^{h}+\mathcal{Z}^{n}(t)\widetilde{\mathcal{X}}^{n+1}(t)+\int_{0}^{t}\mathcal{V}^{n}(s)\widetilde{\mathcal{X}}^{n+1}(s)ds.
\end{aligned}
\end{equation}
Finally, following the proof of Lemma \ref{lemmaL2}, we also have
\begin{equation}\label{alldelta}
\begin{aligned}
&\|(\widetilde{w}^{n+1},\widetilde{r}^{n+1},\widetilde{u}^{n+1})\|_{\widetilde{L}^{\infty}_{t}(\dot{B}^{\frac{d}{2}})}+\frac{1}{\sqrt{\var}}\|\widetilde{w}^{n+1}\|_{\widetilde{L}^2_{t}(\dot{B}^{\frac{d}{2}})}+\frac{1}{\sqrt{\tau}}\|\widetilde{u}^{n+1}\|_{\widetilde{L}^{2}_{t}(\dot{B}^{\frac{d}{2}})}\\
&\quad\lesssim \|\dot{\Delta}_{n}(w_{0},r_{0},u_{0})\|_{\dot{B}^{\frac{d}{2}}}+\|(\widetilde{S}^{n}_{2},\widetilde{S}^{n}_{3},\widetilde{S}^{n}_{4})\|_{L^1_{t}(\dot{B}^{\frac{d}{2}})}\\
&\quad\quad+(\widetilde{\eta}+\sqrt{\mathcal{Z}^{n}(t)})\widetilde{\mathcal{X}}^{n+1}(t)+\frac{1}{\widetilde{\eta}}\int_{0}^{t}\mathcal{V}^{n}(s)\widetilde{\mathcal{X}}^{n+1}(s)ds,
\end{aligned}
\end{equation}
where $\widetilde{\eta}>0$ is a constant to be chosen. Combining \eqref{deltay}-\eqref{alldelta} together, we arrive at
\begin{equation}\label{Xtwide}
\begin{aligned}
\widetilde{\mathcal{X}}^{n+1}(t)&\lesssim \|\dot{\Delta}_{n}(y_{0},w_{0},r_{0},u_{0})\|_{\dot{B}^{\frac{d}{2}-1}\cap\dot{B}^{\frac{d}{2}}}+\|(\widetilde{S}^{n}_{2},\widetilde{S}^{n}_{3},\widetilde{S}^{n}_{4})\|_{L^1_{t}(\dot{B}^{\frac{d}{2}-1}\cap\dot{B}^{\frac{d}{2}})}\\
&\quad+(\widetilde{\eta}+\sqrt{\mathcal{Z}^{n}(t)}+\mathcal{Z}^{n}(t))\widetilde{\mathcal{X}}^{n+1}(t)+(1+\frac{1}{\widetilde{\eta}})\int_{0}^{t}\mathcal{V}^{n}(s)\widetilde{\mathcal{X}}^{n+1}(s)ds.
\end{aligned}
\end{equation}
Now we estimate the right-hand side of \eqref{Xtwide} as follow. First, due to $\Delta_{n'}\dot{\Delta}_{n}=0$ with $|n-n'|\geq 2$, one has
\begin{equation}\nonumber
\begin{aligned}
\|\dot{\Delta}_{n}(y_{0},w_{0},r_{0},u_{0})\|_{\dot{B}^{\frac{d}{2}-1}\cap\dot{B}^{\frac{d}{2}}}\lesssim \sum_{|n'-n|\leq 1}(2^{(\frac{d}{2}-1)n'}+2^{\frac{d}{2}n'})\|\dot{\Delta}_{n'}(y_{0},w_{0},r_{0},u_{0})\|_{L^2}.
\end{aligned}
\end{equation}
Thence, applying uniform estimate \eqref{apriori} leads to
\begin{equation}\nonumber
\left\{
\begin{aligned}
&\mathcal{Z}^{n}(t)\lesssim \|(y^{n},w^{n},r^{n})\|_{\widetilde{L}^{\infty}_{t}(\dot{B}^{\frac{d}{2}-1}\cap \dot{B}^{\frac{d}{2}+1})}\lesssim \mathcal{X}_{0},\nonumber\\
&\int_{0}^{t}\mathcal{V}^{n}(s)ds\lesssim \|(\partial_{t}y^{n},\partial_{t}w^{n},\partial_{t}r^{n})\|_{L^1_{t}(\dot{B}^{\frac{d}{2}})}\lesssim \mathcal{X}_{0}.\nonumber
\end{aligned}
\right.
\end{equation}
Regarding the nonlinear terms on the right-hand side of \eqref{Xtwide}, one deduces from \eqref{uv2}, \eqref{F1}-\eqref{F3} and \eqref{apriori} that
\begin{align}
&\|(\widetilde{S}^{n}_{1},\widetilde{S}^{n}_{2},\widetilde{S}^{n}_{3},\widetilde{S}^{n}_{4})\|_{L^1_{t}(\dot{B}^{\frac{d}{2}-1}\cap\dot{B}^{\frac{d}{2}})}\nonumber\\
&\quad\lesssim \|(y^{n},w^{n},r^{n})\|_{\widetilde{L}^{\infty}_{t}(\dot{B}^{\frac{d}{2}}\cap \dot{B}^{\frac{d}{2}+1})}\|\widetilde{u}^{n}\|_{L^1_{t}(\dot{B}^{\frac{d}{2}})}+\|(\widetilde{y}^{n},\widetilde{w}^{n},\widetilde{r}^{n},\widetilde{u}^{n})\|_{\widetilde{L}^{\infty}_{t}(\dot{B}^{\frac{d}{2}})}\|u^{n}\|_{L^1_{t}(\dot{B}^{\frac{d}{2}}\cap\dot{B}^{\frac{d}{2}+1})}\nonumber\\
&\quad\quad+\frac{1}{\sqrt{\var}}\|(w^{n},w^{n-1})\|_{\widetilde{L}^{2}_{t}(\dot{B}^{\frac{d}{2}})}\frac{1}{\sqrt{\var}}\|\widetilde{w}^{n}\|_{\widetilde{L}^{2}_{t}(\dot{B}^{\frac{d}{2}-1}\cap\dot{B}^{\frac{d}{2}})}\lesssim \mathcal{X}_{0}\widetilde{\mathcal{X}}^{n}(t).\nonumber
\end{align}
Gathering the above estimates into \eqref{Xtwide} and \eqref{deltay} and letting both $\widetilde{\eta}$ and $\mathcal{X}_{0}$ be sufficiently small, we obtain
\begin{equation}\nonumber
\begin{aligned}
\widetilde{\mathcal{X}}^{n+1}(t)&\lesssim \sum_{|n'-n|\leq 1}(2^{(\frac{d}{2}-1)n'}+2^{\frac{d}{2}n'})\|\dot{\Delta}_{n'}(y_{0},w_{0},r_{0},u_{0})\|_{L^2}+\mathcal{X}_{0}\widetilde{\mathcal{X}}^{n}(t).
\end{aligned}
\end{equation}
Summing this over $n\geq 1$ leads to 
\begin{equation}\nonumber
\begin{aligned}
\sum_{n=1}^{\infty} \widetilde{\mathcal{X}}^{n+1}(t)\lesssim \|(y_{0},w_{0},r_{0},u_{0})\|_{\dot{B}^{\frac{d}{2}-1}\cap\dot{B}^{\frac{d}{2}}}+\mathcal{X}_{0}\sum_{n=1}^{\infty} \widetilde{\mathcal{X}}^{n}(t).
\end{aligned}
\end{equation}
Given
$(y^{0},w^{0},r^{0},u^{0})=(0,0,0,0)$ and $(y^{1},w^{1},r^{1},u^{1})=(y_{0},w_{0},r_{0},u_{0})$, we take sufficiently small $\mathcal{X}_{0}$ to have
\begin{equation}\label{Xninfty}
\begin{aligned}
&\sum_{n=0}^{\infty}  \widetilde{\mathcal{X}}^{n}(t)\lesssim \|(y_{0},w_{0},r_{0},u_{0})\|_{\dot{B}^{\frac{d}{2}-1}\cap\dot{B}^{\frac{d}{2}}}.
\end{aligned}
\end{equation}
Now we define
$(y,w,r,u):=\sum_{n'=0}^{\infty}(\widetilde{y}^{n'},\widetilde{w}^{n'},\widetilde{r}^{n'},\widetilde{u}^{n'}).$
Thanks to \eqref{Xninfty} and 
$$
(y^{n},w^{n},r^{n},u^{n})=\sum_{n'=0}^{n}(\widetilde{y}^{n'},\widetilde{w}^{n'},\widetilde{r}^{n'},\widetilde{u}^{n'}),
$$
it follows that
\begin{equation}\nonumber
\begin{aligned}
\|(y^{n},w^{n},r^{n},u^{n})-(y,w,r,u)\|_{L^{\infty}_{t}(\dot{B}^{\frac{d}{2}-1}\cap\dot{B}^{\frac{d}{2}})}\leq \sum_{n'\geq n+1}^{\infty}\widetilde{\mathcal{X}}^{n'}(t)\underset{n\rightarrow\infty}{\rightarrow} 0.
\end{aligned}
\end{equation}Gathering the embeddings in Lemma \ref{lemma22}, we get \eqref{strong1}, which finishes the proof of Lemma \ref{lemmastrong}.

\end{proof}

\begin{itemize}
\item {\emph{Step 4: Global existence}}
\end{itemize}

Let $\mathcal{X}_{0}\leq c_3^*$. By virtue of the strong convergence properties \eqref{strong}-\eqref{strong1}, one can pass to the limit  as $n\rightarrow\infty$ in \eqref{ren} and justify that the limit $(y,w,r,u)$, obtained in Lemma \ref{lemmastrong}, is indeed a global strong solution to the Cauchy problem \eqref{re}. In addition, taking advantage of Fatou's property in Lemma \ref{lemma22}, for all $t>0$, we have 
\begin{equation}
\begin{aligned}
    &\|(y,w,r,u)\|_{\mathbb{E}_t}+\frac{1}{\tau}\|u +\tau (\bar{F}_{0}+G_{0}  )\nabla r \|_{L^1_{t}(\dot{B}^{\frac{d}{2}-1}\cap \dot{B}^{\frac{d}{2}})}\\
   &\quad \lesssim  \liminf_{n\rightarrow \infty} \Big(\|(y^{n},w^{n},r^{n},u^{n})\|_{\mathbb{E}_t}+\frac{1}{\tau}\|u^{n} +\tau (\bar{F}_{0}+G^{n-1}_{0}  )\nabla r^{n} \|_{L^1_{t}(\dot{B}^{\frac{d}{2}-1}\cap \dot{B}^{\frac{d}{2}})}\Big)\lesssim \mathcal{X}_{0}.\label{uniformlimit}
\end{aligned}
\end{equation}

To prove the time continuity property in \eqref{r1}-\eqref{uniform1}, our proof relies on the uniform bound \eqref{uniformlimit} and employs a reasoning analogous to that found in \cite{danchin1}. Since $\|(\partial_ty,\partial_tw,\partial_tr,\partial_tu)\|$ lies in $L^1(\mathbb{R}_{+};{\dot{B}^{\frac d2}})$, one has
$(y,w,r,u)\in\mathcal{C}_b(\mathbb{R}_{+};\dot{B}^{\frac d2}).$ To recover $(y,w,r,u)\in\mathcal{C}_b(\mathbb{R}_{+};\dot{B}^{\frac d2-1}\cap \dot{B}^{\frac{d}{2}+1})$, we shall investigate each equations separately. Recall that the solution $(y,w,r,u)$ satisfies
\[
\left\{
\begin{array}
[c]{l}%
\partial_ty=-u\cdot \nabla y,
\\\partial_tw=-u\cdot \nabla w-\bigl(\bar F_{1}+G_1\bigr)\div u-\bigl(\bar F_2+G_2\bigr)\dfrac{w}{\var},\\\partial_tr=-u\cdot \nabla r-\bigl(\bar F_{3}+G_3\bigr)\div u-F_4\dfrac{w^2}{\var},\\
\partial_tu=-u\cdot \nabla u-\dfrac{u}{\tau}- \bigl(\bar F_0+G_0\bigr)%
\nabla  r-\left(  \gamma_{+}-\gamma_{-}\right)  \bigl(\bar F_0+G_0\bigr)\nabla w.
\end{array}
\right.
\]
As the right-hand side terms of the components $y,r$ and $w$ belong to $L^1(\mathbb{R}_{+};\dot{B}^{\frac{d}{2}-1})$, we directly get $(y,r,w)\in\mathcal{C}_b(\mathbb{R}_{+};\dot{B}^{\frac d2-1}).$ Concerning the equation of $u$, its right-hand side lies in $L^2(\mathbb{R}_{+};\dot{B}^{\frac d2-1})$ thus we can also recover that $u$ belongs to $\mathcal{C}_b(\mathbb{R}_{+};\dot{B}^{\frac d2-1})$. We are left with recovering the time continuity of $(y,w,r,u)$ in $\dot{B}^{\frac d2+1}$. First, for a fixed $j\in \mathbb{Z}$, each $(y_j,w_j,r_j,u_j)$ is continuous in time with values in $L^2$ due to Bernstein's inequality.
Now, thanks to $(y,w,r,u)\in L^\infty(\mathbb{R}_{+};\dot{B}^{\frac d2+1}),$ for any $\eta>0$, there exists an large integer $J_*$ such that, for all $t>0$
\begin{eqnarray}\nonumber \sum_{|j|\geq J_*}2^{j\left(\frac d2+1\right)}\|(y_j,w_j,r_j,u_j)\|_{L^\infty_t(L^2)}<\frac{\eta}{2}.
\end{eqnarray}
Thus, for any time $t, t'\in \mathbb{R}_{+}$, we have
\begin{align*}
\|(y,w,r,u)(t)-(y,w,r,u)(t')\|_{\dot{B}^{\frac d2+1}}&\leq \sum_{|j|<J_*}2^{j\left(\frac d2+1\right)}\|\ddj((y,w,r,u)(t)-(y,w,r,u)(t'))\|_{L^2}\\&+\sum_{|j|\geq J_*}2^{j\left(\frac d2+1\right)}\|\ddj((y,w,r,u)(t)-(y,w,r,u)(t'))\|_{L^2}&\\
& \leq 2^{J_*} \|(y,w,r,u)(t)-(y,w,r,u)(t')\|_{\dot{B}^{\frac d2}}+\eta\underset{t\to t'}{\rightarrow} \eta.
\end{align*} 
Since $\eta$ is an arbitrary constant, we get $(y,w,r,u)\in \mathcal{C}_b(\mathbb{R}_{+};\dot{B}^{\frac d2+1})$. Finally, applying the inverse function theorem, we can see that once $\alpha_{\pm}$ and $\rho_{\pm}$ are determined by \eqref{newunknows}, then $(\alpha_{\pm},\rho_{\pm}, u)\in \mathcal{C}_{b}(\mathbb{R}_{+};\dot{B}^{\frac{d}{2}-1}\cap\dot{B}^{\frac{d}{2}+1})$ is the unique global strong solution to the original Cauchy problem of System \eqref{BN}. Using \eqref{uniformlimit}, product laws and composite estimates, we are able to verify that $(\alpha_{\pm},\rho_{\pm}, u)$ satisfies the properties \eqref{r1}-\eqref{uniform1}. To complete the proof, we prove the uniqueness in our regularity framework below.

\begin{itemize}
\item {\emph{ Step 5: Uniqueness}}
\end{itemize}

The final step is to show the uniqueness of solutions to \eqref{BN} belonging to the regularity class  \eqref{r1}. We emphasize that the proof does not require the smallness of regularity for initial data. It suffices to consider the reformulated system \eqref{re}. Without loss of generality, as the parameters do not affect our argument for proving uniqueness, we set $\var=\tau=1$. Let $(y_{1},w_{1},r_{1},u_{1})$ and $(y_{2},w_{2},r_{2},u_{2})$ be two solutions to \eqref{re} subject to the same initial data  $(y_{0},w_{0},r_{0},u_{0})$, satisfying \eqref{r1} and $\bar{F}_{i}+G_{i}(y_{j},r_{j},w_{j})>0$ for $i=0,1,2,3$ and $j=1,2$. The difference
$$
(\widetilde{y},\widetilde{w},\widetilde{r},\widetilde{u}):=(y_{1}-y_{2},w_{1}-w_{2},r_{1}-r_{2},u_{1}-u_{2})
$$
solves
\begin{equation}\label{renerror}
\left\{
\begin{aligned}
&\partial_{t}\widetilde{y}+u_{1}\cdot\nabla \widetilde{y}=\widetilde{S}_{1},\\
&\partial_{t}\widetilde{w}+u_{1}\cdot\nabla \widetilde{w}+ (\bar{F}_{1}+G_1^{1}  )\div \widetilde{u}+ \bar{F}_2 \widetilde{w}=\widetilde{S}_{2},\\
&\partial_{t}\widetilde{r}+u_{1}\cdot\nabla \widetilde{r}+ (\bar{F}_3+G_3^{1}  )\div \widetilde{u}=\widetilde{S}_{3},\\
&\partial_{t}\widetilde{u}+u_{1}\cdot\nabla \widetilde{u}+\widetilde{u}+ ( \bar{F}_{0}+G_0^{1}  )(\nabla  \widetilde{r}+(\gamma_{+}-\gamma_{-}) \nabla \widetilde{w}) =\widetilde{S}_{4},\\
&(\widetilde{y},\widetilde{w},\widetilde{r},\widetilde{u})(0,x)=(0,0,0,0),
\end{aligned}
\right.
\end{equation}
where we denoted 
\begin{equation}\nonumber
\left\{
\begin{aligned}
\widetilde{S}_{1}:&=-\widetilde{u} \cdot\nabla y_{2},\\
\widetilde{S}_{2}:&=-\widetilde{u}\cdot\nabla w_{2}-(G_{1}^1-G_{1}^2)\div u_{2}-(G_2^{1}-G_2^{2})w_{2}-G_2^{1} \widetilde{w},\\
\widetilde{S}_{3}:&=-\widetilde{u} \cdot\nabla r_{2}-(G_3^{1}-G_{3}^2)\div u_{2}+(F_{4}^1-F_{4}^2)(w_{1})^2-F_{4}^2(w_{1}+w_{2})\widetilde{w},\\
\widetilde{S}^{4}:&=-\widetilde{u} \cdot\nabla u_{2}-(G_0^{1}-G_0^{2})(\nabla r_{2}+(\gamma_{+}-\gamma_{-})\nabla w_{2}).
\end{aligned}
\right.
\end{equation}
Here $G_{i}^{l}:=G_{i}(y_l,w_l,r_l)$ and $F_{4}^{l}:=F_{4}(y_l,w_l,r_l)$ with $i=0,1,2,3,$ and $l=1,2$. Applying Lemma \ref{maximaldamped} to $\eqref{renerror}$ implies that, for all $t>0$,
\begin{equation}\label{deltayun}
\begin{aligned}
\|\widetilde{y}(t)\|_{\dot{B}^{\frac{d}{2}}}&\lesssim {\rm{exp}}\Big(\|u_{1}\|_{L^1_{t}(\dot{B}^{\frac{d}{2}+1})}\Big)\int_{0}^{t}\|\widetilde{S}^{n}_{1}\|_{\dot{B}^{\frac{d}{2}}}ds.
\end{aligned}
\end{equation}
Through the application of the weighted Lyapunov functional method, as outlined in \eqref{Lj}-\eqref{rjh}, we obtain
\begin{equation}\label{deltauj}
\begin{aligned}
&\frac{d}{dt}\int_{\mathbb{R}^{d}} \frac{1}{2}\left( W_{1}|\widetilde{w}_{j}|^2+W_{2}|\widetilde{r}_{j}|^2+|\widetilde{u}_{j}|^2\right)dx+\int_{\mathbb{R}^{d}}\left( W_{3}|\widetilde{w}_{j}|^2+|\widetilde{u}_{j}|^2\right)dx\\
&\lesssim \Big(\|(\partial_{t}y_{1},\partial_{t}w_{1},\partial_{t}r_{1},\nabla y_{1}, \nabla w_{1},\nabla r_{1})\|_{L^{\infty}}+\|u_{1}\|_{W^{1,\infty}}\Big)\|(\widetilde{w}_{j},\widetilde{r}_{j},\widetilde{u}_{j})\|_{L^2}^2\\
&\quad+\Big(\|(\widetilde{T}_{1,j},\widetilde{T}_{2,j},\widetilde{T}_{3,j})\|_{L^2} +\|\dot{\Delta}_{j}(\widetilde{S}_{2},\widetilde{S}_{3},\widetilde{S}_{4})\|_{L^2}\Big)\|(\widetilde{w}_{j},\widetilde{r}_{j},\widetilde{u}_{j})\|_{L^2},
\end{aligned}
\end{equation}
where $W_{i}=W_{i}(y_{1},w_{1},r_{1})>0$, $i=1,2,3$ are some smooth weight functions depending on $\bar{F}_{i}+G_{i}^{1}>0$, and the commutator terms are given by
\begin{equation}\nonumber
\left\{
\begin{aligned}
&\widetilde{T}_{1,j}:=[u_{1},\dot{\Delta}_{j}]\nabla \widetilde{w}+[G_{1}^1,\dot{\Delta}_{j}]\div \widetilde{u},\\
&\widetilde{T}_{2,j}:=[u_{1},\dot{\Delta}_{j}]\nabla \widetilde{r}+[G_{3}^1,\dot{\Delta}_{j}]\div \widetilde{u},\\
&\widetilde{T}_{3,j}:=[u_{1},\dot{\Delta}_{j}]\nabla \widetilde{u} +[G_{0}^1,\dot{\Delta}_{j}]\nabla \widetilde{r}+[G_{0}^1,\dot{\Delta}_{j}]\nabla \widetilde{w}.
\end{aligned}
\right.
\end{equation}
Integrating \eqref{deltauj} over $[0,t]$, taking the square root of both sides and summing the resulting estimate over $j\in\mathbb{Z}$ with the weight $2^{\frac{d}{2}j}$, we get
\begin{equation}\nonumber
\begin{aligned}
&\|(\widetilde{w} ,\widetilde{r} ,\widetilde{u} )\|_{\widetilde{L}^{\infty}_{t}(\dot{B}^{\frac{d}{2}})}+\|\widetilde{u} \|_{\widetilde{L}^{2}_{t}(\dot{B}^{\frac{d}{2}})}+\|\widetilde{w} \|_{\widetilde{L}^{2}_{t}(\dot{B}^{\frac{d}{2}})}\\
&\quad\lesssim \bigg( \int_{0}^{t}\Big((\|(\partial_{t}y_{1},\partial_{t}w_{1},\partial_{t}r_{1},\nabla y_{1}, \nabla w_{1},\nabla r_{1})\|_{L^{\infty}}+\|u _{1}\|_{W^{1,\infty}})\|(\widetilde{w} ,\widetilde{r} ,\widetilde{u} )\|_{\dot{B}^{\frac{d}{2}}} \\
&\quad\quad+\sum_{j\in\mathbb{Z}}2^{j\frac{d}{2}} \|(\widetilde{T}_{1,j},\widetilde{T}_{2,j},\widetilde{T}_{3,j})\|_{L^2}+\|(\widetilde{S}_{2},\widetilde{S}_{3},\widetilde{S}_{4})\|_{\dot{B}^{\frac{d}{2}}}\Big) ds \bigg)^{\frac{1}{2}}\|(\widetilde{w} ,\widetilde{r} ,\widetilde{u} )\|_{\widetilde{L}^{\infty}_{t}(\dot{B}^{\frac{d}{2}})}^{\frac{1}{2}},\\
\end{aligned}
\end{equation}
which together with Young's inequality implies
\begin{equation}\label{widetildeeee}
\begin{aligned}
\|(\widetilde{w} ,\widetilde{r} ,\widetilde{u} )(t)\|_{\dot{B}^{\frac{d}{2}}}&\lesssim \int_{0}^{t}\Big((\|(\partial_{t}y_{1},\partial_{t}w_{1},\partial_{t}r_{1},\nabla y_{1}, \nabla w_{1},\nabla r_{1})\|_{L^{\infty}}+\|u _{1}\|_{W^{1,\infty}})\|(\widetilde{w} ,\widetilde{r} ,\widetilde{u} )\|_{\dot{B}^{\frac{d}{2}}} \\
&\quad\quad+\sum_{j\in\mathbb{Z}}2^{j\frac{d}{2}} \|(\widetilde{T}_{1,j},\widetilde{T}_{2,j},\widetilde{T}_{3,j})\|_{L^2}+\|(\widetilde{S}_{2},\widetilde{S}_{3},\widetilde{S}_{4})\|_{\dot{B}^{\frac{d}{2}}} \Big)ds,\quad \quad t>0.
\end{aligned}
\end{equation}
Using the classical commutator estimate in Lemma \ref{lemmacom} implies
\begin{equation}\nonumber
\begin{aligned}
&\sum_{j\in\mathbb{Z}}2^{j\frac{d}{2}} \|(\widetilde{T}_{1,j},\widetilde{T}_{2,j},\widetilde{T}_{3,j})\|_{L^2}\lesssim \|(y_{1}, r_{1}, w_{1}, u_{1})\|_{\dot{B}^{\frac{d}{2}+1}}\|(\widetilde{w},\widetilde{r},\widetilde{u})\|_{\dot{B}^{\frac{d}{2}}}.
\end{aligned}
\end{equation}
In addition, according to standard product laws and composite estimates in Lemmas \ref{lemma63} and \ref{lemma64}, the nonlinear terms $(\widetilde{S}_{1},\widetilde{S}_{2},\widetilde{S}_{3},\widetilde{S}_{4})$ can be handled as
\begin{equation}\nonumber
\begin{aligned}
\|(\widetilde{S}_{1},\widetilde{S}_{2},\widetilde{S}_{3},\widetilde{S}_{4})\|_{\dot{B}^{\frac{d}{2}}}\lesssim \Big(\|(y_{2},w_{2},r_{2},u_{2})\|_{\dot{B}^{\frac{d}{2}+1}}+\|(w_{1},w_{2})\|_{\dot{B}^{\frac{d}{2}}}\Big)\|(\widetilde{y},\widetilde{w} ,\widetilde{r} ,\widetilde{u} )\|_{\dot{B}^{\frac{d}{2}}}. 
\end{aligned}
\end{equation}
Substituting the above two estimates into \eqref{deltayun} and \eqref{widetildeeee} and then employing Gr\"onwall's inequality, we end up with $\|(\widetilde{y},\widetilde{w} ,\widetilde{r} ,\widetilde{u} )(t)\|_{\dot{B}^{\frac{d}{2}}}=0$ for all $t>0$, which concludes the proof of Theorem \ref{theorem11}.
\normalcolor

\subsection{The Cauchy problem of Systems \eqref{K} and \eqref{PM}}\label{sec:exKapila}

We provide a brief explanation of the proof of the global existence and uniqueness for System \eqref{PM}. The proof of the result for System \eqref{K} (Theorem \ref{theoremK}) follows a very similar procedure, so we omit the details here for brevity. The uniformity of the estimate \eqref{uniform1} for System \eqref{K} allows us to construct solutions for System \eqref{PM} by taking the limit as the relaxation parameter $\tau\to 0$.

The following lemma states the uniform estimate verified by the solutions of System \eqref{Keta}, which are rescaled from estimate \eqref{uniform1K} obtained in Theorem \ref{theoremK} for System \eqref{K}.
\begin{lemma}\label{lemmaKuniform}
Let  $(\alpha_{\pm}^{\tau},\rho_{\pm}^{\tau}, u^{\tau})$ be the global solution to the Cauchy problem of System \eqref{K} subject to the initial data $(\alpha_{\pm,0}^\tau, \rho_{\pm,0}^\tau, u_{0}^\tau)$ given by Theorem \ref{theoremK} and $(\beta_{\pm}^{\tau},\varrho_{\pm}^{\tau}, v^{\tau})$ be defined by the diffusive scaling \eqref{scalingK}, then  it holds that 
\begin{equation}\label{uniformK}
\begin{aligned}
&\|(\beta_{\pm}^{\tau}-\bar{\alpha}_{\pm}, \varrho_{\pm}^{\tau}-\bar{\rho}_{\pm})\|_{\widetilde{L}^{\infty} (\dot{B}^{\frac{d}{2}-1}\cap\dot{B}^{\frac{d}{2}+1})}\\
&\quad\quad+\|(\Pi^{\tau}-\bar{P},\varrho_{\pm}^{\tau}-\bar{\rho}_{\pm})\|_{L^1 (\dot{B}^{\frac{d}{2}+1})}+\|(\Pi^{\tau}-\bar{P},\varrho_{\pm}^{\tau}-\bar{\rho})\|_{\widetilde{L}^2 (\dot{B}^{\frac{d}{2}}\cap\dot{B}^{\frac{d}{2}+1})}\\
&\quad\quad+\|v^{\tau}\|_{L^1 (\dot{B}^{\frac{d}{2}}\cap\dot{B}^{\frac{d}{2}+1})}+\|v^{\tau}\|_{\widetilde{L}^2 (\dot{B}^{\frac{d}{2}-1}\cap\dot{B}^{\frac{d}{2}+1})}+\frac{1}{\tau}\| z^{\tau}\|_{L^1 (\dot{B}^{\frac{d}{2}-1}\cap\dot{B}^{\frac{d}{2}})}\\
&\quad\leq C\|(\alpha^\tau_{\pm,0}-\bar{\alpha}_{\pm}, \rho^\tau_{\pm,0}-\bar{\rho}_{\pm},u^\tau_{0})\|_{\dot{B}^{\frac{d}{2}-1}\cap\dot{B}^{\frac{d}{2}+1}},
\end{aligned}
\end{equation}
with $z^{\tau}:=v^{\tau}+\frac{1}{\varrho^{\tau}}\nabla \Pi^{\tau}$, and $C>0$ a universal constant.
\end{lemma}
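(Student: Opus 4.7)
The plan is to derive every bound in \eqref{uniformK} by pulling back each norm through the diffusive scaling \eqref{scalingK} and invoking the uniform-in-$\tau$ estimates \eqref{uniform1K} already established in Theorem \ref{theoremK}. Since the change of variables $s=\tau t$ only affects the time variable (and the velocity is additionally divided by $\tau$), no new PDE analysis is required; the content of the lemma is a careful bookkeeping of the scaling factors.

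First, I would record the elementary identities. For any spatial Banach space $X$ and any scalar function $f^{\tau}(t,x)$ with $\tilde f^{\tau}(s,x):=f^{\tau}(s/\tau,x)$, the change of variable yields
\begin{equation}\nonumber
\|\tilde f^{\tau}\|_{L^{\infty}_s(X)}=\|f^{\tau}\|_{L^{\infty}_t(X)},\qquad \|\tilde f^{\tau}\|_{L^{p}_s(X)}=\tau^{1/p}\|f^{\tau}\|_{L^{p}_t(X)}\ \ (1\le p<\infty).
\end{equation}
Applied to the pressure $\Pi^{\tau}(s,x)=P^{\tau}(s/\tau,x)$, this converts $\tau\|P^{\tau}-\bar P\|_{L^1_t(\dot B^{\frac d2+1})}$ and $\sqrt\tau\|P^{\tau}-\bar P\|_{L^2_t(\dot B^{\frac d2}\cap\dot B^{\frac d2+1})}$ in \eqref{uniform1K} directly into $\|\Pi^{\tau}-\bar P\|_{L^1_s(\dot B^{\frac d2+1})}$ and $\|\Pi^{\tau}-\bar P\|_{\widetilde L^2_s(\dot B^{\frac d2}\cap\dot B^{\frac d2+1})}$, both uniformly bounded. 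Because $\Pi^{\tau}=P_{\pm}(\varrho_{\pm}^{\tau})$ with $\gamma_{\pm}$-law pressures that are smooth and strictly monotone at $\bar\rho_{\pm}$, the composition lemma \ref{lemma64} converts these pressure bounds into the claimed bounds on $\varrho_{\pm}^{\tau}-\bar\rho_{\pm}$, and the $L^{\infty}_s$-bounds on $\varrho_{\pm}^{\tau}-\bar\rho_{\pm}$ and $\beta_{\pm}^{\tau}-\bar\alpha_{\pm}$ are immediate pointwise identities. The velocity obeys $v^{\tau}(s,x)=u^{\tau}(s/\tau,x)/\tau$, so the $1/\tau$ in front cancels a factor of $\tau^{1/p}$ coming from the time-rescaling exactly where it should: the $L^1_t(\dot B^{\frac d2}\cap\dot B^{\frac d2+1})$ bound on $u^{\tau}$ transforms into $\|v^{\tau}\|_{L^1_s(\dot B^{\frac d2}\cap\dot B^{\frac d2+1})}\lesssim c_1$, and the bound on $\tau^{-1/2}\|u^{\tau}\|_{L^2_t(\dot B^{\frac d2-1}\cap\dot B^{\frac d2+1})}$ transforms into $\|v^{\tau}\|_{\widetilde L^2_s(\dot B^{\frac d2-1}\cap\dot B^{\frac d2+1})}\lesssim c_1$.

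The only slightly nontrivial step is the effective-flux bound. Writing the rescaled effective flux as $\tilde Z^{\tau}(s,x):=\bigl(\tfrac{\rho^{\tau}u^{\tau}}{\tau}+\nabla P^{\tau}\bigr)(s/\tau,x)=\varrho^{\tau}v^{\tau}+\nabla\Pi^{\tau}$, the $L^1_s$ scaling identity and \eqref{uniform1K} yield
\begin{equation}\nonumber
\|\varrho^{\tau}v^{\tau}+\nabla\Pi^{\tau}\|_{L^1_s(\dot B^{\frac d2-1}\cap\dot B^{\frac d2})}=\tau\Bigl\|\tfrac{\rho^{\tau}u^{\tau}}{\tau}+\nabla P^{\tau}\Bigr\|_{L^1_t(\dot B^{\frac d2-1}\cap\dot B^{\frac d2})}\lesssim \tau\,c_1.
\end{equation}
Since $\varrho^{\tau}z^{\tau}=\varrho^{\tau}v^{\tau}+\nabla\Pi^{\tau}$ with $1/\varrho^{\tau}$ a bounded perturbation of $1/\bar\rho$ in $\widetilde L^{\infty}_s(\dot B^{\frac d2})$, the product estimate \eqref{uv2} and composition estimate \eqref{F1} give $\|z^{\tau}\|_{L^1_s(\dot B^{\frac d2-1}\cap\dot B^{\frac d2})}\lesssim\|\varrho^{\tau}v^{\tau}+\nabla\Pi^{\tau}\|_{L^1_s(\dot B^{\frac d2-1}\cap\dot B^{\frac d2})}\lesssim\tau c_1$, which is exactly the claimed $\tau^{-1}\|z^{\tau}\|_{L^1_s(\dot B^{\frac d2-1}\cap\dot B^{\frac d2})}\lesssim c_1$.

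I do not expect any real obstacle here; the heart of the work is already in Theorem \ref{theoremK}. The only point requiring attention is to match the scaling factor powers with the time-Lebesgue exponents (in particular, the $\sqrt\tau$ in front of $L^2_t$ norms and the $\tau$ in front of $L^1_t$ pressure norms in \eqref{uniform1K} are precisely what is needed so that the rescaled norms become scale-invariant), and to use the composition/product lemmas when passing from $\Pi^{\tau}$ to $\varrho_{\pm}^{\tau}$ and from $\varrho^{\tau}v^{\tau}+\nabla\Pi^{\tau}$ to $z^{\tau}$.
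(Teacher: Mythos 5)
Your proof is correct and coincides with the approach the paper implicitly intends (the paper states Lemma \ref{lemmaKuniform} without proof, presenting it as the rescaled form of \eqref{uniform1K}). The elementary time-rescaling identities $\|\tilde f\|_{L^p_s(X)}=\tau^{1/p}\|f\|_{L^p_t(X)}$ (and the same for Chemin--Lerner norms, since $\dot\Delta_j$ acts only in space) are indeed the whole game: they precisely cancel the $\tau$ and $\sqrt{\tau}$ weights in front of the $L^1_t$ and $L^2_t$ pressure norms in \eqref{uniform1K}, while the extra $\tau^{-1}$ in $v^\tau=u^\tau/\tau$ absorbs the factor $\tau^{1/p}$ in the velocity norms. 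Your handling of $z^{\tau}$ is also right: the effective flux in \eqref{uniform1K} rescales to $\varrho^{\tau}v^{\tau}+\nabla\Pi^{\tau}=\varrho^{\tau}z^{\tau}$, the $L^1_s$ scaling gives the $\tau$-smallness, and since $1/\varrho^{\tau}$ is a small perturbation of $1/\bar{\rho}$ in $\widetilde L^\infty(\dot B^{\frac d2}\cap\dot B^{\frac d2+1})$, the product law \eqref{uv2} (and \eqref{uv1} at level $\dot B^{\frac d2}$) and composition Lemma \ref{lemma64} carry this over to $z^{\tau}$ itself. Finally, using $\varrho_\pm^\tau=P_\pm^{-1}(\Pi^\tau)$ and the composition lemma to pass from $\Pi^\tau-\bar P$ bounds to $\varrho_\pm^\tau-\bar\rho_\pm$ bounds is exactly what is needed, since in System \eqref{K} the common pressure gives $\varrho_\pm^\tau$ as a smooth function of $\Pi^\tau$. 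No gap.
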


\underline{\it\textbf{Proof of Theorem \ref{theoremPM}:}}~ Assume that the initial data $(\beta_{\pm,0},\varrho_{\pm,0})$ satisfies \eqref{a1PM}. For any $\tau\in(0,1)$, we define the regularized data as
\begin{align}
(\alpha^{\tau}_{\pm,0},\rho^{\tau}_{\pm,0})(x):=\dot{S}_{[\frac{1}{\tau}]}(\beta_{\pm, 0},\varrho_{\pm, 0})(x)\quad {\rm and}\quad u^\tau_0(x):=0.\nonumber
\end{align} 
Hence, by employing Theorem \ref{theoremK} we can obtain a sequence $(\alpha_{\pm}^{\tau}, \rho_{\pm}^{\tau},u^{\tau})$, which is the global solution to System \eqref{K} subject to the initial data $(\alpha^{\tau}_{\pm,0},\rho^{\tau}_{\pm,0}, u^\tau_0)$.
Taking the diffusive scaling \eqref{scalingK}, one has that 
$(\beta_{\pm}^{\tau}, \varrho_{\pm}^{\tau}, v^\tau)$ is the global solution to System \eqref{Keta} subject to the initial data $(\alpha^{\var}_{\pm,0},\rho^{\var}_{\pm,0}, u^\tau_0/\tau)$. In view of the uniform estimate \eqref{uniformK} established in Lemma \ref{lemmaKuniform}, the Aubin-Lions lemma and the cantor diagonal process, there exists a limit $(\beta_{\pm}, \varrho_{\pm})$ such that as $\tau\rightarrow0$, up to a subsequence, $\chi(\beta_{\pm}^{\tau}, \varrho_{\pm}^{\tau})$ converges to $\chi(\beta_{\pm}, \varrho_{\pm})$ in $C([0,T];\dot{B}^{s})$ $(s<\frac{d}{2}+1)$ strongly for any given time $T>0$ and $\chi\in C^{\infty}_{c}(\mathbb{R}^{d}\times [0,T])$. Thus, we can check that $(\beta_{\pm}, \varrho_{\pm})$ solves System \eqref{PM} in the sense of distributions. Furthermore, taking advantage of the Fatou property and the optimal regularity estimate in Lemma \ref{maximalheat} for the equation of $\Pi$, we can conclude \eqref{r1PM}. Finally, the uniqueness can be obtained in a simple fashion. The interested reader may also refer to \cite{c1,c2} for more details.


\section{Relaxation limits with convergence rates}\label{sectionrelaxation}

\subsection{Pressure-relaxation limit: System \eqref{BN} to System \eqref{K}}\label{sectionrelaxationBNK}

In this section, we prove Theorem \ref{theorem12} related to the convergence rate of the relaxation process between System \eqref{BN} and System \eqref{K}. Let $(\alpha^{\var,\tau}_{\pm},\rho^{\var,\tau}_{\pm}, u^{\var,\tau})$ and  $(\alpha^{\tau}_{\pm},\rho^{\tau}_{\pm}, u^{\tau})$ be the global solutions to System \eqref{BN} with the initial data $(\alpha^{\var,\tau}_{\pm,0},\rho^{\var,\tau}_{\pm,0},u^{\var,\tau}_{0})$ and System \eqref{K} with the initial data $(\alpha^{\var,\tau}_{\pm,0},\rho^{\var,\tau}_{\pm,0},u^{\var,\tau}_{0})$ given by Theorems \ref{theorem11} and \ref{theoremK}, respectively. Denote the error variables
\begin{align}
&( \delta \alpha_{\pm}, \delta \rho_{\pm},\delta u):=(\alpha^{\var,\tau}_{\pm}-\alpha^{\tau}_{\pm}, u^{\var,\tau}-u^{\tau}),\nonumber\\
&(\delta\rho,\delta P_{\pm}, \delta P):=( \rho^{\var,\tau}_{\pm}-\rho^{\tau}_{\pm},\rho^{\var,\tau}-\rho^{\tau}, P_{\pm}(\rho^{\var,\tau}_{\pm})-P_{\pm}(\rho^{\tau}_{\pm}),P^{\var,\tau}-P^{\tau}),\nonumber
\end{align}
and the initial data of $\delta P$
\begin{align}
   \delta P|_{t=0}= P_{0}^{\var,\tau}-P^{\tau}_{0},\quad P_{0}^{\var}:=\alpha_{+,0}^{\var, \tau}P_{+}(\rho_{+,0}^{\var, \tau})+\alpha_{-,0}^{\var, \tau}P_{-}(\rho_{-,0}^{\var, \tau}),\quad  P_{0}^{\tau}:=P_{+}(\rho_{+,0}^{\tau}).\label{deltaP0}
\end{align}

First, to avoid dealing with difficult nonlinearities in the equation of $ \delta\alpha_{\pm}$, we work with the following purely transported variable instead of $\delta\alpha_{\pm}$:
\begin{align}
    \delta Y:=\frac{\alpha^{\var,\tau}_{+}\rho^{\var,\tau}_{+}}{\rho^{\var,\tau}}-\frac{\alpha^{\tau}_{+}\rho^{\tau}_{+}}{\rho^{\tau}}. \label{deltaY}
\end{align}
with the initial data
\begin{align}
\delta Y|_{t=0}=Y^{\var,\tau}_{0}-Y^{\tau}_{0},\quad Y^{\var,\tau}_{0}:=\frac{\alpha_{+,0}^{\var,\tau}\rho_{+,0}^{\var,\tau}}{\alpha_{+,0}^{\var,\tau}\rho_{+,0}^{\var,\tau}+\alpha_{-,0}^{\var,\tau}\rho_{-,0}^{\var,\tau}},\quad Y^{\tau}_{0}:= \frac{\alpha^{\tau}_{+,0}\rho^{\tau}_{+,0}}{\alpha^{\tau}_{+,0}\rho^{\tau}_{+,0}+\alpha^{\tau}_{-,0}\rho_{-,0}^{\tau}}.\label{deltaY0}
\end{align}
\begin{lemma} For $d\geq3$, under the assumption \eqref{errora1}, $\delta Y$ satisfies the following estimate{\rm:}
\begin{equation}\label{deltaalpha}
\begin{aligned}
\|\delta Y\|_{\widetilde{L}^{\infty}_{t}(\dot{B}^{\frac{d}{2}-2}\cap\dot{B}^{\frac{d}{2}-1})}&\lesssim  \sqrt{\var \tau}+{o}(1) \|\delta u\|_{L^1_{t}(\dot{B}^{\frac{d}{2}-1} )}.
\end{aligned}
\end{equation}
\end{lemma}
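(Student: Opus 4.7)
The plan is to exploit the observation already used in the paper (cf.\ \eqref{re}$_1$) that the ratio $Y^{\var,\tau} = \alpha_+^{\var,\tau}\rho_+^{\var,\tau}/\rho^{\var,\tau}$ is purely transported by the common velocity $u^{\var,\tau}$, and that the analogous ratio $Y^\tau$ is transported by $u^\tau$. Indeed, combining the $\alpha_+\rho_+$ and $\rho$ mass laws from System \eqref{BN} and System \eqref{K} respectively yields $\partial_t Y^{\var,\tau} + u^{\var,\tau}\cdot\nabla Y^{\var,\tau} = 0$ and $\partial_t Y^\tau + u^\tau\cdot\nabla Y^\tau = 0$. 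Subtracting these identities, I obtain for the difference $\delta Y$ the inhomogeneous transport equation
\begin{equation*}
\partial_t \delta Y + u^{\var,\tau}\cdot\nabla\delta Y = -\,\delta u\cdot\nabla y^\tau,
\end{equation*}
where $y^\tau := Y^\tau - \bar Y$ (the constant equilibrium disappears under the gradient). This decoupling is the whole point of working with $\delta Y$ rather than $\delta\alpha_\pm$, since the nonlinear source terms in the $\alpha_\pm$ equations would not produce the crucial $o(1)$ prefactor.

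Next I would apply the transport estimate of Lemma \ref{maximaldamped} (with $c_2 = 0$ and $p=1$) at both regularities $s\in\{\tfrac{d}{2}-2,\tfrac{d}{2}-1\}$ to get
\begin{equation*}
\|\delta Y\|_{\widetilde{L}^\infty_t(\dot{B}^s)} \lesssim \exp\!\bigl(C\|u^{\var,\tau}\|_{L^1_t(\dot{B}^{d/2+1})}\bigr)\bigl(\|\delta Y_0\|_{\dot{B}^s} + \|\delta u\cdot\nabla y^\tau\|_{L^1_t(\dot{B}^s)}\bigr).
\end{equation*}
The exponential factor is $O(1)$ by the uniform bound \eqref{uniform1}, and the initial data contributes $\lesssim \sqrt{\var\tau}$ by the standing assumption \eqref{errora1}. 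The source is then handled by the product law \eqref{uv2}: for the higher target regularity, choosing $s_1=\tfrac{d}{2}-1,\ s_2=\tfrac{d}{2}$ gives $\|\delta u\cdot\nabla y^\tau\|_{\dot{B}^{d/2-1}}\lesssim\|\delta u\|_{\dot{B}^{d/2-1}}\|y^\tau\|_{\dot{B}^{d/2+1}}$; for the lower one, the balanced choice $s_1=s_2=\tfrac{d}{2}-1$ yields $\|\delta u\cdot\nabla y^\tau\|_{\dot{B}^{d/2-2}}\lesssim\|\delta u\|_{\dot{B}^{d/2-1}}\|y^\tau\|_{\dot{B}^{d/2}}$. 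Since $y^\tau$ is a smooth function of $(\alpha_\pm^\tau-\bar\alpha_\pm,\rho_\pm^\tau-\bar\rho_\pm)$ vanishing at the origin, the composition estimate of Lemma \ref{lemma64} combined with \eqref{uniform1K} gives $\|y^\tau\|_{L^\infty_t(\dot{B}^{d/2}\cap\dot{B}^{d/2+1})}\lesssim c_1$, which supplies the required $o(1)$ prefactor.

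The only real obstacle, and the point at which the hypothesis $d\geq 3$ is actually needed, is the admissibility condition $s_1+s_2>0$ in the Besov product rule at the lowest target regularity: at $s=\tfrac{d}{2}-2$ with the balanced choice $s_1=s_2=\tfrac{d}{2}-1$ one needs $d-2>0$. Beyond that, the scheme is routine: pick up $\sqrt{\var\tau}$ from the initial data, absorb the constant from the transport exponential using the global bounds \eqref{uniform1}, and harvest the smallness of $y^\tau$ from Theorem \ref{theoremK} to convert the $L^1_t(\dot{B}^{d/2-1})$ source contribution into an $o(1)$ term. Assembling the two regularity bounds then produces the desired inequality \eqref{deltaalpha}.
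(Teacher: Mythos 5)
Your proposal is correct and matches the paper's proof essentially step for step: you derive the same inhomogeneous transport equation for $\delta Y$ with source $-\delta u\cdot\nabla Y^\tau$, apply Lemma \ref{maximaldamped} at both regularities, and close with the product law \eqref{uv2}, the composition estimate, and the uniform bounds \eqref{uniform1}, \eqref{uniform1K}. You also correctly identify the product-law admissibility condition $s_1+s_2=d-2>0$ at the lowest regularity $\dot{B}^{d/2-2}$ as the reason for the restriction $d\geq 3$, which the paper invokes tacitly.
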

\begin{proof}
Since the equation of $\delta Y$ reads  
\begin{align}
&\partial_{t}\delta Y+u^{\var,\tau}\cdot\nabla \delta Y=-\delta u\cdot\nabla \frac{\alpha^{\tau}_{+}\rho^{\tau}_{+}}{\rho^{\tau}},\nonumber
\end{align}
 Lemma \ref{maximaldamped} and the product law \eqref{uv2} for $d\geq3$ gives
\begin{equation}\nonumber
\begin{aligned}
\|\delta Y\|_{\widetilde{L}^{\infty}_{t}(\dot{B}^{\frac{d}{2}-2}\cap\dot{B}^{\frac{d}{2}-1})}&\lesssim {\rm{exp}}\Big(\|u^{\var,\tau}\|_{L^1_{t}(\dot{B}^{\frac{d}{2}+1})}\Big)\Big(\sqrt{\var\tau}+\|\delta u\|_{L^1_{t}(\dot{B}^{\frac{d}{2}-1})}  \| \nabla\frac{\alpha^{\tau}_{+}\rho^{\tau}_{+}}{\rho^{\tau}}\|_{\widetilde{L}^{\infty}_{t}(\dot{B}^{\frac{d}{2}-1}\cap\dot{B}^{\frac{d}{2}})} \Big).
\end{aligned}
\end{equation}
This together with the uniform estimate \eqref{uniform1} leads to \eqref{deltaalpha}.
\end{proof}

We are now ready to estimate $(\delta\alpha_{\pm},\delta \rho_{\pm}, \delta P_{\pm}, \delta P)$. It is easy to verify that $P^{\var,\tau}$ satisfies
\begin{equation}\label{Pvar}
\begin{aligned}
\partial_{t}P^{\var,\tau}+u^{\var,\tau}\cdot\nabla P^{\var,\tau}&=-\big(\gamma_{+}\alpha^{\var,\tau}_{+}P_{+}(\rho_{+}^{\var,\tau})+\gamma_{-}\alpha^{\var,\tau}_{-}P_{-}(\rho_{-}^{\var,\tau}) \big) \div u^{\var,\tau}\\
    &\quad\quad-\alpha^{\var,\tau}_{+}\alpha^{\var,\tau}_{-}\big( (\gamma_{+}-1)P_{+}(\rho_{+}^{\var,\tau})-(\gamma_{-}-1)P_{-}(\rho_{-}^{\var,\tau}) \big) \frac{P_{+}(\rho_{+}^{\var,\tau})-P_{-}(\rho_{-}^{\var,\tau})}{\var}.
\end{aligned}
\end{equation}
And the equation of $P^{\tau}$ reads
\begin{equation}\label{Ptau}
\begin{aligned}
\partial_{t}P^{\tau}+u^{\tau}\cdot\nabla P^{\tau}+\frac{\gamma_{+}\gamma_{-}P^{\tau}}{\gamma_{+}\alpha_{-}^{\tau}+\gamma_{-}\alpha_{+}^{\tau}}\div u^{\tau}=0.
\end{aligned}
\end{equation}
However it is not suitable to estimate $\delta P$ directly from \eqref{Pvar}-\eqref{Ptau} as the decay rate of $P_{+}(\rho_{+}^{\var,\tau})-P_{-}(\rho_{-}^{\var,\tau})$ can not be faster than $\varepsilon$ in view of \eqref{uniform1}. To overcome this difficulty, we introduce an auxiliary  unknown  $Q^{\var,\tau}:=P^{\var,\tau}-\Gamma_{1}^{\var,\tau}\big(P_{+}(\rho_{+}^{\var,\tau})-P_{-}(\rho_{-}^{\var,\tau})\big)$ which verifies
\begin{equation}\label{eqQ}
    \begin{aligned}
    \partial_{t} Q^{\var,\tau}+u^{\var,\tau}\cdot\nabla  Q^{\var,\tau}+\Gamma_{2}^{\var,\tau} \div u^{\var,\tau}&=-\Gamma_{3}^{\var,\tau}(P_{+}(\rho_{+}^{\var,\tau})-P_{-}(\rho_{-}^{\var,\tau}) \big) \div u^{\var,\tau}\\
    &\quad+(\partial_{t}\Gamma_{1}^{\var,\tau}+u^{\var,\tau}\cdot\nabla \Gamma_{1}^{\var,\tau})\big(P_{+}(\rho_{+}^{\var,\tau})-P_{-}(\rho_{-}^{\var,\tau})\big),
    \end{aligned}
\end{equation}
with
\begin{equation}\nonumber
\left\{
    \begin{aligned}
    &\Gamma_{1}^{\var,\tau}:=\frac{\alpha^{\var,\tau}_{+}\alpha^{\var,\tau}_{-}\big( (\gamma_{+}-1)P_{+}(\rho_{+}^{\var,\tau})-(\gamma_{-}-1)P_{-}(\rho_{-}^{\var,\tau}) \big) }{ \gamma_{+}\alpha^{\var,\tau}_{-}P_{+}(\rho_{+}^{\var,\tau})+\gamma_{-}\alpha^{\var,\tau}_{+}P_{-}(\rho_{-}^{\var,\tau})},\\
&\Gamma_{2}^{\var,\tau}:=\frac{\gamma_{+}\gamma_{-}P_{+}(\rho_{+}^{\var,\tau})P_{-}(\rho_{-}^{\var,\tau})}{\gamma_{+}\alpha_{-}P_{+}(\rho_{+}^{\var,\tau})+\gamma_{-}\alpha^{\var,\tau}_{+}P_{-}(\rho_{-}^{\var,\tau})},\\
&\Gamma_{3}^{\var,\tau}:=\frac{\alpha_{+}^{\var,\tau}\alpha_{-}^{\var,\tau}\big(\gamma_{+}P_{+}(\rho_{+}^{\var,\tau})-\gamma_{-}P_{-}(\rho_{-}^{\var,\tau})\big)}{\gamma_{+}\alpha^{\var,\tau}_{-}P_{+}(\rho_{+}^{\var,\tau})+\gamma_{-}\alpha^{\var,\tau}_{+}P_{-}(\rho_{-}^{\var,\tau})}.
\end{aligned}
\right.
\end{equation}
With this formulation, it will be possible to derive the $\mathcal{O}(\varepsilon)$ bounds for the last term on the right-hand side of \eqref{eqQ}. Define
\begin{equation}\label{Qvar}
    \begin{aligned}
&\delta Q:=P^{\var,\tau}-P^{\tau}-\Gamma_{1}^{\var,\tau}\big(P_{+}(\rho_{+}^{\var,\tau})-P_{-}(\rho_{-}^{\var,\tau})\big).
    \end{aligned}
\end{equation}
The next lemma implies that to estimate $(\delta \alpha_{\pm}, \delta \rho_{\pm}, \delta P_{\pm}, \delta P)$, it is sufficient to control $(\delta Y,\delta Q,P_{+}(\rho_{+}^{\var,\tau})-P_{-}(\rho_{-}^{\var,\tau}))$.

\begin{lemma}\label{lemmarecover} For $d\geq3$, under the assumption \eqref{errora1}, the following estimates follow{\rm:}
\begin{equation}\label{estimatedeltaalpha}
\left\{
\begin{aligned}
&\|(\delta\alpha_{\pm},\delta \rho_{\pm}, \delta\rho)\|_{\widetilde{L}^{\infty}_{t}(\dot{B}^{\frac{d}{2}-1}\cap\dot{B}^{\frac{d}{2}})} \lesssim  \|\big(\delta Y,\delta Q,P_{+}(\rho_{+}^{\var,\tau})-P_{-}(\rho_{-}^{\var,\tau}) \big)\|_{\widetilde{L}^{\infty}_{t}(\dot{B}^{\frac{d}{2}-2}\cap\dot{B}^{\frac{d}{2}-1})},\\
&\|\delta \rho_{\pm}\|_{\widetilde{L}^2_{t}(\dot{B}^{\frac{d}{2}-1})}\lesssim \|\big(\delta Q,P_{+}(\rho_{+}^{\var,\tau})-P_{-}(\rho_{-}^{\var,\tau}) \big)\|_{\widetilde{L}^{2}_{t}(\dot{B}^{\frac{d}{2}-1})}.
\end{aligned}
\right.
\end{equation}
\end{lemma}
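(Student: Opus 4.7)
I would proceed entirely algebraically, using the constraints of Systems \eqref{BN} and \eqref{K} to reconstruct the primitive unknowns from $(\delta Y,\delta Q,\xi)$, where $\xi:=P_{+}(\rho_{+}^{\var,\tau})-P_{-}(\rho_{-}^{\var,\tau})$. Starting from $\alpha_{+}^{\var,\tau}+\alpha_{-}^{\var,\tau}=1$ and the definition of $P^{\var,\tau}$, one obtains the explicit formulas $P_{+}(\rho_{+}^{\var,\tau})=P^{\var,\tau}+\alpha_{-}^{\var,\tau}\xi$ and $P_{-}(\rho_{-}^{\var,\tau})=P^{\var,\tau}-\alpha_{+}^{\var,\tau}\xi$. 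Inverting the strictly monotone pressure laws $P_{\pm}$, which is licit near $(\bar\rho_{+},\bar\rho_{-})$ by \eqref{AssumPressure}, expresses $\rho_{\pm}^{\var,\tau}$ as smooth functions of $(\alpha_{+}^{\var,\tau},P^{\var,\tau},\xi)$, and then the identity $Y^{\var,\tau}(\alpha_{+}^{\var,\tau}\rho_{+}^{\var,\tau}+\alpha_{-}^{\var,\tau}\rho_{-}^{\var,\tau})=\alpha_{+}^{\var,\tau}\rho_{+}^{\var,\tau}$ reduces to a single scalar equation in $\alpha_{+}^{\var,\tau}$ which, by the implicit function theorem, admits a unique smooth solution $\alpha_{+}^{\var,\tau}=\Phi(Y^{\var,\tau},P^{\var,\tau},\xi)$ in a neighborhood of $(\bar Y,\bar P,0)$. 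Substituting yields $\rho_{\pm}^{\var,\tau}=\Psi_{\pm}(Y^{\var,\tau},P^{\var,\tau},\xi)$ and $\rho^{\var,\tau}=\Upsilon(Y^{\var,\tau},P^{\var,\tau},\xi)$ with smooth $\Psi_{\pm},\Upsilon$. The analogous reconstruction with $\xi=0$ applies to System \eqref{K}.

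With these reconstructions in hand, Taylor's formula along the affine path between the two endpoints gives, for any $F\in\{\Phi,\Psi_{\pm},\Upsilon\}$,
\begin{equation*}
F(Y^{\var,\tau},P^{\var,\tau},\xi)-F(Y^{\tau},P^{\tau},0)=\mathcal{A}_{F}^{Y}\,\delta Y+\mathcal{A}_{F}^{P}\,\delta P+\mathcal{A}_{F}^{\xi}\,\xi,
\end{equation*}
where each $\mathcal{A}_{F}^{\cdot}$ is the corresponding partial derivative of $F$ integrated along the path. Using the definition \eqref{Qvar} to replace $\delta P=\delta Q+\Gamma_{1}^{\var,\tau}\xi$ produces representations
\begin{equation*}
(\delta\alpha_{\pm},\delta\rho_{\pm},\delta\rho)=\mathcal{F}_{1}\,\delta Y+\mathcal{F}_{2}\,\delta Q+\mathcal{F}_{3}\,\xi,
\end{equation*}
with coefficients $\mathcal{F}_{i}$ that are smooth nonlinear functions of the states, controlled uniformly in $\widetilde{L}^{\infty}_{t}(\dot{B}^{\frac{d}{2}-1}\cap\dot{B}^{\frac{d}{2}+1})$ thanks to the composition estimate \eqref{F1} and the uniform a priori bounds \eqref{uniform1}. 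The first bound in \eqref{estimatedeltaalpha} then follows by applying the product laws \eqref{uv1}--\eqref{uv2} in each dyadic block of the Chemin--Lerner space; the hypothesis $d\geq 3$ is precisely what guarantees the condition $s_{1}+s_{2}>0$ in \eqref{uv2} when handling the lower-endpoint piece $\dot{B}^{\frac{d}{2}-2}$.

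For the second estimate one can short-circuit the implicit-function step: the direct identities $\rho_{+}^{\var,\tau}=P_{+}^{-1}(P^{\var,\tau}+\alpha_{-}^{\var,\tau}\xi)$ and $\rho_{+}^{\tau}=P_{+}^{-1}(P^{\tau})$ combined with Taylor's formula for the smooth map $P_{+}^{-1}$ yield $\delta\rho_{+}=\mathcal{B}_{+}\,\delta P+\mathcal{C}_{+}\,\xi$ for smooth functions $\mathcal{B}_{+},\mathcal{C}_{+}$ of the states, and symmetrically for $\delta\rho_{-}$. Substituting $\delta P=\delta Q+\Gamma_{1}^{\var,\tau}\xi$ and applying \eqref{uv1} in Chemin--Lerner spaces yields the announced $\widetilde{L}^{2}_{t}(\dot{B}^{\frac{d}{2}-1})$ bound. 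The principal technical obstacle I anticipate is rigorously verifying the uniform-in-$(\var,\tau)$ smoothness of the implicit reconstruction $\Phi$ on the perturbation set considered here; this is ensured precisely by the smallness hypothesis \eqref{a1} propagated into \eqref{uniform1}, which keeps the trajectory inside a neighborhood of $(\bar Y,\bar P,0)$ where the Jacobian of the map $(\alpha_{+},\rho_{+},\rho_{-})\mapsto(Y,P,\xi)$ is uniformly bounded away from zero.
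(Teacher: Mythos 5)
Your proposal is correct but routes through genuinely different machinery than the paper. The paper differentiates the algebraic identities directly: from $\rho=\alpha_+\rho_++\alpha_-\rho_-$ it gets $\delta\rho=(\rho_+^{\var,\tau}-\rho_-^{\var,\tau})\delta\alpha_+ +\alpha_+^{\tau}\delta\rho_+ +\alpha_-^{\tau}\delta\rho_-$, expands $\delta Y$ and solves the resulting \emph{linear} relation for $\delta\alpha_+$ with explicit rational coefficients \eqref{deltaalpha0}, then eliminates $\delta\rho_\pm$ via the elementary Taylor identity $\delta P_\pm=\delta\rho_\pm\int_0^1 P_\pm'(\theta\rho_\pm^{\var,\tau}+(1-\theta)\rho_\pm^{\tau})\,d\theta$ together with $\delta P=\alpha_+^{\var,\tau}\big(P_+(\rho_+^{\var,\tau})-P_-(\rho_-^{\var,\tau})\big)+\delta P_-$, $\delta P_+=\delta P_-+P_+(\rho_+^{\var,\tau})-P_-(\rho_-^{\var,\tau})$ and $\delta P=\delta Q+\Gamma_1^{\var,\tau}\big(P_+(\rho_+^{\var,\tau})-P_-(\rho_-^{\var,\tau})\big)$. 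No implicit function theorem appears; everything is written out. You instead reconstruct $(\alpha_+,\rho_\pm,\rho)$ as smooth maps of $(Y,P,\xi)$ via the IFT and then Taylor-expand along a path. Both routes land on the same structural statement --- each error unknown is a linear combination of $\big(\delta Y,\delta Q,P_+(\rho_+^{\var,\tau})-P_-(\rho_-^{\var,\tau})\big)$ with coefficients that are smooth, uniformly bounded composites of the two solutions --- after which \eqref{uv1}--\eqref{uv2} and the composition estimates close the bounds, and $d\geq3$ enters identically in both to validate the product $\dot{B}^{\frac{d}{2}}\times\dot{B}^{\frac{d}{2}-2}\to\dot{B}^{\frac{d}{2}-2}$. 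Your route is conceptually cleaner; the paper's is more elementary and makes the rational structure of the coefficients explicit, which is then reused (e.g.\ in \eqref{Gammaes}).

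One caveat concerning the target rather than your argument: the first inequality in \eqref{estimatedeltaalpha}, as printed, claims a one-derivative gain (left side in $\dot{B}^{\frac{d}{2}-1}\cap\dot{B}^{\frac{d}{2}}$, right side in $\dot{B}^{\frac{d}{2}-2}\cap\dot{B}^{\frac{d}{2}-1}$), which neither your pointwise-algebraic reconstruction nor the paper's can deliver --- multiplication by smooth bounded coefficients does not raise Besov regularity. The way the lemma is actually invoked later (e.g.\ the bound on $\|\delta\rho\|_{\widetilde{L}^{\infty}_t(\dot{B}^{\frac{d}{2}-2})}$ inside the proof of Lemma \ref{lemmadeltaQu}) confirms the intended left-hand regularity is $\dot{B}^{\frac{d}{2}-2}\cap\dot{B}^{\frac{d}{2}-1}$; your proof proves exactly this corrected version.
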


\begin{proof}
Due to \eqref{deltaY} and
\begin{equation}\label{deltarho0}
    \begin{aligned}
\delta\rho=(\rho_{+}^{\var,\tau}-\rho_{-}^{\var,\tau})\delta\alpha_{+}+\alpha_{+}^{\tau}\delta \rho_{+}+\alpha_{-}^{\tau}\delta\rho_{-},
    \end{aligned}
\end{equation}
it holds that
\begin{equation}\nonumber
    \begin{aligned}
    \delta Y&=\frac{1}{\rho^{\var,\tau}\rho^{\tau}} \big( \rho_{+}^{\var,\tau}\rho^{\tau} \delta\alpha^{\var,\tau}_{+}+\alpha^{\tau}_{+} \rho^{\tau}\delta\rho_{+}-\alpha_{+}^{\tau}\rho_{+}^{\tau}\delta\rho \big)\\
    &=\frac{1}{\rho^{\var,\tau}\rho^{\tau}} \big( (\alpha^{\tau}_{-}\rho_{+}^{\var,\tau}\rho_{-}^{\tau}+\alpha^{\tau}_{+}\rho_{+}^{\tau}\rho_{-}^{\var,\tau})\delta \alpha_{+} +\alpha_{+}^{\tau}\alpha_{-}^{\tau}\rho_{-}^{\tau}\delta\rho_{+}-\alpha_{+}^{\tau}\alpha_{-}^{\tau}\rho_{+}^{\tau}\delta \rho_{-} \big).
    \end{aligned}
\end{equation}
This implies
\begin{equation}\label{deltaalpha0}
    \begin{aligned}
    &\delta \alpha_{+}=\frac{1}{\alpha^{\tau}_{-}\rho_{+}^{\var,\tau}\rho_{-}^{\tau}+\alpha^{\tau}_{+}\rho_{+}^{\tau}\rho_{-}^{\var,\tau}} \big( \rho^{\var,\tau}\rho^{\tau} \delta Y-\alpha_{+}^{\tau}\alpha_{-}^{\tau}\rho_{-}^{\tau}\delta\rho_{+}+\alpha_{+}^{\tau}\alpha_{-}^{\tau}\rho_{+}^{\tau}\delta \rho_{-}\big).
    \end{aligned}
\end{equation}
Inserting \eqref{deltaalpha0} into \eqref{deltarho0}, we have
\begin{equation}\label{deltarho00}
    \begin{aligned}
    \delta\rho&=\frac{\rho_{+}^{\var,\tau}-\rho_{-}^{\var,\tau}}{\alpha^{\tau}_{-}\rho_{+}^{\var,\tau}\rho_{-}^{\tau}+\alpha^{\tau}_{+}\rho_{+}^{\tau}\rho_{-}^{\var,\tau}} \big( \rho^{\var,\tau}\rho^{\tau} \delta Y-\alpha_{+}^{\tau}\alpha_{-}^{\tau}\rho_{-}^{\tau}\delta\rho_{+}+\alpha_{+}^{\tau}\alpha_{-}^{\tau}\rho_{+}^{\tau}\delta \rho_{-}\big)\\
    &\quad+\alpha_{+}^{\tau}\delta \rho_{+}+\alpha_{-}^{\tau}\delta\rho_{-}.
    \end{aligned}
\end{equation}
Moreover, we have
\begin{equation}\label{deltaPpmrhopm}
\begin{aligned}
&\delta P_{\pm}=\delta \rho_{\pm}\int_{0}^{1} P_{\pm}'(\theta \rho^{\var,\tau}_{\pm}+(1-\theta)\rho^{\tau}_{\pm})d\theta\quad \andf \quad \delta P=\alpha_{+}^{\var,\tau}(P_{+}^{\var,\tau}-P_{-}^{\var,\tau})+\delta P_{-}.
\end{aligned}
\end{equation}
Using the previous uniform estimates \eqref{uniform1} and \eqref{uniformK}, the product laws \eqref{uv1}-\eqref{uv2} and the composition estimates \eqref{F1}-\eqref{F11}, for some constant states $\bar{\Gamma_{i}}>0$ ($i=1,2,3$), we have
\begin{equation}\label{Gammaes}
\begin{aligned}
& \sum_{i=1}^{3}(\|\Gamma_{i}-\bar{\Gamma_{i}}\|_{\widetilde{L}^{\infty}_{t}(\dot{B}^{\frac{d}{2}-1}\cap\dot{B}^{\frac{d}{2}+1})}+\|\partial_{t}\Gamma_{i}\|_{L^1_{t}(\dot{B}^{\frac{d}{2}})})={o}(1).
\end{aligned}
\end{equation}
Therefore, \eqref{estimatedeltaalpha} follows from \eqref{Qvar}, \eqref{deltaalpha0}-\eqref{Gammaes}, the product laws \eqref{uv1}-\eqref{uv2} and the fact $\delta \alpha_{+}=-\delta \alpha_{+}$. 
\end{proof}

The next lemma pertains to $\mathcal{O}(\sqrt{\varepsilon\tau})$ bounds for $P_{+}(\rho_{+}^{\var,\tau})-P_{-}(\rho_{-}^{\var,\tau})$, which leads to the convergence rate $\sqrt{\varepsilon\tau}$.
\begin{lemma} For $d\geq3$, under the assumption \eqref{errora1}, the following estimate is valid{\rm:}
\begin{equation}\label{deltaP+P-1}
\begin{aligned}
&\|P_{+}(\rho_{+}^{\var,\tau})-P_{-}(\rho_{-}^{\var,\tau})\|_{\widetilde{L}^{\infty}_{t}(\dot{B}^{\frac{d}{2}-2}\cap\dot{B}^{\frac{d}{2}-1} )}\\
&\quad\quad+\frac{1}{\sqrt{\var}}\|P_{+}(\rho_{+}^{\var,\tau})-P_{-}(\rho_{-}^{\var,\tau})\|_{\widetilde{L}^{2}_{t}(\dot{B}^{\frac{d}{2}-1}\cap\dot{B}^{\frac{d}{2}-1} )}\lesssim \sqrt{\var \tau}.
\end{aligned}
\end{equation}
\end{lemma}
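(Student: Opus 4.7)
The plan is to derive a damped transport equation for the pressure difference $\Delta P := P_{+}(\rho_{+}^{\var,\tau})-P_{-}(\rho_{-}^{\var,\tau})$ with damping coefficient of order $1/\var$ and source proportional to $\div u^{\var,\tau}$, and then to apply the $L^{2}$-in-time maximal regularity of Lemma \ref{maximaldamped} at the low-regularity level $\dot{B}^{\frac{d}{2}-2}\cap\dot{B}^{\frac{d}{2}-1}$. The key point is that although \eqref{uniform1} only yields a $\sqrt{\var}$-rate for $\Delta P$ at the higher regularity $\dot{B}^{\frac{d}{2}-1}\cap\dot{B}^{\frac{d}{2}+1}$, dropping down by one derivative lets us exploit the uniform bound $\|u^{\var,\tau}\|_{\widetilde{L}^{2}(\dot{B}^{\frac{d}{2}-1}\cap\dot{B}^{\frac{d}{2}+1})}\lesssim\sqrt{\tau}$ from Theorem \ref{theorem11} to upgrade the rate to $\sqrt{\var\tau}$.

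First I combine $\partial_{t}(\alpha^{\var,\tau}_{\pm}\rho^{\var,\tau}_{\pm})+\div(\alpha^{\var,\tau}_{\pm}\rho^{\var,\tau}_{\pm}u^{\var,\tau})=0$ with the volume-fraction equation $(\partial_{t}+u^{\var,\tau}\cdot\nabla)\alpha^{\var,\tau}_{\pm}=\pm\frac{\alpha^{\var,\tau}_{+}\alpha^{\var,\tau}_{-}}{\var}\Delta P$ and use the identity $P'_{\pm}(\rho_{\pm})\rho_{\pm}=\gamma_{\pm}P_{\pm}$ to obtain
\begin{equation}\label{DPsketch}
\partial_{t}\Delta P+u^{\var,\tau}\cdot\nabla\Delta P+\frac{a^{\var,\tau}}{\var}\Delta P=-b^{\var,\tau}\div u^{\var,\tau},
\end{equation}
where $a^{\var,\tau}:=\gamma_{+}P_{+}(\rho^{\var,\tau}_{+})\alpha^{\var,\tau}_{-}+\gamma_{-}P_{-}(\rho^{\var,\tau}_{-})\alpha^{\var,\tau}_{+}$ and $b^{\var,\tau}:=\gamma_{+}P_{+}(\rho^{\var,\tau}_{+})-\gamma_{-}P_{-}(\rho^{\var,\tau}_{-})$. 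Setting the positive constant $\bar{a}:=\bar{P}(\gamma_{+}\bar{\alpha}_{-}+\gamma_{-}\bar{\alpha}_{+})$, I rewrite \eqref{DPsketch} as a damped transport equation with constant damping $\bar{a}/\var$, absorbing the fluctuation $(a^{\var,\tau}-\bar{a})\Delta P/\var$ into an effective source.

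Next I apply Lemma \ref{maximaldamped} with $p=2$, $c_{2}=\bar{a}/\var$ and transport field $v=u^{\var,\tau}$; the requirement $u^{\var,\tau}\in L^{1}(\dot{B}^{\frac{d}{2}+1})$ is already furnished by Theorem \ref{theorem11}. For each $s\in\{\frac{d}{2}-2,\frac{d}{2}-1\}$, this gives
\begin{equation}\nonumber
\|\Delta P\|_{\widetilde{L}^{\infty}_{t}(\dot{B}^{s})}+\frac{1}{\sqrt{\var}}\|\Delta P\|_{\widetilde{L}^{2}_{t}(\dot{B}^{s})}\lesssim \|\Delta P|_{t=0}\|_{\dot{B}^{s}}+\sqrt{\var}\,\|b^{\var,\tau}\div u^{\var,\tau}\|_{\widetilde{L}^{2}_{t}(\dot{B}^{s})}+\frac{1}{\sqrt{\var}}\|(a^{\var,\tau}-\bar{a})\Delta P\|_{\widetilde{L}^{2}_{t}(\dot{B}^{s})}.
\end{equation}
The initial term is bounded by $\sqrt{\var\tau}$ thanks to \eqref{errora1}. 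For the first source piece, the composition estimate \eqref{F1} and Theorem \ref{theorem11} give $\|b^{\var,\tau}\|_{\widetilde{L}^{\infty}(\dot{B}^{\frac{d}{2}})}\lesssim 1$, and the same theorem yields $\|\div u^{\var,\tau}\|_{\widetilde{L}^{2}(\dot{B}^{\frac{d}{2}-2}\cap\dot{B}^{\frac{d}{2}-1})}\lesssim\sqrt{\tau}$; the product law \eqref{uv2} then gives $\sqrt{\var}\|b^{\var,\tau}\div u^{\var,\tau}\|_{\widetilde{L}^{2}_{t}(\dot{B}^{s})}\lesssim\sqrt{\var\tau}$. For the second source piece, the smallness condition \eqref{a1} and \eqref{F1} produce $\|a^{\var,\tau}-\bar{a}\|_{\widetilde{L}^{\infty}(\dot{B}^{\frac{d}{2}})}\lesssim c_{0}\ll 1$, so its contribution is $\lesssim c_{0}\cdot\frac{1}{\sqrt{\var}}\|\Delta P\|_{\widetilde{L}^{2}_{t}(\dot{B}^{s})}$ and may be absorbed into the left-hand side.

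The only non-routine point is the correct allocation of the $\var$- and $\tau$-weights: one must be careful to use the $L^{2}$-in-time maximal regularity of Lemma \ref{maximaldamped} (rather than the $L^{1}$-version) so that the $\sqrt{\var}\cdot\sqrt{\tau}$ structure of the $\div u^{\var,\tau}$-source matches the target rate exactly, and to drop to the low-regularity pair $\dot{B}^{\frac{d}{2}-2}\cap\dot{B}^{\frac{d}{2}-1}$ so that the $\sqrt{\tau}$-bound for $\div u^{\var,\tau}$ is available. Summing the estimates at $s=\frac{d}{2}-2$ and $s=\frac{d}{2}-1$ then yields \eqref{deltaP+P-1}.
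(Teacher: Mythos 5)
Your proof follows essentially the same route as the paper: derive the damped transport equation for $\Delta P:=P_{+}(\rho_{+}^{\var,\tau})-P_{-}(\rho_{-}^{\var,\tau})$ with constant damping $\bar a/\var$, absorb the $\mathcal O(1/\var)$ variable-coefficient fluctuation using smallness, apply Lemma \ref{maximaldamped} with $p=2$ at the lowered regularity $\dot B^{\frac d2-2}\cap\dot B^{\frac d2-1}$, and use the $\widetilde L^2$-in-time bound $\|u^{\var,\tau}\|_{\widetilde L^2(\dot B^{\frac d2-1}\cap\dot B^{\frac d2+1})}\lesssim\sqrt\tau$ from Theorem \ref{theorem11} to produce the $\sqrt{\var\tau}$ rate. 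One small technical slip: the bound $\|b^{\var,\tau}\|_{\widetilde L^\infty(\dot B^{\frac d2})}\lesssim 1$ does not make sense as written, since $b^{\var,\tau}=\gamma_+P_+(\rho_+^{\var,\tau})-\gamma_-P_-(\rho_-^{\var,\tau})$ tends to the nonzero constant $(\gamma_+-\gamma_-)\bar P$ at infinity and hence does not lie in $\dot B^{\frac d2}$; you should split $b^{\var,\tau}=\bar b+(b^{\var,\tau}-\bar b)$ and bound the fluctuation in $\dot B^{\frac d2}$ while the constant part multiplies $\div u^{\var,\tau}$ directly (which is exactly the implicit content of the paper's estimate of $W_2$).
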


\begin{proof}
It is easy to verify from \eqref{BN} that $P_{+}(\rho_{+}^{\var,\tau})-P_{-}(\rho_{-}^{\var,\tau})$ satisfies the damped equation
\begin{equation}\label{P+P-}
    \begin{aligned}
    &\partial_{t}\big(P_{+}(\rho_{+}^{\var,\tau})-P_{-}(\rho_{-}^{\var,\tau})\big)+u^{\var,\tau}\cdot\nabla\big(P_{+}(\rho_{+}^{\var,\tau})-P_{-}(\rho_{-}^{\var,\tau})\big)\\
    &\quad\quad+\frac{c_{*}}{\var} \big(P_{+}(\rho_{+}^{\var,\tau})-P_{-}(\rho_{-}^{\var,\tau})\big)\\
    &\quad= ((\gamma_{+}\alpha^{\var,\tau}_{-}P_{+}(\rho_{+}^{\var,\tau})+\gamma_{-}\alpha^{\var,\tau}_{+}P_{-}(\rho_{-}^{\var,\tau}))-c_{*})\frac{1}{\var}\big( P_{+}(\rho_{+}^{\var,\tau})-P_{-}(\rho_{-}^{\var,\tau}) \big) \\
    &\quad\quad-\big(\gamma_{+}P_{+}(\rho_{+}^{\var,\tau})-\gamma_{-}P_{-}(\rho_{-}^{\var,\tau})\big)\div u^{\var,\tau}:=W_1+W_2,
    \end{aligned}
\end{equation}
with $c_{*}:=(\gamma_{+}\bar{\alpha}_{-}+\gamma_{-}\bar{\alpha}_{+})\bar{P}.$ Thence the $L^2$-in-time type estimate in Lemma \ref{maximaldamped} for the damped transport equation \eqref{P+P-} leads to
\begin{equation}\nonumber
\begin{aligned}
&\|P_{+}(\rho_{+}^{\var,\tau})-P_{-}(\rho_{-}^{\var,\tau})\|_{\widetilde{L}^{\infty}_{t}(\dot{B}^{\frac{d}{2}-2}\cap\dot{B}^{\frac{d}{2}-1} )}+\frac{1}{\sqrt{\var}}\|P_{+}(\rho_{+}^{\var,\tau})-P_{-}(\rho_{-}^{\var,\tau})\|_{\widetilde{L}^{2}_{t}(\dot{B}^{\frac{d}{2}-2}\cap\dot{B}^{\frac{d}{2}-1} ) }\\
&\quad\lesssim {\rm{exp}}\Big(\|u^{\var,\tau}\|_{L^1_{t}(\dot{B}^{\frac{d}{2}+1})}\Big) \Big(\sqrt{\var\tau}+\sqrt{\var}\|(W_{1},W_{2})\|_{\widetilde{L}^{2}_{t}(\dot{B}^{\frac{d}{2}-2}\cap\dot{B}^{\frac{d}{2}-1} )} \Big).
\end{aligned}
\end{equation}
 By \eqref{uniform1} and \eqref{uv2}, there holds
\begin{equation}\nonumber
\begin{aligned}
\sqrt{\var}\|W_{1}\|_{\widetilde{L}^{2}_{t}(\dot{B}^{\frac{d}{2}-2}\cap\dot{B}^{\frac{d}{2}-1} )}&\lesssim\|(\alpha_{\pm}^{\var,\tau}-\bar{\alpha}_{\pm},\rho_{\pm}^{\var,\tau}-\bar{\rho}_{\pm})\|_{\widetilde{L}^{\infty}_{t}(\dot{B}^{\frac{d}{2}})}\frac{1}{\sqrt{\var}}\|P_{+}(\rho_{+}^{\var,\tau})-P_{-}(\rho_{-}^{\var,\tau})\|_{\widetilde{L}^{2}_{t}(\dot{B}^{\frac{d}{2}-2}\cap\dot{B}^{\frac{d}{2}-1} )}\\
&\lesssim {o}(1) \frac{1}{\sqrt{\var}}\|P_{+}(\rho_{+}^{\var,\tau})-P_{-}(\rho_{-}^{\var,\tau})\|_{\widetilde{L}^{2}_{t}(\dot{B}^{\frac{d}{2}-2}\cap\dot{B}^{\frac{d}{2}-1} )},
\end{aligned}
\end{equation}
and
\begin{equation}\nonumber
\begin{aligned}
&\sqrt{\var} \|W_{2}\|_{\widetilde{L}^{2}_{t}(\dot{B}^{\frac{d}{2}-2}\cap \dot{B}^{\frac{d}{2}-1} )}\lesssim \sqrt{\var}\|u^{\var,\tau}\|_{\widetilde{L}^{2}_{t}(\dot{B}^{\frac{d}{2}-1}\cap\dot{B}^{\frac{d}{2}} )}\lesssim \sqrt{\var \tau}.
\end{aligned}
\end{equation}
Therefore,  we gain \eqref{deltaP+P-1}.
\end{proof}

We are going to estimate $(\delta Q,\delta u)$. By virtue of \eqref{BN}, \eqref{K}  and \eqref{Ptau}-\eqref{eqQ}, $(\delta Q,\delta u)$ satisfies the following equations of damped Euler type with rough coefficients:
\begin{equation}\label{deltaK}
\left\{
\begin{aligned}
&\partial_{t} \delta Q+u^{\var,\tau}\cdot\nabla  \delta Q+\Gamma_{2}^{\var,\tau}  \div \delta u=\delta F_1 ,\\
&\partial_{t} \delta u+ u^{\var,\tau}\cdot \nabla \delta u+\frac{1}{\bar{\rho}}\nabla \delta Q+(\frac{1}{\rho^{\var,\tau}}-\frac{1}{\rho^{\tau}}) \nabla P^{\tau}+\frac{ \delta u}{\tau}=\delta F_2,
\end{aligned}
\right.
\end{equation}
with the nonlinear terms
\begin{equation}\nonumber
\left\{
\begin{aligned}
\delta F_1&=-\delta u\cdot\nabla P^{\tau}-\Big( \Gamma_{2}^{\var,\tau}-\frac{\gamma_{+}\gamma_{-}P^{\tau}}{\gamma_{+}\alpha_{-}^{\tau}+\gamma_{-}\alpha_{+}^{\tau}}\Big)\div u^{\tau}\\
&\quad-\Gamma_{3}^{\var,\tau}\big(P_{+}(\rho_{+}^{\var,\tau})-P_{-}(\rho_{-}^{\var,\tau})\big) \div u^{\var,\tau}+(\partial_{t}\Gamma_{1}^{\var,\tau}+u^{\var,\tau}\cdot\nabla \Gamma_{1}^{\var,\tau})\big(P_{+}(\rho_{+}^{\var,\tau})-P_{-}(\rho_{-}^{\var,\tau})\big),\\
\delta F_2&:=-\delta u\cdot\nabla u^{\tau}-\frac{1}{\rho^{\var,\tau}}\nabla \Big(\Gamma_{1}^{\var,\tau}\big(P_{+}(\rho_{+}^{\var,\tau})-P_{-}(\rho_{-}^{\var,\tau})\big)\Big).
\end{aligned}
\right.
\end{equation} 
In order to establish the uniform-in-$\tau$ convergence estimates, we follow the ideas in Section \ref{section3} to overcome the issue caused by the overdamping phenomenon.

\begin{lemma}\label{lemmadeltaQu}
Let $d\geq3$, $0<\var\leq\tau\leq1$, and the threshold $J_{\tau}$ be given by \eqref{J}. Then under the assumption \eqref{errora1}, there holds
 \begin{equation}\label{errordeltaQu}
    \begin{aligned}
    &\|(\delta Q,\delta u)\|_{\widetilde{L}^{\infty}_{t}(\dot{B}^{\frac{d}{2}-2}\cap\dot{B}^{\frac{d}{2}-1})}+ \tau\|\delta Q\|_{L^1_{t}(\dot{B}^{\frac{d}{2}})}^{\ell}+\|\delta Q\|_{L^1_{t}(\dot{B}^{\frac{d}{2}-1})}^{h}\\
&\quad\quad+ \sqrt{\tau}\| \delta Q\|_{\widetilde{L}^{2}_{t}(\dot{B}^{\frac{d}{2}-1})}+ \frac{1}{\sqrt{\tau}} \|\delta u\|_{\widetilde{L}^{2}_{t}(\dot{B}^{\frac{d}{2}-2}\cap\dot{B}^{\frac{d}{2}-1})}+\| \delta u\|_{L^1_t(\dot{B}^{\frac{d}{2}-1})}\\
&\quad\lesssim \sqrt{\var\tau}+{o}(1)\|\delta Y\|_{\widetilde{L}^{\infty}_{t}(\dot{B}^{\frac{d}{2}-2}\cap\dot{B}^{\frac{d}{2}-1})}.
\end{aligned}
\end{equation}
\end{lemma}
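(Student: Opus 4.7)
The plan is to view the error system \eqref{deltaK} as a damped Euler-type system with rough but small coefficients $\rho^{\var,\tau}, \Gamma_2^{\var,\tau}$, and to recycle the low-frequency/high-frequency machinery of Section~\ref{section3}. The initial datum $(\delta Q_0,\delta u_0)$ has size $\sqrt{\var\tau}$ by \eqref{errora1}, so the strategy is to propagate this smallness in time by showing that every source term in $(\delta F_1,\delta F_2)$ and the coefficient-difference term $(\tfrac{1}{\rho^{\var,\tau}}-\tfrac{1}{\rho^{\tau}})\nabla P^{\tau}$ can be bounded by $\sqrt{\var\tau}$ plus a $o(1)$ piece absorbable on the left-hand side.

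In the low-frequency regime I would introduce the effective flux $\delta z := \delta u + (\tau/\bar\rho)\nabla\delta Q$ mimicking the construction \eqref{z}. Substituting $\div \delta u = \div\delta z - (\tau/\bar\rho)\Delta\delta Q$ into $\eqref{deltaK}_1$ turns the $\delta Q$-equation into a heat-type equation
\[
\partial_t\delta Q + u^{\var,\tau}\cdot\nabla\delta Q - \tfrac{\tau\Gamma_2^{\var,\tau}}{\bar\rho}\Delta\delta Q = -\Gamma_2^{\var,\tau}\div\delta z + \delta F_1,
\]
while $\delta z$ solves a purely damped transport equation $\partial_t\delta z + \delta z/\tau = (\mathrm{h.o.t.})$. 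Applying Lemma~\ref{maximalheat} to $\delta Q$ and Lemma~\ref{maximaldamped} to $\delta z$ at the regularity levels $\dot B^{d/2-2}$ and $\dot B^{d/2-1}$, and absorbing the cross terms via the smallness $\tau 2^{J_\tau}\sim 2^k\ll 1$ exactly as in Lemma~\ref{lemmalow}, yields the low-frequency parts of \eqref{errordeltaQu}; the bounds on $\delta u$ in low frequencies are then recovered from the identity $\delta u = \delta z - (\tau/\bar\rho)\nabla\delta Q$. In the high-frequency regime I would localize with $\dot\Delta_j$ and build a Beauchard--Zuazua-type Lyapunov functional
\[
\mathcal L_j(t) := \tfrac12\!\int\!\Big(\tfrac{1}{\bar\rho\,\Gamma_2^{\var,\tau}}|\delta Q_j|^2+|\delta u_j|^2\Big)\,dx+\tfrac{\eta_*}{\tau}2^{-2j}\!\int\!\delta u_j\cdot\nabla\delta Q_j\,dx
\]
with $\eta_*>0$ small, so that the weight $1/(\bar\rho\Gamma_2^{\var,\tau})$ cancels the cross terms $\Gamma_2^{\var,\tau}\div\delta u$ against $(1/\bar\rho)\nabla\delta Q$ after integration by parts, $\mathcal L_j\sim\|(\delta Q_j,\delta u_j)\|_{L^2}^2$, and the dissipation rate $\mathcal H_j\gtrsim\tau^{-1}\mathcal L_j$ produces the $\widetilde L^\infty_t(\dot B^{d/2-1})^h$ and $L^1_t(\dot B^{d/2-1})^h$ bounds on $(\delta Q,\delta u)$. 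A separate $L^2$-in-time energy estimate on $\eqref{deltaK}_1$ then recovers the $\sqrt\tau$-weighted high-frequency bound on $\delta Q$, as in Lemma~\ref{lemmahigh}.

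The main difficulty I anticipate is handling the rough-coefficient term $(1/\rho^{\var,\tau}-1/\rho^{\tau})\nabla P^{\tau}$ in $\eqref{deltaK}_2$, which does not a priori carry any smallness in $\sqrt{\var\tau}$ because $\delta\rho_\pm$ enjoys only the $\widetilde L^2_t$-type bound from \eqref{estimatedeltaalpha}. The remedy is to apply Lemma~\ref{lemmarecover} to split this factor into three contributions governed respectively by $\delta Y$, $\delta Q$, and $P_+(\rho_+^{\var,\tau})-P_-(\rho_-^{\var,\tau})$: pair the $\delta Q$-piece and the $(P_+ - P_-)$-piece with the $\widetilde L^2_t(\dot B^{d/2}\cap\dot B^{d/2+1})$-integrability of $\nabla P^{\tau}$ granted by \eqref{uniform1K}, combined with \eqref{deltaP+P-1}, so that both pieces produce $\sqrt{\var\tau}$; the remaining $\delta Y$-piece produces precisely the $o(1)\|\delta Y\|_{L^\infty_t(\dot B^{d/2-2}\cap\dot B^{d/2-1})}$ term on the right-hand side of \eqref{errordeltaQu}, the $o(1)$ factor coming from the smallness \eqref{a1} of the uniform norms. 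All other nonlinearities in $\delta F_1,\delta F_2$ are standard: each is a product of a uniformly small coefficient (controlled by \eqref{uniform1}--\eqref{uniform1K} and \eqref{Gammaes}) with either $(\delta Q,\delta u)$ or $P_+(\rho_+^{\var,\tau})-P_-(\rho_-^{\var,\tau})$, so the Morse-type laws \eqref{uv1}--\eqref{uv2} together with \eqref{deltaP+P-1} give the required $\sqrt{\var\tau}$ bound up to terms absorbable on the left via the bootstrap smallness and a Grönwall argument on the transport contribution $\|u^{\var,\tau}\|_{L^1_t(\dot B^{d/2+1})}$.
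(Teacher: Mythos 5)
Your overall architecture (low-frequency effective flux, high-frequency Beauchard--Zuazua Lyapunov functional, then an $L^2_t$-in-time estimate at regularity $\dot B^{d/2-1}$, finally closing by absorption) matches the paper's four-step proof. However, the effective flux you propose in the low-frequency regime is incomplete, and this is not a cosmetic issue: it creates a source term that cannot be closed.

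You set $\delta z := \delta u + (\tau/\bar\rho)\nabla\delta Q$, whereas the paper's flux is
\[
\delta z := \delta u + \frac{\tau}{\rho^{\var,\tau}}\nabla\delta Q + \tau\Big(\frac{1}{\rho^{\var,\tau}}-\frac{1}{\rho^{\tau}}\Big)\nabla P^{\tau}.
\]
The third term is essential. With your truncated flux, the damped $\delta z$-equation retains the source $-(\frac{1}{\rho^{\var,\tau}}-\frac{1}{\rho^{\tau}})\nabla P^{\tau}$ without any prefactor $\tau$. Lemma~\ref{maximaldamped} then forces you to bound
\[
\Big\|\Big(\frac{1}{\rho^{\var,\tau}}-\frac{1}{\rho^{\tau}}\Big)\nabla P^{\tau}\Big\|_{L^1_t(\dot B^{d/2-2})}^{\ell}
\lesssim \|\delta\rho\|_{\widetilde L^{\infty}_t(\dot B^{d/2-1})}\,\|P^{\tau}-\bar P\|_{L^1_t(\dot B^{d/2})},
\]
but $\|P^{\tau}-\bar P\|_{L^1_t(\dot B^{d/2})}$ is \emph{not} uniformly bounded in $\tau$: \eqref{uniform1K} only controls $\tau\|P^{\tau}-\bar P\|_{L^1_t(\dot B^{d/2+1})}$, which is a strictly weaker quantity on low frequencies (one loses a factor $\tau^{-1}$ in passing from $\dot B^{d/2}$ to $\tau\dot B^{d/2+1}$ below the threshold $J_{\tau}$). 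Your fallback---pairing the $\delta Q$-piece of $\delta\rho$ with $\nabla P^{\tau}$ in $\widetilde L^2_t\times\widetilde L^2_t$---fares no better: using \eqref{estimatedeltaalpha} and the bound $\sqrt{\tau}\|P^{\tau}-\bar P\|_{\widetilde L^2_t(\dot B^{d/2})}\lesssim 1$ from \eqref{uniform1K} produces a term of size $\tau^{-1/2}\|\delta Q\|_{\widetilde L^2_t(\dot B^{d/2-1})}$, while the functional on the left-hand side of \eqref{errordeltaQu} only contains $\sqrt{\tau}\|\delta Q\|_{\widetilde L^2_t(\dot B^{d/2-1})}$; this is off by a factor $\tau$, so absorption fails.

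The paper's fix is precisely to fold $(\frac{1}{\rho^{\var,\tau}}-\frac{1}{\rho^{\tau}})\nabla P^{\tau}$ into the effective flux so that it cancels against $\delta u/\tau$ to leading order. The resulting remainder appears inside $\delta F_3$ multiplied by $\bar\Gamma_2\tau\div(\cdot)$, and the estimate becomes
$\|\delta\rho\|_{\widetilde L^{\infty}_t(\dot B^{d/2-1})}\cdot\tau\|P^{\tau}-\bar P\|_{L^1_t(\dot B^{d/2+1})}$,
which \emph{is} uniformly bounded and yields the $o(1)\|(\delta Y,\delta Q)\|$ and $\sqrt{\var\tau}$ contributions you were aiming for. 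You should also verify that the $\tau\partial_t$-terms generated by the full flux (and the corresponding piece $\tau\partial_t((\frac{1}{\rho^{\var,\tau}}-\frac{1}{\rho^\tau})\nabla P^\tau)$ inside $\delta F_4$) are handled as in the paper via $\partial_t\delta\rho = -\div(\delta\rho\,u^{\var,\tau}+\rho^{\tau}\delta u)$; and a final step bounding $\|\partial_t\delta Q\|_{L^1_t(\dot B^{d/2-2})}$ is needed to close the loop, since it reappears on the right-hand side after the flux substitution. With these corrections the rest of your outline---high-frequency Lyapunov functional with weight $1/(\rho^{\var,\tau})$ on $\delta Q$ (equivalently $1/(\bar\rho\Gamma_2^{\var,\tau})$ up to absorbable perturbations), commutator corrections $\delta R_{1,j},\delta R_{2,j},\delta R_{3,j}$, and the $\dot B^{d/2-1}$ $L^2$-in-time estimate---is consistent with the paper.
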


\begin{proof}
As in Section \ref{section3}, we split the proof into three parts:
\begin{itemize}
\item \textbf{Step 1: $\dot{B}^{\frac{d}{2}-2}$-estimates in low frequencies}
\end{itemize}
We introduce the new damped mode (effective flux)
$$
\delta z:=\delta u+\frac{\tau}{\rho^{\var,\tau}}\nabla \delta Q+\tau(\frac{1}{\rho^{\var,\tau}}-\frac{1}{\rho^{\tau}}) \nabla P^{\tau},
$$
so that \eqref{deltaK} is rewritten as
\begin{equation}\label{deltalowe}
\left\{
    \begin{aligned}
    &\partial_{t}\delta Q-\frac{\bar{\Gamma}_{2}\tau}{\bar{\rho}} \Delta \delta Q=-\bar{\Gamma}_{2} \div z+\delta F_3,\\
    &\partial_{t}\delta z+\frac{\delta z}{\tau}=\frac{\tau}{\bar{\rho}}\nabla(\frac{\bar{\Gamma}_{2}\tau}{\bar{\rho}} \Delta \delta Q-\bar{\Gamma}_{2} \div z)+\delta F_4,
    \end{aligned}
    \right.
\end{equation}
where $\bar{\Gamma}_{2}>0$ is the constant state of $\Gamma^{\var,\tau}_{2}$, and $\delta F_{i}$ ($i=3,4$) is defined by
\begin{equation}\nonumber
\left\{
    \begin{aligned}
\delta F_{3}:&=-u^{\var,\tau}\cdot\nabla  \delta Q-(\Gamma_{2}^{\var,\tau}-\bar{\Gamma}_{2})\div\delta u\\
&\quad+\bar{\Gamma}_{2}\tau\div \Big((\frac{1}{\rho^{\var,\tau}}-\frac{1}{\bar{\rho}}) \nabla \delta Q+(\frac{1}{\rho^{\var,\tau}}-\frac{1}{\rho^{\tau}}) \nabla P^{\tau} \Big)+\delta F_1,\\
\delta F_{4}:&=-u^{\var,\tau}\cdot \nabla \delta u+\frac{\tau}{\bar{\rho}}\nabla \delta F_3\\
&\quad+\tau(\frac{1}{\rho^{\var,\tau}}-\frac{1}{\bar{\rho}})\nabla\partial_{t}\delta Q-\tau  \partial_{t}(\frac{1} {\rho^{\var,\tau}})\nabla \delta Q+\tau\partial_{t}\big((\frac{1}{\rho^{\var,\tau}}-\frac{1}{\rho^{\tau}}) \nabla P^{\tau}\big)+\delta F_2.
    \end{aligned}
    \right.
\end{equation}
Then by similar arguments used to get \eqref{rd21}-\eqref{wrzd21}, we deduce from \eqref{deltalowe} and the choice $\eqref{J}$ of the threshold $J_{\tau}$ that
\begin{equation}\label{deltalowee}
    \begin{aligned}
&\|(\delta Q,\delta z)\|_{\widetilde{L}^{\infty}_{t}(\dot{B}^{\frac{d}{2}-2})}^{\ell}+\tau\|\delta Q\|_{L^{1}_{t}(\dot{B}^{\frac{d}{2}})}^{\ell}+\tau\|\partial_{t}\delta Q\|_{L^{1}_{t}(\dot{B}^{\frac{d}{2}-2})}^{\ell}\\
&\quad\quad+\sqrt{\tau}\|\delta Q\|_{\widetilde{L}^{2}_{t}(\dot{B}^{\frac{d}{2}-1})}^{\ell}+\frac{1}{\tau}\|\delta z\|_{L^{1}_{t}(\dot{B}^{\frac{d}{2}-2})}^{\ell}\\
&\quad\lesssim \sqrt{\var\tau}+\|(\delta F_{3}, \delta F_{4})\|_{L^1_t(\dot{B}^{\frac{d}{2}-2})}^{\ell}.
    \end{aligned}
\end{equation}

We first estimate $\delta F_{3}$. From \eqref{uniform1}, \eqref{Gammaes} and the product map $\dot{B}^{\frac{d}{2}-2}\times \dot{B}^{\frac{d}{2}}\rightarrow \dot{B}^{\frac{d}{2}-2}$ for $d\geq3$, one obtains
\begin{equation}\label{unablaldetalQ}
    \begin{aligned}
    &\|u^{\var,\tau}\cdot\nabla  \delta Q +(\Gamma_{2}^{\var,\tau}-\bar{\Gamma}_{2})\div\delta u\|_{L^1_t(\dot{B}^{\frac{d}{2}-2})}\\
    &\quad\lesssim\frac{1}{\sqrt{\tau}}\|u^{\var,\tau}\|_{\widetilde{L}^{2}_{t}(\dot{B}^{\frac{d}{2}})}\sqrt{\tau}\|\delta Q\|_{\widetilde{L}^{2}_{t}(\dot{B}^{\frac{d}{2}-1})}+\|\Gamma_{2}^{\var,\tau}-\bar{\Gamma}_{2}\|_{\widetilde{L}^{\infty}_{t}(\dot{B}^{\frac{d}{2}})}\|\delta u\|_{L^1_t(\dot{B}^{\frac{d}{2}-1})}\\
    &\quad\lesssim {o}(1)\Big( \sqrt{\tau}\|\delta Q\|_{\widetilde{L}^{2}_{t}(\dot{B}^{\frac{d}{2}-1})}+\|\delta u\|_{L^1_t(\dot{B}^{\frac{d}{2}-1})}\Big).
    \end{aligned}
\end{equation}
By virtue of \eqref{uniform1}, \eqref{lhl}, \eqref{estimatedeltaalpha}, \eqref{deltaP+P-1} and \eqref{uv2}, we also have
\begin{equation}\nonumber
    \begin{aligned}
&\tau\Big\|\div\big((\frac{1}{\rho^{\var,\tau}}-\frac{1}{\rho^{\tau}}) \nabla P^{\tau}\big)\Big\|_{L^1_t(\dot{B}^{\frac{d}{2}-2})}^{\ell}\\
&\quad\lesssim \|\delta \rho\|_{\widetilde{L}^{\infty}_{t}(\dot{B}^{\frac{d}{2}-1})}\tau\|P^{\tau}-\bar{P}\|_{L^1_{t}(\dot{B}^{\frac{d}{2}+1})}\\
&\quad\lesssim {o}(1)\|(\delta Y,\delta Q)\|_{\widetilde{L}^{\infty}_{t}(\dot{B}^{\frac{d}{2}-1})}+\sqrt{\var\tau}.
\end{aligned}
\end{equation}
As in the previous analysis \eqref{wrzd213}, the tricky nonlinear term can be estimated as
\begin{equation}\nonumber
    \begin{aligned}
&\tau\Big\|\div \Big((\frac{1}{\rho^{\var,\tau}}-\frac{1}{\bar{\rho}}) \nabla \delta Q\Big)\Big\|_{L^1_t(\dot{B}^{\frac{d}{2}-2})}^{\ell}\\
&\quad\lesssim \tau\Big\|(\frac{1}{\rho^{\var,\tau}}-\frac{1}{\bar{\rho}}) \nabla \delta Q^{\ell}\Big\|_{L^1_t(\dot{B}^{\frac{d}{2}-1})}^{\ell}+\Big\|(\frac{1}{\rho^{\var,\tau}}-\frac{1}{\bar{\rho}}) \nabla \delta Q^{h}\Big\|_{L^1_t(\dot{B}^{\frac{d}{2}-2})}^{\ell}\\
&\quad\lesssim {o}(1) \Big(\tau\|\delta Q\|_{L^1_{t}(\dot{B}^{\frac{d}{2}})}^{\ell}+\|\delta Q\|_{L^1_{t}(\dot{B}^{\frac{d}{2}-1})}^{h}\Big).
\end{aligned}
\end{equation}
Similarly, one can show
\begin{align}
    \|\delta F_{1}\|_{L^1_t(\dot{B}^{\frac{d}{2}-2})}&\lesssim \frac{1}{\sqrt{\tau}} \|\delta u\|_{\widetilde{L}^{2}_{t}(\dot{B}^{\frac{d}{2}-2})} \sqrt{\tau} \|Q^{\tau}\|_{\widetilde{L}^{2}_{t}(\dot{B}^{\frac{d}{2}})}\nonumber\\
    &\quad+\Big\|\Big( \Gamma_{2}^{\var,\tau}-\frac{\gamma_{+}\gamma_{-}P^{\tau}}{\gamma_{+}\alpha_{-}^{\tau}+\gamma_{-}\alpha_{+}^{\tau}}\Big)\Big\|_{\widetilde{L}^{\infty}_{t}(\dot{B}^{\frac{d}{2}-2})}\|u^{\tau}\|_{L^1_{t}(\dot{B}^{\frac{d}{2}+1})}\nonumber\\
    &\quad+ \|P_{+}(\rho_{+}^{\var,\tau})-P_{-}(\rho_{-}^{\var,\tau})\|_{\widetilde{L}^{\infty}_{t}(\dot{B}^{\frac{d}{2}-2})}\|u^{\var,\tau}\|_{L^1_{t}(\dot{B}^{\frac{d}{2}+1})} \nonumber\\
    &\quad+(\|\partial_{t}\Gamma_{1}^{\var,\tau}\|_{L^1_{t}(\dot{B}^{\frac{d}{2}})}+ \|u^{\var,\tau}\|_{L^1_{t}(\dot{B}^{\frac{d}{2}})}\|\nabla \Gamma_{1}^{\var,\tau}\|_{\widetilde{L}^{\infty}_{t}(\dot{B}^{\frac{d}{2}})})\|P_{+}(\rho_{+}^{\var,\tau})-P_{-}(\rho_{-}^{\var,\tau})\|_{\widetilde{L}^{\infty}_{t}(\dot{B}^{\frac{d}{2}-2})}\nonumber,
    \end{align}
    which implies
\begin{equation}\label{deltaF1}
    \begin{aligned}
      \|\delta F_{1}\|_{L^1_t(\dot{B}^{\frac{d}{2}-2})}&\lesssim{o}(1) \Big(\|(\delta Y,\delta Q)\|_{\widetilde{L}^{\infty}_{t}(\dot{B}^{\frac{d}{2}-2})}+ \frac{1}{\sqrt{\tau}} \|\delta u\|_{\widetilde{L}^{2}_{t}(\dot{B}^{\frac{d}{2}-2})}\Big)+\sqrt{\var\tau}.
      \end{aligned}
\end{equation}
Therefore, we have
\begin{equation}\label{deltaF3}
    \begin{aligned}
    \|\delta F_{3}\|_{L^1_t(\dot{B}^{\frac{d}{2}-2})}^{\ell}&\lesssim {o}(1) \Big( \|(\delta Y,\delta Q)\|_{\widetilde{L}^{\infty}_{t}(\dot{B}^{\frac{d}{2}-2})}+ \| \delta Q\|_{L^{1}_{t}(\dot{B}^{\frac{d}{2}})}+\sqrt{\tau}\|\delta Q\|_{\widetilde{L}^{2}_{t}(\dot{B}^{\frac{d}{2}-1} )}\\
    &\quad\quad+ \frac{1}{\sqrt{\tau}} \|\delta u\|_{\widetilde{L}^{2}_{t}(\dot{B}^{\frac{d}{2}-2})}+\| \delta u\|_{L^1_t(\dot{B}^{\frac{d}{2}-1})} \Big)+ \sqrt{\var\tau}.
    \end{aligned}
\end{equation}

We turn to the estimate of $\delta F_{4}$. Similar calculations give
\begin{equation}\nonumber
    \begin{aligned}
    \Big\|-u^{\var,\tau}\cdot \nabla \delta u+\frac{\tau}{\rho^{\var,\tau}}\nabla \delta F_3\Big\|_{L^1_t(\dot{B}^{\frac{d}{2}-2})}&\lesssim {o}(1)\|\delta u\|_{L^1_{t}(\dot{B}^{\frac{d}{2}-1} )}+\|\delta F_{3}\|_{L^1_t(\dot{B}^{\frac{d}{2}-2})},
    \end{aligned}
\end{equation}
and
\begin{equation}\nonumber
    \begin{aligned}
\Big\|\tau  \partial_{t}(\frac{1} {\rho^{\var,\tau}})\nabla \delta Q\Big\|_{L^1_t(\dot{B}^{\frac{d}{2}-2})}&\lesssim \|\partial_{t}\rho^{\var,\tau}\|_{L^1_t(\dot{B}^{\frac{d}{2}})} \|\delta Q\|_{\widetilde{L}^{\infty}_{t}(\dot{B}^{\frac{d}{2}-1})}\lesssim {o}(1) \|\delta Q\|_{\widetilde{L}^{\infty}_{t}(\dot{B}^{\frac{d}{2}-1})}.
    \end{aligned}
\end{equation}
For the third difficult term in $\delta F_{4}$, we apply \eqref{lhl}, the product law \eqref{uv2} for $d\geq3$ and the fact
$$\tau(\frac{1}{\rho^{\var,\tau}}-\frac{1}{\bar{\rho}})\nabla\partial_{t}\delta Q=\tau\nabla\big( (\frac{1}{\rho^{\var,\tau}}-\frac{1}{\bar{\rho}})\partial_{t}\delta Q \big)-\tau\nabla (\frac{1}{\rho^{\var,\tau}}-\frac{1}{\bar{\rho}})\partial_{t}\delta Q
$$
to have
\begin{equation}\nonumber
    \begin{aligned}
    \Big\| \tau(\frac{1}{\rho^{\var,\tau}}-\frac{1}{\bar{\rho}})\nabla\partial_{t}\delta Q\Big\|_{L^1_t(\dot{B}^{\frac{d}{2}-2})}^{\ell}&\lesssim \tau\Big\| \nabla \big( (\frac{1}{\rho^{\var,\tau}}-\frac{1}{\bar{\rho}})\partial_{t}\delta Q \big)\Big\|_{L^1_t(\dot{B}^{\frac{d}{2}-2})}^{\ell}+\tau\Big\| \nabla (\frac{1}{\rho^{\var,\tau}}-\frac{1}{\bar{\rho}})\partial_{t}\delta Q\Big\|_{L^1_t(\dot{B}^{\frac{d}{2}-2})}^{\ell}\\
        &\lesssim \Big\|  (\frac{1}{\rho^{\var,\tau}}-\frac{1}{\bar{\rho}})\partial_{t}\delta Q\Big\|_{L^1_t(\dot{B}^{\frac{d}{2}-2})}^{\ell}+\Big\| \nabla (\frac{1}{\rho^{\var,\tau}}-\frac{1}{\bar{\rho}})\partial_{t}\delta Q\Big\|_{L^1_t(\dot{B}^{\frac{d}{2}-2})}^{\ell}\\
&\lesssim \|\rho^{\var,\tau}-\bar{\rho}\|_{\widetilde{L}^{\infty}_{t}(\dot{B}^{\frac{d}{2}}\cap\dot{B}^{\frac{d}{2}+1})} \|\partial_{t}\delta Q\|_{L^1_t(\dot{B}^{\frac{d}{2}-2})}\\
&\lesssim o(1)\|\partial_{t}\delta Q\|_{L^1_t(\dot{B}^{\frac{d}{2}-2})}.
        \end{aligned}
\end{equation}
Similarly, the term $\delta F_{2}$ can be easily estimated as follows:
\begin{equation}\nonumber
    \begin{aligned}
    \|\delta F_{2}\|_{L^1_t(\dot{B}^{\frac{d}{2}-2})}&\lesssim \|\delta u\|_{L^1_{t}(\dot{B}^{\frac{d}{2}-1})}\|u^{\tau}\|_{\widetilde{L}^{\infty}_{t}(\dot{B}^{\frac{d}{2}})}+\|P_{+}(\rho_{+}^{\var,\tau})-P_{-}(\rho_{-}^{\var,\tau})\|_{L^1_t(\dot{B}^{\frac{d}{2}-1})}\\
    &\lesssim {o}(1)\| \delta u\|_{L^1_t(\dot{B}^{\frac{d}{2}-1})}+\sqrt{\var\tau}+\var\\
    &\lesssim {o}(1)\| \delta u\|_{L^1_t(\dot{B}^{\frac{d}{2}-1})}+\sqrt{\var\tau}.
    \end{aligned}
\end{equation}
To bound the term  $\tau\partial_{t}\big((\frac{1}{\rho^{\var,\tau}}-\frac{1}{\rho^{\tau}}) \nabla P^{\tau}\big)$, noticing that
$$
\partial_{t}\delta\rho=-\div \big(\delta \rho u^{\var,\tau}+\rho^{\tau}\delta u),
$$
we use \eqref{uniform1}, \eqref{estimatedeltaalpha}, \eqref{deltaP+P-1} and \eqref{F1}-\eqref{F11} that
\begin{equation}\nonumber
    \begin{aligned}
   &\Big\| \tau\partial_{t}\big((\frac{1}{\rho^{\var,\tau}}-\frac{1}{\rho^{\tau}}) \nabla P^{\tau}\big)\Big\|_{L^1_t(\dot{B}^{\frac{d}{2}-2})}\\
   &\quad\lesssim  \|\partial_{t}\delta \rho\|_{L^1_t(\dot{B}^{\frac{d}{2}-2})}\|\nabla P^{\tau}\|_{\widetilde{L}^{\infty}_{t}(\dot{B}^{\frac{d}{2}})}+\|\delta \rho\|_{\widetilde{L}^{\infty}_{t}(\dot{B}^{\frac{d}{2}-2})}\tau \|\nabla P^{\tau}\|_{L^1_t(\dot{B}^{\frac{d}{2}})}\\
   &\quad\lesssim {o}(1)\Big( \|(\delta Y,\delta Q)\|_{\widetilde{L}^{\infty}_{t}(\dot{B}^{\frac{d}{2}-2}\cap\dot{B}^{\frac{d}{2}-1})}+\|\delta u\|_{L^1_t(\dot{B}^{\frac{d}{2}-1})}\Big)+\sqrt{\var\tau}.
    \end{aligned}
\end{equation}
We thence get
\begin{equation}\label{deltaF4}
    \begin{aligned}
    \|\delta F_{3}\|_{L^1_t(\dot{B}^{\frac{d}{2}-2})}^{\ell}&\lesssim {o}(1) \Big(\|(\delta Y,\delta Q)\|_{\widetilde{L}^{\infty}_{t}(\dot{B}^{\frac{d}{2}-2})}+ \| \delta Q\|_{L^{1}_{t}(\dot{B}^{\frac{d}{2}})}+\sqrt{\tau}\|\delta Q\|_{\widetilde{L}^{2}_{t}(\dot{B}^{\frac{d}{2}-1} )}\\
    &\quad\quad+ \frac{1}{\sqrt{\tau}} \|\delta u\|_{\widetilde{L}^{2}_{t}(\dot{B}^{\frac{d}{2}-2})}+\| \delta u\|_{L^1_t(\dot{B}^{\frac{d}{2}-1})}\Big)+ \sqrt{\var\tau}.
    \end{aligned}
\end{equation}

Substituting the above estimates \eqref{deltaF3}-\eqref{deltaF4} into \eqref{deltalowee} and taking advantage of $\delta u=\delta z-\frac{\tau}{\rho^{\var,\tau}}\nabla \delta Q-\tau(\frac{1}{\rho^{\var,\tau}}-\frac{1}{\rho^{\tau}}) \nabla P^{\tau}$, we obtain
\begin{equation}\label{step1}
    \begin{aligned}
&\|(\delta Q,\delta z)\|_{\widetilde{L}^{\infty}_{t}(\dot{B}^{\frac{d}{2}-2})}^{\ell}+\tau\|\delta Q\|_{L^{1}_{t}(\dot{B}^{\frac{d}{2}})}^{\ell}+\sqrt{\tau}\|\delta Q\|_{\widetilde{L}^{2}_{t}(\dot{B}^{\frac{d}{2}-1})}^{\ell}+\frac{1}{\tau}\|\delta z\|_{L^{1}_{t}(\dot{B}^{\frac{d}{2}-2})}^{\ell}\\
&\quad\quad+\|\delta u\|_{\widetilde{L}^{\infty}_{t}(\dot{B}^{\frac{d}{2}-2})}^{\ell}+\frac{1}{\sqrt{\tau}} \|\delta u\|_{\widetilde{L}^{2}_{t}(\dot{B}^{\frac{d}{2}-2})}^{\ell}+\| \delta u\|_{L^1_t(\dot{B}^{\frac{d}{2}-1})}^{\ell}\\
&\quad\lesssim {o}(1) \Big(\|(\delta Y,\delta Q)\|_{\widetilde{L}^{\infty}_{t}(\dot{B}^{\frac{d}{2}-2}\cap\dot{B}^{\frac{d}{2}-1})}+ \tau\|\delta Q\|_{L^1_{t}(\dot{B}^{\frac{d}{2}})}^{\ell}\\
&\quad\quad+\|\delta Q\|_{L^1_{t}(\dot{B}^{\frac{d}{2}-1})}^{h}+ \sqrt{\tau}\| \delta Q\|_{\widetilde{L}^{2}_{t}(\dot{B}^{\frac{d}{2}-1})}\\
&\quad\quad+ \frac{1}{\sqrt{\tau}} \|\delta u\|_{\widetilde{L}^{2}_{t}(\dot{B}^{\frac{d}{2}-2})}+\| \delta u\|_{L^1_t(\dot{B}^{\frac{d}{2}-1})}+\|\partial_{t}\delta Q\|_{L^1_t(\dot{B}^{\frac{d}{2}-2})}\Big)+\sqrt{\var\tau}.
    \end{aligned}
\end{equation}

\begin{itemize}
\item \textbf{Step 2: $\dot{B}^{\frac{d}{2}-2}$-estimates of $(\delta Q,\delta u)$ in high frequencies}
\end{itemize}

Applying $\Delta_{j}$ to $\eqref{deltaK}$, one gets
\begin{equation}\nonumber
\left\{
\begin{aligned}
&\partial_{t} \dot{\Delta}_{j}\delta Q+u^{\var,\tau}\cdot\nabla  \dot{\Delta}_{j} \delta Q+\Gamma^{\var,\tau}_{2} \div \dot{\Delta}_{j} \delta u=\dot{\Delta}_{j}\delta F_1+\delta R_{1,j} ,\\
&\partial_{t} \dot{\Delta}_{j} \delta u+ u^{\var,\tau}\cdot \nabla \dot{\Delta}_{j}\delta u+\frac{1}{\rho^{\var,\tau}}\nabla \dot{\Delta}_{j}\delta Q+\frac{ \dot{\Delta}_{j}\delta u}{\tau}\\
&\quad\quad=-\dot{\Delta}_{j}\big((\frac{1}{\rho^{\var,\tau}}-\frac{1}{\rho^{\tau}}) \nabla P^{\tau}\big)+\dot{\Delta}_{j}\delta F_2+\delta R_{2,j}+\delta R_{3,j},
\end{aligned}
\right.
\end{equation}
with 
\begin{equation}\nonumber
\left\{
\begin{aligned}
&\delta R_{1,j}:=[u^{\var,\tau},\dot{\Delta}_{j}]\nabla \delta Q+[\Gamma_{2}^{\var,\tau},\dot{\Delta}_{j}]\div \dot{\Delta}_{j} \delta u,\\
&\delta R_{2,j}:=[u^{\var,\tau},\dot{\Delta}_{j}]\nabla \delta u,\\
&\delta R_{3,j}:=[\frac{1}{\rho^{\var,\tau}},\dot{\Delta}_{j}]\nabla \delta Q.
\end{aligned}
\right.
\end{equation}

Similarly to the high-frequency analysis in Section \ref{subsectionhigh}, one gains
\begin{equation}\label{deltaL1}
    \begin{aligned}
    &\frac{d}{dt}\int_{\mathbb{R}^{d}} (\frac{1}{ \rho^{\var,\tau}} |\dot{\Delta}_{j}\delta Q|^2+\Gamma^{\var,\tau}_{2} |\dot{\Delta}_{j} \delta u|^2 )dx+\frac{1}{\tau}\|\dot{\Delta}_{j} \delta u\|_{L^2}^2\\
    &\lesssim \Big\|\Big(\div u^{\var,\tau},\nabla \Gamma^{\var,\tau}_{2},\nabla \frac{1}{ \rho^{\var,\tau}},\partial_{t}\frac{1}{\rho^{\var,\tau}},\partial_{t}\Gamma^{\var,\tau}_{2}\Big)\Big\|_{\widetilde{L}^{\infty}} \|\dot{\Delta}_{j}\delta Q\|_{L^2}\|\dot{\Delta}_{j}\delta u\|_{L^2}\\
    &\quad+\Big\|\dot{\Delta}_{j}(\frac{1}{\rho^{\var,\tau}}-\frac{1}{\rho^{\tau}}) \nabla P^{\tau}\Big\|_{L^2}\|\dot{\Delta}_{j}\delta u\|_{L^2}+\|\dot{\Delta}_{j}(\delta F_1,\delta F_2)\|_{L^2}\|\dot{\Delta}_{j}(\delta Q,\delta u)\|_{L^2}\\
    &\quad+\|\delta R_{1,j}\|_{L^2}\|\dot{\Delta}_{j}\delta Q\|_{L^2}+\|(\delta R_{2,j},\delta R_{3,j})\|_{L^2}\|\dot{\Delta}_{j}\delta u\|_{L^2},
    \end{aligned}
\end{equation}
and the cross term
\begin{equation}\label{deltaL2}
\begin{aligned}
    &\frac{d}{dt}\int_{\mathbb{R}^{d}} \dot{\Delta}_{j} \delta u\cdot \nabla \dot{\Delta}_{j}\nabla \delta P dx\\
    &\quad+\int_{\mathbb{R}^{d}} \Big(\frac{1}{\rho^{\var,\tau}}|\nabla  \dot{\Delta}_{j}\delta Q|^2-\Gamma^{\var,\tau}_{2} |\div \dot{\Delta}_{j}\delta u_{j}|^2+\frac{1}{\tau}\dot{\Delta}_{j}\delta u\cdot \nabla \dot{\Delta}_{j}\nabla \delta P \Big)dx\\
    &\lesssim \bigg(\|u^{\var,\tau}\|_{\widetilde{L}^{\infty}}\|\nabla \dot{\Delta}_{j} \delta u\|_{L^2}+ \Big\|\dot{\Delta}_{j}\big((\frac{1}{\rho^{\var,\tau}}-\frac{1}{\rho^{\tau}}) \nabla P^{\tau}\big)\Big\|_{L^2}\\
    &\quad+\|(\dot{\Delta}_{j}\delta F_2,\delta R_{2,j},\delta R_{3,j})\|_{L^2}\bigg)\| \nabla \dot{\Delta}_{j}\nabla \delta P\|_{L^2}\\
    &\quad+(\|u^{\var,\tau}\|_{\widetilde{L}^{\infty}}\|\nabla \dot{\Delta}_{j} \delta Q\|_{L^2}+\|\dot{\Delta}_{j}\delta F_1\|_{L^2}+\|\delta R_{1,j}\|_{L^2})\|\delta u\|_{L^2}.
    \end{aligned}
    \end{equation}
For all $j\geq J_{\tau}$, multiplying \eqref{deltaL2} by a suitable small constant and adding the resulting inequality and \eqref{deltaL1} together, we can derive the Lyapunov inequality similar to \eqref{ujh}-\eqref{sim} and then show the following $L^1$-in-time type estimate:
\begin{equation}\nonumber
    \begin{aligned}
    &\tau\|(\delta Q,\delta u)\|_{\widetilde{L}^{\infty}_{t}(\dot{B}^{\frac{d}{2}-1})}^{h}+\|(\delta Q,\delta u)\|_{L^1_{t}(\dot{B}^{\frac{d}{2}-1})}^{h}\\
    &\quad\lesssim \sqrt{\tau\var}+(\|u^{\var,\tau}\|_{L^1_t(\dot{B}^{\frac{d}{2}}\cap\dot{B}^{\frac{d}{2}+1})}+\|(\partial_{t}\rho^{\var,\tau},\partial_{t}\Gamma^{\var,\tau}_{2})\|_{L^1_{t}(\dot{B}^{\frac{d}{2}})})\tau\|(\delta Q,\delta u)\|_{\widetilde{L}^{\infty}_{t}(\dot{B}^{\frac{d}{2}-1})}^{h}\\
    &\quad\quad+\|( \rho^{\var,\tau}-\bar{\rho},\Gamma^{\var,\tau}_{2}-\bar{\Gamma}_{2})\|_{\widetilde{L}^{\infty}_{t}(\dot{B}^{\frac{d}{2}+1})}\|(\delta Q,\delta u)\|_{L^1_{t}(\dot{B}^{\frac{d}{2}-1})}^{h}+\tau\|(\delta F_1,\delta F_2)\|_{L^1_{t}(\dot{B}^{\frac{d}{2}-1})}^{h}\\
    &\quad\quad+\tau\Big\|(\frac{1}{\rho^{\var,\tau}}-\frac{1}{\rho^{\tau}}) \nabla P^{\tau}\Big\|_{L^1_{t}(\dot{B}^{\frac{d}{2}-1})}^{h}\\
     &\quad\quad+\tau\sum_{j\geq J_{\tau}-1}2^{(\frac{d}{2}-1)j}\|(\delta R_{1,j}, \delta  R_{2,j},\delta R_{3,j})\|_{L^2}.
    \end{aligned}
\end{equation}
By \eqref{uniform1}, \eqref{estimatedeltaalpha}, \eqref{deltaP+P-1}, the product laws \eqref{uv2} and the commutator estimate \eqref{commutator}, it is easy to show
\begin{equation}\nonumber
    \begin{aligned}
   & \tau\Big\|(\frac{1}{\rho^{\var,\tau}}-\frac{1}{\rho^{\tau}}) \nabla P^{\tau}\Big\|_{L^1_{t}(\dot{B}^{\frac{d}{2}-1})}^{h}\\
   &\quad\lesssim \|\delta \rho\|_{\widetilde{L}^{\infty}_{t}(\dot{B}^{\frac{d}{2}-1})} \tau\|P^{\tau}-\bar{P}\|_{L^1_{t}(\dot{B}^{\frac{d}{2}+1})}\lesssim {o}(1)\|(\delta Y,\delta Q)\|_{\widetilde{L}^{\infty}_{t}(\dot{B}^{\frac{d}{2}-1})}+\sqrt{\var\tau},
    \end{aligned}
\end{equation}
and
\begin{equation}\nonumber
    \begin{aligned}
    &\tau\sum_{j\geq J_{\tau}-1}2^{(\frac{d}{2}-1)j}\|(\delta R_{1,j}, \delta  R_{2,j})\|_{L^2}\\
    &\quad\lesssim \|\nabla u^{\var,\tau}\|_{L^1_{t}(\dot{B}^{\frac{d}{2}})}\|(\delta Q,\delta u)\|_{\widetilde{L}^{\infty}_{t}(\dot{B}^{\frac{d}{2}-1})}\lesssim {o}(1)\|(\delta Q,\delta u)\|_{\widetilde{L}^{\infty}_{t}(\dot{B}^{\frac{d}{2}-1})}.
    \end{aligned}
\end{equation}
For the tricky commutator term $R_{3,j}$, we have
\begin{equation}\nonumber
    \begin{aligned}
   &\tau\sum_{j\geq J_{\tau}-1}2^{(\frac{d}{2}-1)j}\| R_{3,j}\|_{L^1_{t}(L^2)}\\
   &\quad\lesssim \tau\sum_{j\geq J_{\tau}-1}2^{\frac{d}{2}j}\Big\| [\frac{1}{\rho^{\var,\tau}},\dot{\Delta}_{j}]\nabla \delta Q^{\ell}\Big\|_{L^1_{t}(L^2)}+\sum_{j\geq J_{\tau}-1}2^{(\frac{d}{2}-1)j}\Big\| [\frac{1}{\rho^{\var,\tau}},\dot{\Delta}_{j}]\nabla \delta Q^{h}\Big\|_{L^1_{t}(L^2)}\\
   &\quad\lesssim \|\nabla \rho^{\var,\tau}\|_{\widetilde{L}^{\infty}_{t}(\dot{B}^{\frac{d}{2}})}( \tau\|\delta Q^{\ell}\|_{L^1_{t}(\dot{B}^{\frac{d}{2}})}+\|\delta Q^{h}\|_{L^1_{t}(\dot{B}^{\frac{d}{2}-1})})\\
   &\quad\lesssim {o}(1)\Big( \tau\|\delta Q\|_{L^1_{t}(\dot{B}^{\frac{d}{2}})}^{\ell}+\|\delta Q\|_{L^1_{t}(\dot{B}^{\frac{d}{2}-1})}^{h}\Big).
    \end{aligned}
\end{equation}
For $\delta F_{1}$ and $\delta F_{2}$, similar computations give rise to
\begin{equation}\label{F1F2d21}
    \begin{aligned}
    &\|(\delta F_{1},\delta F_{2})\|_{L^1_{t}(\dot{B}^{\frac{d}{2}-1})}\lesssim \|\delta u\|_{L^1_{t}(\dot{B}^{\frac{d}{2}-1})}\|( P^{\tau}-\bar{P}, u^{\tau})\|_{\widetilde{L}^{\infty}_{t}(\dot{B}^{\frac{d}{2}+1})}\\
    &\quad\quad+\Big\|\Gamma_{2}^{\var,\tau}-\frac{\gamma_{+}\gamma_{-}P^{\tau}}{\gamma_{+}\alpha_{-}^{\tau}+\gamma_{-}\alpha_{+}^{\tau}}\Big\|_{\widetilde{L}^{\infty}_{t}(\dot{B}^{\frac{d}{2}-1})}\|u^{\tau}\|_{L^{1}_{t}(\dot{B}^{\frac{d}{2}+1})}\\
    &\quad+(\|u^{\var,\tau}\|_{L^1_{t}(\dot{B}^{\frac{d}{2}+1})}+\|\partial_{t}\Gamma_{1}^{\var,\tau}+u^{\var,\tau}\cdot\nabla \Gamma_{1}^{\var,\tau}\|_{L^1_{t}(\dot{B}^{\frac{d}{2}})})\|P_{+}(\rho_{+}^{\var,\tau})-P_{-}(\rho_{-}^{\var,\tau})\|_{\widetilde{L}^{\infty}_{t}(\dot{B}^{\frac{d}{2}-1})}\\
    &\quad\quad+\|P_{+}(\rho_{+}^{\var,\tau})-P_{-}(\rho_{-}^{\var,\tau})\|_{L^{1}_{t}(\dot{B}^{\frac{d}{2}-1}\cap\dot{B}^{\frac{d}{2}})}\\
    &\quad\lesssim {o}(1)\Big(\|(\delta Y, \delta Q)\|_{\widetilde{L}^{\infty}_{t}(\dot{B}^{\frac{d}{2}-1})}+  \| \delta u\|_{L^1_{t}(\dot{B}^{\frac{d}{2}-1})} \Big)+\sqrt{\var\tau}+\var.
    \end{aligned}
\end{equation}
We thus get
\begin{equation}\label{step2}
    \begin{aligned}
    &\|(\delta Q,\delta u)\|_{\widetilde{L}^{\infty}_{t}(\dot{B}^{\frac{d}{2}-2})}^{h}+\tau\|(\delta Q,\delta u)\|_{\widetilde{L}^{\infty}_{t}(\dot{B}^{\frac{d}{2}-1})}^{h}\\
    &\quad\quad+\|(\delta Q,\delta u)\|_{L^1_{t}(\dot{B}^{\frac{d}{2}-1})}^{h}+\sqrt{\tau}\|\delta Q\|_{\widetilde{L}^2_{t}(\dot{B}^{\frac{d}{2}-1})}^{h}+\frac{1}{\sqrt{\tau}}\|u\|_{\widetilde{L}^2_{t}(\dot{B}^{\frac{d}{2}-2})}^{h}\\
    &\quad\lesssim {o}(1)\Big(\|(\delta Q,\delta u)\|_{\widetilde{L}^{\infty}_{t}(\dot{B}^{\frac{d}{2}-1})}+\tau \|\delta Q\|_{L^1_{t}(\dot{B}^{\frac{d}{2}})}^{\ell}\\
    &\quad\quad+\|\delta Q\|_{L^1_{t}(\dot{B}^{\frac{d}{2}-1})}^{h}+ \tau \|\delta u\|_{L^1_{t}(\dot{B}^{\frac{d}{2}-1})} \Big)+\sqrt{\var\tau}.
    \end{aligned}
    \end{equation}

\begin{itemize}
\item \textbf{Step 3: $\dot{B}^{\frac{d}{2}-1}$-estimates of $(\delta Q,\delta u)$ in all frequencies}
\end{itemize}

We need to further establish the uniform $\dot{B}^{\frac{d}{2}-1}$-bounds. To this end, owing to \eqref{deltaL1}, we obtain the $L^2$-in-time estimate
\begin{equation}\nonumber
    \begin{aligned}
    &\|(\delta Q,\delta u)\|_{\widetilde{L}^{\infty}_{t}(\dot{B}^{\frac{d}{2}-1})}+\frac{1}{\sqrt{\tau}}\|\delta u\|_{\widetilde{L}^2_{t}(\dot{B}^{\frac{d}{2}-1})}\\
    &\quad\lesssim \sqrt{\var\tau}+\Big(\|u^{\var,\tau}\|_{L^1_t(\dot{B}^{\frac{d}{2}})}+\|(\partial_{t}\rho^{\var,\tau},\partial_{t}\Gamma^{\var,\tau}_{2})\|_{L^1_{t}(\dot{B}^{\frac{d}{2}})}\Big)^{\frac{1}{2}}\|(\delta Q,\delta u)\|_{\widetilde{L}^{\infty}_{t}(\dot{B}^{\frac{d}{2}-1})}^{h}\\
    &\quad\quad+\|( \rho^{\var,\tau}-\bar{\rho},\Gamma^{\var,\tau}_{2}-\bar{\Gamma}_{2})\|_{\widetilde{L}^{\infty}_{t}(\dot{B}^{\frac{d}{2}+1})}^{\frac{1}{2}}\Big(\sqrt{\tau}\|\delta Q\|_{\widetilde{L}^2_{t}(\dot{B}^{\frac{d}{2}-1})}^{h}\Big)^{\frac{1}{2}}\Big(\frac{1}{\sqrt{\tau}}\|\delta u\|_{\widetilde{L}^2_{t}(\dot{B}^{\frac{d}{2}-1})}^{h}\Big)^{\frac{1}{2}}\\
    &\quad\quad+\|(\delta F_1,\delta F_2)\|_{L^1_{t}(\dot{B}^{\frac{d}{2}-1})}^{\frac{1}{2}}\|(\delta Q,\delta u)\|_{\widetilde{L}^{\infty}_{t}(\dot{B}^{\frac{d}{2}-1})}^{\frac{1}{2}}+\Big\|(\frac{1}{\rho^{\var,\tau}}-\frac{1}{\rho^{\tau}}) \nabla P^{\tau}\Big\|_{\widetilde{L}^{2}_{t}(\dot{B}^{\frac{d}{2}-1})}^{\frac{1}{2}}\|\delta u\|_{\widetilde{L}^2_{t}(\dot{B}^{\frac{d}{2}-1})}^{\frac{1}{2}}\\
     &\quad\quad+\sum_{j\in\mathbb{Z}}2^{(\frac{d}{2}-1)j}\Big(\int_{0}^{t}\big(\|\delta R_{1,j}\|_{L^2} \|\dot{\Delta}_{j}\delta Q\|_{L^2}+\|(\delta R_{2,j},\delta R_{3,j})\|_{L^2}\|\dot{\Delta}_{j}u\|_{L^2}\big)ds \Big)^{\frac{1}{2}}.
    \end{aligned}
\end{equation}
One has
\begin{equation}\nonumber
    \begin{aligned}
&\Big\|(\frac{1}{\rho^{\var,\tau}}-\frac{1}{\rho^{\tau}}) \nabla P^{\tau}\Big\|_{\widetilde{L}^{2}_{t}(\dot{B}^{\frac{d}{2}-1})}^{\frac{1}{2}}\|\delta u\|_{\widetilde{L}^2_{t}(\dot{B}^{\frac{d}{2}-1})}^{\frac{1}{2}}\\
&\quad\lesssim \|\delta\rho\|_{\widetilde{L}^{\infty}_{t}(\dot{B}^{\frac{d}{2}-1})}\Big( \sqrt{\tau}\|P^{\tau}-\bar{P}\|_{\widetilde{L}^2_{t}(\dot{B}^{\frac{d}{2}+1})}\Big)^{\frac{1}{2}}\Big(\frac{1}{\sqrt{\tau}}\|\|\delta u\|_{\widetilde{L}^2_{t}(\dot{B}^{\frac{d}{2}-1})}\Big)^{\frac{1}{2}}\\
&\quad\lesssim {o}(1) \Big( \|(\delta Y,\delta Q)\|_{\widetilde{L}^{\infty}_{t}(\dot{B}^{\frac{d}{2}-1})}+\frac{1}{\sqrt{\tau}}\|\|\delta u\|_{\widetilde{L}^2_{t}(\dot{B}^{\frac{d}{2}-1})} \Big)+\sqrt{\var\tau}.
    \end{aligned}
\end{equation}
Concerning the commutator terms, we have
\begin{equation}\nonumber
    \begin{aligned}
    &\sum_{j\in\mathbb{Z}}2^{(\frac{d}{2}-1)j}\Big(\int_{0}^{t}\Big(\|\delta R_{1,j}\|_{L^2} \|\dot{\Delta}_{j}\delta Q\|_{L^2}+\| \delta  R_{2,j}\|_{L^2}\|\dot{\Delta}_{j}u\|_{L^2}\big)ds \Big)^{\frac{1}{2}}\\
    &\quad\lesssim \Big(\frac{1}{\sqrt{\tau}}\|\nabla u^{\var,\tau}\|_{\widetilde{L}^2_{t}(\dot{B}^{\frac{d}{2}})}\|\delta Q\|_{\widetilde{L}^{\infty}_{t}(\dot{B}^{\frac{d}{2}-1})}\sqrt{\tau} \|\delta Q\|_{\widetilde{L}^2_{t}(\dot{B}^{\frac{d}{2}-1})}\Big)^{\frac{1}{2}}\\
    &\quad\quad+\Big(\frac{1}{\sqrt{\tau}}\|\delta u \|_{\widetilde{L}^2_{t}(\dot{B}^{\frac{d}{2}-1})}\|\nabla \Gamma^{\var,\tau}_{2}\|_{\widetilde{L}^{\infty}_{t}(\dot{B}^{\frac{d}{2}+1})} \sqrt{\tau} \|\delta Q\|_{\widetilde{L}^2_{t}(\dot{B}^{\frac{d}{2}-1})}\Big)^{\frac{1}{2}}+\big(\|\nabla u^{\var,\tau}\|_{L^1_{t}(\dot{B}^{\frac{d}{2}})}\|\delta u\|_{\widetilde{L}^{\infty}_{t}(\dot{B}^{\frac{d}{2}-1})}^2 \big)^{\frac{1}{2}}\\
    &\quad\quad+\Big(\|\nabla \rho^{\var,\tau}\|_{\widetilde{L}^{\infty}_{t}(\dot{B}^{\frac{d}{2}})}\sqrt{\tau}\|\delta Q\|_{\widetilde{L}^{\infty}_{t}(\dot{B}^{\frac{d}{2}-1})} \frac{1}{\sqrt{\tau}}\|\delta u\|_{\widetilde{L}^{2}_{t}(\dot{B}^{\frac{d}{2}-1})} \Big)^{\frac{1}{2}}\\
    &\quad\lesssim {o}(1)\Big( \|(\delta Q,\delta u)\|_{\widetilde{L}^{\infty}_{t}(\dot{B}^{\frac{d}{2}-1})}+\sqrt{\tau}\|\delta Q\|_{\widetilde{L}^2_{t}(\dot{B}^{\frac{d}{2}-1})}+ \frac{1}{\sqrt{\tau}}\|\delta u \|_{\widetilde{L}^2_{t}(\dot{B}^{\frac{d}{2}-1})} \Big).
        \end{aligned}
\end{equation}
Gathering \eqref{F1F2d21} and the above three estimates,  we have
\begin{equation}\label{step3}
    \begin{aligned}
    &\|(\delta Q,\delta u)\|_{\widetilde{L}^{\infty}_{t}(\dot{B}^{\frac{d}{2}-1})}+\frac{1}{\sqrt{\tau}}\|\delta u\|_{\widetilde{L}^2_{t}(\dot{B}^{\frac{d}{2}-1})}\\
    &\quad\lesssim {o}(1)\Big( \|(\delta Y,\delta Q,\delta u)\|_{\widetilde{L}^{\infty}_{t}(\dot{B}^{\frac{d}{2}-1})}+\|\delta Q\|_{\widetilde{L}^2_{t}(\dot{B}^{\frac{d}{2}-1})}+\frac{1}{\sqrt{\tau}}\|\delta u\|_{\widetilde{L}^2_{t}(\dot{B}^{\frac{d}{2}-1})} \Big)+\sqrt{\var\tau}.
            \end{aligned}
\end{equation}
    
\begin{itemize}
\item \textbf{Step 4: Proof of convergence rate}
\end{itemize}

    Finally, as required in \eqref{step1}, one needs to estimate $\partial_{t}\delta Q$. We make use of the equation $\eqref{deltaK}_{1}$, \eqref{unablaldetalQ} and \eqref{deltaF1} to get
    \begin{equation}\label{step4}
    \begin{aligned}
    \|\partial_{t}\delta Q\|_{L^1_{t}(\dot{B}^{\frac{d}{2}-2})}&\lesssim \|u^{\var,\tau}\cdot\nabla  \delta Q +(\Gamma_{2}^{\var,\tau}-\bar{\Gamma}_{2})\div\delta u\|_{L^1_t(\dot{B}^{\frac{d}{2}-2})}\\
    &\quad+\|\div \delta u\|_{L^1_t(\dot{B}^{\frac{d}{2}-2})}+\|\delta F_{1}\|_{L^1_t(\dot{B}^{\frac{d}{2}-2})}\\
    &\lesssim o(1)\Big( \|(\delta Y,\delta Q)\|_{\widetilde{L}^{\infty}_{t}(\dot{B}^{\frac{d}{2}-2})}+ \frac{1}{\sqrt{\tau}} \|\delta u\|_{\widetilde{L}^{2}_{t}(\dot{B}^{\frac{d}{2}-2})}\Big)\\
    &\quad+\| \delta u\|_{L^1_t(\dot{B}^{\frac{d}{2}-1})}+\sqrt{\var\tau}.
    \end{aligned}
\end{equation} 
    Combining \eqref{deltaalpha}, \eqref{step1}, \eqref{step2}, \eqref{step3} and \eqref{step4} together, we end up with \eqref{errordeltaQu} which completes the proof of Lemma \ref{lemmadeltaQu}.
\end{proof}

\subsection{Time-relaxation limit: System \eqref{Keta} to System \eqref{PM}}\label{sectionrelaxationKPM}


This section is devoted to the proof of \eqref{error2} in Theorem \ref{theorem12}. Define the error variables
$$
(\delta \beta_{\pm}, \delta \varrho_{\pm}, \delta \varrho, \delta \Pi,\delta v):=(\beta^{\tau}_{\pm}-\beta_{\pm},  \varrho^{\tau}_{\pm}-\varrho_{\pm},\varrho^{\tau}-\varrho, \Pi^{\tau}-\Pi, v^{\tau}-v).
$$
First, similarly to \eqref{sectionrelaxationBNK}, instead of $\delta\beta$, we need to estimate the variable 
$$
\delta Z:=\frac{\beta_{+}^{\tau}\varrho_{+}^{\tau}}{\varrho^{\tau}}-\frac{\beta_{+}\varrho_{+}}{\varrho},
$$
where the initial data of $\delta z$ is
\begin{align}
\delta Z|_{t=0}=Z^{\tau}_{0}-Z_{0},\quad Z^{\tau}_{0}:=\frac{\alpha_{+,0}^{\tau}\rho_{+,0}^{\tau}}{\alpha_{+,0}^{\tau}\rho_{+,0}^{\tau}+\alpha_{-,0}^{\tau}\rho_{-,0}^{\tau}},\quad Z_{0}:=\frac{\beta_{+,0}\varrho_{+,0}}{\beta_{+,0}\varrho_{+,0}+\beta_{-,0}\varrho_{-,0}}.\label{deltaZ0}
\end{align}
Indeed, arguing similarly as in Lemma \ref{lemmarecover}, we obtain from \eqref{Keta} and \eqref{PM} that
\begin{equation}\label{deltarho000eta}
\left\{
    \begin{aligned}
    \delta \beta_{+}&=\frac{1}{\beta^{\tau}_{-}\varrho_{+}^{\tau}\varrho_{-}+\beta_{+}\varrho_{+}\varrho_{-}^{\tau}} \big( \varrho^{\tau}\varrho \delta Z-\beta_{+}\beta_{-}\varrho_{-}\delta\varrho_{+}+\beta_{+}^{\tau}\beta_{-} \varrho_{+}\delta \varrho_{-}\big),\\
    \delta\varrho^{\tau}&=\frac{\varrho_{+}^{\tau}-\varrho_{-}^{\tau}}{\beta_{-}\varrho_{+}^{\tau}\varrho_{-}+\beta_{+}\varrho_{+}\varrho_{-}^{\tau}} \big( \varrho^{\tau}\varrho \delta Z-\beta_{+}\beta_{-}\varrho_{-}\delta\varrho_{+}+\beta_{+}\beta_{-}\varrho_{+}\delta\varrho_{-}\big)+\beta_{+}\delta\varrho_{+}+\beta_{-}\delta\varrho_{-}^{\tau},\\
\delta \Pi&=\delta \varrho_{+}\int_{0}^{1} P_{+}'(\theta \varrho^{\tau}_{+}+(1-\theta)\varrho_{+})d\theta =\delta \varrho_{-}\int_{0}^{1} P_{-}'(\theta \varrho^{\tau}_{-}+(1-\theta)\varrho_{-})d\theta,
\end{aligned}
\right.
\end{equation}
which leads to
\begin{equation}\label{deltarho0eta}
\left\{
\begin{aligned}
&\|\delta \varrho_{\pm}\|_{\widetilde{L}^{\infty}_{t}(\dot{B}^{\frac{d}{2}-1})}+\|\delta \varrho_{\pm}\|_{L^1_{t}(\dot{B}^{\frac{d}{2}+1})}\sim \|\delta \Pi\|_{\widetilde{L}^{\infty}_{t}(\dot{B}^{\frac{d}{2}-1})}+\|\delta \Pi\|_{L^1_{t}(\dot{B}^{\frac{d}{2}+1})},\\
&\|\delta \beta_{\pm}\|_{\widetilde{L}^{\infty}_{t}(\dot{B}^{\frac{d}{2}-1})}\lesssim  \|(\delta Z,\delta\Pi)\|_{\widetilde{L}^{\infty}_{t}(\dot{B}^{\frac{d}{2}-1})}.
\end{aligned}
\right.
\end{equation}
It is therefore sufficient to estimate $(\delta \Pi,\delta v, \delta Z)$ to recover the information on all the error unknowns.

Next, note that  $\delta Z$ satisfies the transport equation 
\begin{equation}\label{deltaY0tau0eta}
    \begin{aligned}
        &\partial_{t}\delta Z+v^{\tau}\cdot\nabla \delta Z=-\delta v\cdot\nabla \frac{\beta_{+}\varrho_{+}}{\varrho}.
    \end{aligned} 
\end{equation}
Using Lemma \ref{maximaldamped}, \eqref{uniformK} and the product law \eqref{uv2}, we get
\begin{equation}\label{deltaalpha0eta}
\begin{aligned}
\|\delta Z\|_{\widetilde{L}^{\infty}_{t}(\dot{B}^{\frac{d}{2}-1}\cap\dot{B}^{\frac{d}{2}})}&\lesssim {\rm{exp}}\big(\|v^{\tau}\|_{L^1_{t}(\dot{B}^{\frac{d}{2}+1})}\big)\|\delta v\|_{L^1_{t}(\dot{B}^{\frac{d}{2}})}  \| \nabla \frac{\beta_{+}\varrho_{+}}{\varrho}\|_{\widetilde{L}^{\infty}_{t}(\dot{B}^{\frac{d}{2}}\cap\dot{B}^{\frac{d}{2}+1})}\\
&\lesssim  {o}(1)\|\delta v\|_{L^1_{t}(\dot{B}^{\frac{d}{2}})}.
\end{aligned}
\end{equation}

Then, we perform the key estimates of $\delta\Pi$. From \eqref{Keta}, it is easy to see
\begin{equation}\nonumber
    \begin{aligned}
&\partial_{t}\Pi^{\tau}+v^{\tau}\cdot\nabla \Pi^{\tau}=\frac{\gamma_{+}\gamma_{-}\Pi^{\tau}}{\gamma_{+}\beta_{-}^{\tau}+\gamma_{-}\beta_{+}^{\tau}}\div\Big (\frac{\nabla \Pi^{\tau}}{\varrho^{\tau}}\Big)-\frac{\gamma_{+}\gamma_{-}\Pi^{\tau}}{\gamma_{+}\beta_{-}^{\tau}+\gamma_{-}\beta_{+}^{\tau}}\div z^{\tau},\quad\quad z^{\tau}:=v^{\tau}+\frac{\nabla \Pi^{\tau}}{\varrho^{\tau}}.
    \end{aligned}
\end{equation}
Thence by the above equation and \eqref{PM}, $\delta\Pi$ satisfies
\begin{equation}\label{deltaP0tau0eta}
\begin{aligned}
&\partial_{t}\delta \Pi-\bar{c}\Delta \delta \Pi=-v^{\tau}\cdot\nabla \delta \Pi-\delta v\cdot \nabla \Pi\\
&\quad+\Big( \frac{\gamma_{+}\gamma_{-}\Pi^{\tau}}{\gamma_{+}\beta_{+}^{\tau}+\gamma_{-}\beta_{-}^{\tau}}-\frac{\gamma_{+}\gamma_{-} \Pi}{\gamma_{+}\beta_{+}+\gamma_{-} \beta_{-} }\Big)\div \Big(\frac{ \nabla \Pi^{\tau}}{\varrho^{\tau}}\Big)\\
&\quad+\frac{\gamma_{+}\gamma_{-} \Pi}{\gamma_{+} \beta_{+}+\gamma_{-}\beta_{-} }\div\big( (\frac{1}{\varrho^{\tau}}-\frac{1}{\varrho})\nabla \Pi^{\tau}\big)-\frac{\gamma_{+}\gamma_{-}\Pi^{\tau}}{\gamma_{+}\beta_{-}^{\tau}+\gamma_{-}\beta_{+}^{\tau}}\div z^{\tau},
\end{aligned}
\end{equation}
with the constant $
\bar{c}:=\frac{\gamma_{+}\gamma_{-} P_{+}(\bar{\rho}_{+})}{(\gamma_{+}\bar{\alpha}_{-}+\gamma_{-}\bar{\alpha}_{+})\bar{\rho}}>0.
$
We mention that the convergence rate $\tau$ is $\mathcal{O}(\tau)$ bound comes from the uniform estimate \eqref{uniformK} of the effective unknown $z^{\tau}$. Indeed, by Lemma \ref{maximalheat}, the uniform estimate \eqref{uniformK}, the smallness of the initial data \eqref{a1}, the product laws \eqref{uv1} and the composition estimates \eqref{F1}-\eqref{F11}, one obtains
\begin{equation}\label{deltaP00eta}
\begin{aligned}
&\|\delta \Pi\|_{\widetilde{L}^{\infty}_{t}(\dot{B}^{\frac{d}{2}-1})}+\|\delta \Pi\|_{L^1_{t}(\dot{B}^{\frac{d}{2}+1})}\lesssim \tau+\|v^{\tau}\|_{\widetilde{L}^2_{t}(\dot{B}^{\frac{d}{2}})}\|\delta \Pi\|_{\widetilde{L}^{2}_{t}(\dot{B}^{\frac{d}{2}-1})}+\|\delta v\|_{L^1_{t}(\dot{B}^{\frac{d}{2}})}\|\Pi-\bar{P}\|_{\widetilde{L}^{\infty}_{t}(\dot{B}^{\frac{d}{2}})}\\
&\quad\quad+\|\delta \Pi\|_{\widetilde{L}^{2}_{t}(\dot{B}^{\frac{d}{2}})}\|\Pi^{\tau}-\bar{P}\|_{\widetilde{L}^2_{t}(\dot{B}^{\frac{d}{2}+1})}+  \Big\|(\frac{1}{\varrho^{\tau}}-\frac{1}{\varrho})\nabla \Pi^{\tau}\Big\|_{L^1_{t}(\dot{B }^{\frac{d}{2}})}+\|z^{\tau}\|_{L^1_{t}(\dot{B}^{\frac{d}{2}})}\\
&\quad\lesssim {o}(1)\Big( \|\delta Z\|_{\widetilde{L}^{\infty}_{t}(\dot{B}^{\frac{d}{2}})}+\|(\delta \varrho_{\pm},\delta \Pi)\|_{\widetilde{L}^{\infty}_{t}(\dot{B}^{\frac{d}{2}-1})}+\|(\delta \varrho_{\pm},\delta \Pi)\|_{L^1_{t}(\dot{B}^{\frac{d}{2}+1})} \Big) +\tau,
\end{aligned}
\end{equation}
where we have used the key fact
\begin{equation}\label{keyr0eta}
\begin{aligned}
\Big\|(\frac{1}{\varrho^{\tau}}-\frac{1}{\varrho})\nabla \Pi^{\tau}\Big\|_{L^1_{t}(\dot{B }^{\frac{d}{2}})}&\lesssim  \|\delta Z\|_{\widetilde{L}^{\infty}_{t}(\dot{B}^{\frac{d}{2}})}\|\Pi^{\tau}-\bar{P}\|_{L^1_{t}(\dot{B }^{\frac{d}{2}+1})}+\|\delta \varrho_{\pm}\|_{\widetilde{L}^2_{t}(\dot{B}^{\frac{d}{2}})}\|\Pi^{\tau}-\bar{P}\|_{\widetilde{L}^2_{t}(\dot{B }^{\frac{d}{2}+1})} \\
&\quad\lesssim {o}(1) \Big( \|\delta Z\|_{\widetilde{L}^{\infty}_{t}(\dot{B}^{\frac{d}{2}})}+\|\delta \varrho_{\pm}\|_{\widetilde{L}^2_{t}(\dot{B}^{\frac{d}{2}})}\Big),
\end{aligned}
\end{equation}
derived from \eqref{uv2}, \eqref{uniformK} and \eqref{deltarho000eta}. Gathering \eqref{deltarho0eta} and \eqref{deltaP00eta} together, we get
\begin{equation}\label{deltaP0eta}
\begin{aligned}
&\|\delta \Pi\|_{\widetilde{L}^{\infty}_{t}(\dot{B}^{\frac{d}{2}-1})}+\|\delta \Pi\|_{L^1_{t}(\dot{B}^{\frac{d}{2}+1})}\lesssim {o}(1) \Big( \|\delta Z\|_{\widetilde{L}^{\infty}_{t}(\dot{B}^{\frac{d}{2}})}+\|\delta \Pi\|_{L^1_{t}(\dot{B}^{\frac{d}{2}+1})} \Big)+\tau.
\end{aligned}
\end{equation}

For the error unknown $\delta v$, in view of \eqref{uniformK}, \eqref{keyr0eta} and $\delta v=(\frac{1}{\varrho^{\tau}}-\frac{1}{\varrho})\nabla \Pi^{\tau} -\frac{1}{\varrho}\nabla \delta \Pi+z^\tau$, it can be bounded by
\begin{equation}\label{deltau0eta}
\begin{aligned}
\|\delta v\|_{L^1_{t}(\dot{B}^{\frac{d}{2}})}&\lesssim \|\delta \Pi\|_{L^1_{t}(\dot{B}^{\frac{d}{2}+1})}+\Big\|(\frac{1}{\varrho^{\tau}}-\frac{1}{\varrho})\nabla \Pi^{\tau}\Big\|_{L^1_{t}(\dot{B }^{\frac{d}{2}})}+\|z^{\tau}\|_{L^1_{t}(\dot{B}^{\frac{d}{2}})}\\
&\lesssim \|\delta Z\|_{\widetilde{L}^{\infty}_{t}(\dot{B}^{\frac{d}{2}})}+\|\delta \Pi\|_{\widetilde{L}^{\infty}_{t}(\dot{B}^{\frac{d}{2}-1})}+\|\delta \Pi\|_{L^1_{t}(\dot{B}^{\frac{d}{2}+1})}+\tau.
\end{aligned}
\end{equation}

The combination of estimate \eqref{deltarho0eta} and inequalities \eqref{deltaalpha0eta}-\eqref{deltaP0eta} gives rise to estimate \eqref{error2}, which completes the proof of Theorem \ref{theorem12}.

\vspace{2ex}
\textbf{Acknowledgments}
The authors are indebted to the anonymous referees for their valuable suggestions and comments on the manuscript. T. Crin-Barat has been funded by the Alexander von Humboldt-Professorship program and the Transregio 154 Project “Mathematical Modelling, Simulation and Optimization Using the Example of Gas Networks” of the DFG. L.-Y. Shou is supported by the National Natural Science Foundation of China (12301275) and the China Postdoctoral Science Foundation  (2023M741694). J. Tan is partially supported by the ANR project BORDS (ANR-16-CE40-0027-01),  by the Labex MME-DII and the CY Initiative of Excellence, project CYNA (CY Nonlinear Analysis).
  \bigbreak
 
\small
\bibliographystyle{abbrv} 
	\parskip=0pt
	\small
	
\bibliography{reference}


\newpage

(T. Crin-Barat)\par\nopagebreak
\noindent\textsc{Chair for Dynamics, Control, Machine Learning and Numerics, Alexander Von Humboldt- Professorship, Department of
Mathematics, Friedrich-Alexander-Universität Erlangen-Nürnberg, 91058 Erlangen, Germany.}

Email address: {\tt timotheecrinbarat@gmail.com}

\vspace{3ex}

(L.-Y. Shou)\par\nopagebreak
\noindent\textsc{School of Mathematics and Key Laboratory of Mathematical MIIT, Nanjing University of Aeronautics and
Astronautics, Nanjing, 211106, P. R. China}

Email address: {\tt shoulingyun11@gmail.com}

\vspace{3ex}

(J. Tan)\par\nopagebreak
\noindent\textsc{Laboratoire de Math\'{e}matiques AGM, UMR CNRS 8088, Cergy Paris Universit\'{e}, 2 Avenue Adolphe Chauvin, 95302     Cergy-Pontoise Cedex, France}\par\nopagebreak
Email address: {\tt jin.tan@cyu.fr}

\end{document}